\newcommand{\frakg}{\mathfrak{g}}
\newcommand{\frakh}{\mathfrak{h}}
\newcommand{\Cov}[0]{\mathrm{Cov}}
\newcommand{\Var}[0]{\mathrm{Var}}
\newcommand{\sign}[0]{\mathrm{sign}}
\newcommand{\calA}[0]{\mathcal{A}}
\newcommand{\calB}[0]{\mathcal{B}}
\newcommand{\calC}[0]{\mathcal{C}}
\newcommand{\calD}[0]{\mathcal{D}}
\newcommand{\calF}[0]{\mathcal{F}}
\newcommand{\calG}[0]{\mathcal{G}}
\newcommand{\calH}[0]{\mathcal{H}}
\newcommand{\calS}[0]{\mathcal{S}}
\newcommand{\calV}[0]{\mathcal{V}}
\newcommand{\calX}[0]{\mathcal{X}}
\newcommand{\calY}[0]{\mathcal{Y}}
\newcommand{\E}[0]{\mathbb{E}}
\newcommand{\G}[0]{\mathbb{G}}
\newcommand{\R}[0]{\mathbb{R}}
\newcommand{\U}[0]{\mathbb{U}}
\newcommand{\Prob}[0]{\mathbb{P}}
\renewcommand{\le}{\leqslant}
\renewcommand{\ge}{\geqslant}
\newcommand{\ub}[0]{\underline{b}}
\newcommand{\vertiii}[1]{{\left\vert\kern-0.25ex\left\vert\kern-0.25ex\left\vert #1 
    \right\vert\kern-0.25ex\right\vert\kern-0.25ex\right\vert}}
\renewcommand{\tilde}{\widetilde}
\renewcommand{\hat}{\widehat}
\theoremstyle{plain}
\newtheorem{thm}{Theorem}[section]
\newtheorem{lem}[thm]{Lemma}
\newtheorem{prop}[thm]{Proposition}
\newtheorem{cor}[thm]{Corollary}
\theoremstyle{definition}
\newtheorem{defn}{Definition}[section]
\newtheorem{exmp}{Example}[section]
\newtheorem{rmk}{Remark}[section]
\begin{document}

\title[Jackknife multiplier bootstrap for $U$-processes]{Jackknife multiplier bootstrap: finite sample approximations to the $U$-process supremum with applications}
\thanks{X. Chen is supported by NSF DMS-1404891, NSF CAREER Award DMS-1752614, and UIUC Research Board Awards (RB17092,  RB18099)}

\author[X. Chen]{Xiaohui Chen}
\author[K. Kato]{Kengo Kato}

\date{First arXiv version: August 9, 2017. This version: \today}

\address[X. Chen]{
Department of Statistics, University of Illinois at Urbana-Champaign \\
725 S. Wright Street, Champaign, IL 61874 USA.
}
\email{xhchen@illinois.edu}

\address[K. Kato]{
Department of Statistical Science, Cornell University\\
1194 Comstock Hall, Ithaca, NY 14853 USA.
}
\email{kk976@cornell.edu}

\begin{abstract}
This paper is concerned with finite sample approximations to the supremum of a non-degenerate $U$-process of a general order indexed by a function class. We are primarily interested in situations where the function class as well as the underlying distribution change with the sample size, and the $U$-process itself is not weakly convergent as a process. Such situations arise in a variety of modern statistical problems. We first consider Gaussian approximations, namely, approximate the $U$-process supremum by the supremum of a Gaussian process, and derive coupling and Kolmogorov distance bounds. 
Such Gaussian approximations are, however, not often directly applicable in statistical problems since the covariance function of the approximating Gaussian process is unknown. This motivates us to study bootstrap-type approximations to the $U$-process supremum. We propose a novel jackknife multiplier bootstrap (JMB) tailored to the $U$-process, and derive coupling and Kolmogorov distance bounds for the proposed JMB method. All these results are non-asymptotic, and established under fairly general conditions on function classes and underlying distributions. Key technical tools in the proofs are new local maximal inequalities for $U$-processes, which may  be useful in other problems. 
We also discuss applications of the general approximation results to testing for qualitative features of nonparametric functions based on generalized local $U$-processes. 
\end{abstract}

\keywords{Gaussian approximation, jackknife multiplier bootstrap, coupling, $U$-process, local maximal inequality}
\subjclass[2010]{60F17, 62E17, 62F40, 62G10}

\maketitle

\section{Introduction}
\label{sec:introduction}

This paper is concerned with finite sample approximations to the supremum of a $U$-process of a general order indexed by a function class. We begin with describing our setting. 
Let $X_1,\dots,X_n$ be independent and identically distributed (i.i.d.) random variables  defined on a probability space $(\Omega, \calA, \Prob)$ and taking values in a measurable space $(S, \calS)$ with common distribution $P$. For a given  integer $r \ge 2$, let $\calH$ be a class of  jointly measurable functions (kernels) $h:S^{r} \to \R$ equipped with a measurable envelope $H$ (i.e., $H$ is a nonnegative function on $S^{r}$ such that $H \ge \sup_{h \in \calH}|h|)$. Consider the associated $U$-process
\begin{equation}
\label{eqn:u-statistic}
U_n(h) := U_{n}^{(r)} (h) := \frac{1}{|I_{n,r}|} \sum_{(i_{1},\dots,i_{r}) \in I_{n,r}} h(X_{i_{1}},\dots,X_{i_{r}}), \ h \in \calH,
\end{equation}
where $I_{n,r} = \{ (i_{1},\dots,i_{r}) : 1 \le  i_{1},\dots,i_{r} \le  n, i_{j} \neq i_{k} \ \text{for} \ j \neq k \}$ and $| I_{n,r} | = n!/(n-r)!$ denotes the cardinality of $I_{n,r}$. 
Without loss of generality, we may assume that each $h \in \calH$ is symmetric, i.e., $h(x_{1},\dots,x_{r}) = h(x_{i_{1}},\dots,x_{i_{r}})$ for every permutation $i_{1},\dots,i_{r}$ of $1,\dots,r$, and the envelope $H$ is symmetric as well. 
Consider the normalized $U$-process
\begin{equation}
\label{eqn:u-process}
\U_n(h) = \sqrt{n} \{ U_n(h) - \E[U_n(h)] \}, \quad h \in \calH.
\end{equation}

The main focus of this paper is to derive finite sample  approximation results for the supremum of the normalized $U$-process, namely, $Z_{n} := \sup_{h \in \calH} \U_{n}(h)/r$, in the case where the $U$-process is \textit{non-degenerate}, i.e., $\Var(\E[h(X_1,\dots,X_{r}) \mid  X_1]) > 0$ for all $h \in \calH$.
The function class $\calH$ is allowed to depend on $n$, i.e., $\calH = \calH_{n}$, and we are primarily interested in situations where the normalized $U$-process $\U_{n}$ is not weakly convergent as a process (beyond finite dimensional convergence). For example, there are situations where $\calH_n$ depends on $n$ but $\calH_{n}$ is further indexed by a parameter set $\Theta$ independent of $n$. In such cases, one can think of  $\U_{n}$ as a $U$-process indexed by $\Theta$ and can consider weak convergence of the $U$-process in the space of bounded functions on $\Theta$, i.e., $\ell^{\infty}(\Theta)$. However, even in such cases, there are 
a variety of statistical problems where the $U$-process is not weakly convergent in $\ell^{\infty}(\Theta)$, even after a proper normalization. The present paper covers such ``difficult'' (and in fact yet more general) problems.

%
$U$-processes are powerful tools for a broad range of statistical applications such as testing for qualitative features of functions in nonparametric statistics \cite{leelintonwhang2009,ghosalsenvandervaart2000,abrevaya-wei2005_JBES}, cross-validation for density estimation \cite{nolanpollard1987}, and establishing limiting distributions of $M$-estimators \citep[see, e.g.,][]{arconesgine1993,sherman1993,sherman1994,delaPenaGine1999}. There are two perspectives on $U$-processes: 1) they are {\it infinite-dimensional} versions of $U$-statistics (with one kernel); 2) they are stochastic processes that are {\it nonlinear} generalizations of empirical processes. Both views are useful in that: 1) statistically, it is of greater interest to consider a rich class of statistics rather than a single statistic; 2) mathematically, we can borrow the insights from empirical process theory to derive limit or approximation theorems for $U$-processes. 
Importantly, however, 1) extending $U$-statistics to $U$-processes requires substantial efforts and different techniques; and 2) generalization from empirical processes to $U$-processes is highly nontrivial especially when $U$-processes are not weakly convergent as processes.  In classical settings where indexing function classes are fixed (i.e., independent of $n$), it is known that Uniform Central Limit Theorems (UCLTs) in the Hoffmann-J{\o}rgensen sense hold for $U$-processes under metric (or bracketing) entropy conditions, where  $U$-processes are  weakly convergent in spaces of bounded functions \citep{nolanpollard1988,arconesgine1993,borovskikh1996,delaPenaGine1999}  (these references also cover degenerate $U$-processes where limiting processes are Gaussian chaoses rather than Gaussian processes). Under such classical settings, \cite{arconesgine1994,zhang2001} study limit theorems for bootstrapping $U$-processes; see also
\cite{bickelfreedman1981,bretagnolle1983,arconesgine1992,huskovajanssen1993, huskovajanssen1993b,janssen1994, dehlingmikosch1994,wangjing2004} as references on bootstraps for $U$-statistics. \cite{ginemason2007} introduce a notion of the local $U$-process  motivated by a density estimator of a function of several variables proposed by \cite{frees1994} and establish a version of UCLTs for local $U$-processes.
More recently, \cite{chen2017a} studies Gaussian and bootstrap approximations for high-dimensional (order-two) $U$-statistics, which can be viewed as $U$-processes indexed by \textit{finite} function classes $\calH_n$ with  increasing cardinality in $n$. To the best of our knowledge, however, no existing work covers the case where the indexing function class $\calH = \calH_n$ 1) may change with $n$; 2) may have infinite cardinality for each $n$; and 3) need not verify UCLTs. This is indeed the situation for many of nonparametric specification testing problems \cite{leelintonwhang2009,ghosalsenvandervaart2000,abrevaya-wei2005_JBES}; see examples in Section \ref{sec:monotonicity_testing} for details.


In this paper, we develop a general non-asymptotic theory for directly approximating the supremum $Z_n = \sup_{h \in \calH} \U_n (h)/r$ without referring a weak limit of the underlying $U$-process $\{\U_n(h) : h \in \calH \}$.
Specifically, we first establish a general Gaussian coupling result to approximate $Z_n$ by the supremum of a Gaussian process $W_P$ in Section \ref{sec:gaussian_approx}. Our Gaussian approximation result builds upon recent development in modern empirical process theory \citep{cck2014_empirical_process,cck_honest_cb,cck2016_empirical_process_coupling} and high-dimensional $U$-statistics \citep{chen2017a}. As a significant departure from the existing literature \citep{ginemason2007,arconesgine1993,cck2014_empirical_process,cck2016_empirical_process_coupling}, our Gaussian approximation for $U$-processes has a multi-resolution nature, which is neither parallel with the theory of $U$-processes with fixed function classes nor that of empirical processes. In particular, unlike $U$-processes with fixed function classes, the higher-order degenerate components are not necessarily negligible compared with the H\'ajek  (empirical) process (in the sense of the Hoeffding projections \cite{hoeffding1948}) and they may impact error bounds of the Gaussian approximation.

However, the covariance function of the Gaussian process $W_P$ depends on the underlying distribution $P$ which is unknown and hence the Gaussian approximation developed in Section \ref{sec:gaussian_approx} is not directly applicable to statistical problems such as computing critical values of a test statistic defined by the supremum of a $U$-process. On the other hand, the (Gaussian) multiplier bootstrap developed in \cite{cck_honest_cb,cck2016_empirical_process_coupling} for empirical processes is not directly applicable to $U$-processes since the H\'ajek  process also depends on $P$ and hence is unknown. Our second main contribution is to develop a fully data-dependent procedure for approximating the distribution of $Z_n$. Specifically, we propose a novel {\it jackknife multiplier bootstrap} (JMB) tailored to $U$-processes in Section \ref{sec:bootstrap}. The key insight of the JMB is to replace the (unobserved) H\'ajek process  by its jackknife estimate \citep[cf.][]{callaertveraverbeke1981}. We  establish finite sample validity of the JMB (i.e., conditional multiplier CLT) with explicit error bounds. As a distinguished feature, our error bounds involve a delicate interplay among {\it all} levels of the Hoeffding projections. In particular, the key innovations are a collection of new powerful local maximal inequalities for level-dependent degenerate components associated with the $U$-process (see Section \ref{sec:local_maximal_inequalities}). To the best of our knowledge, there has been no theoretical guarantee on bootstrap consistency for $U$-processes whose function classes change with $n$ and which do not  converge weakly as processes. Our finite sample bootstrap validity results with explicit error bounds fill this important gap in literature, although we only focus on the supremum functional.

It should be emphasized that our approximation problem is different from the problem of approximating the \textit{whole} $U$-process $\{\U_n(h) : h \in \calH \}$. In testing monotonicity of nonparametric regression functions, \cite{ghosalsenvandervaart2000} consider a test statistic defined by the supremum of a bounded $U$-process of order-two and derive a Gaussian approximation result for the normalized $U$-process. Their idea is a two-step approximation procedure: first approximate the $U$-process by its H\'ajek process and then apply  Rio's coupling result \citep{rio1994a}, which is a Koml\'os-Major-Tusn\'ady (KMT) \citep{kmt1975} type strong approximation for empirical processes indexed by Vapnik-\v{C}ervonenkis (VC) type classes of functions. See also \citep{massart1989_AoP,koltchinskii1994} for extensions of the KMT construction to other function classes.
It is worth noting that the two-step approximation of $U$-processes based on KMT type approximations  in general requires more restrictive conditions on the function class and the underlying distribution in statistical applications. Our regularity conditions on the function class and the underlying distribution for the Gaussian and bootstrap approximations are easy to verify and  are  less restrictive than those required for KMT type approximations since we directly approximate the supremum of a $U$-process rather than the whole $U$-process; in fact, our approximation results can cover examples of statistical applications for which KMT type approximations are not applicable or difficult to apply; see Section \ref{sec:monotonicity_testing}  for details. In particular, both Gaussian and bootstrap approximation results of the present paper allow classes of functions with {\it unbounded} envelopes provided suitable moment conditions are satisfied.

To illustrate the general approximation results for suprema of $U$-processes, we consider the problem of testing qualitative features of the conditional distribution and regression functions in nonparametric statistics \cite{leelintonwhang2009,ghosalsenvandervaart2000,abrevaya-wei2005_JBES}. In Section \ref{sec:monotonicity_testing}, we propose a unified test statistic for specifications (such as monotonicity, linearity, convexity, concavity, etc.) of nonparametric functions based on the {\it generalized local $U$-process} (the name is inspired by \cite{ginemason2007}). Instead of attempting to establish a Gumbel type limiting distribution for the extreme-value test statistic (which is known to have slow rates of convergence; see \cite{hall1991,resnick1987a}), we apply the JMB to approximate the finite sample distribution of the proposed test statistic. Notably, the JMB is valid for a larger spectrum of bandwidths, allows for an unbounded envelope, and the size error of the JMB is decreasing polynomially fast in $n$, which should be contrasted with the fact that tests based on Gumbel approximations have size errors of order $1/\log n$. It is worth noting that \cite{leelintonwhang2009}, who develop a test for the stochastic monotonicity based on the supremum of a (second-order) $U$-process and derive a Gumbel limiting distribution for their test statistic under the null, state a conjecture that a bootstrap resampling method would yield the test whose size error is decreasing polynomially fast in $n$ \citep[][p.594]{leelintonwhang2009}. The results of the present paper formally solve this conjecture for a different version of bootstrap, namely, the JMB, in a more general setting. 
In addition, our general theory can be used to develop a version of the JMB test that is uniformly valid in compact bandwidth sets. Such ``uniform-in-bandwidth'' type results allow one to consider tests with data-dependent bandwidth selection procedures, which are not covered in \cite{ghosalsenvandervaart2000,leelintonwhang2009,abrevaya-wei2005_JBES}.

\subsection{Organization}

The rest of the paper is organized as follows.
In Section \ref{sec:gaussian_approx}, we derive non-asymptotic Gaussian approximation error bounds for the $U$-process supremum in the non-degenerate case.
In Section \ref{sec:bootstrap}, we develop and study a jackknife multiplier bootstrap (with Gaussian weights) tailored to the $U$-process to further approximate the distribution of the $U$-process supremum in a data-dependent manner. 
In Section \ref{sec:monotonicity_testing}, we discuss applications of the general results developed in Sections \ref{sec:gaussian_approx} and \ref{sec:bootstrap} to testing for qualitative features of nonparametric functions based on generalized local $U$-processes. 
In Section \ref{sec:local_maximal_inequalities}, we prove new  {\it multi-resolution} and {\it local} maximal inequalities for degenerate $U$-processes with respect to the degeneracy levels of their kernel. These inequalities are key technical tools in the proofs for the results in the previous sections. 
In Section \ref{sec:proof}, we present the proofs for Sections \ref{sec:gaussian_approx}--\ref{sec:bootstrap}. 
Appendix contains additional proofs, discussions, and auxiliary technical results. 

\subsection{Notation}

For a nonempty set $T$, let $\ell^{\infty}(T)$ denote the Banach space of bounded real-valued functions $f: T \to \R$ equipped with the sup norm $\| f \|_{T} := \sup_{t \in T}|f(t)|$. For a pseudometric space $(T,d)$, let $N(T,d,\varepsilon)$ denote the $\varepsilon$-covering number for $(T,d)$, i.e., the minimum number of closed $d$-balls with radius at most $\varepsilon$ that cover $T$. See \cite[Section 2.1]{vandervaartwellner1996} or \cite[Section 2.3]{ginenickl2016} for details.  For a probability space $(T,\mathcal{T},Q)$ and a measurable function $f: T \to \R$, we use the notation $Q f := \int f dQ$
whenever the integral is defined. For $q \in [1,\infty]$, let $\| \cdot \|_{Q,q}$ denote the $L^{q}(Q)$-seminorm, i.e., $\| f \|_{Q,q} := (Q|f|^{q})^{1/q} := (\int |f|^{q} dQ)^{1/q}$ for finite $q$ while $\| f \|_{Q,\infty}$ denotes the essential supremum of $|f|$ with respect to $Q$.  For a measurable space $(S,\mathcal{S})$ and a positive integer $r$, $S^{r} = S \times \cdots \times S$ ($r$ times) denotes the product space equipped with the product $\sigma$-field $\mathcal{S}^{r}$. 
For a generic random variable $Y$ (not necessarily real-valued), let $\mathcal{L}(Y)$ denote the law (distribution) of $Y$. 
For $a,b \in \R$, let $a \vee b = \max \{ a,b \}$ and $a \wedge b = \min \{ a,b \}$. 
Let $\lfloor a \rfloor$ denote the integer part of $a \in \R$. 
``Constants'' refer to finite, positive, and non-random numbers.

\section{Gaussian approximation for suprema of $U$-processes}
\label{sec:gaussian_approx}

In this section, we derive non-asymptotic Gaussian approximation error bounds for the $U$-process supremum in the non-degenerate case, which is essential for establishing the bootstrap validity in Section \ref{sec:bootstrap}. The goal  is to approximate the supremum of the normalized $U$-process, $\sup_{h \in \calH} \U_{n}(h)/r$, by the supremum of a suitable Gaussian process, and derive bounds on such approximations. 

We first recall the setting. Let $X_{1},\dots,X_{n}$ be i.i.d. random variables defined on a probability space $(\Omega,\calA,\Prob)$ and taking values in a measurable space $(S,\mathcal{S})$ with common distribution $P$. For a technical reason, we assume that $S$ is a separable metric space and $\calS$ is its Borel $\sigma$-field. For a given integer $r \ge 2$, let $\calH$ be a class of symmetric measurable functions $h: S^{r} \to \R$ equipped with a symmetric measurable envelope $H$. 
Recall the $U$-process $\{ U_{n}(h) : h \in \calH \}$ defined in (\ref{eqn:u-statistic}) and its normalized version $\{ \U_{n}(h) : h \in \calH \}$ defined in (\ref{eqn:u-process}).
In applications, the function class $\calH$ may depend on $n$, i.e., $\calH = \calH_{n}$. 
However, in Sections \ref{sec:gaussian_approx} and \ref{sec:bootstrap}, we will derive non-asymptotic results that are valid for each sample size $n$, and therefore suppress the possible dependence of $\calH = \calH_n$ on $n$ for the notational convenience.

We will use the following notation. For a symmetric measurable function $h:  S^{r} \to \R$ and $k=1,\dots,r$, let $P^{r-k}h$ denote the function on $S^{k}$ defined by 
\begin{align*}
P^{r-k}h (x_{1},\dots,x_{k}) &= \E[ h(x_{1},\dots,x_{k},X_{k+1},\dots,X_{r})] \\
&=\int \cdots \int h(x_{1},\dots,x_{k},x_{k+1},\dots,x_{r}) dP(x_{k+1}) \cdots dP(x_{r}), 
\end{align*}
whenever the latter integral exists and is finite for every $(x_{1},\dots,x_{k}) \in S^{k}$ ($P^{0}h = h$). Provided that $P^{r-k}h$ is well-defined, $P^{r-k}h$ is symmetric and measurable.

In this paper, we focus on the case where the function class $\calH$ is \textit{VC (Vapnik-\v{C}ervonenkis) type}, whose formal definition is stated as follows. 

\begin{defn}[VC type class]
\label{defn:vc-type-fns}
A function class $\calH$ on $S^{r}$ with  envelope $H$ is said to be \textit{VC type} with characteristics $(A,v)$ if $
\sup_Q N(\calH, \| \cdot \|_{Q,2}, \varepsilon \|H\|_{Q,2}) \le  (A / \varepsilon)^v$ for all $0 < \varepsilon \le  1$, where $\sup_{Q}$ is taken over all finitely discrete distributions on $S^{r}$. 
\end{defn}

We make the following assumptions on the function class $\calH$ and the distribution $P$. 

\begin{enumerate}
\item[(PM)] The function class $\calH$ is \textit{pointwise measurable}, i.e., there exists a countable subset $\calH' \subset \calH$ such that for every $h \in \calH$, there exists a sequence $h_k \in \calH'$ with $h_k \to h$ pointwise. 

\item[(VC)] The function class $\calH$ is VC type with characteristics $A \ge  (e^{2(r-1)}/16) \vee e$  and $v  \ge  1$ for envelope $H$.
The envelope $H$ satisfies that $H \in L^{q}(P^{r})$ for some $q \in [4,\infty]$ and $P^{r-k}H$ is everywhere finite for every $k=1,\dots,r$. 
\item[(MT)] Let $\calG := P^{r-1} \calH := \{ P^{r-1} h : h \in \calH \}$ and $G:=P^{r-1}H$. There exist (finite) constants 
\[
b_{\frakh} \ge  b_{\frakg} \vee \sigma_{\frakh} \ge  b_{\frakg} \wedge \sigma_{\frakh} \ge  \overline{\sigma}_{\frakg} > 0
\]
 such that the following hold: 
\[
\begin{split}
&\|G\|_{P,q} \le  b_{\frakg}, \qquad \sup_{g \in \calG} \| g \|_{P,\ell}^{\ell} \le \overline{\sigma}_{\frakg}^2 b_{\frakg}^{\ell-2}, \ \ell=2,3,4, \\
&\| P^{r-2} H \|_{P^{2},q} \le  b_{\frakh}, \ \text{and} \ \sup_{h \in \calH} \| P^{r-2}h \|_{P^{2},\ell}^{\ell} \le \sigma_{\frakh}^{2} b_{\frakh}^{\ell-2}, \ \ell=2,4,
\end{split}
\]
where $q$ appears in Condition (VC). 
\end{enumerate}

Some comments on the conditions are in order. Conditions (PM), (VC), and (MT) are inspired by Conditions (A)-(C) in \cite{cck2016_empirical_process_coupling}. 
Condition (PM) is made to avoid measurability difficulties. Our definition of ``pointwise measurability'' is borrowed from Example 2.3.4 in \cite{vandervaartwellner1996}; \cite[p.262]{ginenickl2016} calls a pointwise measurable function class a function class satisfying the \textit{pointwise countable approximation property}. 
Condition (PM) ensures that, e.g., $\sup_{h \in \calH} \U_{n}(h) = \sup_{h \in \calH'} \U_{n}(h)$, so that $\sup_{h \in \calH} \U_{n}(h)$ is a (proper) random variable. See \cite[Section 2.2]{vandervaartwellner1996} for details.

Condition (VC) ensures that $\calG$ is VC type as well with characteristics $4\sqrt{A}$ and $2v$ for envelope $G=P^{r-1}H$; see Lemma \ref{lem:uniform_entropy_numbers_between_GH} ahead.
Since $G \in L^{2}(P)$ by Condition (VC), it is seen from Dudley's criterion on sample continuity of Gaussian processes (see, e.g., \cite[][Theorem 2.3.7]{ginenickl2016}) that the function class $\calG$ is \textit{$P$-pre-Gaussian}, i.e., there exists a tight Gaussian random variable $W_P$ in $\ell^\infty(\calG)$ with mean zero and covariance function
\[
\E[W_P(g) W_P(g')] = \Cov(g(X_{1}), g'(X_{1})), \ g,g' \in \calG. 
\]
Recall that a Gaussian process $W= \{ W(g) : g \in \calG \}$ is a tight Gaussian random variable in $\ell^{\infty}(\calG)$ if and only if  $\calG$ is totally bounded for the intrinsic pseudometric $d_{W}(g,g') = (\E[ (W(g)-W(g'))^{2}])^{1/2}, g,g' \in \calG$, and $W$ has sample paths almost surely uniformly $d_{W}$-continuous \citep[][Section 1.5]{vandervaartwellner1996}. 
In applications, $\calG$ may depend on $n$ and so the Gaussian process $W_P$ (and its distribution) may depend on $n$ as well, although such dependences are suppressed in Sections \ref{sec:gaussian_approx} and \ref{sec:bootstrap}. The VC type assumption made in Condition (VC) covers many statistical applications. However, it is worth noting that in principle, we can derive corresponding results for Gaussian and bootstrap approximations under more general complexity assumptions on the function class beyond the VC type, as our local maximal inequalities for the $U$-process in Theorem \ref{lem:local_max_ineq_dengenerate_uprocess} ahead, which are key technical results in the proofs of the Gaussian and bootstrap approximation results, can cover more general function classes than VC type classes; but the resulting bounds would be more complicated and may not be clear enough. For the clarity of exposition, we focus on VC type function classes and present a Gaussian coupling bound for general function classes in Appendix~\ref{sec:Gaussian_approx_general}. 

Condition (MT) imposes suitable moment bounds on the kernel and its H\'ajek projection. Specifically, this moment condition contains interpolated parameters which control the lower moments (i.e., $L^{2}, L^{3}$, and $L^{4}$ sizes) and the envelopes  of $\calH$ and $\calG$.

Under these conditions on the function class $\calH$ and the distribution $P$, we will first construct a random variable, defined on the same probability space as $X_{1},\dots,X_{n}$, which is equal in distribution to $\sup_{g \in \calG} W_{P}(g)$ and ``close'' to $Z_{n}$ with high-probability. To ensure such constructions,  a common assumption is that the probability space is \textit{rich enough}. For the sake of clarity, we will  assume in Sections \ref{sec:gaussian_approx} and \ref{sec:bootstrap} that the probability space $(\Omega,\calA,\Prob)$ is such that 
\begin{equation}
\label{eq:probability_space}
(\Omega,\calA,\Prob) = (S^{n},\calS^{n},P^{n}) \times (\Xi,\calC,R) \times ((0,1),\calB(0,1), U(0,1)),
\end{equation}
where $X_{1},\dots,X_{n}$ are the coordinate projections of $(S^{n},\calS^{n},P^{n})$, multiplier random variables $\xi_{1},\dots,\xi_{n}$ to be introduced in Section \ref{sec:bootstrap} depend only on the ``second'' coordinate $(\Xi,\calC,R)$, and $U(0,1)$ denotes the uniform distribution (Lebesgue measure) on $(0,1)$ ($\calB(0,1)$ denotes the Borel $\sigma$-field on $(0,1)$). The augmentation of the last coordinate is reserved to generate a $U(0,1)$ random variable independent of $X_{1},\dots,X_{n}$ and $\xi_{1},\dots,\xi_{n}$, which is needed when applying the Strassen-Dudley theorem and its conditional version in the proofs of Proposition \ref{prop:entropy_bounds_vc} and Theorem \ref{thm:coupling_gaussian_mulitiplier_bootstrap}; see Appendix \ref{app:strassen-dudley} for the Strassen-Dudley theorem and its conditional version. We will also assume that the Gaussian process $W_{P}$ is defined on the same probability space (e.g. one can generate $W_{P}$ by the previous $U(0,1)$ random variable), but of course $\sup_{g \in \calG} W_{P}(g)$ is not what we want since there is no guarantee that $\sup_{g \in \calG}W_{P}(g)$ is close to $Z_{n}$.

Now, we are ready to state the first result of this paper. 
Recall the notation given in Condition (MT) and define
\begin{align*}
K_n = v  \log(A \vee n) \quad \text{and} \quad \chi_{n}  &=\sum_{k=3}^{r} n^{-(k-1)/2} \| P^{r-k}H \|_{P^{k},2} K_{n}^{k/2}
\end{align*}
with the convention that $\sum_{k=3}^{r} =0$ if $r=2$.
The following proposition  derives Gaussian coupling bounds for $Z_{n} = \sup_{h \in \calH}  \U_{n}(h)/r$.

\begin{prop}[Gaussian coupling bounds]
\label{prop:entropy_bounds_vc}
Let $Z_n = \sup_{h \in \calH}  \U_n(h)/r$. 
Suppose that Conditions (PM), (VC), and (MT) hold, and that $K_{n}^{3} \le n$. Then, for every $n \ge  r+1$ and $\gamma \in (0,1)$, one can construct a random variable $\tilde{Z}_{n,\gamma}$ such that $\mathcal{L}(\tilde{Z}_{n,\gamma}) = \mathcal{L}(\sup_{g \in \calG} W_P(g))$ and
\[
\Prob ( |Z_{n} - \tilde{Z}_{n,\gamma}| >  C \varpi_n ) \le  C' (\gamma + n^{-1}),
\]
where  $C,C'$ are constants depending only on $r$, and 
\begin{equation}
\varpi_n := \varpi_{n}(\gamma) :=  {(\overline{\sigma}_{\frakg}^2b_{\frakg} K_n^2)^{1/3} \over \gamma^{1/3} n^{1/6}} +\frac{1}{\gamma} \left ( \frac{b_{\frakg}K_{n}}{n^{1/2-1/q}} + \frac{\sigma_{\frakh}K_{n}}{n^{1/2}} + \frac{b_{\frakh} K_{n}^{2}}{n^{1-1/q}} + \chi_{n} \right ). 
\end{equation}
In the case of $q=\infty$, ``$1/q$'' is interpreted as $0$. 
\end{prop}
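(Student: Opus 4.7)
The plan is to apply the Hoeffding decomposition to split $\U_n(h)/r$ into a H\'ajek (linear) term indexed by $\calG = P^{r-1}\calH$ plus a sum of completely degenerate components of orders $k = 2,\ldots,r$, couple the H\'ajek term to $\sup_{g \in \calG} W_P(g)$ via the empirical-process Gaussian approximation, and then show via the local maximal inequalities of Section \ref{sec:local_maximal_inequalities} that the degenerate remainders are uniformly negligible. Concretely, for each symmetric $h$ with finite second moment I write
\begin{equation*}
\U_n(h)/r \;=\; \G_n\bigl(P^{r-1}h - P^{r}h\bigr) \;+\; R_n(h), \qquad R_n(h) \;=\; \frac{\sqrt n}{r}\sum_{k=2}^{r}\binom{r}{k}\, U_n^{(k)}(\pi_{k,P}h),
\end{equation*}
where $\G_n f = n^{-1/2}\sum_{i=1}^n(f(X_i) - Pf)$ is the empirical process and $U_n^{(k)}(\pi_{k,P}h)$ is the completely degenerate $U$-statistic of order $k$ formed from the $k$-th Hoeffding projection of $h$. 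Since $|\sup_{h} \U_n(h)/r - \sup_{g} \G_n g| \le \sup_{h \in \calH}|R_n(h)|$, the triangle inequality reduces the problem to two independent tasks.

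For the H\'ajek part, note that by Lemma \ref{lem:uniform_entropy_numbers_between_GH} the class $\calG$ is VC type with characteristics $(4\sqrt{A}, 2v)$ and envelope $G = P^{r-1}H$ controlled through Condition (MT) by $(b_\frakg, \bar\sigma_\frakg)$. I invoke the VC-type Gaussian coupling for empirical-process suprema of Chernozhukov--Chetverikov--Kato (2016) applied to $\calG$: using the independent $U(0,1)$ coordinate reserved in \eqref{eq:probability_space} to run the (conditional) Strassen--Dudley construction (Appendix \ref{app:strassen-dudley}), this produces a random variable $\tilde Z_{n,\gamma}$ with $\mathcal{L}(\tilde Z_{n,\gamma}) = \mathcal{L}(\sup_{g \in \calG} W_P(g))$ such that with probability at least $1 - C(\gamma + n^{-1})$,
\begin{equation*}
\bigl|\sup_{g \in \calG}\G_n g \;-\; \tilde Z_{n,\gamma}\bigr| \;\le\; C\left[\frac{(\bar\sigma_\frakg^{2} b_\frakg K_n^{2})^{1/3}}{\gamma^{1/3} n^{1/6}} \;+\; \frac{b_\frakg K_n}{\gamma\, n^{1/2-1/q}}\right],
\end{equation*}
which accounts for the first two terms in $\varpi_n$.

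For $R_n(h)$ I apply the local maximal inequalities of Section \ref{sec:local_maximal_inequalities} to each degeneracy level. The class $\pi_{2,P}\calH$ is again VC type, with envelope controlled by $P^{r-2}H$ (bounded by $b_\frakh$ in $L^q$) and $L^2$-size $\sigma_\frakh$; the local maximal inequality for completely degenerate order-two $U$-processes bounds $\E\sup_{h \in \calH}|U_n^{(2)}(\pi_{2,P}h)|$ by a constant multiple of $\sigma_\frakh K_n/n + b_\frakh K_n^{2}/n^{2-1/q}$, and multiplying by $\sqrt n$ yields exactly the $\sigma_\frakh K_n/n^{1/2}$ and $b_\frakh K_n^{2}/n^{1-1/q}$ terms of $\varpi_n$. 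For $k \ge 3$ the corresponding local maximal inequality produces a bound of order $\|P^{r-k}H\|_{P^k,2} K_n^{k/2}/n^{k/2}$; after the $\sqrt n$ rescaling and summation in $k$ this is precisely $\chi_n$. A union bound plus Markov's inequality converts these expected-supremum estimates into in-probability statements with the $1/\gamma$ factor. Combining with the H\'ajek coupling through the triangle inequality finishes the proof. The main obstacle is the remainder-term analysis: in contrast to the classical fixed-class theory, the higher-order Hoeffding components need not be asymptotically negligible relative to the H\'ajek process in the regimes we target, so the proof genuinely depends on the multi-resolution local maximal inequalities of Section \ref{sec:local_maximal_inequalities}, and the delicate interplay among the four moment parameters $(b_\frakg,\bar\sigma_\frakg,\sigma_\frakh,b_\frakh)$ in $\varpi_n$ reflects the envelope/variance trade-off at each degeneracy level.
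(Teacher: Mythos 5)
Your proposal matches the paper's proof essentially step for step: the Hoeffding decomposition into the H\'ajek term plus degenerate remainders $R_n$, the observation $|Z_n - L_n|\le R_n$, the CCK (2016) VC-type Gaussian coupling (the paper's Lemma \ref{lem:empirical_process}) for $L_n$, the local maximal inequality (Corollary \ref{cor:local_max_ineq_dengenerate_uprocess_VCtype}) for the order-$2$ Hoeffding component, Markov's inequality to pass to high-probability statements, and a Strassen--Dudley construction to produce $\tilde Z_{n,\gamma}$. Two small imprecisions, neither affecting the outcome: (i) for $k\ge3$ the paper actually uses the cruder non-local bound of Corollary \ref{thm:alternative_max_ineq}, not the ``local'' one you name, though the rate you quote, $\|P^{r-k}H\|_{P^{k},2}K_n^{k/2}n^{-k/2}$, is the correct one; and (ii) your stated intermediate estimate $\E\sup_h|U_n^{(2)}(\pi_2 h)|\lesssim \sigma_\frakh K_n/n + b_\frakh K_n^{2}/n^{2-1/q}$ should read $n^{3/2-1/q}$ in the last denominator — your post-$\sqrt n$ rate $b_\frakh K_n^2/n^{1-1/q}$ is nevertheless what the corollary yields.
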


In statistical applications, bounds on the Kolmogorov distance are often more useful than coupling bounds. For two real-valued random variables $V,Y$, let $\rho (V,Y)$ denote the Kolmogorov distance between the distributions of $V$ and $Y$, i.e., $\rho (V,Y) := \sup_{t \in \R} | \Prob (V \le  t)  - \Prob (Y \le  t)|$. To derive a Kolomogorov distance bound, we will assume that there exists a constant $\underline{\sigma}_{\frakg} > 0$ such that
\begin{equation}
\label{eqn:lower_bound_variance_condition}
\inf_{g \in \calG} \| g - Pg \|_{P,2} \ge \underline{\sigma}_{\frakg}.
\end{equation}
Condition~\eqref{eqn:lower_bound_variance_condition} implies that the $U$-process is non-degenerate. For the notational convenience, let $\tilde{Z} = \sup_{g \in \calG} W_{P}(g)$. 
\begin{cor}[Bounds on the Kolmogorov distance between $Z_n$ and $\sup_{g \in \calG}W_{P}(g)$]
\label{cor:kolmogorov_distance_gaussian_coupling_vc}
Assume that all the conditions in Proposition \ref{prop:entropy_bounds_vc} and \eqref{eqn:lower_bound_variance_condition} hold. Then, there exists a constant $C$ depending only on $r, \overline{\sigma}_{\frakg}$ and $\underline{\sigma}_{\frakg}$ such that 
\[
\rho (Z_{n},\tilde{Z}) \le  C\Bigg \{ \left ( \frac{b_{\frakg}^{2}K_{n}^{7}}{n} \right )^{1/8} +\left ( \frac{b_{\frakg}^{2}K_{n}^{3}}{n^{1-2/q}} \right )^{1/4} +  \left ( \frac{\sigma_{\frakh}^{2}K_{n}^{3}}{n} \right )^{1/4} + \left ( \frac{b_{\frakh} K_{n}^{5/2}}{n^{1-1/q}} \right)^{1/2} +  \chi_{n}^{1/2}K_{n}^{1/4}  \Bigg \}. 
\]
\end{cor}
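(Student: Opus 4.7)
The plan is to convert the Gaussian coupling of Proposition~\ref{prop:entropy_bounds_vc} into a Kolmogorov distance bound via a standard anti-concentration argument. For any $\gamma \in (0,1)$ and any $t \in \R$, splitting on the event $\{|Z_n - \tilde{Z}_{n,\gamma}| \le C\varpi_n\}$ from Proposition~\ref{prop:entropy_bounds_vc} gives
\[
\Prob(Z_n \le t) \le \Prob(\tilde{Z}_{n,\gamma} \le t + C\varpi_n) + C'(\gamma + n^{-1}),
\]
and a symmetric inequality in the opposite direction. Since $\tilde{Z}_{n,\gamma} \stackrel{d}{=} \tilde{Z}$, this yields
\[
\rho(Z_n,\tilde{Z}) \le \sup_{t \in \R} \Prob\bigl(t < \tilde{Z} \le t + C\varpi_n(\gamma)\bigr) + C'(\gamma + n^{-1}).
\]

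Next, I would invoke the Chernozhukov--Chetverikov--Kato anti-concentration inequality for suprema of separable centered Gaussian processes. Under \eqref{eqn:lower_bound_variance_condition}, we have $\inf_{g \in \calG}\Var(W_P(g)) \ge \underline{\sigma}_{\frakg}^2 > 0$, so the anti-concentration bound gives
\[
\sup_{t \in \R}\Prob(|\tilde{Z} - t| \le \varepsilon) \le C_1\,\varepsilon\,\bigl(\E[\tilde{Z}] + 1\bigr),
\]
with $C_1$ depending only on $\underline{\sigma}_{\frakg}$. To bound $\E[\tilde{Z}]$, I combine Lemma~\ref{lem:uniform_entropy_numbers_between_GH} (which says $\calG$ is VC type with characteristics $(4\sqrt{A},2v)$ and envelope $G$ satisfying $\|G\|_{P,2}\le b_{\frakg}$) with Dudley's entropy integral for Gaussian processes, obtaining $\E[\tilde{Z}] \lesssim \overline{\sigma}_{\frakg}\sqrt{K_n}$ after absorbing $\log(b_{\frakg}/\overline{\sigma}_{\frakg})$ into $\log(A\vee n)$. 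Absorbing $\overline{\sigma}_{\frakg},\underline{\sigma}_{\frakg}$ into the constants leaves
\[
\rho(Z_n,\tilde{Z}) \le C_2\Bigl(\varpi_n(\gamma)\sqrt{K_n} + \gamma + n^{-1}\Bigr).
\]

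Finally I would optimize $\gamma$. Writing $\varpi_n(\gamma)\sqrt{K_n} = A_1 K_n^{1/2}/\gamma^{1/3} + \sum_{i=1}^{4} T_i K_n^{1/2}/\gamma$, where $A_1 = (\overline{\sigma}_{\frakg}^2 b_{\frakg})^{1/3}K_n^{2/3}/n^{1/6}$ and $T_1,\dots,T_4$ are the four remaining summands of $\varpi_n$, AM--GM balancing $\gamma \asymp (A_1 K_n^{1/2})^{3/4}$ contributes $A_1^{3/4}K_n^{3/8} \asymp (b_{\frakg}^2 K_n^7/n)^{1/8}$, while balancing $\gamma \asymp (T_i K_n^{1/2})^{1/2}$ for each $i$ gives $T_i^{1/2}K_n^{1/4}$; plugging in the explicit $T_i$ reproduces the four remaining summands in the statement. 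Taking the global $\gamma$ to be the maximum of these individually optimal choices suffices because each $f_i(\gamma) = T_i K_n^{1/2}/\gamma$ (and $A_1 K_n^{1/2}/\gamma^{1/3}$) is monotone decreasing, so each summand is dominated by its own optimal value at this larger $\gamma$; the $n^{-1}$ term is absorbed since each optimal $\gamma$ dominates $n^{-1}$ under $K_n^3 \le n$.

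The main technical step is the anti-concentration and the entropy bound on $\E[\tilde{Z}]$; everything else is bookkeeping. The only mild subtlety is justifying that a single $\gamma$ delivers all five summands simultaneously, addressed above by the monotonicity remark. No new inequalities beyond Proposition~\ref{prop:entropy_bounds_vc}, the CCK anti-concentration bound, Dudley's bound, and Lemma~\ref{lem:uniform_entropy_numbers_between_GH} are needed.
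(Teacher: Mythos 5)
Your overall route is the same as the paper's: apply Proposition~\ref{prop:entropy_bounds_vc}, convert the coupling into a Kolmogorov-distance bound via anti-concentration, bound $\E[\tilde{Z}]$ via Dudley, and balance $\gamma$. Two steps, however, are stated incorrectly and need repair.

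First, the anti-concentration inequality you quote drops a term. Lemma~\ref{lem:AC} reads
\[
\sup_{t\in\R}\Prob\bigl(|\tilde{Z}-t|\le\varepsilon\bigr)\le C_{\sigma}\,\varepsilon\Bigl\{\E[\tilde{Z}]+\sqrt{1\vee\log(\underline{\sigma}_{\frakg}/\varepsilon)}\Bigr\},
\]
and the factor $\sqrt{1\vee\log(\underline{\sigma}_{\frakg}/\varepsilon)}$ cannot be replaced by the constant $1$ with $C_{1}$ depending only on $\underline{\sigma}_{\frakg}$: as $\varepsilon\to0$ the left side behaves like $\varepsilon\sqrt{\log(1/\varepsilon)}$. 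Your final bound is nonetheless correct, but only because of a fact you do not record: since $b_{\frakg}\ge\overline{\sigma}_{\frakg}$ and $K_{n}\ge 1$, the first summand of $\varpi_{n}(\gamma)$ gives $\varpi_{n}(\gamma)\ge\overline{\sigma}_{\frakg}\gamma^{-1/3}K_{n}^{2/3}n^{-1/6}\ge\overline{\sigma}_{\frakg}n^{-1/6}$, hence $\log(1/\varpi_{n})\lesssim\log n\le K_{n}$ and the $\sqrt{\log}$ factor is subsumed by $K_{n}^{1/2}$. This is exactly what the paper does by writing $\sqrt{1\vee\log(\underline{\sigma}_{\frakg}/(C\varpi_n))}\lesssim(K_n\vee\log(\gamma^{-1}))^{1/2}$ before balancing.

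Second, ``absorbing $\log(b_{\frakg}/\overline{\sigma}_{\frakg})$ into $\log(A\vee n)$'' is not justified by the hypotheses: nothing in Conditions (VC) or (MT) bounds $b_{\frakg}$ by a power of $A\vee n$. Dudley's bound gives $\E[\tilde{Z}]\lesssim\overline{\sigma}_{\frakg}\sqrt{v\log(Ab_{\frakg}/\overline{\sigma}_{\frakg})}$, which only reduces to $\overline{\sigma}_{\frakg}K_n^{1/2}$ once you know $b_{\frakg}\lesssim n^{1/2}$. The paper secures this by first observing that one may assume $b_{\frakg}\le n^{1/2}$ without loss of generality: if $b_{\frakg}>n^{1/2}$ then $(b_{\frakg}^{2}K_{n}^{7}/n)^{1/8}>K_{n}^{7/8}\ge 1$, so the claimed bound exceeds $1\ge\rho(Z_n,\tilde{Z})$ and the corollary is trivial. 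You should insert this reduction before invoking Dudley. Your balancing step and the observation that a single $\gamma$ (the maximum of the five individually optimal choices) suffices by monotonicity is sound and slightly more explicit than the paper's terse ``balance $K_n^{1/2}\varpi_n(\gamma)$ and $\gamma$.''
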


In particular, if the function class $\calH$ and the distribution $P$ are independent of $n$, then $\rho  (Z_n, \tilde{Z} )= O(\{ (\log n)^{7}/n \}^{1/8} )$.

Condition (\ref{eqn:lower_bound_variance_condition}) is used to apply the ``anti-concentration'' inequality for the Gaussian supremum (see Lemma \ref{lem:AC}), which is a key technical ingredient of the proof of Corollary \ref{cor:kolmogorov_distance_gaussian_coupling_vc}. 
The dependence of the constant $C$ on the variance parameters $\underline{\sigma}_{\frakg}$ and $\overline{\sigma}_{\frakg}$ is not a serious restriction in statistical applications. 
In statistical applications, the function class $\calH$ is often normalized in such a way that each function $g \in \calG$ has (approximately) unit variance. In such cases, we may take $\underline{\sigma}_{\frakg} = \overline{\sigma}_{\frakg} = 1$ or $(\underline{\sigma}_{\frakg},\overline{\sigma}_{\frakg})$ as positive constants independent of $n$; see Section \ref{sec:monotonicity_testing} for details.


\begin{rmk}[Comparisons with Gaussian approximations to suprema of empirical processes]
Our Gaussian coupling (Proposition \ref{prop:entropy_bounds_vc}) and approximation (Corollary \ref{cor:kolmogorov_distance_gaussian_coupling_vc}) results are level-dependent on the Hoeffding projections of the $U$-process $\U_n$ (cf. (\ref{eqn:hoeffding_projections}) and (\ref{eqn:hoeffding_decomp}) for formal definitions of the Hoeffding projections and decomposition). Specifically, we observe that: 1) $\underline{\sigma}_{\frakg}, \overline{\sigma}_{\frakg}, b_{\frakg}$ quantify the contribution from the H\'ajek (empirical) process associated with $\U_n$; 2)  $\sigma_{\frakh}, b_{\frakh}$ are related to the second-order degenerate component associated with $\U_n$; 3) $\chi_n$ contains the effect from all higher order projection terms of $\U_n$. For statistical applications in Section \ref{sec:monotonicity_testing} where the function class $\calH = \calH_n$ changes with $n$, the second and higher order projections terms are not necessarily negligible and we have to take into account the contributions of all higher order projection terms. Hence, the Gaussian approximation for the $U$-process supremum of a general order is not parallel with the approximation results for the empirical process supremum \cite{cck2014_empirical_process,cck2016_empirical_process_coupling}.
\end{rmk}

%

\section{Bootstrap approximation for suprema of $U$-processes}
\label{sec:bootstrap}

The Gaussian approximation results derived in the previous section are often not directly applicable in statistical applications such as computing critical values of a test statistic defined by the supremum of a $U$-process. This is because  the covariance function of the approximating Gaussian process $W_{P}(g), g \in \calG$, is often unknown. 
In this section, we study a Gaussian multiplier bootstrap, tailored to the $U$-process, to further approximate the distribution of the random variable $Z_{n} = \sup_{h \in \calH} \U_{n}(h)/r$ in a data-dependent manner. The Gaussian approximation results will be used as building blocks for establishing validity of the Gaussian multiplier bootstrap. 

We begin with noting that, in contrast to the empirical process case studied in \cite{cck_honest_cb} and \cite{cck2016_empirical_process_coupling}, devising (Gaussian) multiplier bootstraps for the $U$-process is not straightforward.
From the Gaussian approximation results, the distribution of $Z_{n}$ is well approximated by the Gaussian supremum $\sup_{g \in \calG} W_{P}(g)$. Hence, one might be tempted to approximate the distribution of $\sup_{g \in \calG}W_{P}(g)$ by the conditional distribution of the supremum of the the multiplier process 
\begin{equation}
\label{eq:naive_multiplier_process}
\calG \ni g \mapsto \frac{1}{\sqrt{n}} \sum_{i=1}^{n}\xi_{i} \{ g(X_{i}) - \overline{g} \},
\end{equation}
where $\xi_{1},\dots,\xi_{n}$ are i.i.d. $N(0,1)$ random variables independent of the data $X_{1}^{n} := \{ X_{1},\dots,X_{n} \}$ and $\overline{g} = n^{-1} \sum_{i=1}^{n} g(X_{i})$. 
However, a major problem of this approach is that, in statistical applications, functions in $\calG$ are unknown to us since functions in $\calG$ are of the form $P^{r-1}h$ for some $h \in \calH$ and depend on the (unknown) underlying distribution $P$. Therefore, we must devise a multiplier bootstrap properly tailored to the $U$-process.

Motivated by this fundamental challenge, we propose and study the following version of Gaussian multiplier bootstrap.
Let $\xi_1,\dots,\xi_n$ be i.i.d. $N(0,1)$ random variables  independent of the data $X_1^n$ (these multiplier variables will be assumed to depend only on the ``second'' coordinate in the probability space construction (\ref{eq:probability_space})). We introduce the following multiplier process:
\begin{equation}
\label{eq:multiplier_process}
\U_n^\sharp(h) = {1 \over \sqrt{n}} \sum_{i=1}^n \xi_{i} \left[ \frac{1}{|I_{n-1,r-1}|} \sum_{(i,i_{2},\dots,i_{r}) \in I_{n,r}} h(X_{i},X_{i_{2}},\dots,X_{i_{r}}) - U_n(h) \right], \ h \in \calH,
\end{equation}
where $\sum_{(i,i_{2},\dots,i_{r})}$ is taken with respect to $(i_{2},\dots,i_{r})$ while keeping $i$ fixed. 
The process $\{ \U_n^\sharp(h) : h \in \calH \}$ is a centered  Gaussian process conditionally on the data $X_{1}^{n}$ and  can be regarded as a version of the (infeasible) multiplier process (\ref{eq:naive_multiplier_process}) with each $g(X_{i})$ replaced by a jackknife estimate.
In fact, the multiplier process (\ref{eq:naive_multiplier_process}) can be alternatively represented as
\begin{equation}
\label{eq:alternative_naive_multiplier_process}
\calH \ni h \mapsto \frac{1}{\sqrt{n}} \sum_{i=1}^{n}\xi_{i} \{ (P^{r-1}h)(X_{i}) - \overline{P^{r-1}h} \},
\end{equation}
where $\overline{P^{r-1}h} = n^{-1} \sum_{i=1}^{n} P^{r-1}h(X_{i})$. For $x \in S$, denote by $\delta_{x}$ the Dirac measure at $x$ and denote by $\delta_{x} h$ the function on $S^{r-1}$ defined by  $(\delta_{x} h)(x_{2},\dots,x_{r}) =h(x,x_{2},\dots,x_{r})$ for $(x_{2},\dots,x_{r}) \in S^{r-1}$. 
For each $i=1,\dots,n$ and a function $f$ on $S^{r-1}$, let $U_{n-1,-i}^{(r-1)}(f)$ denote the $U$-statistic with kernel $f$ for the sample without the $i$-th observation, i.e.,
\[
U_{n-1,-i}^{(r-1)} (f) = \frac{1}{|I_{n-1,r-1}|} \sum_{(i,i_{2},\dots,i_{r}) \in I_{n,r}} f(X_{i_{2}},\dots,X_{i_{r}}).
\]
Then the proposed multiplier process (\ref{eq:multiplier_process}) can be alternatively written as 
\[
\U_n^\sharp(h) =\frac{1}{\sqrt{n}} \sum_{i=1}^n \xi_{i} \left[ U_{n-1,-i}^{(r-1)} (\delta_{X_{i}}h) - U_n(h) \right],
\]
that is, our multiplier process (\ref{eq:multiplier_process}) replaces each $(P^{r-1}h)(X_{i})$ in the infeasible multiplier process  (\ref{eq:alternative_naive_multiplier_process}) by its jackknife estimate $U_{n-1,-i}^{(r-1)} (\delta_{X_{i}}h)$. 

In practice, we approximate the distribution of $Z_{n}$ by the conditional distribution of the supremum of the multiplier process $Z_{n}^{\sharp}:=\sup_{h \in \calH} \U_{n}^{\sharp}(h)$ given $X_{1}^{n}$, which can be further approximated by Monte Carlo simulations on the multiplier variables. 

To the best of our knowledge, our multiplier bootstrap method for $U$-processes is new in the literature, at least in this generality; see Remark \ref{rem:other_bootstraps} for comparisons with other bootstraps for $U$-processes. We call the resulting bootstrap method  the {\it jackknife multiplier bootstrap} (JMB) for $U$-processes.

Now, we turn to proving validity of the proposed JMB. 
We will first construct couplings $Z_{n}^{\sharp}$ and $\tilde{Z}_{n}^{\sharp} := \tilde{Z}_{n,\gamma}^{\sharp}$ (a real-valued random variable that may depend on the coupling error $\gamma \in (0,1)$) such that: 1) $\mathcal{L}(\tilde{Z}_{n}^{\sharp} \mid X_{1}^{n} ) = \mathcal{L} ( \tilde{Z} )$,
where $\mathcal{L}(\cdot \mid X_{1}^{n})$ denotes the conditional law given $X_{1}^{n}$ (i.e., $\tilde{Z}_{n}^{\sharp}$ is independent of $X_{1}^{n}$ and has the same distribution as $\tilde{Z} = \sup_{g \in \calG}W_{P}(g)$); and at the same time 2) $Z_{n}^{\sharp}$ and $\tilde{Z}_{n}^{\sharp}$ are ``close'' to each other.  Construction of such couplings leads to validity of the JMB. To see this, suppose that  $Z_{n}^{\sharp}$ and $\tilde{Z}_{n}^{\sharp}$ are close to each other, namely, $\Prob (|Z_{n}^{\sharp} - \tilde{Z}_{n}^{\sharp}| > r_{1}) \le  r_{2}$ for some small $r_{1},r_{2} > 0$. To ease the notation, denote by $\Prob_{\mid X_{1}^{n}}$ and $\E_{\mid X_{1}^{n}}$ the conditional probability and expectation given $X_{1}^{n}$, respectively (i.e., the notation $\Prob_{\mid X_{1}^{n}}$ corresponds to taking probability with respect to the ``latter two'' coordinates in (\ref{eq:probability_space}) while fixing $X_{1}^{n}$). Then, 
\[
\Prob \left \{ \Prob_{\mid X_{1}^{n}} (|Z_{n}^{\sharp} - \tilde{Z}_{n}^{\sharp}| > r_{1}) > r_{2}^{1/2} \right \} \le  r_{2}^{1/2}
\]
by Markov's inequality, so that, on the event $\{ \Prob_{\mid X_{1}^{n}} (|Z_{n}^{\sharp} - \tilde{Z}_{n}^{\sharp}| > r_{1}) \le r_{2}^{1/2} \}$ whose probability is at least $1-r_{2}^{1/2}$, for every $t \in \R$,  
\[
\Prob_{\mid X_{1}^{n}} (Z_{n}^{\sharp} \le  t) \le \Prob_{\mid X_{1}^{n}} (\tilde{Z}_{n}^{\sharp} \le  t+r_{1} ) + r_{2}^{1/2} = \Prob  (\tilde{Z} \le  t+r_{1} ) + r_{2}^{1/2},
\]
and likewise $\Prob_{\mid X_{1}^{n}} (Z_{n}^{\sharp} \le  t) \ge \Prob (\tilde{Z} \le  t-r_{1} ) - r_{2}^{1/2}$. Hence, on that event, 
\begin{align*}
\sup_{t \in \R} \left | \Prob_{\mid X_{1}^{n}} (Z_{n}^{\sharp} \le  t) - \Prob (\tilde{Z} \le  t) \right | \le \sup_{t \in \R} \Prob (|\tilde{Z} - t | \le  r_{1}) + r_{2}^{1/2}.
\end{align*}
The first term on the right hand side can be bounded by using the anti-concentration inequality for the supremum of a Gaussian process (cf. \cite[Lemma A.1]{cck2014_empirical_process} which is stated in Lemma \ref{lem:AC} in Appendix \ref{app:supporting_lemmas}), and combining the Gaussian approximation results, we obtain a bound on the Kolmogorov distance between $\mathcal{L}(Z_{n}^{\sharp} \mid X_{1}^{n})$ and $\mathcal{L}(Z_{n})$ on an event with probability close to one, which leads to validity of the JMB.

The following theorem is the main result of this paper and derives bounds on such couplings. 
To state the next theorem, we need the additional notation. 
For a symmetric measurable function $f$ on $S^{2}$, define $f^{\odot 2} = f^{\odot 2}_{P}$ by 
\[
f^{\odot 2}(x_{1},x_{2}) := \int f(x_{1},x) f(x,x_{2}) dP(x).
\]
Let $\nu_{\frakh} := \| (P^{r-2}H)^{\odot 2} \|_{P^{2},q/2}^{1/2}$.

\begin{thm}[Bootstrap coupling bounds]
\label{thm:coupling_gaussian_mulitiplier_bootstrap}
Let $Z_n^\sharp = \sup_{h \in \calH} \U_n^\sharp(h)$. 
Suppose that Conditions (PM), (VC), and (MT) hold. Furthermore, suppose that 
\begin{equation}
\label{eq:growthcondition}
\begin{gathered}
\sigma_{\frakh} K_{n}^{1/2} \le \overline{\sigma}_{\frakg}n^{1/2}, \ \nu_{\frakh} K_{n} \le \overline{\sigma}_{\frakg}n^{3/4-1/q}, \ (\sigma_{\frakh} b_{\frakh})^{1/2} K_{n}^{3/4} \le  \overline{\sigma}_{\frakg}n^{3/4}, \\
b_{\frakh} K_{n}^{3/2} \le \overline{\sigma}_{\frakg} n^{1-1/q}, \ \text{and} \ \chi_{n} \le \overline{\sigma}_{\frakg}. 
\end{gathered}
\end{equation}
Then, for every $n \ge  r+1$ and  $\gamma \in (0,1)$, one can construct  a random variable $\tilde{Z}_{n,\gamma}^\sharp$ such that $\mathcal{L}(\tilde{Z}_{n,\gamma}^\sharp \mid  X_1^n) = \mathcal{L}(\sup_{g \in \calG} W_P(g))$ and 
\[
\Prob(|Z_n^\sharp - \tilde{Z}_{n,\gamma}^\sharp| > C \varpi_n^\sharp) \le  C' (\gamma + n^{-1}),
\]
where $C, C'$ are constants depending only on $r$, and 
\begin{equation}
\label{eq:bootstrap_rate}
\begin{split}
\varpi_n^\sharp := \varpi_{n}^{\sharp} (\gamma) := \frac{1}{\gamma^{3/2}} \Bigg \{ &\frac{ \{  (b_{\frakg} \vee \sigma_{\frakh} ) \overline{\sigma}_{\frakg}K_n^{3/2} \}^{1/2}}{n^{1/4}} + {b_{\frakg} K_n \over n^{1/2-1/q}} +  {(\overline{\sigma}_{\frakg} \nu_{\frakh})^{1/2}K_{n}  \over n^{3/8-1/(2q)}} \\
&\qquad + \frac{\overline{\sigma}_{\frakg}^{1/2}(\sigma_{\frakh} b_{\frakh})^{1/4}K_{n}^{7/8}}{n^{3/8}} 
+ \frac{(\overline{\sigma}_{\frakg}b_{\frakh})^{1/2}K_{n}^{5/4}}{n^{1/2-1/(2q)}} + \overline{\sigma}_{\frakg}^{1/2} \chi_{n}^{1/2}K_{n}^{1/2} \Bigg \}.
\end{split}
\end{equation}
In the case of $q=\infty$, ``$1/q$'' is interpreted as $0$. 
\end{thm}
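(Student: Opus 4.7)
The plan is to reduce the problem to the already-understood multiplier bootstrap for the empirical process indexed by $\calG = P^{r-1}\calH$, by showing that the jackknife replacements in~(\ref{eq:multiplier_process}) ``fill in'' the unobservable H\'ajek values $P^{r-1}h(X_{i})$ well enough in sup-norm. Introduce the infeasible multiplier process
\[
H_{n}^{\sharp}(g) = \frac{1}{\sqrt{n}}\sum_{i=1}^{n}\xi_{i}\{g(X_{i})-\overline{g}\},\quad g\in\calG,\ \overline{g}=\frac{1}{n}\sum_{i=1}^{n}g(X_{i}),
\]
and set $\hat{H}_{n} = \sup_{g\in\calG}H_{n}^{\sharp}(g)$. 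The proof splits into (i) a \emph{jackknife-linearization} bound showing that $Z_{n}^{\sharp}$ is close in probability to $\hat{H}_{n}$, and (ii) an application of the empirical-process multiplier bootstrap coupling of~\cite{cck2016_empirical_process_coupling} to construct $\tilde{Z}_{n,\gamma}^{\sharp}$ with $\mathcal{L}(\tilde{Z}_{n,\gamma}^{\sharp}\mid X_{1}^{n}) = \mathcal{L}(\sup_{g}W_{P}(g))$ close to $\hat{H}_{n}$. The two pieces are then combined via the triangle inequality, and the final coupling is realized on the enlarged probability space~(\ref{eq:probability_space}) using the conditional Strassen--Dudley theorem (whose auxiliary $U(0,1)$ variable lives in the third coordinate).

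For (i), write the remainder $\Delta_{n}(h) := \U_{n}^{\sharp}(h) - H_{n}^{\sharp}(P^{r-1}h)$ as
\[
\Delta_{n}(h) = \frac{1}{\sqrt{n}}\sum_{i=1}^{n}\xi_{i}\bigl\{E_{i}(h) - \overline{E}(h)\bigr\},\qquad E_{i}(h) := U^{(r-1)}_{n-1,-i}(\delta_{X_{i}}h) - P^{r-1}h(X_{i}),
\]
after absorbing the constant term $U_{n}(h) - \overline{P^{r-1}h}$ into the empirical mean of the $E_{i}(h)$'s. Expand $U^{(r-1)}_{n-1,-i}(\delta_{X_{i}}h)$ about its conditional mean $P^{r-1}h(X_{i})$ by Hoeffding's decomposition for the $(r-1)$-th order $U$-statistic based on $\{X_{j}:j\ne i\}$: this produces, for each $k=1,\dots,r-1$, a completely degenerate $U$-statistic of order $k$ whose kernel is tied to the higher Hoeffding projections of $h$ with one argument fixed at $X_{i}$. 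Consequently $\Delta_{n}(h)$ is a finite sum of randomly $\xi$-weighted, multi-level degenerate $U$-process-type terms indexed by $h\in\calH$.

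Conditionally on $X_{1}^{n}$, the process $\{\Delta_{n}(h):h\in\calH\}$ is a centered Gaussian process with conditional variance $\Var(\Delta_{n}(h)\mid X_{1}^{n}) = n^{-1}\sum_{i=1}^{n}(E_{i}(h)-\overline{E}(h))^{2}$. By Borell--Sudakov--Tsirelson together with a Dudley-type entropy integral, applied conditionally and exploiting the VC-type entropy of $\calH$, one obtains
\[
\sup_{h\in\calH}|\Delta_{n}(h)| \lesssim K_{n}^{1/2}\sqrt{\sup_{h\in\calH}\Var(\Delta_{n}(h)\mid X_{1}^{n})}
\]
up to $\gamma$-probability and lower-order tail terms. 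Controlling the supremum of the conditional variance reduces to bounding expected suprema of \emph{squared} completely degenerate $U$-processes level by level ($k=1,\dots,r-1$), which is precisely the task addressed by the new multi-resolution local maximal inequalities of Theorem~\ref{lem:local_max_ineq_dengenerate_uprocess}. Each Hoeffding stratum then produces one term of $\varpi_{n}^{\sharp}$: the $b_{\frakg},\sigma_{\frakh}$ and $\nu_{\frakh}$ contributions come from the $k=1$ and $k=2$ components (the $\odot 2$-squaring defining $\nu_{\frakh}$ is exactly the second moment of the degenerate second-order kernel), the $(\sigma_{\frakh}b_{\frakh})^{1/4}$ and $(\overline{\sigma}_{\frakg}b_{\frakh})^{1/2}$ pieces come from envelope truncation at the $L^{q}$ threshold, and $\chi_{n}^{1/2}$ from orders $k\ge 3$; the growth condition~(\ref{eq:growthcondition}) is calibrated so that the variance-type bounds dominate the entropy-type bounds at each level.

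The main obstacle is exactly the conditional-variance analysis above: unlike the empirical-process setting, $\Var(\Delta_{n}(h)\mid X_{1}^{n})$ is genuinely multi-level, and each Hoeffding stratum demands its own sharp control balancing the envelope $L^{q}$-moment, the $L^{2}$-size and the VC-entropy of $\calH$. A naive bound through the envelope $H$ would destroy the gain from non-degeneracy and yield vacuous rates; the careful layer-by-layer application of Theorem~\ref{lem:local_max_ineq_dengenerate_uprocess}, followed by symmetrization inside the conditional Gaussian argument, is what produces the six distinct terms of $\varpi_{n}^{\sharp}$ and, after combining with the empirical-process multiplier coupling of~\cite{cck2016_empirical_process_coupling} for step (ii), closes the proof.
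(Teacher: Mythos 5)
Your proposal takes a genuinely different route from the paper's own proof, and the route is sound. The paper does \emph{not} reduce to the empirical-process multiplier bootstrap: it discretizes $\calG$ at the outset, treats $\U_n^\sharp$ directly as a conditionally Gaussian process on the $\varepsilon$-net, bounds the sup-norm distance $\overline{\Delta}_n$ between the conditional covariance $\hat{C}_{j,k}$ of $\U_n^\sharp$ and $\Cov_P(g_j,g_k)$ (this is where the quantity $\Upsilon_n$ and the bound~(\ref{eq:upsilon_bound}) enter), and then applies the Gaussian \emph{comparison} inequality of \cite[Theorem~3.2]{cck2016_empirical_process_coupling} plus discretization-tail control via Borell--Sudakov--Tsirel'son. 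You instead decompose $\U_n^\sharp(h) = H_n^\sharp(P^{r-1}h) + \Delta_n(h)$ and treat the remainder $\Delta_n$ as a single conditionally Gaussian process, piping $H_n^\sharp$ through the already-available empirical-process multiplier coupling. Structurally this is cleaner, and because the triangle inequality separates the jackknife error from the covariance comparison of $H_n^\sharp$, you avoid the cross-term $\overline{\sigma}_{\frakg}\Upsilon_n^{1/2}$ that appears in the paper's bound on $\overline{\Delta}_n$; under Condition~(\ref{eq:growthcondition}) your $\Upsilon_n^{1/2}K_n^{1/2}$ from step~(i) is dominated by the corresponding paper terms, so the route gives at least the stated rate.

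Two places where your sketch glosses over real work. First, the Dudley bound in step~(i) is taken over the full class $\calH$ under the conditional pseudometric $d(h,h') = (\Var(\Delta_n(h)-\Delta_n(h')\mid X_1^n))^{1/2}$; this metric is bounded by a mixture of $\|h-h'\|_{\Prob_{I_{n,r}},2}$ and $\|P^{r-1}(h-h')\|_{\Prob_n,2}$, so you need entropy control for \emph{both} $\calH$ and $\calG$ in their respective empirical $L^2$ norms, and a floor on the diameter (as in the paper's Step~4, which uses $n^{-(r-1)/2}\|H\|_{P^r,2}$) to keep the logarithm inside $K_n$. The paper avoids this by discretizing first and only needing Dudley over the small class $\calH_{\varepsilon}$. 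Second, the bound on $\E[\Upsilon_n]$ is not a straightforward ``level-by-level'' application of Theorem~\ref{lem:local_max_ineq_dengenerate_uprocess}: the $\nu_\frakh$ term arises from a degenerate order-$3$ $U$-statistic, derived by further decomposing the squared first Hoeffding projection $n^{-1}\sum_i S_{n-1,-i}(\delta_{X_i}h)^2$ into a mean, a non-degenerate order-$2$ piece, and a completely degenerate order-$3$ piece (classes $\calF_1,\calF_2,\calF_3$ in the paper), not directly from the second-order degenerate component as your sketch implies. Neither gap is fatal, but both require the kind of careful bookkeeping the paper spends Steps~2 and~4 on, and the cited multiplier coupling from \cite{cck2016_empirical_process_coupling} must be invoked with the moment parameters $(\overline{\sigma}_\frakg,b_\frakg,K_n)$ of $\calG$ rather than those of $\calH$.
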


We note that $\nu_{\frakh}^{q} \le \|P^{r-2}H\|_{P^{2},q}^{q} \le  b_{\frakh}^{q}$, but in our applications $\nu_{\frakh} \ll b_{\frakh}$ and this is why we introduced such a seemingly complicated definition for $\nu_{\frakh}$. To see that $\nu_{\frakh} \le b_{\frakh}$, observe that by the Cauchy-Schwarz and Jensen inequalities, 
\begin{align*}
\nu_{\frakh}^{q} &= \iint \left \{ \int (P^{r-2}H)(x_{1},x) (P^{r-2}H)(x,x_{2}) dP(x) \right \}^{q/2} dP(x_{1})dP(x_{2})\\
&\le \left \{ \iint (P^{r-2}H)^{q/2}(x_{1},x_{2}) dP(x_{1}) dP(x_{2}) \right \}^{2} \le \iint (P^{r-2}H)^{q}(x_{1},x_{2}) dP(x_{1}) dP(x_{2}) \le b_{\frakh}^{q}.
\end{align*}
Condition (\ref{eq:growthcondition}) is not restrictive. In applications, the function class $\calH$ is often normalized in such a way that $\overline{\sigma}_{\frakg}$ is of constant order, and under this  normalization, Condition (\ref{eq:growthcondition}) is a merely necessary condition for the coupling bound (\ref{eq:bootstrap_rate}) to tend to zero. 

The proof of Theorem \ref{thm:coupling_gaussian_mulitiplier_bootstrap} is lengthy and involved. A delicate part of the proof is to sharply bound the sup-norm distance between the conditional covariance function of the multiplier process $\U_{n}^{\sharp}$ and the covariance function of $W_{P}$, which boils down to bounding the  term
\[
\left \| \frac{1}{n} \sum_{i=1}^{n} \{ U_{n-1,-i}^{(r-1)}(\delta_{X_{i}} h) - P^{r-1}h(X_{i}) \}^{2} \right \|_{\calH}.
\]
To this end, we make use of the following observation: for a $P^{r-1}$-integrable function $f$ on $S^{r-1}$, $U_{n-1,-i}^{(r-1)}(f)$ is a $U$-statistic of order $(r-1)$, and denote by $S_{n-1,-i}(f)$  its first Hoeffding projection term.
Conditionally on $X_{i}$, $U_{n-1,-i}^{(r-1)} (\delta_{X_{i}}h) - P^{r-1}h(X_{i}) - S_{n-1,-i}(\delta_{X_{i}}h)$ is a degenerate $U$-process, and we will bound the expectation of the squared supremum of this term conditionally on $X_{i}$ using ``simpler'' maximal inequalities (Corollary \ref{thm:alternative_max_ineq} ahead). On the other hand, the term $n^{-1} \sum_{i=1}^{n} \{ S_{n-1,-i}(\delta_{X_{i}}h) \}^{2}$ is decomposed into 
\[
n^{-1} (\text{non-degenerate $U$-statistic of order 2}) + (\text{degenerate $U$-statistic of order 3}),
\]
where the order of degeneracy of the latter term is $1$, and we will apply ``sharper'' local maximal inequalities (Corollary \ref{cor:local_max_ineq_dengenerate_uprocess_VCtype} ahead) to bound the suprema of both terms. Such a delicate combination of different maximal inequalities turns out to be crucial to yield sharper regularity conditions for validity of the JMB in our applications. In particular, if we bound the sup-norm distance between the conditional
covariance function of $\U_{n}^{\sharp}$ and the covariance function of $W_{P}$ in a cruder way, then this will lead to more restrictive conditions on bandwidths in our applications, especially for  the ``uniform-in-bandwidth" results (cf. Condition (T5$'$) in Theorem \ref{thm:app_local_uproc_bootstrap_kolmogorov_distance_uniform_bandwidth}).

The following corollary derives a ``high-probability'' bound for the Kolmogorov distance between $\mathcal{L}( Z_{n}^{\sharp} \mid X_{1}^{n})$ and $\mathcal{L}(\tilde{Z})$ (here a high-probability bound refers to a bound holding with probability at least $1 - C n^{-c}$ for some constants $C,c$).

\begin{cor}[Validity of the JMB]
\label{cor:bootstrap_validity_vc}
Suppose that Conditions (PM), (VC), (MT), and \eqref{eqn:lower_bound_variance_condition} hold. Let
\[
\begin{split}
\eta_{n} := &\frac{ \{  (b_{\frakg} \vee \sigma_{\frakh})K_n^{5/2} \}^{1/2}}{n^{1/4}} + {b_{\frakg} K_n^{3/2} \over n^{1/2-1/q}} +  { \nu_{\frakh}^{1/2}K_{n}^{3/2}  \over n^{3/8-1/(2q)}} \\
&\qquad + \frac{(\sigma_{\frakh} b_{\frakh})^{1/4}K_{n}^{11/8}}{n^{3/8}} 
+ \frac{b_{\frakh}^{1/2}K_{n}^{7/4}}{n^{1/2-1/(2q)}} + \chi_{n}^{1/2}K_{n}
\end{split}
\]
with the convention that $1/q = 0$ when $q=\infty$. 
Then, there exist constants $C,C'$ depending only on $r, \overline{\sigma}_{\frakg}$, and $\underline{\sigma}_{\frakg}$ such that, with probability at least $1-C \eta_{n}^{1/4}$,
\[
\sup_{t \in \R} \left |  \Prob_{\mid X_{1}^{n}} (Z_{n}^{\sharp} \le  t) - \Prob (\tilde{Z} \le  t) \right | \le  C' \eta_{n}^{1/4}.
\]
\end{cor}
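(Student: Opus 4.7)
The plan is to apply Theorem~\ref{thm:coupling_gaussian_mulitiplier_bootstrap} with a judicious choice of $\gamma$ and then convert the resulting coupling bound into a conditional Kolmogorov distance bound by combining Markov's inequality with the Gaussian anti-concentration inequality, exactly along the lines of the heuristic sketched immediately before Theorem~\ref{thm:coupling_gaussian_mulitiplier_bootstrap}.

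First I would write $\varpi_n^{\sharp}(\gamma) = \gamma^{-3/2} A_n$, where $A_n$ is the bracketed expression in (\ref{eq:bootstrap_rate}), so that $A_n$ does not depend on $\gamma$. A term-by-term comparison of (\ref{eq:bootstrap_rate}) with the expression defining $\eta_n$ shows that $A_n K_n^{1/2} \le c(\overline{\sigma}_{\frakg}) \eta_n$ with a constant $c(\overline{\sigma}_{\frakg})$ depending only on $\overline{\sigma}_{\frakg}$: for instance, the leading contribution gives $\overline{\sigma}_{\frakg}^{1/2}(b_{\frakg} \vee \sigma_{\frakh})^{1/2} K_n^{5/4}/n^{1/4}$, which matches the leading term of $\eta_n$ up to the $\overline{\sigma}_{\frakg}^{1/2}$ prefactor, and the remaining five terms line up analogously (this six-way check is the only genuine arithmetic in the proof). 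We may assume $\eta_n$ is below a fixed constant (else the corollary is trivial after enlarging $C$), and set $\gamma := \eta_n^{1/2} \in (0,1)$. Applying Theorem~\ref{thm:coupling_gaussian_mulitiplier_bootstrap} then produces $\tilde{Z}_{n,\gamma}^{\sharp}$ with $\mathcal{L}(\tilde{Z}_{n,\gamma}^{\sharp} \mid X_1^n) = \mathcal{L}(\tilde{Z})$ and $\Prob(|Z_n^{\sharp} - \tilde{Z}_{n,\gamma}^{\sharp}| > C \varpi_n^{\sharp}) \le C'(\eta_n^{1/2} + n^{-1})$; since $\eta_n \ge c K_n^{5/4}/n^{1/4}$ from its first term, for all $n$ covered by the statement we absorb $n^{-1}$ into $\eta_n^{1/2}$ and bound the right-hand side by $2 C' \eta_n^{1/2}$.

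Next I would apply Markov's inequality to the conditional probability $\Prob_{\mid X_1^n}(|Z_n^{\sharp} - \tilde{Z}_{n,\gamma}^{\sharp}| > C \varpi_n^{\sharp})$ to obtain an event $E_n$ with $\Prob(E_n) \ge 1 - C'' \eta_n^{1/4}$ on which this conditional probability does not exceed $C'' \eta_n^{1/4}$. The standard sandwich argument (writing $\Prob_{\mid X_1^n}(Z_n^{\sharp}\le t)$ between $\Prob(\tilde{Z}\le t \pm C\varpi_n^{\sharp})\pm C''\eta_n^{1/4}$) then gives, on $E_n$,
\[
\sup_{t \in \R} \left| \Prob_{\mid X_1^n}(Z_n^{\sharp} \le t) - \Prob(\tilde{Z} \le t) \right| \le \sup_{s \in \R} \Prob\bigl( |\tilde{Z} - s| \le C \varpi_n^{\sharp} \bigr) + C'' \eta_n^{1/4}.
\]

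The remaining step is to control the Gaussian anti-concentration term. By Condition~(VC) and Lemma~\ref{lem:uniform_entropy_numbers_between_GH}, $\calG$ is VC type with envelope $G$ satisfying $\|G\|_{P,2} \le b_{\frakg}$, so Dudley's entropy integral yields $\E[\tilde{Z}] \le c_1 \overline{\sigma}_{\frakg} K_n^{1/2}$. Combined with the variance lower bound (\ref{eqn:lower_bound_variance_condition}), the Gaussian anti-concentration inequality (Lemma~\ref{lem:AC}) gives
\[
\sup_{s \in \R} \Prob\bigl( |\tilde{Z} - s| \le C \varpi_n^{\sharp} \bigr) \le c_2 \varpi_n^{\sharp} K_n^{1/2},
\]
with $c_2$ depending only on $\underline{\sigma}_{\frakg}$ and $\overline{\sigma}_{\frakg}$. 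Substituting $\varpi_n^{\sharp} = A_n/\eta_n^{3/4}$ and invoking $A_n K_n^{1/2} \le c(\overline{\sigma}_{\frakg}) \eta_n$ yields $\varpi_n^{\sharp} K_n^{1/2} \le c(\overline{\sigma}_{\frakg}) \eta_n^{1/4}$, which combined with the previous display completes the proof. No step is deep because Theorem~\ref{thm:coupling_gaussian_mulitiplier_bootstrap} packages all the heavy analysis; the only potential snag is verifying the six-way comparison $A_n K_n^{1/2} \le c(\overline{\sigma}_{\frakg}) \eta_n$, which I expect to be the main bookkeeping step.
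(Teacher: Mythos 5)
Your proof is essentially correct and follows the same path the paper takes: apply Theorem~\ref{thm:coupling_gaussian_mulitiplier_bootstrap}, convert the coupling to a conditional Kolmogorov bound via Markov plus the sandwich argument, control anti-concentration via Lemma~\ref{lem:AC} and Dudley's bound $\E[\tilde Z]\lesssim K_n^{1/2}$, and optimize $\gamma$ (your choice $\gamma=\eta_n^{1/2}$ is exactly what balances the two error sources, and your six-term bookkeeping $A_nK_n^{1/2}\lesssim\overline\sigma_{\frakg}^{1/2}\eta_n$ is accurate). The one point you should make explicit is that Theorem~\ref{thm:coupling_gaussian_mulitiplier_bootstrap} requires the growth condition~\eqref{eq:growthcondition}; your WLOG reduction should take the ``fixed constant'' to be $\overline{\sigma}_{\frakg}^{1/2}$ and then observe that $\eta_n\le\overline{\sigma}_{\frakg}^{1/2}$, together with $K_n\ge 1$, forces each of the five inequalities in~\eqref{eq:growthcondition} (and also $b_{\frakg}\le\overline{\sigma}_{\frakg}^{1/2}n^{1/2-1/q}\le n^{1/2}$, which is quietly needed for the clean bound $\E[\tilde Z]\lesssim K_n^{1/2}$). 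The paper handles this in a single sentence; without it your invocation of the theorem is formally unjustified.
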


If the function class $\calH$ and the distribution $P$ are independent of $n$, then $\eta_{n}^{1/4}$ is of order $n^{-1/16}$, which is polynomially decreasing in $n$ but appears to be non-sharp. 
Sharper bounds could be derived by improving on $\gamma^{-3/2}$ in front of the $n^{-1/4}$ term in (\ref{eq:bootstrap_rate}). The proof of Theorem \ref{thm:coupling_gaussian_mulitiplier_bootstrap} consists of constructing a ``high-probability'' event on which, e.g., the sup-norm distance between the conditional covariance function of $\U_{n}^{\sharp}$ and the covariance function of $W_{P}$ is small.
To construct such a high-probability event, the current proof repeatedly relies on Markov's inequality, which could be replaced by more sophisticated deviation inequalities. However, this is at the cost of more technical difficulties and more restrictive moment conditions. In addition, we derive a conditional UCLT for the JMB in Appendix~\ref{sec:conditional_UCLT_JMB} when $\calH$ is fixed and $P$ does not depend on $n$.

\begin{rmk}[Connections to other bootstraps]
\label{rem:other_bootstraps}
There are several versions of bootstraps for {\it non-degenerate} $U$-processes. 
The most celebrated one is the {\it empirical bootstrap}
\[
\U_n^*(h) = {\sqrt{n} \over r |I_{n,r}|} \sum_{(i_{1},\dots,i_{r}) \in I_{n,r}} \left \{ h(X_{i_1}^*,\dots,X_{i_{r}}^*) - V_n(h) \right\}, \ h \in \calH,
\]
where $X_1^*,\dots,X_n^*$ are i.i.d. draws  from the empirical distribution $P_{n} = n^{-1} \sum_{i=1}^n \delta_{X_i}$ and $V_n(h) = n^{-r} \sum_{i_{1},\dots,i_{r}=1}^{n} h(X_{i_1},\dots,X_{i_{r}})$ is the $V$-statistic associated with kernel $h$ (cf. \cite{bickelfreedman1981,arconesgine1994,chen2017a}). A slightly different bootstrap procedure 
\[
\U_{n}^{\natural}(h) = n^{-r+1/2} \sum_{1 \le i_{1},\dots,i_{r} \le n} \left\{ h(X_{i_{1}}^{*}, X_{i_{2}},\dots, X_{i_{r}}) - h(X_{i_{1}}, X_{i_{2}},\dots, X_{i_{r}})) \right\}, \ h \in \calH,
\]
is proposed in \cite{arconesgine1992}; see Remark 2.7 therein. If $\calH = \{h\}$ is a singleton and the associated $U$-statistic $U_{n}(h)$ is non-degenerate, then $\U_{n}^{\natural}(h)$ and $\U_n^*(h)$ are asymptotically equivalent in the sense that they have the same weak limit that is given by the centered Gaussian random variable $W_{P}(P^{r-1}h)$; see Theorem 2.4 and Corollary 2.6 in \cite{arconesgine1992}. Since the bootstrap $\U_{n}^{\natural}(h)$ can be viewed as the empirical bootstrap applied to a $V$-statistic estimate of the H\'ajek projection, i.e., $\U_{n}^{\natural}(h) =n^{-1/2} \sum_{i=1}^{n} (\delta_{X_{i}^{*}} - P_{n}) P_{n}^{r-1} h$, 
our JMB is connected to (but still different from) $\U_{n}^{\natural}(h)$ in the sense that we
apply the multiplier bootstrap to a jackknife $U$-statistic estimate of the Hajek projection.
Another example is the {\it Bayesian bootstrap} (with Dirichlet weights)
\[
\U_n^\flat(h) = {\sqrt{n} \over r |I_{n,r}|} \sum_{(i_{1},\dots,i_{r}) \in I_{n,r}} (w_{i_1} \cdots w_{i_r} -1)h(X_{i_1},\dots,X_{i_{r}}), \ h \in \calH,
\]
where $w_{i} = \eta_{i} / (n^{-1} \sum_{j=1}^n \eta_{j})$ for $i=1,\dots,n$ and $\eta_1,\dots,\eta_n$ are i.i.d. exponential random variables with mean one (i.e., $(w_{1},\dots,w_{n})$ follows a scaled Dirichlet distribution) independent of $X_{1}^{n} = \{ X_{1},\dots,X_{n} \}$ \cite{rubin1981,lo1987,masonnewton1992,zhang2001}. 
%
If $\calH$ is a fixed VC type function class and the distribution $P$ is independent of $n$ (hence the distribution of the approximating Gaussian process $W_{P}$ is independent of  $n$), then the conditional distributions (given $X_1^n$) of the empirical bootstrap process $\{\U_n^*(h) : h \in \calH\}$ and the Bayesian bootstrap process $\{\U_n^\flat(h) : h \in \calH\}$ (with Dirichlet weights) are known to have the same weak limit as the $U$-process $\{r^{-1} \U_n(h) : h \in \calH\}$, where the weak limit is the Gaussian process $W_{P} \circ P^{r-1}$ in the non-degenerate case \cite{arconesgine1994,zhang2001}. The proposed multiplier process in (\ref{eq:multiplier_process}) is also connected to the empirical and Baysian bootstraps (or more general randomly reweighted bootstraps) in the sense that the latter two bootstraps also implicitly construct an empirical process whose conditional covariance function is close to that of $W_P$ under the supremum norm \citep[cf.][]{chen2017a}. Recall that the conditional covariance function of $\U_n^\sharp$ can be viewed as a jackknife estimate of the covariance function of $W_P$. For the special case where $r = 2$ and $\calH = \calH_n$ is such that $|\calH_n| < \infty$ and $|\calH_n|$ is allowed to increase with $n$, \cite{chen2017a} shows that the Gaussian multiplier, empirical and randomly reweighted bootstraps ($\U_n^\flat(h)$ with i.i.d. Gaussian weights $w_i \sim N(1,1)$) all achieve similar error bounds. In the $U$-process setting, it would be possible to establish  finite sample validity for the empirical and  more general randomly reweighted bootstraps, but this is  at the price of a much more involved technical analysis which we do not pursue in the present paper.
\end{rmk}

\section{Applications: Testing for qualitative features based on generalized local $U$-processes}
\label{sec:monotonicity_testing}

In this section, we discuss applications of the general results in the previous sections to \textit{generalized local $U$-processes}, which are motivated from testing for qualitative features of functions in nonparametric statistics (see below for concrete statistical problems).  

Let $m \ge  1, r \ge  2$ be fixed integers and let $\calV$ be a separable metric space. Suppose that $n \ge  r+1$, and let $D_{i} =  (X_{i},V_{i}), i=1,\dots,n$ be i.i.d. random variables taking values in $\R^m \times \calV$ with joint distribution $P$ defined on the product $\sigma$-field on $\R^{m} \times \calV$ (we equip $\R^{m}$ and $\calV$ with the Borel $\sigma$-fields). The variable $V_{i}$ may include some components of $X_{i}$. Let $\Phi$ be a class of \textit{symmetric} measurable functions $\varphi:\calV^r \to \R$, and let $L: \R^{m} \to \R$ be a (fixed) ``kernel function'', i.e., an integrable function on $\R^{m}$ (with respect to the Lebesgue measure)  such that $\int_{\R^{m}} L(x)dx = 1$.  For $b > 0$ (``bandwidth''), we use the notation $L_{b} (\cdot) = b^{-m}L(\cdot /b)$. For a given sequence of bandwidths $b_{n} \to 0$, let 
\[
h_{n,\vartheta}(d_{1},\dots,d_{r}):= \varphi(v_1,\dots,v_r) \prod_{k=1}^r L_{b_{n}}(x-x_{k}), \ \vartheta = (x ,\varphi) \in \Theta := \calX  \times \Phi, 
\]
where  $\calX \subset \R^{m}$ is a (nonempty) compact subset.
Consider the $U$-process
\[
U_{n}(h_{n,\vartheta}) := U_{n}^{(r)}(h_{n,\vartheta}) := \frac{1}{|I_{n,r}|} \sum_{(i_{1},\dots,i_{r}) \in I_{n,r}} h_{n,\vartheta}(D_{i_{1}},\dots,D_{i_{r}}),
\]
which we call, following \cite{ginemason2007}, the \textit{generalized local $U$-process}. The indexing function class is $\{ h_{n,\vartheta} : \vartheta \in \Theta \}$ which depends on the sample size  $n$. 
The $U$-process $U_{n}(h_{n,\vartheta})$ can be seen as a process indexed by $\Theta$, but in general is not weakly convergent in the space $\ell^{\infty}(\Theta)$, even after a suitable normalization (an exception is the case where $\calX$ and $\Phi$ are finite sets, and in that case, under regularity conditions,  the vector $\{\sqrt{nb_{n}^{m}} (U_{n}(h_{n,\vartheta}) - P^{r}h_{n,\vartheta}) \}_{\vartheta \in \Theta}$ converges weakly to a multivariate normal distribution). In addition, we will allow the set $\Theta$ to depend on $n$.

We are interested in approximating the distribution of the normalized version of this process
\[
S_n = \sup_{\vartheta \in \Theta} \frac{\sqrt{n b_{n}^{m}}  \{U_{n}(h_{n,\vartheta}) - P^{r}h_{n,\vartheta}\}}{rc_{n}(\vartheta)}, 
\]
where $c_{n}(\vartheta) > 0$ is a suitable normalizing constant.
The goal of this section is to characterize conditions under which the JMB developed in the previous section is consistent for approximating the distribution of $S_{n}$ (more generally we will allow the normalizing constant $c_{n}(\vartheta)$ to be data-dependent).
There are a number of statistical applications where we are interested in approximating distributions of such statistics. We provide a couple of examples. All the test statistics discussed in Examples in \ref{exmp:test_stoc_mono} and \ref{exmp:test_curv} are covered by our general framework. In Examples
\ref{exmp:test_stoc_mono} and \ref{exmp:test_curv}, $\alpha \in (0,1)$ is a nominal level. 

\begin{exmp}[Testing stochastic monotonicity]
\label{exmp:test_stoc_mono}
Let $X,Y$ be real-valued random variables and denote by $F_{Y \mid X}(y \mid x)$ the conditional distribution function of $Y$ given $X$. Consider the problem of testing the stochastic monotonicity
\[
H_0 : F_{Y \mid X}(y \mid x) \le  F_{Y \mid X} (y \mid x') \ \forall y \in \R \ \text{whenever $x \ge  x'$}.
\]
Testing for the stochastic monotonicity is an important topic in a variety of applied fields such as economics \cite{solon1992_AER,bgim2007_Econometrica,EllisonEllison2011_AEJ}. For this problem, \cite{leelintonwhang2009} consider a test for $H_0$ based on a local Kendall's tau statistic, inspired by \cite{ghosalsenvandervaart2000}. Let $(X_{i},Y_{i}), i=1,\dots,n$ be i.i.d. copies of $(X,Y)$. \cite{leelintonwhang2009} consider the $U$-process
\[
U_n(x,y) = {1 \over n (n-1)} \sum_{1 \le  i \neq j \le  n} \{ 1(Y_i \le  y) - 1(Y_j \le  y) \} \sign(X_{i}-X_{j}) L_{b_n}(x-X_{i}) L_{b_{n}}(x-X_{j}), 
\]
where $b_n \to 0$ is a sequence of bandwidths and  $\sign (x) = 1(x > 0) - 1(x < 0)$ is the sign function.
They propose to reject the null hypothesis if $S_n = \sup_{(x,y) \in \calX \times \calY} U_n(x, y)/c_n(x)$
is large, where $\calX,\calY$ are subsets of the supports of $X,Y$, respectively and $c_n(x) > 0$ is a suitable normalizing constant. \cite{leelintonwhang2009} argue that as far as the size control is concerned, it is enough to choose,  as a critical value,  the $(1-\alpha)$-quantile of $S_{n}$ when $X,Y$ are independent, under which $U_{n}(x,y)$ is centered. Under independence between $X$ and $Y$, and under regularity conditions, they derive a Gumbel limiting distribution  for a properly scaled version of $S_{n}$ using techniques from \citep{piterberg1996}, but do not consider bootstrap approximations to $S_{n}$. 
It should be noted that \cite{leelintonwhang2009} consider a slightly more general setup than that described above in the sense that they allow $X_{i}$ not to be directly observed but assume that estimated $X_{i}$ are available, and also cover the case where $X$ is multidimensional. 
\end{exmp}

\begin{exmp}[Testing curvature and monotonicity of nonparametric regression]
\label{exmp:test_curv}
Consider the nonparametric regression model $Y = f(X)  + \varepsilon$ with $\E[ \varepsilon \mid X] = 0$, 
where $Y$ is a scalar outcome variable, $X$ is an $m$-dimensional vector of regressors, $\varepsilon$ is an error term, and $f$ is the conditional mean function $f(x) = \E[ Y \mid X=x]$. We observe i.i.d. copies $V_{i} = (X_{i},Y_{i}), i=1,\dots,n$ of $V=(X,Y)$. We are interested in 
testing for qualitative features (e.g., curvature, monotonicity) of the regression function $f$.

\cite{abrevaya-wei2005_JBES} consider a simplex statistic to test linearity, concavity, convexity of $f$ under the assumption that the conditional distribution of $\varepsilon$ given $X$ is symmetric. 
To define their test statistics, for $x_{1},\dots,x_{m+1} \in \R^{m}$, let $\Delta^{\circ} (x_{1},\dots,x_{m+1})  = \{ \sum_{i=1}^{m+1} a_{i} x_{i} : 0 < a_{j} < 1, j=1,\dots,m+1, \ \sum_{i=1}^{m+1}a_{i} = 1\}$ denote the interior of the simplex spanned by $x_{1},\dots,x_{m+1}$, and define $\calD = \bigcup_{j=1}^{m+2} \calD_{j}$, where
\begin{align*}
\calD_{j} =  \Bigg \{ (x_{1},\dots,x_{m+2}) \in \R^{m \times (m+2)} : 
\begin{split} 
&x_{1},\dots,x_{j-1},x_{j+1},\dots,x_{m+2} \ \text{are affinely independent} \\ 
&\text{and} \ x_{j} \in \Delta^{\circ}(x_{1},\dots,x_{j-1},x_{j+1},\dots,x_{m+2})
\end{split}
\Bigg \}.
\end{align*}
The sets $\calD_{1},\dots,\calD_{m+2}$ are disjoint. For given $v_{i}=(x_{i},y_{i}) \in \R^{m} \times \R, i=1,\dots,m+2$, if $(x_{1},\dots,x_{m+2}) \in \calD$ then there exist a unique index  $j=1,\dots,m+2$  and a unique vector $(a_{i})_{1 \le i \le m+2,i \ne j}$ such that $0 < a_{i} < 1$ for all $i \neq j, \sum_{i \neq j} a_{i}=1$, and $x_{j}=\sum_{i \neq j} a_{i} x_{i}$; then, define $w(v_{1},\dots,v_{m+2}) = \sum_{i \neq j} a_{i} y_{i} - y_{j}$. 
The index $j$ and vector $(a_{i})_{1 \le i \le m+2,i \ne j}$ are functions of $x_{i}$'s. 
The set $\calD$ is symmetric (i.e., its indicator function is symmetric) and $w(v_{1},\dots,v_{m+2})$ is  symmetric in its arguments.


Under this notation, \cite{abrevaya-wei2005_JBES} consider the following {\it localized simplex statistic} 
\begin{equation}
\label{eq:localized_simplex_statistic}
U_n(x) = {1 \over |I_{n,m+2}|} \sum_{(i_{1},\dots,i_{m+2}) \in I_{n,m+2}} \varphi (V_{i_1}, \dots, V_{i_{m+2}}) \prod_{k=1}^{m+2} L_{b_n}(x-X_{i_{k}}),
\end{equation}
where $\varphi(v_1, \dots, v_{m+2}) = 1\{(x_1,\dots,x_{m+2}) \in \calD \} \sign(w(v_1,\dots,v_{m+2}))$, which is a $U$-process of order $(m+2)$.
 To test concavity and convexity of $f$, \cite{abrevaya-wei2005_JBES} propose to reject the hypotheses if $\overline{S}_n = \sup_{x \in \calX} U_n(x)/c_n(x)$ and $\underline{S}_n = \inf_{x \in \calX} U_n(x)/c_n(x)$
are large and small, respectively, 
where $\calX$ is a subset of the support of $X$ and $c_n(x) > 0$ is a suitable normalizing constant. The infimum statistic $\underline{S}_n$ can  be written as the supremum of a $U$-process by replacing $\varphi$ with $-\varphi$, so we will focus on $\overline{S}_{n}$. Precisely speaking, they consider to take discrete deign points $x_{1},\dots,x_{G}$ with $G = G_{n} \to \infty$, and take the supremum or infimum on the discrete grids $\{ x_{1},\dots,x_{G} \}$. \cite{abrevaya-wei2005_JBES} argue that as far as the size control is concerned, it is enough to choose, as a critical value, the $(1-\alpha)$-quantile of $\overline{S}_{n}$ when $f$ is linear, under which $U_{n}(x)$ is centered due to the symmetry assumption on the distribution of $\varepsilon$ conditionally on $X$. Under linearity of $f$, \cite[Theorem 6]{abrevaya-wei2005_JBES} claims to derive a Gumbel limiting distribution for a properly scaled version of $\overline{S}_{n}$, but the authors think that their proof needs a further justification. 
The proof of Theorem 6 in \cite{abrevaya-wei2005_JBES} proves that, in their notation, 
the \textit{marginal} distributions of $\tilde{U}_{n,h}(x_{g}^*)$ converge to $N(0,1)$ uniformly in $g =1,\dots,G$ (see their equation (A.1)), and the covariances between $\tilde{U}_{n,h}(x_{g}^*)$ and $\tilde{U}_{n,h}(x_{g'}^*)$ for $g \neq g'$ are approaching zero faster than the variances, but what they need to show is that the \textit{joint} distribution of $(\tilde{U}_{n,h}(x_{1}^*),\dots,\tilde{U}_{n,h}(x_{G}^*))$ is approximated by $N(0,I_{G})$ in a suitable sense, which is lacking in their proof. An alternative proof strategy is to apply Rio's coupling \cite{rio1994a} to the H\'ajek process associated to $U_{n}$, but it seems non-trivial to apply Rio's coupling since it is non-trivial to verify that the function $\varphi$ is of bounded variation.

On the other hand, \cite{ghosalsenvandervaart2000} study testing monotonicity of $f$ when $m=1$ and $\varepsilon$ is independent of $X$. Specifically, they consider testing whether $f$ is increasing, and propose to reject the hypothesis if $S_{n} = \sup_{x \in \calX} \check{U}_{n}(x)/c_{n}(x)$
is large, where $\calX$ is a subset of the support of $X$, 
\begin{equation}
\check{U}_{n}(x) = \frac{1}{n(n-1)} \sum_{1 \le  i \neq j \le  n} \sign (Y_{j}-Y_{i})\sign (X_{i}-X_{j}) L_{b_{n}}(x-X_{i})L_{b_{n}}(x-X_{j}),
\label{eq:gsv_statistic}
\end{equation}
and $c_{n}(x) > 0$ is a suitable normalizing constant. \cite{ghosalsenvandervaart2000} argue that as far as the size control is concerned, it is enough to choose, as a critical value, the $(1-\alpha)$-quantile of $S_{n}$ when $f \equiv 0$, under which $U_{n}(x)$ is centered. Under $f \equiv 0$ and under regularity conditions, \cite{ghosalsenvandervaart2000} derive a Gumbel limiting distribution for a properly scaled version of $S_{n}$ but do not study bootstrap approximations to $S_{n}$. 

In Appendix~\ref{sec:comments_on_alternative_tests}, we discuss some alternative tests in the literature for concavity/convexity and monotonicity of regression functions. 
\end{exmp}

Now, we go back to the general case. In applications, a typical choice of the normalizing constant $c_{n}(\vartheta)$ is 
$c_n(\vartheta) = b_{n}^{m/2} \sqrt{\Var_{P}(P^{r-1}h_{n,\vartheta})}$ where $\Var_{P}(\cdot)$ denotes the variance under $P$, so that each $b_{n}^{m/2} c_{n}(\vartheta)^{-1}P^{r-1}h_{n,\vartheta}$ is normalized to have unit variance, but other choices (such as $c_{n}(\vartheta) \equiv 1$) are also possible. The choice $c_n(\vartheta) = b_{n}^{m/2} \sqrt{\Var_{P}(P^{r-1}h_{n,\vartheta})}$ depends on the unknown distribution $P$ and needs to be estimated in practice. Suppose in general (i.e., $c_n(\vartheta)$ need not to be $b_{n}^{m/2} \sqrt{\Var_{P}(P^{r-1}h_{n,\vartheta})}$) that there is an estimator $\hat{c}_{n}(\vartheta) = \hat{c}_{n}(\vartheta ; D_{1}^{n}) > 0$ for $c_{n}(\vartheta)$ for each $\vartheta \in \Theta$, and instead of original $S_{n}$, consider
\[
\hat{S}_{n} := \sup_{\vartheta \in \Theta} \frac{\sqrt{nb_{n}^{m}}\{ U_{n}(h_{n,\vartheta}) - P^{r}h_{n,\vartheta}\}}{r \hat{c}_{n}(\vartheta)}.
\]
We consider to approximate the distribution of $\hat{S}_{n}$ by the conditional distribution of the JMB analogue of $\hat{S}_{n}$: $\hat{S}_{n}^{\sharp} := \sup_{\vartheta \in \Theta} b_{n}^{m/2}\U_{n}^{\sharp}(h_{n,\vartheta})/\hat{c}_{n}(\vartheta)$, where 
\[
\U_{n}^{\sharp}(h_{n,\vartheta}) = \frac{1}{\sqrt{n}} \sum_{i=1}^n \xi_{i} \left [
U_{n-1,-i}^{(r-1)} (\delta_{D_{i}} h_{n,\vartheta}) - U_n(h_{n,\vartheta}) \right ],
\ \vartheta \in \Theta,
\]
and $\xi_{1},\dots,\xi_{n}$ are i.i.d. $N(0,1)$ random variables independent of $D_{1}^{n} = \{ D_{i} \}_{i=1}^{n}$. 
Recall that for a function  $f$ on $(\R^{m} \times \mathcal{V})^{r-1}$, $U_{n-1,-i}^{(r-1)}(f)$ denotes the $U$-statistic with kernel $f$ for the sample without the $i$-th observation, i.e., $U_{n-1,-i}^{(r-1)} (f) = |I_{n-1,r-1}|^{-1} \sum_{(i,i_{2},\dots,i_{r}) \in I_{n,r}} f(D_{i_{2}},\dots,D_{i_{r}})$.

Let $\zeta, c_{1},c_{2}$, and $C_{1}$ be given positive constants such that $C_{1} >1$ and $c_{2} \in (0,1)$, and let $q \in [4,\infty]$. Denote by $\calX^{\zeta}$ the $\zeta$-enlargement of $\calX$, i.e., $\calX^{\zeta} := \{ x \in \R^{m} : \inf_{x' \in \calX} | x - x' | \le \zeta \}$ where $|\cdot |$ denotes the Euclidean norm. Let $\Cov_{P}(\cdot,\cdot)$ and $\Var_{P}(\cdot)$ denote the covariance and variance under $P$, respectively. For the notational convenience, for arbitrary $r$ variables $d_{1},\dots,d_{r}$, we use the notation $d_{k:\ell} = (d_{k},d_{k+1},\dots,d_{\ell})$ for $1 \le k \le \ell \le r$. We make the following assumptions. 

\begin{enumerate}[label=(T\arabic*)]
\setcounter{enumi}{0}
\item Let $\calX$ be a non-empty compact subset of $\R^{m}$ such that its diameter is bounded by $C_{1}$. 
\item The random vector $X$ has a  Lebesgue density $p(\cdot)$  such that $\| p \|_{\calX^{\zeta}} \le  C_{1}$. 

\item  Let $L:\R^{m} \to \R$ be a continuous kernel function supported in $[-1,1]^m$ such that the function class $\mathfrak{L} := \{ x \mapsto  L(ax+b) : a \in \R, b \in \R^{m} \}$ is VC type for envelope $\| L \|_{\R^{m}} = \sup_{x \in \R^{m}}|L(x)|$. 

\item Let $\Phi$ be  a pointwise measurable class of symmetric functions $\calV^r \to \R$ that  is VC type with characteristics $(A,v)$ for a finite and symmetric  envelope $\overline{\varphi} \in L^{q}(P^{r})$ such that $\log A \le  C_{1}\log n$ and $v \le  C_{1}$. In addition, the envelope $\overline{\varphi}$ satisfies that 
$( \E[ \overline{\varphi}^{q} (V_{1:r}) \mid X_{1:r}=x_{1:r}]  )^{1/q}  \le  C_{1}$ for all $x_{1:r} \in \calX^{\zeta} \times \cdots \times \calX^{\zeta}$ if $q$ is finite, and $\| \overline{\varphi} \|_{P^{r},\infty} \le  C_{1}$ if $q=\infty$

\item $nb_{n}^{3mq/[2(q-1)]} \ge  C_{1}n^{c_{2}}$ with the convention that $q/(q-1) = 1$ when $q=\infty$, and $2m(r-1)b_{n} \le \zeta/2$. 

\item $b_{n}^{m/2}\sqrt{\Var_{P} (P^{r-1}h_{n,\vartheta})} \ge c_{1}$ for all $n$ and $\vartheta \in \Theta$. 
\item $c_{1} \le c_{n}(\vartheta) \le  C_{1}$ for all $n$ and $\vartheta \in \Theta$.
For each fixed $n$, if $x_{k} \to x$ in $\calX$ and $\varphi_{k} \to \varphi$ pointwise in $\Phi$,  then $c_{n}(x_{k},\varphi_{k}) \to c_{n}(x,\varphi)$.

\item With probability at least $1-C_{1}n^{-c_{2}}$, $\sup_{\vartheta \in \Theta} \left| \frac{\hat{c}_n(\vartheta)}{c_n(\vartheta)}- 1\right| \le  C_1 n^{-c_2}$. 
\end{enumerate}

Some comments on the conditions are in order. Condition (T1) allows the set $\calX$ to depend on $n$, i.e., $\calX= \calX_{n}$, but its diameter is bounded (by $C_{1}$). For example, $\calX$ can be discrete grids whose cardinality increases with $n$ but its diameter must be bounded (an implicit assumption here is that the dimension $m$ is fixed; in fact the constants appearing in the following  results depend on the dimension $m$, so that $m$ should be considered as fixed). Condition (T2) is a mild restriction on the density of $X$. It is worth mentioning that $V$ may take values in a generic measurable space, and even if $V$ takes values in a Euclidean space, $V$ need not be absolutely continuous with respect to the Lebesgue measure (we will often omit  the qualification ``with respect to the Lebesgue measure'').  In Examples \ref{exmp:test_stoc_mono} and \ref{exmp:test_curv},  the variable $V$ consists of the pair of regressor vector and outcome variable, i.e., $V=(X, Y)$ with $Y$ being real-valued, and our conditions allow the distribution of $Y$ to be generic. 
In contrast,  \cite{ghosalsenvandervaart2000,leelintonwhang2009} assume that the \textit{joint} distribution of  $X$ and $Y$ have a continuous density (or at least they require the distribution function of $Y$ to be continuous) and thereby ruling out the case where the distribution of $Y$ has a discrete component. This is essentially because they rely on Rio's coupling  \cite{rio1994a} when deriving limiting null distributions of their test statistics.
Rio's coupling is a powerful KMT \cite{kmt1975} type strong approximation result for general empirical processes, but requires the underlying distribution to be defined on a hyper-cube and to have a density bounded away from zero on the hyper-cube. In contrast, our analysis is conditional on $X$ and we only require some moment conditions and VC type conditions on the function class. Thus our JMB does not require $Y$ to have a density for its validity and thereby having a wider applicability in this respect. 

Condition (T3) is a standard regularity condition on kernel functions $L$.
Sufficient conditions under which $\mathfrak{L}$ is VC type are found in \cite{nolanpollard1987,GineNickl2009_AoP, ginenickl2016}.
Condition (T4) allows the envelope $\overline{\varphi}$ to be unbounded. Condition (T4) allows the function class $\Phi$ to depend on $n$, as long as the VC characteristics $A$ and $v$ satisfy that $\log A \le C_{1}\log n$ and $v \le C_{1}$. For example, $\Phi$ can be a discrete set whose cardinality is bounded by $Cn^{c}$ for some constants $c,C>0$. 
Condition (T5) relaxes bandwidth requirements in \cite{ghosalsenvandervaart2000,leelintonwhang2009} where  $m = 1$ and $q = \infty$. For example, \cite{ghosalsenvandervaart2000} assume $n b_n^2 /(\log n)^{4} \to \infty$ and $b_n  \log n \to 0$ for size control. For the problem of testing for regression/stochastic monotonicity of univariate functions, our test statistic is of order $r=2$. If we choose a bounded kernel (such as the sign kernel), then we only need $n^{-2/3+c} \lesssim b_{n} \lesssim 1$ for some small constant $c > 0$. 
Further,  our general theory allows us to develop a version of the JMB that is uniformly valid in  compact bandwidth sets, which can be used to develop versions of tests that are valid with data-dependent bandwidths in Examples \ref{exmp:test_stoc_mono} and \ref{exmp:test_curv}; see Section \ref{subsec:uniformly_valid_JMB_bandwidth} ahead for details. 


Condition (T6) is a high-level condition and implies the $U$-process to be non-degenerate. Let  $\varphi_{[r-1]}(v_{1},x_{2:r}) := \E[\varphi (v_{1},V_{2:r}) \mid X_{2:r}=x_{2:r}] \prod_{j=2}^{r}  p(x_{j})$, and observe that
\[
(P^{r-1}h_{n,\vartheta}) (x_{1},v_{1}) =L_{b_{n}}(x-x_{1}) \int \varphi_{[r-1]}(v_{1},x-b_{n}x_{2:r})\prod_{j=2}^{r} L(x_{j})  dx_{2:r}
\]
for $\vartheta = (x,\varphi)$, where $x-b_{n}x_{2:r}= (x-b_{n}x_{2},\dots,x-b_{n}x_{r})$. From this expression, 
in applications, it is not difficult to find primitive regularity conditions that guarantee Condition (T6). To keep the presentation concise, however, we assume Condition (T6). 

Condition (T7) is concerned with the normalizing constant $c_{n}(\vartheta)$. 
For the special case where $c_n(\vartheta) = b_{n}^{m/2} \sqrt{\Var_{P}(P^{r-1}h_{n,\vartheta})}$, Condition (T7) is implied by Conditions (T4) and (T6). 
Condition (T8) is also a high-level condition, which together with (T7) implies that there is a uniformly consistent estimate $\hat{c}_{n}(\vartheta)$ of $c_{n}(\vartheta)$ in $\Theta$ with polynomial error rates. Construction of $\hat{c}_{n}(\vartheta)$ is quite flexible: for $c_{n}(\vartheta) = b_{n}^{m/2} \sqrt{\Var_{P}(P^{r-1}h_{n,\vartheta})}$, one natural example is the jackknife estimate 
\begin{equation}
\label{eqn:normalizing_constant_jackknife_estimate}
\hat{c}_{n}(\vartheta)=\sqrt{\frac{b_{n}^{m}}{n} \sum_{i=1}^{n} \left\{ U_{n-1,-i}^{(r-1)}(\delta_{D_{i}}h_{n,\vartheta}) - U_{n}(h_{n,\vartheta}) \right\}^{2}}, \ \vartheta \in \Theta.
\end{equation}
The following lemma verifies that the jackknife estimate (\ref{eqn:normalizing_constant_jackknife_estimate}) obeys Condition (T8) for $c_{n}(\vartheta) = b_{n}^{m/2} \sqrt{\Var_{P}(P^{r-1}h_{n,\vartheta})}$. However, it should be noted that  other estimates for this normalizing constant are possible depending on  applications of interest; see \cite{ghosalsenvandervaart2000,leelintonwhang2009,abrevaya-wei2005_JBES}.

\begin{lem}[Estimation error of the normalizing constant]
\label{lem:rate_normalizing_constant_jackknife_estimate}
Suppose that Conditions (T1)-(T7) hold. Let $c_{n}(\vartheta) = b_{n}^{m/2} \sqrt{\Var_{P}(P^{r-1}h_{n,\vartheta})}, \vartheta \in \Theta$ and $\hat{c}_{n}(\vartheta)$ be defined in (\ref{eqn:normalizing_constant_jackknife_estimate}). Then there exist constants $c,C$ depending only on  $r, m, \zeta, c_{1},c_{2}, C_{1}, L$ such that 
\[
\Prob \left \{ \sup_{\vartheta \in \Theta}\left | \frac{\hat{c}_{n}(\vartheta)}{c_{n}(\vartheta)} - 1 \right | > Cn^{-c} \right \} \le Cn^{-c}.
\]
\end{lem}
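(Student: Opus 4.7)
The plan is to reduce the ratio statement to a uniform bound on $\hat c_n(\vartheta)^{2} - c_n(\vartheta)^{2}$, then decompose each jackknife summand into three centered pieces and estimate each via the maximal inequalities from Section~\ref{sec:local_maximal_inequalities}. By (T6) and (T7), $c_n(\vartheta) \ge c_1$ uniformly in $\Theta$. On the event $\mathcal{E}_n := \{\sup_{\vartheta} |\hat c_n(\vartheta)^2 - c_n(\vartheta)^2| \le c_1^{2}/2\}$ we have $\hat c_n(\vartheta) \ge c_1/\sqrt{2}$, so
\[
\left|\frac{\hat c_n(\vartheta)}{c_n(\vartheta)}-1\right| = \frac{|\hat c_n(\vartheta)^2 - c_n(\vartheta)^2|}{c_n(\vartheta)(\hat c_n(\vartheta) + c_n(\vartheta))} \lesssim \frac{\sup_{\vartheta} |\hat c_n(\vartheta)^2 - c_n(\vartheta)^2|}{c_1^{2}}.
\]
Hence it suffices to prove $\sup_{\vartheta \in \Theta}|\hat c_n(\vartheta)^{2} - c_n(\vartheta)^{2}| \le C n^{-c}$ with probability at least $1-Cn^{-c}$.

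Next, decompose $U_{n-1,-i}^{(r-1)}(\delta_{D_i}h_{n,\vartheta}) - U_n(h_{n,\vartheta}) = A_i(\vartheta) + B_i(\vartheta) - C_n(\vartheta)$, where
\[
A_i(\vartheta) := U_{n-1,-i}^{(r-1)}(\delta_{D_i}h_{n,\vartheta}) - P^{r-1}h_{n,\vartheta}(D_i), \quad B_i(\vartheta) := P^{r-1}h_{n,\vartheta}(D_i) - P^{r}h_{n,\vartheta},
\]
and $C_n(\vartheta) := U_n(h_{n,\vartheta}) - P^{r}h_{n,\vartheta}$. Since $c_n(\vartheta)^{2} = b_n^{m} \E[B_{1}(\vartheta)^{2}]$, expanding the square yields
\[
\hat c_n(\vartheta)^{2} - c_n(\vartheta)^{2} = b_n^{m}\Big\{n^{-1}\sum_{i=1}^{n} B_i(\vartheta)^{2} - \E[B_1(\vartheta)^2]\Big\} + b_n^{m}\, R_n(\vartheta),
\]
where $R_n(\vartheta)$ collects $n^{-1}\sum_i A_i(\vartheta)^{2}$, $C_n(\vartheta)^{2}$, $-2C_n(\vartheta) n^{-1}\sum_i (A_i(\vartheta)+B_i(\vartheta))$, and $2 n^{-1}\sum_i A_i(\vartheta) B_i(\vartheta)$.

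The leading term is the centered sample mean of the class $\{b_n^{m}(P^{r-1}h_{n,\vartheta}-P^{r}h_{n,\vartheta})^{2}: \vartheta \in \Theta\}$. A change of variables that exploits the local $U$-process structure shows this class has envelope $O(b_n^{-m})$ and $L^{2}(P)$-size $O(b_n^{-m/2})$; by (T3), (T4) and standard VC stability from \cite{ginenickl2016}, it is VC type with parameters polynomial in $n$. The empirical-process case ($r=1$) of the local maximal inequality in Corollary~\ref{cor:local_max_ineq_dengenerate_uprocess_VCtype} then bounds the supremum by $O_{P}((nb_n^{m})^{-1/2}(\log n)^{a_1} + (nb_n^{m})^{-1}(\log n)^{a_2})$ for some constants $a_1,a_2$, which is $O(n^{-c'})$ under (T5). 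The non-degenerate $U$-process maximal inequality in the same section gives $\sup_{\vartheta} |C_n(\vartheta)| = O_{P}((nb_n^{m})^{-1/2}(\log n)^{a_3})$, and the cross terms are then controlled by Cauchy--Schwarz once the $A_i$-pieces are handled.

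The main technical hurdle is the uniform-in-$\vartheta$ control of $n^{-1}\sum_i A_i(\vartheta)^{2}$, because conditional on $D_i$ the quantity $A_i(\vartheta)$ is a centered $U$-statistic whose variance depends on $L_{b_n}(x-X_i)$ and thus on both $D_i$ and $\vartheta$. Following the strategy outlined after Theorem~\ref{thm:coupling_gaussian_mulitiplier_bootstrap}, I split $A_i(\vartheta)$ via Hoeffding's decomposition into its first-order projection $S_{n-1,-i}(\delta_{D_i}h_{n,\vartheta})$ plus a completely degenerate remainder in the remaining $r-1$ arguments. The squared remainder is controlled conditionally on $D_i$ by the ``simpler'' maximal inequality (Corollary~\ref{thm:alternative_max_ineq}), then averaged over $i$. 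The piece $n^{-1}\sum_i S_{n-1,-i}(\delta_{D_i}h_{n,\vartheta})^{2}$ is in turn expanded as the sum of a non-degenerate second-order $U$-statistic and a third-order $U$-statistic with degeneracy level one, each handled by the ``sharper'' local inequality (Corollary~\ref{cor:local_max_ineq_dengenerate_uprocess_VCtype}). Combining all bounds with (T5) yields $\Prob(\mathcal{E}_n^c) \le C n^{-c}$ with the desired polynomial rate.
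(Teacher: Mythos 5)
Your proposal is correct and follows essentially the same route as the paper. The paper's reduction step uses the slicker elementary inequality $|a-1|\le |a^2-1|$ for $a\ge 0$ (applied to $a=\hat c_n(\vartheta)/c_n(\vartheta)$) to avoid the event $\mathcal{E}_n$ you introduce, and then instead of re-deriving the bounds on the pieces $A_i,B_i,C_n$ from scratch it simply observes that, after rescaling by $\breve{h}_{n,\vartheta}=b_n^{m/2}c_n(\vartheta)^{-1}h_{n,\vartheta}$ so that $\Var_P(P^{r-1}\breve h_{n,\vartheta})=1$, the quantity $\sup_\vartheta|\hat c_n^2(\vartheta)/c_n^2(\vartheta)-1|$ is exactly the diagonal part of the covariance discrepancy $\Delta_n$ bounded in Step 5 of the proof of Theorem \ref{thm:coupling_gaussian_mulitiplier_bootstrap}, so that bound can be invoked directly; your decomposition into $A_i,B_i,C_n$ and the subsequent use of Corollaries \ref{cor:local_max_ineq_dengenerate_uprocess_VCtype} and \ref{thm:alternative_max_ineq} is precisely what Step 5 does internally.
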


Now, we are ready to state finite sample validity of the JMB for approximating the distribution of the supremum of the generalized local $U$-process.

\begin{thm}[JMB validity for the supremum of a generalized local $U$-process]
\label{thm:app_local_uproc_bootstrap_kolmogorov_distance}
Suppose that Conditions (T1)--(T8) hold. Then there exist constants $c,C$  depending only on $r, m, \zeta, c_{1},c_{2}, C_{1}, L$ such that the following holds: for every $n$, there exists a tight Gaussian random variable $W_{P,n}(\vartheta), \vartheta \in \Theta$ in $\ell^{\infty}(\Theta)$ with mean zero and covariance function
\begin{equation}
\E[ W_{P,n} (\vartheta)W_{P,n}(\vartheta') ] = b_{n}^{m}\Cov_{P}(P^{r-1}h_{n,\vartheta},P^{r-1}h_{n,\vartheta'})/\{ c_{n}(\vartheta)c_{n}(\vartheta') \} \label{eq:app_covariance_function}
\end{equation}
for $\vartheta, \vartheta' \in \Theta$, and it follows that
\begin{equation}
\label{eqn:app_local_uproc_bootstrap_kolmogorov_distance}
\begin{split}
&\sup_{t \in \R} \left | \Prob (\hat{S}_{n} \le  t) - \Prob (\tilde{S}_{n} \le  t) \right | \le  Cn^{-c}, \\
&\Prob \left \{ \sup_{t \in \R} \left | \Prob_{\mid D_{1}^{n}} (\hat{S}_{n}^\sharp \le  t) - \Prob (\tilde{S}_{n} \le  t) \right | >  Cn^{-c} \right \} \le Cn^{-c},
\end{split}
\end{equation}
where  $\tilde{S}_{n}:=\sup_{\vartheta \in \Theta}W_{P,n}(\vartheta)$.
\end{thm}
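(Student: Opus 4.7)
The plan is to reduce the theorem to Corollaries~\ref{cor:kolmogorov_distance_gaussian_coupling_vc} and~\ref{cor:bootstrap_validity_vc} applied to the rescaled symmetric kernel class
\[
\tilde\calH_n := \bigl\{ \tilde h_{n,\vartheta} := b_n^{m/2} h_{n,\vartheta}/c_n(\vartheta) : \vartheta \in \Theta \bigr\}.
\]
Under this normalization $\U_n(\tilde h_{n,\vartheta})/r$ equals $\sqrt{nb_n^m}\{U_n(h_{n,\vartheta}) - P^r h_{n,\vartheta}\}/(rc_n(\vartheta))$ and $\U_n^\sharp(\tilde h_{n,\vartheta})$ equals $b_n^{m/2}\U_n^\sharp(h_{n,\vartheta})/c_n(\vartheta)$, so the suprema over $\Theta$ coincide with $\hat S_n$ and $\hat S_n^\sharp$ \emph{provided} $\hat c_n$ is replaced by the deterministic $c_n$. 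I will therefore first establish \eqref{eqn:app_local_uproc_bootstrap_kolmogorov_distance} for the ``oracle'' statistics $S_n^{\rm or}, S_n^{{\rm or},\sharp}$ obtained by this replacement, and at the very end transfer the bounds to $\hat S_n, \hat S_n^\sharp$ using (T8) and Gaussian anti-concentration. The approximating process is $W_{P,n}(\vartheta) := W_P(P^{r-1}\tilde h_{n,\vartheta})$ with $W_P$ the Gaussian process on $\calG := P^{r-1}\tilde\calH_n$ supplied by Section~\ref{sec:gaussian_approx}; the covariance agrees with~\eqref{eq:app_covariance_function} by construction, and tightness in $\ell^\infty(\Theta)$ follows from the VC property of $\tilde\calH_n$ established below together with the $L^2(P)$-continuity of $\vartheta \mapsto P^{r-1}\tilde h_{n,\vartheta}$ granted by (T3), (T4), (T7).

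Next I verify Conditions (PM), (VC), (MT), and~\eqref{eqn:lower_bound_variance_condition} for $\tilde\calH_n$. Pointwise measurability follows from the countable approximation property in (T4) combined with the continuity of $L$ and of $c_n$ from (T3) and (T7). The class $\{L_{b_n}(x-\cdot) : x \in \R^m\}$ is VC type with fixed characteristics by (T3), and standard preservation of the VC property under products of $r$ such classes with $\Phi$ and scalar division by $c_n(\vartheta) \in [c_1,C_1]$ (e.g.\ Lemma~2.6.18 of~\cite{vandervaartwellner1996}) yields (VC) for $\tilde\calH_n$. The quantitatively crucial step is to use the envelope
\[
\tilde H_n(d_{1:r}) := C\,b_n^{-m(r-1/2)}\,\overline\varphi(v_{1:r})\,\mathbf{1}\bigl\{x_{1:r} \text{ lies in some ball of radius }2b_n\bigr\},
\]
the clustering indicator being legitimate because the compact support of $L$ in (T3) forces the kernel product to vanish unless all $x_k$'s are mutually within $2b_n$. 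A direct computation based on (T2), (T4), and the change of variables $x_k = x-b_n y_k$ then identifies the order of magnitude of the parameters appearing in (MT):
\[
\overline\sigma_\frakg \asymp 1,\quad \underline\sigma_\frakg \asymp 1,\quad b_\frakg \lesssim b_n^{-m/2},\quad \sigma_\frakh \lesssim b_n^{-m/2},\quad b_\frakh \lesssim b_n^{m(2/q-3/2)},\quad \nu_\frakh \lesssim b_n^{-m(1-1/q)},
\]
and $\chi_n \lesssim K_n^{1/2}\sum_{k=3}^{r}(K_n/(nb_n^{m}))^{(k-1)/2}$; the lower bound $\underline\sigma_\frakg \gtrsim 1$ is exactly (T6) combined with (T7).

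Condition (T5) is calibrated precisely so that the growth condition~\eqref{eq:growthcondition} is satisfied and every summand in the Kolmogorov rates of Corollaries~\ref{cor:kolmogorov_distance_gaussian_coupling_vc} and~\ref{cor:bootstrap_validity_vc} is bounded by $n^{-c}$ for some $c>0$ depending only on $(r,m,\zeta,c_1,c_2,C_1,L)$. Applying the two corollaries to $\tilde\calH_n$ therefore yields the two bounds in~\eqref{eqn:app_local_uproc_bootstrap_kolmogorov_distance} with $S_n^{\rm or}, S_n^{{\rm or},\sharp}$ in place of $\hat S_n, \hat S_n^\sharp$. For the final transfer: on the event $\{\sup_\vartheta|1-\hat c_n(\vartheta)/c_n(\vartheta)| \le C_1 n^{-c_2}\}$ from (T8), which has probability at least $1-C_1 n^{-c_2}$, one has $|\hat S_n - S_n^{\rm or}| \le C_1 n^{-c_2}|S_n^{\rm or}|$. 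Since $|S_n^{\rm or}|$ is $O_{\Prob}(\sqrt{\log n})$ (by the just-established oracle Kolmogorov bound combined with Borell's Gaussian concentration for $|\tilde S_n|$ and the VC bound $\log A \lesssim \log n$ on its expectation), this gives $|\hat S_n - S_n^{\rm or}| = O_{\Prob}(n^{-c})$ after shrinking $c$. Lemma~\ref{lem:AC} (Gaussian anti-concentration), applicable under $\underline\sigma_\frakg \gtrsim 1$, converts this displacement into a Kolmogorov-distance bound of the same polynomial order. An identical argument, carried out conditionally on $D_1^n$ using the Gaussian structure of $\U_n^\sharp(\tilde h_{n,\vartheta})$ given the data, yields the corresponding bound for $\hat S_n^\sharp$.

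The sharp $b_n$-scaling in the parameters above --- in particular the interplay between $\sigma_\frakh$, $b_\frakh$, and $\nu_\frakh$ --- is the main technical point. A non-localized envelope would introduce extra $b_n^{-m(r-k)}$ factors in each $\|P^{r-k}\tilde H_n\|_{P^k,\ell}$, collapse the bandwidth range permitted by (T5), and defeat the purpose of the refined local maximal inequalities of Section~\ref{sec:local_maximal_inequalities} that power Corollary~\ref{cor:bootstrap_validity_vc}.
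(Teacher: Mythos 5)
Your proposal follows the paper's route: normalize to $\tilde\calH_n = \{b_n^{m/2}h_{n,\vartheta}/c_n(\vartheta) : \vartheta \in \Theta\}$, verify Conditions (PM), (VC), (MT) and~\eqref{eqn:lower_bound_variance_condition}, invoke Corollaries~\ref{cor:kolmogorov_distance_gaussian_coupling_vc} and~\ref{cor:bootstrap_validity_vc} to get the oracle (known-$c_n$) statement, and transfer to $\hat S_n,\hat S_n^\sharp$ via (T8) and anti-concentration; this is exactly what the paper does.

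Two details need fixing. First, the transfer-step inequality $|\hat S_n - S_n^{\rm or}| \le C_1 n^{-c_2}|S_n^{\rm or}|$ is false as written: with $u_\vartheta := \sqrt{nb_n^m}\{U_n(h_{n,\vartheta}) - P^rh_{n,\vartheta}\}/(rc_n(\vartheta))$, the correct bound is $|\hat S_n - S_n^{\rm or}| \le \sup_\vartheta|c_n(\vartheta)/\hat c_n(\vartheta) - 1| \cdot \sup_\vartheta|u_\vartheta|$, and $\sup_\vartheta|u_\vartheta|$ need not be comparable to $|S_n^{\rm or}| = |\sup_\vartheta u_\vartheta|$ when the $u_\vartheta$ change sign. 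The paper controls $\sup_\vartheta|u_\vartheta| = \|\sqrt{n}U_n\|_{\tilde\calH_n}$ (and the analogous $\|\U_n^\sharp\|_{\tilde\calH_n}$) by applying Corollaries~\ref{cor:kolmogorov_distance_gaussian_coupling_vc} and~\ref{cor:bootstrap_validity_vc} to the symmetrized class $\tilde\calH_n \cup (-\tilde\calH_n)$, whose covering numbers are at most twice those of $\tilde\calH_n$; you should make that step explicit. Second, two of your parameter exponents are off: the envelope computation gives $\|P^{r-2}H_n\|_{P^2,q}\lesssim b_n^{-m(3/2-1/q)}$, so a valid choice must satisfy $b_\frakh \gtrsim b_n^{-m(3/2-1/q)}$ (the paper simply takes $b_\frakh \simeq b_n^{-3m/2}$), whereas your $b_n^{m(2/q-3/2)}$ is strictly smaller and therefore not an upper bound; and $\chi_n = \sum_{k\ge 3} n^{-(k-1)/2}\|P^{r-k}H_n\|_{P^k,2}K_n^{k/2}$ carries an extra $b_n^{-m/2}$ factor that your expression omits, so the dominant $k=3$ term is of order $K_n^{3/2}/(nb_n^{3m/2})$, not $K_n^{3/2}/(nb_n^m)$. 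These are arithmetic slips rather than structural gaps, but they matter for the calibration against Condition (T5).
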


Theorem \ref{thm:app_local_uproc_bootstrap_kolmogorov_distance} leads to the following corollary, which is another form of validity of the JMB.
For $\alpha \in (0,1)$, let $q_{\hat{S}_{n}^\sharp}(\alpha) = q_{\hat{S}_{n}^\sharp}(\alpha ; D_{1}^{n})$ denote the conditional $\alpha$-quantile of $\hat{S}_{n}^\sharp$ given $D_{1}^{n}$, i.e., $q_{\hat{S}_{n}^\sharp} (\alpha) = \inf \left \{ t \in \R : \Prob_{\mid D_{1}^{n}} (\hat{S}_{n}^\sharp \le  t) \ge \alpha \right \}$.

\begin{cor}[Size validity of the JMB test]
\label{cor:app_local_uproc_bootstrap}
Suppose that Conditions (T1)--(T8) hold. Then there exist constants $c,C$  depending only on $r, m, \zeta, c_{1},c_{2}, C_{1}, L$ such that 
\[
\sup_{\alpha \in (0,1)} \left|\Prob \left \{ \hat{S}_n \le  q_{\hat{S}_{n}^\sharp}(\alpha) \right \}  - \alpha \right | \le  C n^{-c}.
\]
\end{cor}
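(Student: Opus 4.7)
The plan is to convert the two Kolmogorov approximation bounds in Theorem~\ref{thm:app_local_uproc_bootstrap_kolmogorov_distance} into the quantile statement via a standard sandwich argument. Write $q := q_{\hat S_n^\sharp}(\alpha)$; both $\hat S_n$ and $q$ are measurable with respect to $D_1^n$ (the multipliers are integrated out when forming the conditional quantile), so the unconditional Kolmogorov bound cannot be directly evaluated at the random point $q$. This obstacle will be handled by passing through deterministic sandwich thresholds. Let $E$ be the event on which $\sup_{t \in \R}|\Prob_{\mid D_1^n}(\hat S_n^\sharp \le t) - \Prob(\tilde S_n \le t)| \le Cn^{-c}$; by the second line of \eqref{eqn:app_local_uproc_bootstrap_kolmogorov_distance}, $\Prob(E) \ge 1 - Cn^{-c}$.

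By definition of the conditional $\alpha$-quantile one has $\Prob_{\mid D_1^n}(\hat S_n^\sharp \le q) \ge \alpha$ and $\Prob_{\mid D_1^n}(\hat S_n^\sharp < q) \le \alpha$. Combining with the uniform proximity on $E$, and using that the distribution of $\tilde S_n$ is continuous (a consequence of the anti-concentration bound for Gaussian suprema in Lemma A.1 of \cite{cck2014_empirical_process}, which applies because Conditions (T6)--(T7) together with \eqref{eq:app_covariance_function} make $W_{P,n}$ a nondegenerate tight Gaussian process in $\ell^{\infty}(\Theta)$), one obtains $|F_{\tilde S_n}(q) - \alpha| \le C n^{-c}$ on $E$, where $F_{\tilde S_n}$ denotes the CDF of $\tilde S_n$. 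Next I would define the deterministic sandwich thresholds $q^\pm := \inf\{t \in \R : F_{\tilde S_n}(t) \ge \alpha \pm Cn^{-c}\}$; the previous step then gives $q^- \le q \le q^+$ on $E$, and hence
\begin{equation*}
\mathbf 1_E \cdot \mathbf 1\{\hat S_n \le q^-\} \le \mathbf 1_E \cdot \mathbf 1\{\hat S_n \le q\} \le \mathbf 1_E \cdot \mathbf 1\{\hat S_n \le q^+\}.
\end{equation*}
Taking unconditional expectations, splitting over $E$ and $E^c$ (the latter contributing at most $\Prob(E^c) \le Cn^{-c}$), and applying the first bound of \eqref{eqn:app_local_uproc_bootstrap_kolmogorov_distance} at the \emph{deterministic} points $q^\pm$ sandwiches $\Prob(\hat S_n \le q)$ between $\alpha - C'n^{-c}$ and $\alpha + C'n^{-c}$. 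Since the constants are independent of $\alpha$, taking the supremum over $\alpha \in (0,1)$ yields the claim.

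The main obstacle, as noted, is the joint measurability of $\hat S_n$ and the random quantile $q$ with respect to $D_1^n$, which prevents direct evaluation of the Kolmogorov bound at $q$. The detour through the deterministic sandwich points $q^\pm$ circumvents this; its viability rests on the anti-concentration property of Gaussian suprema, which guarantees that the interval $[q^-,q^+]$ is short enough so that the sandwich does not disturb the polynomial error rate, and also supplies the continuity of $F_{\tilde S_n}$ needed to promote the strict inequality $\Prob_{\mid D_1^n}(\hat S_n^\sharp < q) \le \alpha$ to a two-sided bound on $F_{\tilde S_n}(q)$.
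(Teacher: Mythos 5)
Your scheme is correct in outline and reaches the same conclusion by a similar device (deterministic sandwich thresholds plus anti-concentration), but the step from $F_{\tilde S_n}(q)\le\alpha+Cn^{-c}$ to $q\le q^{+}$ has a small gap: with $q^{+}=\inf\{t:F_{\tilde S_n}(t)\ge\alpha+Cn^{-c}\}$, that implication fails if $F_{\tilde S_n}$ has a flat piece at level $\alpha+Cn^{-c}$, since one could then have $q>q^{+}$ with $F_{\tilde S_n}(q)=F_{\tilde S_n}(q^{+})=\alpha+Cn^{-c}$, and the pointwise inequality $1\{\hat S_n\le q\}\le 1\{\hat S_n\le q^{+}\}$ would not hold. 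This is easily repaired --- the CDF of a nondegenerate Gaussian supremum is strictly increasing on its support, so no flat piece can occur at any level in $(0,1)$ --- but strict monotonicity does not follow from the anti-concentration bound you cite (which only rules out atoms, i.e.\ gives continuity), so it needs to be stated as a separate ingredient.

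The paper's proof sidesteps the issue by using the infimum definition of the bootstrap quantile in the other direction: on $\mathcal{E}_n$, it evaluates the \emph{conditional} CDF $\Prob_{\mid D_1^n}(\hat S_n^{\sharp}\le\cdot)$ at the deterministic point $q_{\tilde S_n}(\alpha+\eta_n)$ (your $q^{+}$), shows this conditional probability is $\ge\alpha$ via the conditional Kolmogorov bound together with the continuity identity $F_{\tilde S_n}(q_{\tilde S_n}(\alpha+\eta_n))=\alpha+\eta_n$, and concludes $q_{\hat S_n^{\sharp}}(\alpha)\le q_{\tilde S_n}(\alpha+\eta_n)$ directly from the definition of $q_{\hat S_n^{\sharp}}(\alpha)$ as an infimum (and symmetrically for the lower bound). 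That route requires only continuity of $F_{\tilde S_n}$, not strict monotonicity. Once the strict-monotonicity point is supplied on your side, the remaining steps --- applying the unconditional Kolmogorov bound at the deterministic endpoints and collecting the error terms --- are the same, so both arguments deliver the stated polynomial rate uniformly in $\alpha$.
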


\subsection{Uniformly valid JMB test in bandwidth}
\label{subsec:uniformly_valid_JMB_bandwidth}
A version of Theorem \ref{thm:app_local_uproc_bootstrap_kolmogorov_distance} continues to hold even if we additionally take the supremum over a set of possible bandwidths.
For a given bandwidth $b \in (0,1)$, let 
\[
h_{\vartheta,b} (d_{1},\dots,d_{r})  = \varphi (v_{1},\dots,v_{r}) \prod_{k=1}^{r} L_{b}(x-x_{k}),
\]
and  for a given candidate set of bandwidths $\calB_n \subset [\ub_n, \overline{b}_n]$ with $0 < \ub_n \le  \overline{b}_n < 1$, consider
\[
\begin{split}
&S_{n} := \sup_{(\vartheta,b) \in \Theta \times \calB_{n}} \frac{\sqrt{nb^{m}} \{ U_{n}(h_{\vartheta,b}) - P^{r}h_{\vartheta,b} \}}{r c(\vartheta,b)} \quad \text{and} \\
&\hat{S}_{n} := \sup_{(\vartheta,b) \in \Theta \times \calB_{n}} \frac{\sqrt{nb^{m}} \{ U_{n}(h_{\vartheta,b}) - P^{r}h_{\vartheta,b} \}}{r \hat{c}(\vartheta,b)},
\end{split}
\]
where $c_{n}(\vartheta,b) > 0$ is a suitable normalizing constant and $\hat{c}(\vartheta,b) > 0$ is an estimate of $c(\vartheta,b)$. 
Following a similar argument used in the proof of Theorem \ref{thm:app_local_uproc_bootstrap_kolmogorov_distance}, we are able to derive a version of the JMB test that is also valid uniformly in bandwidth, which opens new possibilities to develop tests that are valid with data-dependent bandwidths in Examples \ref{exmp:test_stoc_mono} and \ref{exmp:test_curv}. For related discussions, we refer the readers to Remark 3.2 in \cite{leelintonwhang2009} for testing  stochastic monotonicity and \cite{einmahlmason2005_AoS} for kernel type estimators.

Consider the JMB analogue of $\hat{S}_{n}$: 
\[
\hat{S}_{n}^\sharp = \sup_{(\vartheta,b) \in \Theta \times \calB_{n}} \frac{b^{m/2}}{\hat{c}_{n}(\vartheta,b)\sqrt{n}}\sum_{i=1}^n \xi_{i} \left[ U_{n-1,-i}^{(r-1)} (\delta_{D_{i}} h_{\vartheta,b}) - U_n(h_{\vartheta,b}) \right].
\]
Let $\kappa_{n} = \overline{b}_n / \ub_n$ denote the ratio of the largest and smallest possible values in the bandwidth set $\calB_{n}$, which intuitively quantifies the size of $\calB_{n}$. To ease the notation and to facilitate comparisons, we only consider $q = \infty$. We make the following assumptions instead of Conditions (T5)--(T8). 

\begin{enumerate}
\item[(T5$'$)] $n \ub_{n}^{3m/2} \ge C_1 n^{c_2} \kappa_{n}^{m(r-2)}$, $\kappa_{n} \le C_1 \ub_{n}^{-1/(2r)}$, and $2m(r-1)\overline{b}_{n} \le \zeta/2$. 
\item[(T6$'$)] $b^{m/2}\sqrt{\Var_{P} (P^{r-1}h_{\vartheta,b})} \ge c_{1}$ for all $n$ and $(\vartheta, b) \in \Theta \times \calB_{n}$. 
\item[(T7$'$)] $c_{1} \le c_{n}(\vartheta,b) \le  C_{1}$ for all $n$ and $(\vartheta, b) \in \Theta \times \calB_{n}$. 
For each fixed $n$, if $x_{k} \to x$ in $\calX$, $\varphi_{k} \to \varphi$ pointwise in $\Phi$, and $b_k \to b$ in $\calB_n$, then $c_{n}(x_{k},\varphi_{k}, b_{k}) \to c_{n}(x,\varphi,b)$.

\item[(T8$'$)] With probability at least $1 - C_1 n^{-c_2}$, $\sup_{(\vartheta, b) \in \Theta \times \calB_{n}} \left| \frac{\hat{c}_n(\vartheta,b)}{c_n(\vartheta,b)} - 1\right| \le  C_1 n^{-c_2}$.
\end{enumerate}

\begin{thm}[Bootstrap validity for the supremum of a generalized local $U$-process: uniform-in-bandwidth result]
\label{thm:app_local_uproc_bootstrap_kolmogorov_distance_uniform_bandwidth}
Suppose that Conditions (T1)-(T4) with $q=\infty$, and Conditions  (T5$'$)--(T8$'$) hold. Then there exist constants $c,C$  depending only on $r, m, \zeta, c_{1},c_{2}, C_{1}, L$ such that the following holds: for every $n$, there exists a tight Gaussian random variable $W_{P,n}(\vartheta,b), (\vartheta, b) \in \Theta \times \calB_n$ in $\ell^{\infty}(\Theta \times \calB_n)$ with mean zero and covariance function
\[
\begin{split}
&\E[ W_{P,n} (\vartheta,b)W_{P,n}(\vartheta',b') ] \\
&\quad =b^{m/2} (b')^{m/2}\Cov_{P}(P^{r-1}h_{\vartheta,b},P^{r-1}h_{\vartheta',b'})/\{ c_{n}(\vartheta,b) c_{n}(\vartheta',b') \}
\end{split}
\]
for $(\vartheta,b), (\vartheta',b') \in \Theta \times \calB_{n}$, 
and the result (\ref{eqn:app_local_uproc_bootstrap_kolmogorov_distance}) continues to hold with $\tilde{S}_{n}:=\sup_{(\vartheta, b) \in \Theta \times \calB_n}W_{P,n}(\vartheta,b)$. 
\end{thm}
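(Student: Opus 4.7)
The plan is to recast $\hat{S}_n$ and $\hat{S}_n^\sharp$ as the supremum of a normalized $U$-process and its JMB analogue, both indexed by the enlarged parameter set $\Theta \times \calB_n$, and then to invoke Corollary~\ref{cor:kolmogorov_distance_gaussian_coupling_vc} and Corollary~\ref{cor:bootstrap_validity_vc} along with the perturbation argument already used for Theorem~\ref{thm:app_local_uproc_bootstrap_kolmogorov_distance}. Treating $(\vartheta, b)$ as a single index, I would work with the function class $\calH_n := \{ h_{\vartheta,b} : (\vartheta,b) \in \Theta \times \calB_n \}$ and use the envelope $H_n(d_{1:r}) = \overline\varphi(v_{1:r})\, \|L\|_\infty^{r}\, \ub_n^{-rm}$ governed by the smallest admissible bandwidth.

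First I would verify that $\calH_n$ is VC type with characteristics $(A', v')$ satisfying $\log A' \lesssim \log n$ and $v' \lesssim 1$. This uses the VC type property of $\Phi$ from (T4), the VC type property of $\mathfrak{L}$ from (T3), and standard VC calculus for products of uniformly bounded classes. The additional index $b$ does not inflate the VC index because $(x,b) \mapsto L_b(x-\cdot)$ already lies in $\mathfrak{L}$. Next I would bound the parameters in Condition (MT). After normalization by $\sqrt{b^m}/c_n(\vartheta,b)$, the density bound (T2) and (T4) give $\overline\sigma_\frakg \asymp 1$, while the envelope parameter satisfies $b_\frakg, b_\frakh \lesssim \ub_n^{-m(r-1)+m/2}$; the second-order parameters $\sigma_\frakh, \nu_\frakh$ pick up additional factors in $\kappa_n$ because functions at bandwidths $b$ and $b'$ can differ by a factor $(\overline{b}_n/\ub_n)^{O(m)}$ in $L^q$; and the higher-order term $\chi_n = \sum_{k=3}^r n^{-(k-1)/2}\|P^{r-k}H_n\|_{P^k,2} K_n^{k/2}$ scales worst at $k=3$.

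With those estimates in hand, Condition (T5$'$) is tailored exactly so that $\chi_n = O(n^{-c})$ and the growth condition \eqref{eq:growthcondition} in Theorem~\ref{thm:coupling_gaussian_mulitiplier_bootstrap} holds; the side requirement $\kappa_n \le C_1 \ub_n^{-1/(2r)}$ controls the variation of the envelope across bandwidths so that $\sigma_\frakh$ and $\nu_\frakh$ remain small. Applying Corollary~\ref{cor:kolmogorov_distance_gaussian_coupling_vc} and Corollary~\ref{cor:bootstrap_validity_vc} to the normalized $U$-process then yields the analogues of the two Kolmogorov distance bounds in \eqref{eqn:app_local_uproc_bootstrap_kolmogorov_distance} with $c_n$ in place of $\hat c_n$, where $W_{P,n}$ is tight in $\ell^\infty(\Theta \times \calB_n)$ by (T7$'$) and Dudley's sample-continuity criterion applied to the VC type class $\calH_n$. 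Finally, the replacement of $c_n$ by $\hat c_n$ follows from (T8$'$) combined with the anti-concentration inequality (Lemma~\ref{lem:AC}) for the supremum of $W_{P,n}$, using the fact that $\sup_{(\vartheta,b)}|\hat c_n/c_n - 1| \le C_1 n^{-c_2}$ produces additive perturbations of $\hat S_n$ and $\hat S_n^\sharp$ that are polynomially small thanks to the tail bounds on the $U$-process supremum furnished by the local maximal inequalities of Section~\ref{sec:local_maximal_inequalities}.

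The main obstacle is the careful bookkeeping of the parameters in (MT) and in particular of $\chi_n$: since it aggregates contributions from Hoeffding projections of all orders $k = 3,\dots,r$, and since at level $k$ the envelope scales like $\ub_n^{-(r-k)m}$ while the typical ($L^2$) size scales like a power of $\overline{b}_n$, the precise exponent $m(r-2)$ in $\kappa_n^{m(r-2)}$ appearing in (T5$'$) is dictated by the worst level $k=3$ and encodes the trade-off between the range of bandwidths and the sample size. Executing this accounting without losing a polynomial factor of $\kappa_n$, and similarly handling the $\nu_\frakh$-term whose definition via $(P^{r-2}H)^{\odot 2}$ allows a genuinely sharper bound than $b_\frakh$ only when one exploits the integral form, is the delicate technical step. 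Once that accounting is in place the remaining arguments are routine adaptations of the proof of Theorem~\ref{thm:app_local_uproc_bootstrap_kolmogorov_distance}.
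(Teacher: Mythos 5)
Your high-level strategy matches the paper's: treat $(\vartheta,b)$ as a single index, form the normalized class $\calH_{n} = \{\, b^{m/2} c_{n}(\vartheta,b)^{-1} h_{\vartheta,b} : (\vartheta,b) \in \Theta \times \calB_{n} \,\}$, verify (PM), (VC), (MT), invoke Corollaries~\ref{cor:kolmogorov_distance_gaussian_coupling_vc} and \ref{cor:bootstrap_validity_vc}, and then absorb $\hat{c}_{n}$ in place of $c_{n}$ exactly as in Step~2 of the proof of Theorem~\ref{thm:app_local_uproc_bootstrap_kolmogorov_distance}. But the envelope you propose, $H_{n}=\overline\varphi\,\|L\|_{\R^{m}}^{r}\,\ub_{n}^{-rm}$, breaks the accounting: it drops the support constraint that a compactly supported $L$ imposes, namely that $\prod_{k=1}^{r}L_{b}(x-x_{k})\ne 0$ forces $|x_{i}-x_{j}|\le 2\overline{b}_{n}$ for all pairs $i<j$. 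The paper's envelope retains the indicators $\prod_{1 \le i<j\le r}1_{[-2,2]^m}(\overline{b}_{n}^{-1}(x_{i}-x_{j}))$ (and $\prod_{i}1_{\calX^{\zeta/2}}(x_i)$), and it is precisely these that produce the $\overline{b}_{n}^{\,m(r-k)}$ gains when forming $P^{r-k}H_{n}$. Without them, the projections $P^{r-k}H_{n}$ do not shrink and the envelope parameters blow up.

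Concretely, your claimed rates $b_{\frakg},b_{\frakh}\lesssim \ub_{n}^{-m(r-1)+m/2}$ are not consistent with the actual moments: the correct scalings are $b_{\frakg}\simeq \kappa_{n}^{m(r-1)}\ub_{n}^{-m/2}$ and $b_{\frakh}\simeq \kappa_{n}^{m(r-2)}\ub_{n}^{-3m/2}$. In particular your exponent for $b_{\frakh}$ gives $\ub_{n}^{-m(r-3/2)}\le \ub_{n}^{-3m/2}$ for $r\ge 2$, which is too \emph{small} and hence cannot upper bound $\|P^{r-2}H_{n}\|_{P^{2},\infty}$. You also assert that $\sigma_{\frakh}$ (and $\nu_{\frakh}$) ``pick up additional $\kappa_{n}$ factors''; in fact $\sigma_{\frakh}\simeq \ub_{n}^{-m/2}$ carries \emph{no} $\kappa_{n}$ factor, because $\sigma_{\frakh}$ bounds $\sup_{h}\|P^{r-2}h\|_{P^{2},2}$ for the actual kernels (not the envelope), and for each fixed $b$ that $L^{2}(P^{2})$-norm is $\asymp b^{-m/2}$, so taking the sup over $b\in[\ub_{n},\overline{b}_{n}]$ gives $\asymp\ub_{n}^{-m/2}$. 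Getting $\sigma_{\frakh}$ free of $\kappa_{n}$ and $\nu_{\frakh}\lesssim \kappa_{n}^{m/2}\ub_{n}^{-m}$, rather than bounds inflated by powers of $\kappa_{n}$, is exactly what makes (T5$'$) sufficient; with your looser accounting the growth condition~\eqref{eq:growthcondition} (and the $\chi_{n}$ bound) would require much stronger constraints than (T5$'$) provides, and the proof would not close.
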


If $\ub_{n} = \overline{b}_{n} = b_n$ (i.e., $\calB_{n} = \{b_n\}$ is a singleton set), then Conditions (T5$'$)--(T8$'$) reduce to (T5)--(T8) and Theorem \ref{thm:app_local_uproc_bootstrap_kolmogorov_distance_uniform_bandwidth} covers Theorem \ref{thm:app_local_uproc_bootstrap_kolmogorov_distance} with $q = \infty$ as a special case. Condition (T5$'$) states that the size of the bandwidth set $\calB_{n}$ cannot be too large. Conditions (T6$'$)--(T8$'$) are completely parallel with Conditions (T6)--(T8). Such ``uniform-in-bandwidth'' type results are not covered in \cite{ghosalsenvandervaart2000,leelintonwhang2009,abrevaya-wei2005_JBES}. 

\subsection{A simulation study on testing for monotonicity of regression}
\label{subsec:simulation_test_reg_mono}

We provide a numerical example to verify the size validity of the JMB test for monotonicity of regression in Example~\ref{exmp:test_curv}. We generate i.i.d. univariate covariates $X_{1},\dots,X_{n}$ from the uniform distribution on $[0,1]$ and consider the zero regression function $f \equiv 0$ (which implies that the covariate $X$ and the response $Y$ are stochastically independent). As argued in \cite{ghosalsenvandervaart2000}, $f \equiv 0$ is the hardest case in terms of size control under the null hypothesis $H_{0} : f \mbox{ is increasing on } [0,1]$.
We consider two error distributions: (i) Gaussian distribution $\varepsilon_{i} \sim N(0, 0.1^{2})$; (ii) (scaled) Rademacher distribution $\Prob(\varepsilon_{i} = \pm 0.1) = 1/2$. For both error distributions, the (unnormalized) $U$-process $\check{U}_{n}(x)$ defined in \eqref{eq:gsv_statistic} has mean zero (i.e., $\E[\check{U}_{n}(x)] = 0$ for all $x \in [0,1]$). The Rademacher distribution is not covered in \cite{ghosalsenvandervaart2000}. We use the Epanechnikov kernel $L(x) = 0.75(1-x^{2})$ for $x \in [-1,1]$ and $L(x) = 0$ otherwise, together with bandwidth parameter $b_{n} = n^{-1/5}$. We consider three  sample sizes $n=100,200,500$. For each setup, we generate 2,000 bootstrap samples. We consider test of the form 
\[
\sup_{x \in [0.05, 0.95]} {\check{U}_{n}(x) \over \hat{c}_{n}(x)} > q \Rightarrow \mbox{reject } H_{0}, 
\]
where $\hat{c}_{n}(x)$ is given in \eqref{eqn:normalizing_constant_jackknife_estimate} and the critical value $q$ is calibrated by the JMB. In particular, for any nominal size $\alpha \in (0, 1)$, the value of $q := q(\alpha)$ is chosen as the $(1-\alpha)$-th conditional quantile of the JBM. Empirical rejection probability of the JMB test is obtained by averaging over 5,000 simulations. We observe that the empirical rejection probability is close to the nominal size of the JMB test. Table~\ref{tab:empirical_rejection_prob_test_reg_mono} shows the proportion of rejections at the nominal sizes $\alpha=0.05, 0.10$, and Figure~\ref{fig:test_reg_mono_JMB} shows the JMB approximation of the proportion of rejections uniformly in $\alpha \in (0, 1)$. 

\begin{table}[t]
\caption{Empirical rejection probability of the JMB test for regression monotonicity at the nominal sizes $0.05$ and $0.10$ with Gaussian and Rademacher error distributions.}
\label{tab:empirical_rejection_prob_test_reg_mono}
\begin{tabular}{c|c|cc}
\hline
Nominal size & Sample size & Gaussian & Rademacher \\
\hline
\multirow{3}{*}{$\alpha=0.05$} & $n=100$ & 0.0374 & 0.0372 \\
& $n=200$ & 0.0362 & 0.0408 \\
& $n=500$ & 0.0412 &  0.0430 \\
\hline
\multirow{3}{*}{$\alpha=0.10$} & $n=100$ & 0.0846 & 0.0796 \\
& $n=200$ & 0.0860 & 0.0872 \\
& $n=500$ & 0.0886 &  0.0844 \\
\hline
\end{tabular}
\end{table}

\begin{figure}[h!] 
   \centering
       \includegraphics[scale=0.28]{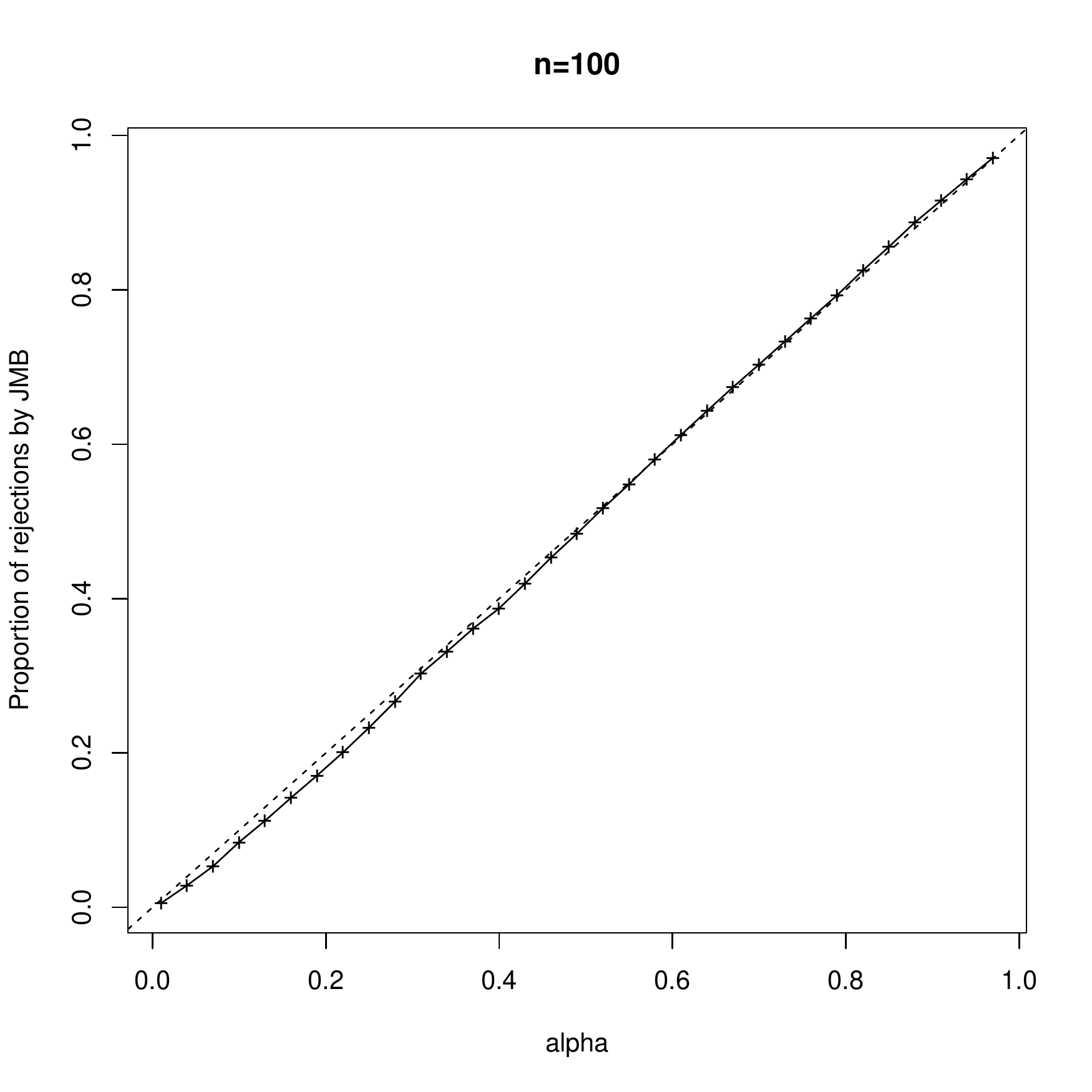}
       \includegraphics[scale=0.28]{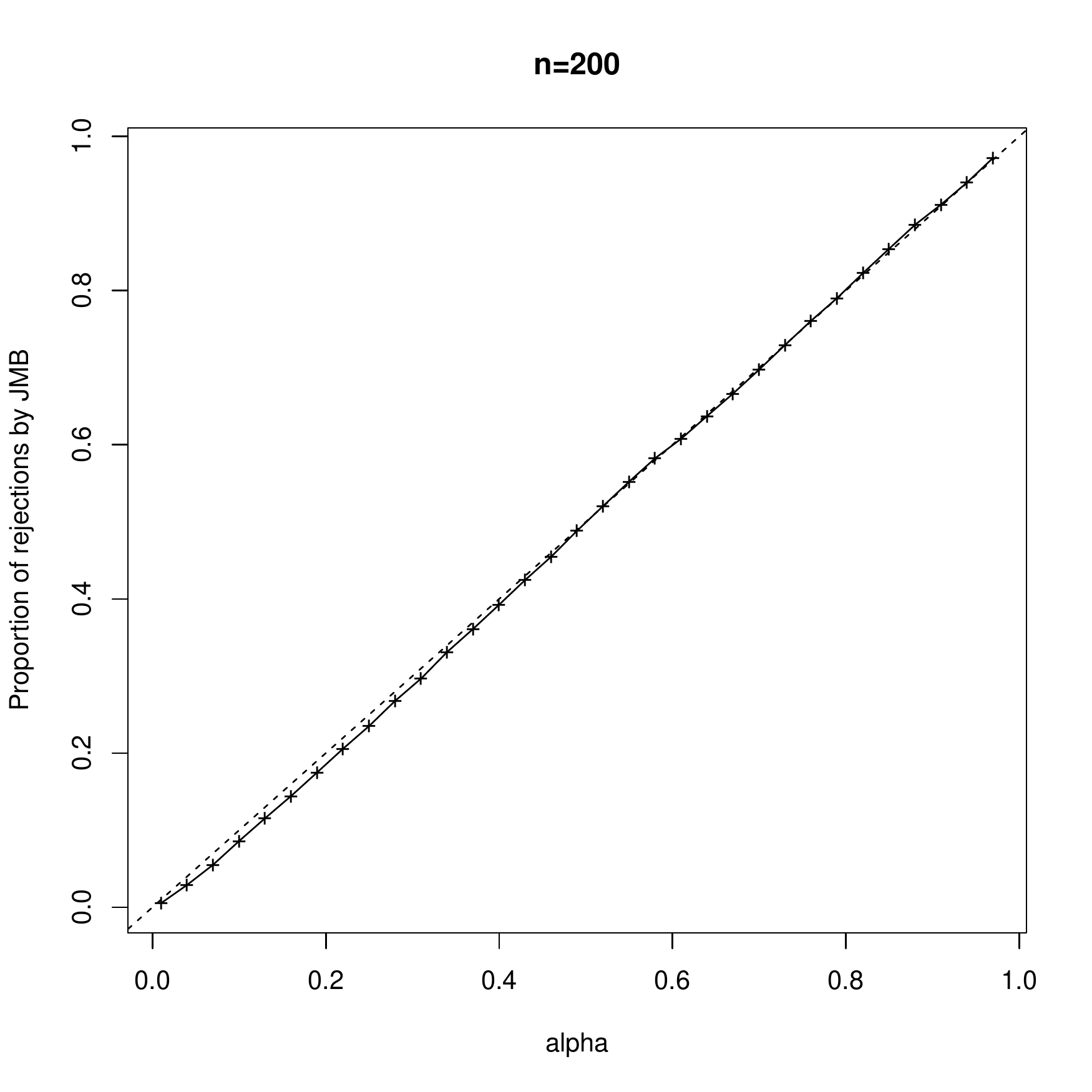}
	  \includegraphics[scale=0.28]{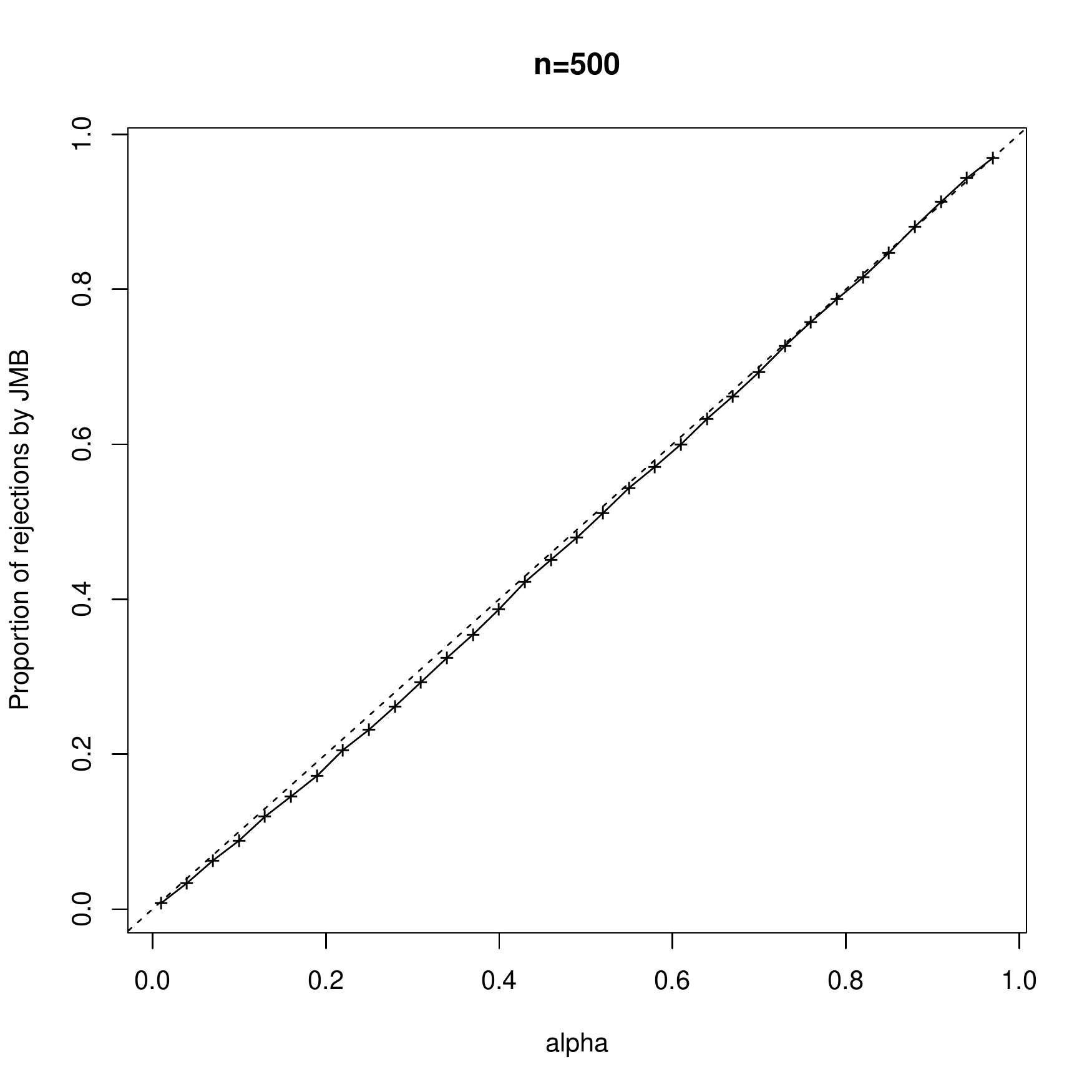}\\
	  \includegraphics[scale=0.28]{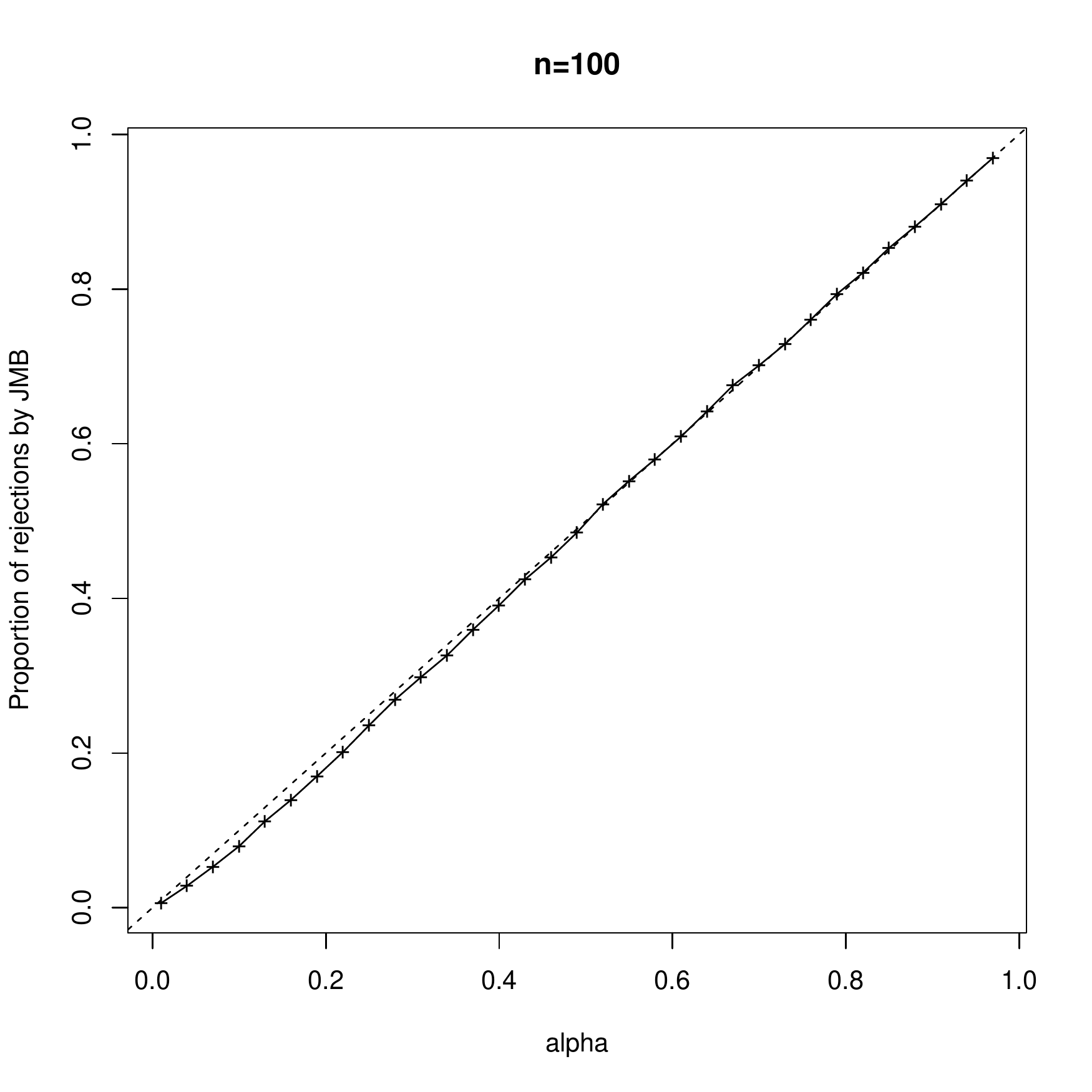}
       \includegraphics[scale=0.28]{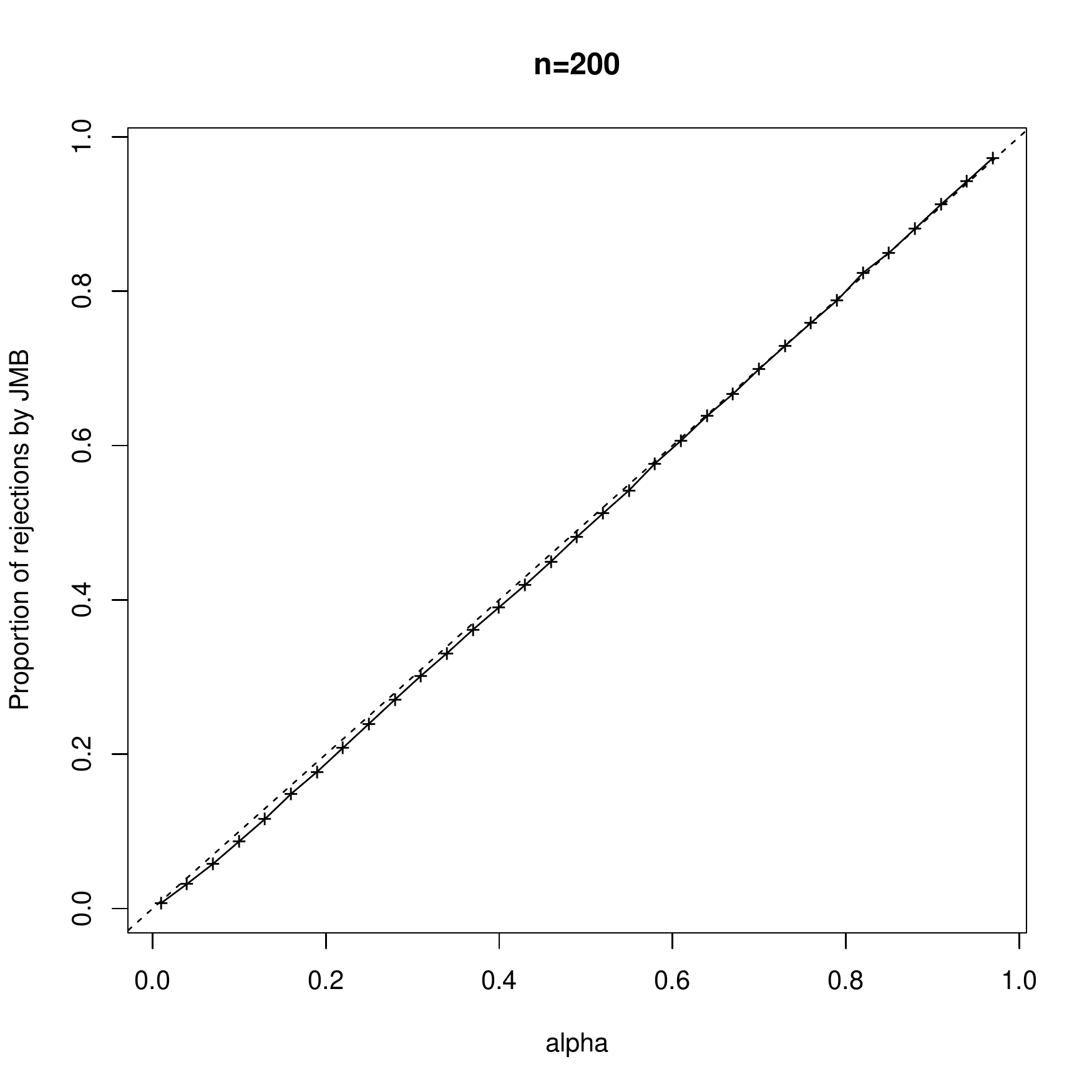}
	  \includegraphics[scale=0.28]{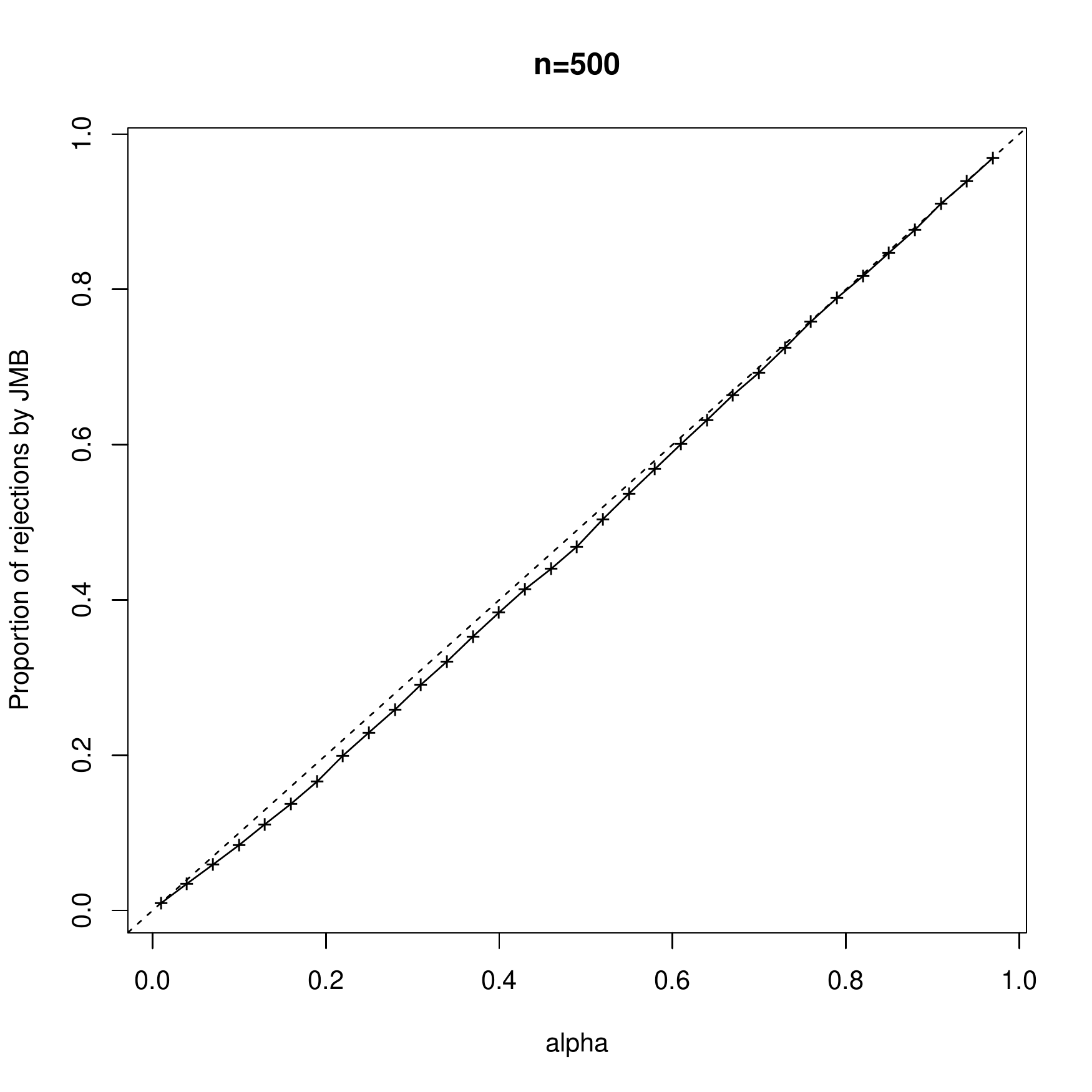}\\
    \caption{JMB approximation of sizes of the regression monotonicity test. Top row: Gaussian errors. Bottom row: Rademacher errors.}
   \label{fig:test_reg_mono_JMB}
\end{figure}

\section{Local maximal inequalities for $U$-processes}
\label{sec:local_maximal_inequalities}

In this section, we prove \textit{local maximal inequalities} for $U$-processes, which are of independent interest and can be useful for other applications. These multi-resolution local maximal inequalities are key technical tools in proving the results stated in the previous sections.

We first review some basic terminologies and facts about $U$-processes. 
For a textbook treatment on $U$-processes, we refer to \cite{delaPenaGine1999}. 
Let $r \ge  1$ be a fixed integer and let $X_{1},\dots,X_{n}$ be i.i.d. random variables taking values in a measurable space $(S,\mathcal{S})$ with  common distribution $P$. 

\begin{defn}[Kernel degeneracy; Definition 3.5.1 in \cite{delaPenaGine1999}]
\label{def:degenerate_kernel}
A  symmetric measurable function $f: S^{r} \to \R$ with $P^{r}f=0$  is said to be {\it degenerate of order $k$} with respect to $P$ if $P^{r-k}f(x_{1},\dots,x_{k}) = 0$ for all $x_{1},\dots,x_{k} \in S$. In particular, $f$ is said to be {\it completely degenerate} if $f$ is degenerate of order $r-1$, and $f$ is said to be {\it non-degenerate} if $f$ is not degenerate of any positive order.
\end{defn}

Let $\calF$ be a class of symmetric measurable functions $f: S^{r} \to \R$. We assume that there is a symmetric measurable envelope $F$ for $\calF$ such that $P^{r}F^{2} < \infty$. Furthermore, we assume that each $P^{r-k}F$ is everywhere finite. Consider the associated $U$-process
\begin{equation}
\label{eqn:uprocess}
U_{n}^{(r)} (f) = {1 \over |I_{n,r}|} \sum_{(i_{1},\dots,i_{r}) \in I_{n,r}} f(X_{i_{1}},\dots,X_{i_{r}}), \ f \in \calF.
\end{equation}
For each $k=1,\dots,r$, the \textit{Hoeffding projection} (with respect to $P$) is defined by
\begin{equation}
\label{eqn:hoeffding_projections}
(\pi_{k} f) (x_{1},\dots,x_{k}) := (\delta_{x_{1}}-P) \cdots (\delta_{x_{k}}-P) P^{r-k}f.
\end{equation}
The Hoeffding projection $\pi_k f$ is a completely degenerate kernel of $k$ variables. Then, the \textit{Hoeffding decomposition} of $U_{n}^{(r)}(f)$ is given by 
\begin{equation}
\label{eqn:hoeffding_decomp}
U_{n}^{(r)} (f) - P^{r} f = \sum_{k=1}^{r} \binom{r}{k} U_{n}^{(k)} (\pi_{k}f).
\end{equation}

In what follows, let $\sigma_{k}$ be any positive constant such that $\sup_{f \in \calF} \| P^{r-k}f \|_{P^{k},2} \le \sigma_{k} \le \| P^{r-k}F \|_{P^{k},2}$ whenever $\| PF^{r-k} \|_{P^{k},2} > 0$ (take $\sigma_{k} = 0$ when $\| P^{r-k} F \|_{P^{k},2} = 0$), and let 
\[
M_{k} = \max_{1 \le  i \le \lfloor n/k \rfloor} (P^{r-k} F)(X_{(i-1)k+1}^{ik}),
\]
where $X_{(i-1)k+1}^{ik} = (X_{(i-1)k+1},\dots,X_{ik})$.

We will assume certain uniform covering number conditions for the function class $\calF$.
For $k=1,\dots,r$, define the uniform entropy integral 
\begin{equation}
\label{eqn:uniform_entropy_integral}
J_k(\delta) := J_k(\delta, \calF, F) := \int_0^\delta  \sup_Q \left [1 + \log N(P^{r-k}\calF, \| \cdot \|_{Q,2}, \tau \| P^{r-k}F \|_{Q,2}) \right ]^{k/2} d \tau,
\end{equation}
where $P^{r-k}\calF = \{ P^{r-k}f : f \in \calF \}$ and $\sup_{Q}$ is taken over all finitely discrete distributions on $S^k$. We note that $P^{r-k}F$ is an envelope for $P^{r-k}\calF$. To avoid measurablity difficulties, we will assume that $\calF$ is pointwise measurable. 
If $\calF$ is pointwise measurable and $P^{r} F < \infty$ (which we have assumed) then $\pi_{k}\calF := \{ \pi_{k} f : f \in \calF \}$ and $P^{r-k}\calF$ for $k=1,\dots,r$ are all pointwise measurable by the dominated convergence theorem. 

Let $\varepsilon_{1},\dots,\varepsilon_{n}$ be i.i.d. Rademacher random variables such that $\Prob(\varepsilon_{i}=\pm1)=1/2$. A real-valued Rademacher chaos variable of order $k$, $X$, is a polynomial of order $k$ in the Rademacher random variables $\varepsilon_{i}$ with real coefficients, i.e., 
\[
X = a + \sum_{i=1}^{n} a_{i} \varepsilon_{i} + \sum_{(i_{1},i_{2}) \in I_{n,2}} a_{i_{1} i_{2}} \varepsilon_{i_{1}} \varepsilon_{i_{2}} + \cdots + \sum_{(i_{1},\dots,i_{k}) \in I_{n,k}} a_{i_{1} \dots i_{k}} \varepsilon_{i_{1}} \cdots \varepsilon_{i_{k}}, 
\]
where $a, a_{i}, a_{i_{1} i_{2}}, \dots, a_{i_{1} \dots i_{k}} \in \R$. If only the monomials of degree $k$ in the variables $\varepsilon_{i}$ in $X$ are not zero, then $X$ is a homogeneous Rademacher chaos of order $k$; see Section 3.2 in \cite{delaPenaGine1999}.  

\begin{defn}[Rademacher chaos process of order $k$; page 220 in \cite{delaPenaGine1999}]
A stochastic process $X(t), t \in T$ is said to be a {\it Rademacher chaos process of order $k$} if for all $s, t \in T$, the joint law of $(X(s), X(t))$ coincides with the joint law of two (not necessarily homogeneous) Rademacher chaos variables of order $k$. 
\end{defn}

In the remainder of this section, the notation $\lesssim$ signifies that the left hand side is bounded by the right hand side up to a constant that depends only on $r$. Recall that $\| \cdot \|_{\calF} = \sup_{f \in \calF} | \cdot |$.

\begin{thm}[Local maximal inequalities for $U$-processes]
\label{lem:local_max_ineq_dengenerate_uprocess}
Suppose that $\calF$ is poinwise measurable
and that  $J_{k}(1) < \infty$ for $k=1,\dots,r$. Let $\delta_{k} =\sigma_{k}/ \| P^{r-k} F \|_{P^{k},2}$ for $k=1,\dots,r$.  Then
\begin{equation}
\label{eq:local_max_ineq_dengenerate_uprocess}
n^{k/2} \E[\| U_{n}^{(k)} (\pi_{k}f) \|_{\calF}] \lesssim J_{k}(\delta_{k}) \| P^{r-k} F \|_{P^{k},2} + \frac{J_{k}^{2}(\delta_{k}) \|M_{k}\|_{\Prob,2}}{\delta_{k}^{2}\sqrt{n}}
\end{equation}
for every $k=1,\dots,r$. 
If $\| P^{r-k} F \|_{P^{k},2} = 0$, then the right hand side is interpreted as $0$. 
\end{thm}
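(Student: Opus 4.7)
The plan is to prove Theorem \ref{lem:local_max_ineq_dengenerate_uprocess} by reducing the completely degenerate $U$-process to a Rademacher chaos process of order $k$ via decoupling and symmetrization, and then applying a local chaining bound for such processes.

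First I would observe that since $\pi_{k}f$ is completely degenerate by construction (cf.~\eqref{eqn:hoeffding_projections}) and is bounded pointwise by a constant multiple (depending on $k$ alone) of $P^{r-k}F$, it suffices to prove the stated bound for a generic completely degenerate class with envelope $P^{r-k}F$ and $L^2$-radius $\sigma_k$. Next I would invoke the decoupling inequality of de la Peña (e.g.\ Theorem 3.1.1 in \cite{delaPenaGine1999}) to pass from $U_{n}^{(k)}(\pi_k f)$ to its decoupled analogue based on $k$ independent copies $(X_i^{(1)}),\dots,(X_i^{(k)})$ of the sample; this costs only a constant factor depending on $k$. Because $\pi_k f$ is canonical, symmetrization (iterated one coordinate at a time) inserts independent Rademacher variables $\varepsilon_i^{(1)},\dots,\varepsilon_i^{(k)}$ in front of each of the $k$ summation indices, again at the cost of a constant factor.

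Conditioning on $(X_i^{(j)})_{i,j}$, the resulting object is a Rademacher chaos process of order $k$ indexed by $f\in\calF$, whose intrinsic metric is the empirical $L^2$-metric on $(P^{r-k}\calF)$-like coordinates of the decoupled samples. I would then apply the generic chaining / maximal inequality for Rademacher chaos processes (as in Theorem 5.1.5 of \cite{delaPenaGine1999}, with the sharp uniform-entropy version developed in the empirical-process literature, e.g.\ \cite{arconesgine1993,nolanpollard1987}). This yields, up to constants depending only on $k$,
\begin{equation*}
\E\Bigl[\bigl\|\textstyle\sum_{(i_1,\dots,i_k)\in I_{n,k}}\varepsilon_{i_1}^{(1)}\cdots\varepsilon_{i_k}^{(k)}\,\pi_k f(X_{i_1}^{(1)},\dots,X_{i_k}^{(k)})\bigr\|_{\calF}\,\bigm|\,\text{data}\Bigr] \lesssim n^{k/2}\,J_k(\hat\delta_k)\,\|P^{r-k}F\|_{P^k_n,2},
\end{equation*}
where $\hat\delta_k$ is the ratio of the empirical $L^2$-size of $P^{r-k}\calF$ to the empirical $L^2$-norm of $P^{r-k}F$. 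Taking expectations outside and using the monotonicity of $J_k$ reduces the task to comparing $\hat\delta_k$ and the empirical envelope norm with their population counterparts $\delta_k$ and $\|P^{r-k}F\|_{P^k,2}$.

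The heart of the argument, and what I expect to be the main obstacle, is the ``local'' character: one must control the empirical $L^2$-size of $P^{r-k}\calF$ by $\sigma_k$ up to a remainder that accounts for the large values of $P^{r-k}F$. This is achieved by a standard bootstrapping argument (in the sense of van der Vaart and Wellner, Section 2.14, adapted to the $U$-statistic setting). Concretely, letting $\Psi_k := n^{k/2}\E[\|U_n^{(k)}(\pi_k f)\|_\calF]$, one writes $\sup_{f\in\calF}\|P^{r-k}f\|_{P^k_n,2}^2 \le \sigma_k^2 + \text{(centered $U$-statistic of order $k$ with kernel $(P^{r-k}f)^2$)}$, bounds the centered part recursively by $\Psi_k$-type quantities applied to the squared class, and uses the Cauchy–Schwarz inequality together with the envelope bound $(P^{r-k}F)^2$ whose maximum over the sample contributes $\|M_k\|_{\Prob,2}$. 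Solving the resulting quadratic inequality for $\Psi_k$ produces the two terms in \eqref{eq:local_max_ineq_dengenerate_uprocess}: the leading term $J_k(\delta_k)\|P^{r-k}F\|_{P^k,2}$ from the in-expectation chaining and the lower-order term $J_k^2(\delta_k)\|M_k\|_{\Prob,2}/(\delta_k^2\sqrt{n})$ from the deviation of the empirical $L^2$-radius from $\sigma_k$. The case $\|P^{r-k}F\|_{P^k,2}=0$ is trivial since then $\pi_k f\equiv 0$.
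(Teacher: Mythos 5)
Your skeleton matches the paper's: decouple/symmetrize to a Rademacher chaos of order $k$, bound it by an entropy integral for such chaoses (the paper uses Corollaries 3.2.6 and 5.1.8 in \cite{delaPenaGine1999}), then bootstrap the empirical $L^2$-radius against $\sigma_k$. However, two steps must be made concrete. First, the chaining has to be carried out on $P^{r-k}\calF$, not on $\pi_k\calF$, since $J_k$ in \eqref{eqn:uniform_entropy_integral} is defined for $P^{r-k}\calF$; the explicit Jensen step that replaces $\pi_kf$ by $P^{r-k}f$ under the decoupled Rademacher average (valid because $\pi_kf$ is an iterated conditional centering of $P^{r-k}f$) should not be hidden in the gloss ``$(P^{r-k}\calF)$-like coordinates.''

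Second, and this is the crux, the radius-bootstrap as you describe it does not work as stated. The empirical radius $\sup_{f\in\calF}\|P^{r-k}f\|_{\Prob_{I_{n,k}},2}^2$ is a non-degenerate $U$-statistic of order $k$; feeding it ``recursively'' into a $\Psi_k$-type inequality for the squared class does not close on its own, and would require a Hoeffding decomposition of the squared $U$-statistic whose projections' $L^2$-sizes are yet more unknowns. The paper sidesteps all of this by \emph{Hoeffding's averaging}: because $\|P^{r-k}f\|_{\Prob_{I_{n,k}},2}^2$ is the average over permutations of the i.i.d.\ block means $S_{f,k}(X_{j_1},\dots,X_{j_n})$, Jensen's inequality converts the problem to an i.i.d.\ empirical process over $m=\lfloor n/k\rfloor$ blocks. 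From that point only one-sample tools are needed: symmetrization, the contraction principle to strip the square, Cauchy--Schwarz (this is where $\|M_k\|_{\Prob,2}$ enters), and the Hoffmann--J{\o}rgensen inequality for the second moment. The resulting functional inequality is in the scaled radius $\tilde{z}$ (not $\Psi_k$) and is solved with \cite[Lemma 2.1]{vandervaartwellner2011} together with the monotonicity properties of $J_k$ from Lemma~\ref{lem:properties_Jprime}; this is what actually produces the two-term bound in \eqref{eq:local_max_ineq_dengenerate_uprocess}. You should add Hoeffding's averaging to your proposal and be precise about the object the quadratic inequality is solved for.
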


The proof of Theorem \ref{lem:local_max_ineq_dengenerate_uprocess} relies on the following lemma on the uniform entropy integrals.

\begin{lem}[Properties of the maps $\delta \mapsto J_k(\delta)$]
\label{lem:properties_Jprime}
Assume that $J_{k} (1) < \infty$ for $k=1,\dots,r$. 
 Then, the  following properties hold for every $k=1,\dots,r$.
(i) The map $\delta \mapsto J_k(\delta)$ is non-decreasing and concave.
(ii) For $c \ge  1$, $J_k(c \delta) \le  c J_k(\delta)$.
(iii) The map $\delta \mapsto J_k(\delta) / \delta$ is non-increasing.
(iv) The map $(x,y) \mapsto J_k(\sqrt{x/y}) \sqrt{y}$ is jointly concave in $(x,y) \in [0,\infty) \times (0,\infty)$.
\end{lem}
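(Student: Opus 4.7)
The key observation underlying all four parts is the representation $J_k(\delta)=\int_0^\delta \phi_k(\tau)\,d\tau$ with integrand
\[
\phi_k(\tau):=\sup_Q\bigl[1+\log N(P^{r-k}\calF,\|\cdot\|_{Q,2},\tau\|P^{r-k}F\|_{Q,2})\bigr]^{k/2},
\]
which is non-negative and non-increasing in $\tau>0$, because the covering number inside is non-increasing in $\tau$. I would organize the proof so that parts (i)--(iii) are derived from these two monotonicity properties and part (iv) from a tangent-line representation of concave functions.

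For (i), non-decreasingness is immediate from $\phi_k\ge 0$, and concavity of $J_k$ follows from the fact that its derivative (in the a.e.\ sense) equals the non-increasing function $\phi_k$; more cleanly, one can invoke the standard lemma that an antiderivative of a non-increasing function is concave. For (ii), with the substitution $\tau=cu$,
\[
J_k(c\delta)=\int_0^{c\delta}\phi_k(\tau)\,d\tau=c\int_0^\delta\phi_k(cu)\,du\le c\int_0^\delta\phi_k(u)\,du=cJ_k(\delta),
\]
where the inequality uses $cu\ge u$ and the monotonicity of $\phi_k$. For (iii), since $J_k$ is concave by (i) and $J_k(0)=0$, applying concavity to the convex combination $\delta_1=(\delta_1/\delta_2)\delta_2+(1-\delta_1/\delta_2)\cdot 0$ for $0<\delta_1<\delta_2$ gives $J_k(\delta_1)\ge(\delta_1/\delta_2)J_k(\delta_2)$, i.e.\ $J_k(\delta_1)/\delta_1\ge J_k(\delta_2)/\delta_2$.

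The main obstacle is (iv), because the transformation $(x,y)\mapsto(\sqrt{x/y},\sqrt{y})$ is not affine, so concavity of the perspective-type function $J_k(\sqrt{x/y})\sqrt{y}$ does not follow from standard perspective constructions. My plan is to use the supporting-line characterisation of concave functions: since $J_k$ is concave on $[0,\infty)$ and non-decreasing with $J_k(0)=0$, every supergradient $m(\delta_0)$ at $\delta_0>0$ lies in $[0,\infty)$, and concavity together with $J_k(0)=0$ forces the intercept $c(\delta_0):=J_k(\delta_0)-m(\delta_0)\delta_0$ to satisfy $c(\delta_0)\ge 0$. Collecting these tangent lines yields
\[
J_k(\delta)=\inf_{(a,b)\in\mathcal{A}}(a\delta+b),\qquad \mathcal{A}\subset[0,\infty)\times[0,\infty).
\]

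Substituting $\delta=\sqrt{x/y}$ and multiplying by $\sqrt{y}>0$ (so the infimum can be exchanged with the multiplication) gives
\[
J_k(\sqrt{x/y})\sqrt{y}=\inf_{(a,b)\in\mathcal{A}}\bigl(a\sqrt{x}+b\sqrt{y}\bigr).
\]
For each $(a,b)\in\mathcal{A}$ the map $(x,y)\mapsto a\sqrt{x}+b\sqrt{y}$ is a non-negative linear combination of the jointly concave functions $\sqrt{x}$ and $\sqrt{y}$ on $[0,\infty)\times(0,\infty)$, hence jointly concave; the pointwise infimum of an arbitrary family of concave functions is concave, which proves (iv). A minor technicality I would address is the boundary case $x=0$ and the possibility that $J_k$ is non-differentiable at finitely many points, both handled by working with one-sided supergradients $m(\delta_0)\in[J_k'(\delta_0+),J_k'(\delta_0-)]$ and by verifying the inf representation on all of $[0,\infty)$ (at $\delta=0$ both sides equal $0$).
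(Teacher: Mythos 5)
Your proof is correct, and it is worth noting that the paper itself provides no proof of this lemma---it simply points to Lemma A.2 of \cite{cck2014_empirical_process}---so you are supplying an argument the paper omits. Parts (i)--(iii) follow directly from the non-negativity and monotonicity of the integrand $\phi_k$, exactly as you describe. For (iv), your supporting-line representation $J_k(\delta)=\inf_{(a,b)\in\mathcal{A}}(a\delta+b)$ with $\mathcal{A}\subset[0,\infty)^2$ is sound: after substituting $\delta=\sqrt{x/y}$ and multiplying through by $\sqrt{y}>0$ the target is an infimum of the jointly concave maps $(x,y)\mapsto a\sqrt{x}+b\sqrt{y}$, hence concave. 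One point you gesture at and should spell out: for the representation to hold at $\delta=0$ you need $\inf_{(a,b)\in\mathcal{A}}b=0$, and this follows because each intercept satisfies $0\le b(\delta_0)=J_k(\delta_0)-m(\delta_0)\delta_0\le J_k(\delta_0)\to 0$ as $\delta_0\downarrow 0$. An alternative derivation of (iv) that avoids cataloguing tangent lines: with $(u,v)=(\sqrt{x},\sqrt{y})$ the target equals the perspective $v\,J_k(u/v)$, which is concave in $(u,v)$; since $J_k$ is non-decreasing with $J_k(0)=0$, one checks this perspective is also non-decreasing in each of $u$ and $v$, so composing with the componentwise-concave map $(x,y)\mapsto(\sqrt{x},\sqrt{y})$ preserves concavity. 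Both routes are standard and of comparable length; yours has the advantage of being entirely elementary, while the perspective route leans on a well-known composition rule.
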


\begin{proof}[Proof of Lemma \ref{lem:properties_Jprime}]
The proof is almost identical to \cite[Lemma A.2]{cck2014_empirical_process} and hence omitted. 
\end{proof}

\begin{proof}[Proof of Theorem \ref{lem:local_max_ineq_dengenerate_uprocess}]
Pick any $k=1,\dots,r$.
It suffices to prove (\ref{eq:local_max_ineq_dengenerate_uprocess}) when $\| P^{r-k} F \|_{P^{k},2}  > 0$ since otherwise there is nothing to prove (recall that we have assumed that $P^{r} F^{2} < \infty$, which ensures that $\| P^{r-k} F\|_{P^{k},2} < \infty$). 
Let $\varepsilon_{1},\dots,\varepsilon_n$ be  i.i.d. Rademacher random variables independent of $X_1^n$. In addition, let $\{ X_{i}^{j} \}$ and $\{ \varepsilon_{i}^{j} \}$ be independent copies of $\{ X_{i} \}$ and $\{ \varepsilon_{i} \}$. From the randomization theorem for $U$-processes \citep[][Theorem 3.5.3]{delaPenaGine1999} and Jensen's inequality, we have 
\begin{align*}
\E[ \|  U_{n}^{(k)} (\pi_{k}f) \|_{\calF} ] 
&\lesssim \E \left [ \left \| \frac{1}{|I_{n,k}|} \sum_{(i_1,\dots,i_k) \in I_{n,k}} \varepsilon_{i_{1}}^{1}\cdots \varepsilon_{i_{k}}^{k} (\pi_{k}f)(X_{i_{1}}^{1},\dots,X_{i_{k}}^{k}) \right \|_{\calF} \right ] \\
&\lesssim  \E \left [ \left \| \frac{1}{|I_{n,k}|} \sum_{(i_1,\dots,i_k) \in I_{n,k}} \varepsilon_{i_{1}}^{1}\cdots \varepsilon_{i_{k}}^{k} (P^{r-k} f) (X_{i_{1}}^{1},\dots,X_{i_{k}}^{k}) \right \|_{\calF} \right ] \\
&\lesssim  \E \left [ \left \| \frac{1}{|I_{n,k}|} \sum_{(i_1,\dots,i_k) \in I_{n,k}} \varepsilon_{i_{1}}\cdots \varepsilon_{i_{k}} (P^{r-k}f)(X_{i_{1}},\dots,X_{i_{k}}) \right \|_{\calF} \right ].
\end{align*}
Conditionally on $X_{1}^{n}$, 
\[
R_{n,k}(f) := \frac{1}{\sqrt{|I_{n,k}|}} \sum_{(i_1,\dots,i_k) \in I_{n,k}} \varepsilon_{i_{1}}\cdots \varepsilon_{i_{k}} (P^{r-k}f)(X_{i_{1}},\dots,X_{i_{k}}), \ f \in \calF
\]
is a (homogeneous) Rademacher chaos process of order $k$. Denote by  $\Prob_{I_{n,k}} = |I_{n,k}|^{-1} \sum_{(i_1,\dots,i_k) \in I_{n,k}} \delta_{(X_{i_{1}},\dots,X_{i_{k}})}$  the empirical distribution  on all possible $k$-tuples of $X_1^n$; then Corollary 3.2.6 in \cite{delaPenaGine1999} yields
\[
\| R_{n,k}(f) - R_{n,k}(f') \|_{\psi_{2/k} \mid X_{1}^{n}} \lesssim \| P^{r-k}f - P^{r-k}f' \|_{\Prob_{I_{n,k}},2}, \ \forall f,f' \in \calF,
\]
where $\| \cdot \|_{\psi_{2/k} \mid X_{1}^{n}}$ denotes the Orlicz (quasi-)norm associated with $\psi_{2/k}(u) = e^{u^{2/k}} - 1$ evaluated conditionally on $X_{1}^{n}$. The $\| \cdot \|_{\psi_{2/k} \mid X_{1}^{n}}$-diameter of the function class $ \calF$ is at most $2\sigma_{I_{n,k}}$ with $\sigma_{I_{n,k}}^{2} := \sup_{f \in \calF} \| P^{r-k} f \|_{\mathbb{P}_{I_{n,k}},2}^{2}$. 
So, since the first moment is bounded by the $\psi_{2/k}$-(quasi)norm up to a constant that depends only on $k$ (and hence $r$), by Corollary 5.1.8 in \cite{delaPenaGine1999} together with Fubini's theorem and a change of variables, we have 
\begin{align*}
&\E \left [ \left \| \frac{1}{\sqrt{|I_{n,k}|}} \sum_{(i_1,\dots,i_k) \in I_{n,k}} \varepsilon_{i_{1}}\cdots \varepsilon_{i_{k}} (P^{r-k}f)(X_{i_{1}},\dots,X_{i_{k}}) \right \|_{\calF} \right ] \\
&\lesssim \E \left [ \left \| \left \| \frac{1}{\sqrt{|I_{n,k}|}} \sum_{(i_1,\dots,i_k) \in I_{n,k}} \varepsilon_{i_{1}}\cdots \varepsilon_{i_{k}} (P^{r-k}f)(X_{i_{1}},\dots,X_{i_{k}}) \right \|_{\calF} \right \|_{\psi_{2/k} \mid X_{1}^{n}} \right ] \\
&\lesssim \E\left [ \int_{0}^{\sigma_{I_{n,k}}} \left [ 1 + \log N(P^{r-k}\calF, \| \cdot \|_{\mathbb{P}_{I_{n,k}},2}, \tau) \right ]^{k/2} d\tau \right ]  \\
&= \E \left [ \| P^{r-k}F  \|_{\mathbb{P}_{I_{n,k}},2} \int_{0}^{\sigma_{I_{n,k}}/\| P^{r-k}F \|_{\mathbb{P}_{I_{n,k}},2}} \left [  1+\log N(P^{r-k} \calF, \| \cdot \|_{\mathbb{P}_{I_{n,k}},2}, \tau \| P^{r-k}F  \|_{\mathbb{P}_{I_{n,k}},2}) \right ]^{k/2}d\tau \right ] \\
&\le \E \left [ \| P^{r-k}F  \|_{\mathbb{P}_{I_{n,k}},2} J_{k}(\sigma_{I_{n,k}}/\| P^{r-k}F \|_{\mathbb{P}_{I_{n,k}},2}) \right].
\end{align*}
The last inequality follows from the definition of $J_{k}$. 
Since $J_k(\sqrt{x/y}) \sqrt{y}$ is jointly concave in $(x,y) \in [0,\infty) \times (0,\infty)$ by Lemma \ref{lem:properties_Jprime} (iv), Jensen's inequality yields 
\begin{equation}
\label{eqn:intermediate_bound_on_Vn}
n^{k/2} \E[ \| U_{n}^{(k)} (\pi_{k}f) \|_{\calF} ] \lesssim \| P^{r-k} F \|_{P^{k},2} J_{k}(z), \quad \text{where} \ z:=\sqrt{\E[\sigma_{I_{n,k}}^{2}] / \| P^{r-k}F \|_{P^{k},2}^2}.
\end{equation}
We shall bound $\E[\sigma_{I_{n,k}}^{2}]$.  
To this end, we will use Hoeffding's averaging \cite[Section 5.1.6]{serfling1980}. Let 
\[
S_{f,k}(x_{1},\dots,x_{n}) = \frac{1}{m} \sum_{i=1}^{m} (P^{r-k}f)^{2}(x_{(i-1)k+1},\dots,x_{ik}), \ m=\lfloor n/k \rfloor.
\]
Then, the $U$-statistic $\| P^{r-k} f \|_{\mathbb{P}_{I_{n,k}},2}^{2} = |I_{n,k}|^{-1} \sum_{I_{n,k}} (P^{r-k}f)^{2}(X_{i_{1}},\dots,X_{i_{k}})$ is the average of the variables $S_{f,k}(X_{j_{1}},\dots,X_{j_{n}})$ taken over all the permutations $j_{1},\dots,j_{n}$ of $1,\dots,n$. Hence, 
\[
\E[ \sigma_{I_{n,k}}^{2}] \le \E \left [ \sup_{f \in \calF} S_{f,k}(X_{1}^{n}) \right ] = \E \left [ \left \| \frac{1}{m}\sum_{i=1}^{m} (P^{r-k}f)^{2}(X_{(i-1)k+1}^{ik}) \right \|_{\calF} \right ] =: B_{n,k}
\]
by Jensen's inequality, 
so that $z \le  \tilde{z} := \sqrt{B_{n,k} / \| P^{r-k}F \|_{P^{k},2}^2}$. Since the blocks $X_{(i-1)k+1}^{ik}, i=1,\dots,m$ are i.i.d., 
\begin{eqnarray*}
B_{n,k} &\le _{(1)}& \sigma_k^2 + \E \left [\left \| {1 \over m} \sum_{i=1}^m \left \{ (P^{r-k} f)^2(X_{(i-1)k+1}^{ik}) - \E [(P^{r-k} f)^2(X_{(i-1)k+1}^{ik}) ] \right \} \right \|_{\calF}\right ] \\
&\le _{(2)}& \sigma_k^2 + 2 \E\left [ \left\| {1 \over m} \sum_{i=1}^m \varepsilon_i (P^{r-k} f)^2(X_{(i-1)k+1}^{ik}) \right \|_{\calF} \right ] \\
&\le _{(3)}& \sigma_k^2 + 8 \E\left [ M_k \left \| {1 \over m} \sum_{i=1}^m \varepsilon_i (P^{r-k} f)(X_{(i-1)k+1}^{ik}) \right \|_{\calF} \right ] \\
&\le _{(4)}& \sigma_k^2 + 8 \|M_k\|_{\Prob,2}  \sqrt{\E \left [\left \| {1 \over m} \sum_{i=1}^m \varepsilon_i (P^{r-k} f)(X_{(i-1)k+1}^{ik}) \right\|_{\calF}^2 \right]},
\end{eqnarray*}
where $(1)$ follows from the triangle inequality, $(2)$ follows from the symmetrization inequality \cite[Lemma 2.3.1]{vandervaartwellner1996}, $(3)$ follows from the contraction principle \cite[Corollary 3.2.2]{ginenickl2016}, and $(4)$ follows from the Cauchy-Schwarz inequality. By (a version of) the Hoffmann-J{\o}rgensen inequality to the empirical process \cite[Proposition A.1.6]{vandervaartwellner1996},
\begin{align*}
&\sqrt{\E \left [\left \| {1 \over m} \sum_{i=1}^m \varepsilon_i (P^{r-k} f)(X_{(i-1)k+1}^{ik}) \right\|_{\calF}^2 \right]} \\\
&\quad \lesssim \E \left [ \left \| {1 \over m} \sum_{i=1}^m \varepsilon_i (P^{r-k} f)(X_{(i-1)k+1}^{ik}) \right \|_{\calF} \right] + m^{-1} \|M_k\|_{\Prob,2}.
\end{align*}
The analysis of the expectation on the right hand side is rather standard. 
From the first half of the proof of Theorem 5.2 in \cite{cck2014_empirical_process} (or repeating the first half of this proof with $r=k=1$), we have
\begin{multline*}
\E \left [ \left \|\frac{1}{\sqrt{m}} \sum_{i=1}^m   \varepsilon_i (P^{r-k} f)(X_{(i-1)k+1}^{ik}) \right \|_{\calF} \right]  \\
\lesssim \| P^{r-k} F \|_{P^{k},2} \int_{0}^{\tilde{z}} \sup_{Q} \sqrt{1+\log N(P^{r-k} \calF, \| \cdot \|_{Q,2}, \tau \| P^{r-k} F \|_{Q,2})} d\tau.
\end{multline*}
%
Since the integral on the right hand side is bounded by $J_{k}(\tilde{z})$, we have
\[
B_{n,k} \lesssim \sigma_k^2 + n^{-1} \|M_k\|_{\Prob,2}^2 + n^{-1/2} \|M_k\|_{\Prob,2} \| P^{r-k} F \|_{P^{k},2} J_k ( \tilde{z} ).
\]
Therefore, we conclude that
\[
\tilde{z}^2 \lesssim \Delta^2 + {\|M_k\|_{\Prob,2} \over \sqrt{n} \| P^{r-k} F \|_{P^{k},2}} J_k (\tilde{z}), \quad \text{where} \ \Delta^2 := {\sigma_k^2 \vee n^{-1} \|M_k\|_{\Prob,2}^2 \over \| P^{r-k} F \|_{P^{k},2}^{2}}.
\]
By Lemma \ref{lem:properties_Jprime} (i) and applying \cite[Lemma 2.1]{vandervaartwellner2011} with  $J(\cdot)= J_k(\cdot), r=1, A^2 = \Delta^2$, and $B^2 = \|M_k\|_{\Prob,2} / (\sqrt{n} \| P^{r-k} F \|_{P^{k},2})$, we have
\begin{equation}
\label{eqn:intermediate_bound_on_B}
J_k(z) \le  J_k(\tilde{z}) \lesssim J_k(\Delta) \left[ 1 + J_k(\Delta) {\|M_k\|_{\Prob,2} \over \sqrt{n} \| P^{r-k} F \|_{P^{k},2}\Delta^{2} } \right].
\end{equation}
Combining (\ref{eqn:intermediate_bound_on_Vn}) and (\ref{eqn:intermediate_bound_on_B}), we arrive at
\begin{equation}
\label{eqn:intermediate_bound_on_C}
n^{k/2} \E[\| U_{n}^{(k)} (\pi_{k}f) \|_{\calF}] \lesssim J_k(\Delta) \| P^{r-k} F \|_{P^{k},2}  + {J_k^{2}(\Delta) \|M_k\|_{\Prob,2} \over \sqrt{n} \Delta^2}.
\end{equation}
We note that $\Delta \ge  \delta_k$ and recall that $\delta_{k}  =\sigma_{k}/\| P^{r-k} F \|_{P^{k},2}$. Since the map $\delta \mapsto J_k(\delta)/\delta$ is non-increasing by Lemma \ref{lem:properties_Jprime} (iii), we have
\[
J_k(\Delta) \le  \Delta {J_k(\delta_k) \over \delta_k} = \max \left\{ J_k(\delta_k), {\|M_k\|_{\Prob,2} J_r(\delta_k) \over \sqrt{n} \| P^{r-k} F \|_{P^{k},2} \delta_k} \right\}.
\]
In addition, since  $J_k(\delta_k) / \delta_k \ge  J_k(1) \ge  1$, we have
$$
J_k(\Delta) \le  \max \left\{ J_k(\delta_k), {\|M_k\|_{\Prob,2} J_k^{2}(\delta_k) \over \sqrt{n} \| P^{r-k} F \|_{P^{k},2} \delta_k^2} \right\}.
$$
Finally, since
$$
{J_k^{2}(\Delta) \|M_k\|_{\Prob,2} \over \sqrt{n} \Delta^2} \le  {J_k^{2}(\delta_k) \|M_k\|_{\Prob,2} \over \sqrt{n} \delta_k^2},
$$
the desired inequality (\ref{eq:local_max_ineq_dengenerate_uprocess}) follows from (\ref{eqn:intermediate_bound_on_C}).
\end{proof}

When the function class $\calF$ is VC type, we may derive a more explicit bound on $n^{k/2}\E[ \| U_{n}^{(k)} (\pi_{k} f) \|_{\calF}]$.  

\begin{cor}[Local maximal inequalities for $U$-processes indexed by VC type classes]
\label{lem:local_max_ineq_dengenerate_uprocess_VCtype}
If $\calF$ is pointwise measurable and VC type with characteristics $A \ge (e^{2(r-1)}/16) \vee e$ and $v \ge 1$, then
\begin{align}
\label{eq:local_max_ineq_dengenerate_uprocess_VCtype}
&n^{k/2} \E[\| U_{n}^{(k)} (\pi_{k}f)  \|_{\calF}] \notag \\
&\quad \lesssim \sigma_{k} \left \{ v \log (A\| P^{r-k} F \|_{P^{k},2}/\sigma_{k}) \right \}^{k/2} + \frac{\|M_{k}\|_{\Prob,2}}{\sqrt{n}} \left \{ v \log (A\| P^{r-k}F \|_{P^{k},2}/\sigma_{k}) \right \}^{k}
\end{align}
for every $k=1,\dots,r$. 
\end{cor}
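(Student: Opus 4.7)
[Proof proposal for Corollary \ref{lem:local_max_ineq_dengenerate_uprocess_VCtype}]
The plan is to apply Theorem \ref{lem:local_max_ineq_dengenerate_uprocess} and to bound the uniform entropy integrals $J_k(\delta_k)$ explicitly using the VC type assumption. First I would verify that the ``marginal'' class $P^{r-k}\calF = \{P^{r-k} f : f \in \calF\}$ inherits the VC type property with comparable characteristics and envelope $P^{r-k}F$; this is the analogue of Lemma \ref{lem:uniform_entropy_numbers_between_GH} (quoted earlier in the paper) for general $k$, and follows from the fact that integration is a non-expansive operation on the relevant $L^2$ seminorms combined with a finite-discrete-measure approximation argument. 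Concretely, one obtains characteristics of the form $(A', v') = (c_1(r) A^{c_2(r)}, c_3(r) v)$ for constants depending only on $r$, so that for every finitely discrete $Q$ on $S^k$,
\[
N(P^{r-k}\calF, \|\cdot\|_{Q,2}, \tau \|P^{r-k}F\|_{Q,2}) \le (A'/\tau)^{v'}, \qquad 0 < \tau \le 1.
\]

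Second, I would bound $J_k(\delta)$ by substituting this entropy bound into the definition \eqref{eqn:uniform_entropy_integral}. Using $1 + v'\log(A'/\tau) \le 2v' \log(A'/\tau)$ (valid under the normalization $A \ge e$ in the hypothesis, which keeps $A' \ge e$), one has
\[
J_k(\delta) \le \int_0^\delta [2 v' \log(A'/\tau)]^{k/2}\, d\tau.
\]
Via the substitution $u = \log(A'/\tau)$, this reduces to $A' (2v')^{k/2} \int_{\log(A'/\delta)}^\infty u^{k/2} e^{-u}\, du$. For $\delta \le A'/e$ (which covers $\delta_k$ since $\sigma_k \le \|P^{r-k}F\|_{P^k,2}$ and $A \ge e$), a standard integration-by-parts estimate on the upper incomplete Gamma function yields
\[
\int_{\log(A'/\delta)}^\infty u^{k/2} e^{-u}\, du \le c(k) [\log(A'/\delta)]^{k/2} \cdot (\delta/A'),
\]
so that $J_k(\delta) \lesssim \delta \, [v' \log(A'/\delta)]^{k/2}$ with a constant depending only on $r$.

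Third, I plug this bound with $\delta = \delta_k = \sigma_k/\|P^{r-k}F\|_{P^k,2}$ into the conclusion \eqref{eq:local_max_ineq_dengenerate_uprocess} of Theorem \ref{lem:local_max_ineq_dengenerate_uprocess}. The first term becomes
\[
J_k(\delta_k) \|P^{r-k}F\|_{P^k,2} \lesssim \sigma_k \{ v' \log(A' \|P^{r-k}F\|_{P^k,2}/\sigma_k)\}^{k/2},
\]
and the second term becomes
\[
\frac{J_k^2(\delta_k)\|M_k\|_{\Prob,2}}{\delta_k^2 \sqrt{n}} \lesssim \frac{\|M_k\|_{\Prob,2}}{\sqrt{n}} \{ v'\log(A' \|P^{r-k}F\|_{P^k,2}/\sigma_k)\}^{k}.
\]
Since $v' \lesssim v$ and $\log(A' \cdot )$ differs from $\log(A \cdot )$ only by an additive constant absorbed in the $v$-prefactor (using $\log A' \lesssim \log A$, which is where the lower bound $A \ge e^{2(r-1)}/16 \vee e$ is used to keep the constants in Lemma \ref{lem:uniform_entropy_numbers_between_GH}-type estimates clean), \eqref{eq:local_max_ineq_dengenerate_uprocess_VCtype} follows.

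The main obstacle, and the place that needs the most care, is the first step: pinning down the VC characteristics of $P^{r-k}\calF$ with explicit constants so that after the substitution one recovers the logarithm $\log(A\|P^{r-k}F\|_{P^k,2}/\sigma_k)$ stated in the corollary (rather than a worse expression). The remaining computations are routine and amount to bounding an incomplete Gamma function.
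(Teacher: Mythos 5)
Your proposal is essentially the paper's own proof: you invoke the VC type inheritance of $P^{r-k}\calF$ (which is exactly Lemma~\ref{lem:uniform_entropy_numbers_between_GH}, already stated for general $k$, so there is nothing to generalize), bound $J_k(\delta)$ explicitly, and plug into Theorem~\ref{lem:local_max_ineq_dengenerate_uprocess}. The only cosmetic difference is that you phrase the entropy-integral bound as an upper incomplete Gamma function $\int_{\log(A'/\delta)}^\infty u^{k/2}e^{-u}\,du$, while the paper works with $\int_{A'/\delta}^\infty (1+\log\tau)^{k/2}\tau^{-2}\,d\tau$; these are the same integral after an obvious change of variables and both are handled by integration by parts. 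One small caveat: the integration-by-parts estimate needs $\log(A'/\delta)$ to be at least of order $k$ (concretely $1+\log(A'/\delta)\ge k$), not merely $\ge 1$ as your ``$\delta \le A'/e$'' condition suggests; this is precisely what the lower bound $A \ge (e^{2(r-1)}/16)\vee e$ delivers, since then $A' = 4\sqrt{A}\ge e^{r-1}\ge e^{k-1}$. So the role of that hypothesis is to make the integration by parts close with a geometric factor, not just to ``keep constants clean''—but this does not affect the validity of the argument.
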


\begin{rmk}
\label{rmk:comparison_local_max_ineq}
(i). Our maximal inequality (\ref{eq:local_max_ineq_dengenerate_uprocess}) scales correctly with the order of degeneracy, namely, the bound on $\E[\| U_{n}^{(k)} (\pi_k f) \|_{\calF}]$ scales as $n^{-k/2}$ if $\calF$ is fixed with $n$; recall that the functions $\pi_{k} f, f \in \calF$ are completely degenerate functions of $k$ variables. In addition, our maximal inequality is ``local'' in the sense that the bound is able take into account the $L^{2}$-bound on functions $P^{r-k}f, f \in \calF$, namely, the bound will yield a better estimate if we have an additional information that such an $L^{2}$-bound is small.

(ii).
\cite[Theorem 8]{ginemason2007} establishes a different local maximal inequality for a $U$-process indexed by a VC type class with a bounded envelope. To be precise, they prove the following bound under the assumption that  the envelope $F$ is bounded by a constant $M$: there exist constants $C_1$ and $C_2$ depending only on $r,A,v$, and $M$ such that
\begin{equation}
n^{k/2} \E[\| U_{n}^{(k)}(\pi_{k}f) \|_{\calF} ] \le C_1 \sigma_r \left ( \log \frac{A \| F \|_{P^{r},2}}{\sigma_r} \right )^{k/2}, \ k=1,\dots,r
\label{eq: Gine Mason bound}
\end{equation}
whenever 
\[
n\sigma_r^{2} \ge C_2 \log \left ( \frac{2\| F \|_{P^{r},2}}{\sigma_r} \right ),
\]
where $\sigma_r$ is a positive  constant satisfying $\sup_{f \in \calF} \|  f \|_{P^{r},2} \le \sigma_r \le \| F \|_{P^{r},2}$. 
Our Corollary \ref{lem:local_max_ineq_dengenerate_uprocess_VCtype} improves upon the bound (\ref{eq: Gine Mason bound}) in several directions: 1) First, our bound (\ref{eq:local_max_ineq_dengenerate_uprocess_VCtype}) allows for an unbounded envelope while the bound (\ref{eq: Gine Mason bound}) requires the envelope to be bounded. 2) Second, the constants $C_1$ and $C_2$ appearing in the bound (\ref{eq: Gine Mason bound}) implicitly depend on the VC characteristics $(A,v)$ and the $L^{\infty}$-bound $M$ on the envelope $F$, in addition to the order $r$, and so is not applicable to cases where the VC characteristics $(A,v)$ and/or the $L^{\infty}$-bound $M$ change with $n$. On the other hand, the constant involved in our bound (\ref{eq:local_max_ineq_dengenerate_uprocess_VCtype}) depends only on $r$ (recall that the notation $\lesssim$ in present section signifies that the left hand side is bounded by the right hand side up to a constant that depends only on $r$), and so is applicable to such cases. 3) Finally, our bound (\ref{eq:local_max_ineq_dengenerate_uprocess_VCtype}) is of the multi-resolution nature in the sense that it depends on the $L^{2}$-bound on $P^{r-k}f$ for $f \in \calF$ (i.e., $\sigma_k$) for each projection level $k=1,\dots,r$ rather than that on $f \in \calF$  (i.e., $\sigma_r$), which allows us to obtain better rates of convergence for kernel type statistics than (\ref{eq: Gine Mason bound}). In particular, $\sigma_{k}$ for $k < r$ can be potentially much smaller than $\sigma_{r}$, which is indeed the case in the applications considered in Section \ref{sec:monotonicity_testing}. To be precise, for the function class $\{ b_{n}^{m/2} c_{n}(\vartheta)^{-1} h_{n,\vartheta} : \vartheta \in \Theta \}$ appearing in Section \ref{sec:monotonicity_testing}, $\sigma_{k}$ would be of order $b_n^{-m(k-1)/2}$ and so $\sigma_k \ll \sigma_r$ for $k < r$; see the proof of Theorem \ref{thm:app_local_uproc_bootstrap_kolmogorov_distance}. 

We also note that \cite{adamczak2006_AoP,ginelatalazinn2000} derive sophisticated moment inequalities for $U$-statistics in Banach spaces. However, we find that their inequalities are difficult to apply in  our setting. 
%


(iii). Theorem \ref{lem:local_max_ineq_dengenerate_uprocess} and Corollary \ref{lem:local_max_ineq_dengenerate_uprocess_VCtype} generalize Theorem  5.2 and Corollary 5.1 in \cite{cck2014_empirical_process} to $U$-processes. In fact, Theorem \ref{lem:local_max_ineq_dengenerate_uprocess} and Corollary \ref{lem:local_max_ineq_dengenerate_uprocess_VCtype} reduce to Theorem  5.2 and Corollary 5.1 in \cite{cck2014_empirical_process} when $r=k=1$, respectively. 
\end{rmk}

Before proving Corollary \ref{lem:local_max_ineq_dengenerate_uprocess_VCtype}, we first verify the following fact about VC type properties. 

\begin{lem}
\label{lem:uniform_entropy_numbers_between_GH}
If $\calF$ is VC type with characteristics $(A,v)$, then for every $k=1,\dots,r-1$, $P^{r-k}\calF$ is also VC type with characteristics $4\sqrt{A}$ and $2v$ for envelope $P^{r-k}F$, i.e.,
\[
\sup_{Q} N(P^{r-k}\calF, \| \cdot \|_{Q,2}, \tau \| P^{r-k} F \|_{Q,2}) \le  (4\sqrt{A}/\tau)^{2v}, \ 0 < \forall \tau \le 1.
\]
\end{lem}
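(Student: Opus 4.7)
The plan is to transfer the uniform covering-number bound from $\calF$ on $S^{r}$ to $P^{r-k}\calF$ on $S^{k}$ by lifting to a product measure on $S^{r}$ and applying the VC-type bound there.

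Fix any finitely discrete probability measure $Q$ on $S^{k}$ and form the product probability measure $\tilde Q := Q \otimes P^{r-k}$ on $S^{r}$. Although $\tilde Q$ is generally not finitely discrete, the VC-type uniform covering bound for $\calF$ (stated for finitely discrete measures) extends to $\tilde Q$ by a standard density argument: finitely discrete measures are weakly dense in the probability measures on $S^{r}$, and the dominated convergence theorem together with pointwise measurability of $\calF$ transfers the bound $(A/\epsilon)^{v}$ to $\tilde Q$ in the form $N(\calF,\|\cdot\|_{\tilde Q,2},\epsilon\|F\|_{\tilde Q,2})\le (A/\epsilon)^{v}$ for all $\epsilon\in(0,1]$.

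The key comparison is Jensen's inequality, which yields $\|P^{r-k}f - P^{r-k}g\|_{Q,2} \le \|f-g\|_{\tilde Q,2}$, so every $L^{2}(\tilde Q)$-cover of $\calF$ projects to an $L^{2}(Q)$-cover of $P^{r-k}\calF$ of the same cardinality. The sharper refinement needed to reach the characteristic $(4\sqrt A,2v)$ exploits $|f|,|g|\le F$ and thus $(f-g)^{2}\le 2F|f-g|$: integrating this pointwise bound against $P^{r-k}$ and then $Q$, followed by Fubini and Cauchy--Schwarz in $L^{2}(\tilde Q)$, gives
\[
\|P^{r-k}f - P^{r-k}g\|_{Q,2}^{2} \;\le\; 2\,\|G\|_{Q,2}\,\|f-g\|_{\tilde Q,2},
\]
with $G=P^{r-k}F$. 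Setting the $L^{2}(\tilde Q)$-covering radius of $\calF$ proportional to $\tau^{2}$ (rather than the naive $\tau$), so that the induced $L^{2}(Q)$-radius on $P^{r-k}\calF$ is at most $\tau\|G\|_{Q,2}$, and applying the VC-type bound $(A/\epsilon)^{v}$ with $\epsilon\propto \tau^{2}$, turns $(A/\epsilon)^{v}$ into $(16A/\tau^{2})^{v}=(4\sqrt{A}/\tau)^{2v}$; the envelope ratio $\|F\|_{\tilde Q,2}/\|G\|_{Q,2}$ (which is $\ge 1$ by Jensen but without a universal upper bound) is absorbed by combining this refined inequality with the trivial diameter bound $\|P^{r-k}f-P^{r-k}g\|_{Q,2}\le 2\|G\|_{Q,2}$.

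The main obstacle is exactly this refined Cauchy--Schwarz step: a purely Jensen-based argument produces only a \emph{linear}-in-$\tau$ comparison, which would leave the uncontrolled factor $\|F\|_{\tilde Q,2}/\|G\|_{Q,2}$ in the covering-number bound. It is the quadratic-in-distance comparison, made possible by the envelope inequality $(f-g)^{2}\le 2F|f-g|$, that permits choosing the effective $\epsilon$ proportional to $\tau^{2}$, and this squared scaling is precisely what produces both the doubling of the exponent $v\mapsto 2v$ and the replacement of $A$ by $4\sqrt{A}$ in the final characteristic.
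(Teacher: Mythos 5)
Your intermediate inequality
\[
\|P^{r-k}f - P^{r-k}g\|_{Q,2}^{2} \;\le\; 2\,\|G\|_{Q,2}\,\|f-g\|_{\tilde Q,2}
\]
is indeed correct (bound $(P^{r-k}(f-g))^{2}\le 2G\cdot P^{r-k}|f-g|$ pointwise, rewrite $\int G\,P^{r-k}|f-g|\,dQ=\int G\,|f-g|\,d\tilde Q$, and use Cauchy--Schwarz together with $\|G\|_{\tilde Q,2}=\|G\|_{Q,2}$), and the quadratic $\epsilon\propto\tau^{2}$ scaling is exactly the mechanism that produces $v\mapsto 2v$ and $A\mapsto\sqrt A$. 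However, the final step has a genuine gap. Covering $\calF$ in $L^{2}(\tilde Q)$ at the \emph{absolute} radius $\tau^{2}\|G\|_{Q,2}/2$ costs, via the VC-type bound, a number of balls controlled only by the \emph{relative} radius $\epsilon=\tau^{2}\|G\|_{Q,2}/(2\|F\|_{\tilde Q,2})$, i.e.\ roughly $(2A\|F\|_{\tilde Q,2}/(\tau^{2}\|G\|_{Q,2}))^{v}$. The ratio $\|F\|_{\tilde Q,2}/\|G\|_{Q,2}$ is $\ge 1$ by Jensen and has no universal upper bound (take $Q=\delta_{a}$, so the ratio is $\sqrt{P^{r-k}F^{2}(a)}/P^{r-k}F(a)$, which is unbounded over $a$). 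The ``trivial diameter bound'' $\|P^{r-k}f-P^{r-k}g\|_{Q,2}\le 2\|G\|_{Q,2}$ only lets one ball cover for $\tau\ge 2$, which lies outside the range $\tau\in(0,1]$, and the Jensen-only comparison $\|P^{r-k}(f-g)\|_{Q,2}\le\|f-g\|_{\tilde Q,2}$ carries the same uncontrolled ratio; taking a minimum of the two does not remove it. So the argument as written does not prove the uniform bound.

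The root of the problem is the choice of lifting measure: $\tilde Q=Q\otimes P^{r-k}$ is precisely the measure for which the $L^{2}$-envelope on $S^{r}$ is \emph{too large} relative to $\|G\|_{Q,2}$. Since the uniform VC bound is a supremum over all finitely discrete $Q'$ on $S^{r}$, one is free to test against a measure chosen adaptively in $Q$ rather than the plain product. This is what the paper does: Lemma~\ref{lem:uniform_entropy_numbers_between_GH} is proved by invoking Lemma~\ref{lem:entropy_conditional} (with $r=s=2$), which in turn rests on Lemma~A.2 of Ghosal, Sen and van der Vaart together with the finitely-discrete approximation in Lemma~\ref{lem:approximation}. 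That lemma builds, for each finitely discrete $Q=\sum_{j}q_{j}\delta_{a_{j}}$, a reweighted lifting of the form $\sum_{j}p_{j}\,\delta_{a_{j}}\otimes P^{r-k}$ with weights $p_{j}$ adapted to $G(a_{j})$, so that the envelope on $S^{r}$ is commensurate with $\|G\|_{Q,2}$ and the uncontrolled ratio never appears. The quadratic Cauchy--Schwarz comparison you derived is a correct ingredient, but the reweighting is the essential step you are missing; without it, the exponent and the $\sqrt{A}$ in the target characteristics cannot be reached uniformly in $Q$.
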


\begin{proof}[Proof of Lemma \ref{lem:uniform_entropy_numbers_between_GH}]
This follows from Lemma \ref{lem:entropy_conditional} in Appendix \ref{app:supporting_lemmas} with $r=s=2$. 
\end{proof}

\begin{proof}[Proof of Corollary \ref{lem:local_max_ineq_dengenerate_uprocess_VCtype}]
For the notational convenience, put $A'=4\sqrt{A}$ and $v'=2v$. Then,
\[
J_{k} (\delta) \le \int_{0}^{\delta} (1+v'\log (A'/\tau))^{k/2} d\tau \le  A' (v')^{k/2} \int_{A'/\delta}^{\infty} \frac{(1+\log \tau)^{k/2}}{\tau^{2}} d\tau.
\]
Integration by parts yields that for $c \ge  e^{k-1}$, 
\begin{align*}
\int_{c}^{\infty} \frac{(1+\log \tau)^{k/2}}{\tau^{2}} d\tau &= \left [ -\frac{(1+\log \tau)^{k/2}}{\tau} \right]_{c}^{\infty} + \frac{k}{2} \int_{c}^{\infty} \frac{(1+\log \tau)^{k/2}}{\tau^{2} (1+\log \tau)} d\tau \\
&\le \frac{(1+\log c)^{k/2}}{c} + \frac{1}{2} \int_{c}^{\infty} \frac{(1+\log \tau)^{k/2}}{\tau^{2}} d\tau.
\end{align*}
Since $A'/\delta \ge  A' \ge  e^{r-1} \ge  e^{k-1}$ for $0 < \delta \le  1$, we conclude that 
\[
\int_{A/\delta'}^{\infty} \frac{(1+\log \tau)^{k/2}}{\tau^{2}} d\tau \le \frac{2\delta (1+\log (A'/\delta))^{k/2}}{A'} \lesssim \frac{\delta (\log (A/\delta))^{k/2}}{A'}.
\]
Combining Theorem \ref{lem:local_max_ineq_dengenerate_uprocess}, we obtain the desired inequality (\ref{eq:local_max_ineq_dengenerate_uprocess_VCtype}).
\end{proof}

The appearance of $\| P^{r-k} F \|_{P^{k},2}/\sigma_{k}$ inside the log may be annoying in applications but there is a clever way to delete this term. Namely, choose $\sigma_{k}' = \sigma_{k} \vee (n^{-1/2} \| P^{r-k} F \|_{P^{k},2})$ and apply Corollary \ref{lem:uniform_entropy_numbers_between_GH} with $\sigma_{k}$ replaced by $\sigma_{k}'$; then the  bound for $n^{k/2} \E[ \| U_{n}^{(k)}(\pi_{k}f) \|_{\calF} ]$ is 
\[
\lesssim \sigma_{k} \left \{ v \log (A \vee n) \right \}^{k/2} + \frac{\| P^{r-k} F \|_{P^{k},2}}{\sqrt{n}} \left \{ v \log (A \vee n) \right \}^{k/2} + \frac{\| M_{k} \|_{\Prob,2}}{\sqrt{n}}  \left \{ v \log (A \vee n) \right \}^{k}.
\]
Since $v \log (A \vee n) \ge 1$ by our assumption, the second term is bounded by the third term. We state the resulting bound as a separate corollary since this form would be most useful in (at least our) applications.

\begin{cor}
\label{cor:local_max_ineq_dengenerate_uprocess_VCtype}
If $\calF$ is pointwise measurable and VC type with characteristics $A \ge (e^{2(r-1)}/16) \vee e$ and $v \ge 1$,  then, 
\[
n^{k/2} \E[\| U_{n}^{(k)} (\pi_{k}f)  \|_{\calF}]  \lesssim \sigma_{k} \left \{ v \log (A \vee n) \right \}^{k/2} + \frac{\| M_{k} \|_{\Prob,2}}{\sqrt{n}}  \left \{ v \log (A\vee n) \right \}^{k}
\]
for every $k=1,\dots,r$. Furthermore, $\| M_{k} \|_{\Prob,2} \le  n^{1/q} \| P^{r-k}F \|_{P^{k},q}$ for every $k=1,\dots,r$ and $q \in [2,\infty]$, where ``$1/q$'' for the $q=\infty$ case is interpreted as $0$. 
\end{cor}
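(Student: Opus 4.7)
[Proof proposal for Corollary \ref{cor:local_max_ineq_dengenerate_uprocess_VCtype}]
The plan is exactly the ``clever way to delete the ratio inside the log'' outlined in the text just before the corollary. First, set
\[
\sigma_{k}' := \sigma_{k} \vee (n^{-1/2} \| P^{r-k} F \|_{P^{k},2}),
\]
and observe that $\sigma_{k}'$ still satisfies the admissibility conditions required in Corollary \ref{lem:local_max_ineq_dengenerate_uprocess_VCtype}: we have $\sup_{f \in \calF}\|P^{r-k}f\|_{P^{k},2} \le \sigma_{k} \le \sigma_{k}'$ and $\sigma_{k}' \le \|P^{r-k}F\|_{P^{k},2}$ for $n \ge 1$. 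Applying Corollary \ref{lem:local_max_ineq_dengenerate_uprocess_VCtype} with $\sigma_{k}$ replaced by $\sigma_{k}'$ and using $\|P^{r-k}F\|_{P^{k},2}/\sigma_{k}' \le \sqrt{n}$ gives
\[
\log \bigl(A\|P^{r-k}F\|_{P^{k},2}/\sigma_{k}'\bigr) \le \log(A\sqrt{n}) \le \tfrac{3}{2}\log(A \vee n),
\]
so that
\[
n^{k/2}\E[\|U_{n}^{(k)}(\pi_{k} f)\|_{\calF}] \lesssim \sigma_{k}'\bigl\{v\log(A\vee n)\bigr\}^{k/2} + \frac{\|M_{k}\|_{\Prob,2}}{\sqrt{n}}\bigl\{v\log(A\vee n)\bigr\}^{k}.
\]

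Next, split $\sigma_{k}' \le \sigma_{k} + n^{-1/2}\|P^{r-k}F\|_{P^{k},2}$. The $\sigma_{k}$-piece is the first term in the target bound. For the remaining piece, use Jensen's inequality to get $\|P^{r-k}F\|_{P^{k},2} = (\E[(P^{r-k}F)^{2}(X_{1}^{k})])^{1/2} \le (\E[M_{k}^{2}])^{1/2} = \|M_{k}\|_{\Prob,2}$; then, since $v\log(A \vee n) \ge 1$ (as $v \ge 1$ and $A \ge e$), we have $\{v\log(A\vee n)\}^{k/2} \le \{v\log(A\vee n)\}^{k}$, so this piece is absorbed into the second term of the target bound.

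Finally, for the envelope estimate $\|M_{k}\|_{\Prob,2} \le n^{1/q}\|P^{r-k}F\|_{P^{k},q}$ for $q \in [2,\infty]$, the case $q=\infty$ is immediate since $M_{k} \le \|P^{r-k}F\|_{P^{k},\infty}$ almost surely. For finite $q$, monotonicity of $L^{p}$-norms on probability spaces gives $\|M_{k}\|_{\Prob,2} \le \|M_{k}\|_{\Prob,q}$, and a union bound yields
\[
\E[M_{k}^{q}] \le \sum_{i=1}^{\lfloor n/k \rfloor}\E\bigl[(P^{r-k}F)^{q}(X_{(i-1)k+1}^{ik})\bigr] \le n \|P^{r-k}F\|_{P^{k},q}^{q},
\]
so $\|M_{k}\|_{\Prob,q} \le n^{1/q}\|P^{r-k}F\|_{P^{k},q}$, as claimed.

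There is essentially no substantive obstacle here; the whole argument is a careful bookkeeping of the threshold $\sigma_{k}'$ and the relation $\|P^{r-k}F\|_{P^{k},2} \le \|M_{k}\|_{\Prob,2}$, plus a standard maximal-of-i.i.d.-blocks estimate for the envelope bound.
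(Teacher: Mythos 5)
Your proof is correct and follows exactly the route the paper sketches in the paragraph before the corollary: replace $\sigma_k$ by $\sigma_k' = \sigma_k \vee (n^{-1/2}\|P^{r-k}F\|_{P^k,2})$, note the resulting log factor is $\lesssim \log(A \vee n)$, split $\sigma_k'$, absorb the extra term using $\|P^{r-k}F\|_{P^k,2}\le\|M_k\|_{\Prob,2}$ and $v\log(A\vee n)\ge 1$, and establish the moment bound on $M_k$ by a union bound. One small mislabeling: the inequality $\|P^{r-k}F\|_{P^k,2}\le\|M_k\|_{\Prob,2}$ is not Jensen's inequality but simply monotonicity of expectation, since $M_k\ge (P^{r-k}F)(X_1^k)\ge 0$ pointwise (the maximum over blocks dominates the first block).
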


\begin{proof}[Proof of Corollary \ref{cor:local_max_ineq_dengenerate_uprocess_VCtype}]
The first half of the corollary is already proved. The latter half is trivial. 
\end{proof}

If one is interested in bounding $\E[ \| U_{n}^{(r)}(f) - P^{r}f \|_{\calF} ]$, then it suffices to apply  (\ref{eq:local_max_ineq_dengenerate_uprocess}) or (\ref{eq:local_max_ineq_dengenerate_uprocess_VCtype}) repeatedly for $k=1,\dots,r$. 
However, it is often the case that lower order Hoeffding projection terms are dominant, and for bounding higher order Hoeffding projection terms, it would suffice to apply the following simpler (but less sharp) maximal inequalities. 


\begin{cor}[Alternative maximal inequalities for $U$-processes]
\label{thm:alternative_max_ineq}
Let $p \in [1,\infty)$. Suppose that $\calF$ is pointwise measurable and that $J_{k}(1)< \infty$ for $k=1,\dots,r$. Then, there exists a constant $C_{r,p}$ depending only on $r,p$ such that 
\[
n^{k/2} (\E [\| U_{n}^{(k)} (\pi_{k}f) \|_{\calF}^{p}])^{1/p} \le  C_{r,p} J_{k}(1)\| P^{r-k}F \|_{P^{k},2 \vee p} 
\]
for every $k=1,\dots,r$. If $\calF$ is VC type with characteristics $A \ge (e^{2(r-1)}/16) \vee e$ and $v \ge 1$, then $J_{k}(1) \lesssim ( v \log A )^{k/2}$ for every $k=1,\dots,r$. 
\end{cor}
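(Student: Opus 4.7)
The proof will follow the template of Theorem~\ref{lem:local_max_ineq_dengenerate_uprocess}, but replace the delicate localization step (which produces $\sigma_k$ and the $\|M_k\|_{\Prob,2}$ term) by the much cruder bound $J_k(\delta)\le J_k(1)$. Fix $k\in\{1,\dots,r\}$ and assume $\|P^{r-k}F\|_{P^k,2\vee p}>0$ (otherwise both sides vanish). Applying the randomization theorem for completely degenerate $U$-processes \cite[Theorem 3.5.3]{delaPenaGine1999} with the convex non-decreasing function $\Phi(x)=x^p$, together with the decoupling/Jensen argument used in the first display of the proof of Theorem~\ref{lem:local_max_ineq_dengenerate_uprocess}, reduces the task to controlling the $p$-th moment of the Rademacher chaos process
\[
R_{n,k}(f) := |I_{n,k}|^{-1/2}\sum_{(i_1,\dots,i_k)\in I_{n,k}} \varepsilon_{i_1}\cdots\varepsilon_{i_k}(P^{r-k}f)(X_{i_1},\dots,X_{i_k}),\quad f\in\calF,
\]
up to the factor $|I_{n,k}|^{-1/2}\lesssim_r n^{-k/2}$ and a constant depending only on $r$ and $p$.

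Conditionally on $X_1^n$, $R_{n,k}$ is a homogeneous Rademacher chaos of order $k$ whose $\psi_{2/k}$-increments are controlled by the $L^2(\mathbb{P}_{I_{n,k}})$-distance in view of \cite[Corollary 3.2.6]{delaPenaGine1999}. By the generic chaining estimate for chaos processes \cite[Corollary 5.1.8]{delaPenaGine1999}, a change of variables $\tau\mapsto\tau\|P^{r-k}F\|_{\mathbb{P}_{I_{n,k}},2}$, and the definition of $J_k$ combined with the fact that the conditional $L^{2}$-diameter of $P^{r-k}\calF$ does not exceed $2\|P^{r-k}F\|_{\mathbb{P}_{I_{n,k}},2}$, I obtain
\[
\big\|\,\|R_{n,k}\|_\calF\,\big\|_{\psi_{2/k}\mid X_1^n}\lesssim_r J_k(1)\,\|P^{r-k}F\|_{\mathbb{P}_{I_{n,k}},2}.
\]
Since $\|Y\|_{L^p}\lesssim_{r,p}\|Y\|_{\psi_{2/k}}$ whenever $k\le r$, taking $L^p$-norms first conditionally on $X_1^n$ and then unconditionally yields
\[
(\E[\|R_{n,k}\|_\calF^p])^{1/p}\lesssim_{r,p} J_k(1)\,(\E[\|P^{r-k}F\|_{\mathbb{P}_{I_{n,k}},2}^p])^{1/p}.
\]

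To finish the first claim, I would bound $\E[\|P^{r-k}F\|_{\mathbb{P}_{I_{n,k}},2}^p]$ by $\|P^{r-k}F\|_{P^k,2\vee p}^p$: if $p\le 2$ this is Jensen's inequality applied to $x\mapsto x^{p/2}$, while if $p\ge 2$ the convexity of $x\mapsto x^{p/2}$ gives $\|P^{r-k}F\|_{\mathbb{P}_{I_{n,k}},2}^p\le |I_{n,k}|^{-1}\sum (P^{r-k}F)^p(X_{i_1},\dots,X_{i_k})$, whose expectation equals $\|P^{r-k}F\|_{P^k,p}^p$ since the summand is an unbiased $U$-statistic estimator. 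For the final VC-type assertion, Lemma~\ref{lem:uniform_entropy_numbers_between_GH} implies that $P^{r-k}\calF$ is VC type with characteristics $(4\sqrt A,2v)$, and hence $J_k(1)\le \int_0^1 [1+2v\log(4\sqrt A/\tau)]^{k/2}\,d\tau$; the integration-by-parts computation used at the end of the proof of Corollary~\ref{lem:local_max_ineq_dengenerate_uprocess_VCtype}, together with $A\ge e^{2(r-1)}/16\ge e^{k-1}$, then gives $J_k(1)\lesssim_r (v\log A)^{k/2}$.

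The main bookkeeping difficulty is ensuring that the implicit constants arising in the randomization theorem, the Rademacher-chaos Orlicz bound, the Orlicz-to-$L^p$ passage, and the $L^p$-moment estimate for $\|P^{r-k}F\|_{\mathbb{P}_{I_{n,k}},2}$ can all be absorbed into a single $C_{r,p}$; since each intermediate constant depends only on $k\le r$ and $p$, this is ultimately routine, but some care is needed because the equivalence constant between $\|\cdot\|_{L^p}$ and $\|\cdot\|_{\psi_{2/k}}$ blows up with $k$, which is precisely why the final constant is allowed to depend on $r$.
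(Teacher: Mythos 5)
Your proposal is correct and follows essentially the same route as the paper: randomization plus Jensen to pass to a Rademacher chaos, control of the conditional $\psi_{2/k}$-norm via the chaining estimate with the trivial bound $\sigma_{I_{n,k}}/\|P^{r-k}F\|_{\mathbb{P}_{I_{n,k}},2}\le 1$ (so $J_k(\delta)\le J_k(1)$), passage from $\psi_{2/k}$ to $L^p$, and then the Jensen/Hoeffding-averaging estimate $\E[\|P^{r-k}F\|_{\mathbb{P}_{I_{n,k}},2}^p]\le\|P^{r-k}F\|_{P^k,2\vee p}^p$. You supply slightly more detail than the paper on this last step (distinguishing $p\le 2$ and $p\ge 2$), but the argument is the same.
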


\begin{proof}[Proof of Corollary \ref{thm:alternative_max_ineq}]
The last assertion follows from a similar computation to that in the proof of Corollary \ref{lem:local_max_ineq_dengenerate_uprocess_VCtype}. Hence we focus here on the first assertion. 
The proof is a modification to the proof of Theorem \ref{lem:local_max_ineq_dengenerate_uprocess} and we shall use the notation used in the proof. 
The randomization theorem and Jensen's inequality yield that $n^{pk/2}\E[\| U_{n}^{(k)}(\pi_{k}f) \|_{\calF}^{p}]$ is bounded by
\[
\E \left [ \left \| \frac{1}{\sqrt{|I_{n,k}|}} \sum_{I_{n,k}} \varepsilon_{i_{1}} \cdots \varepsilon_{i_{k}} (P^{r-k} f)(X_{i_{1}},\dots,X_{i_{k}}) \right \|_{\calF}^{p} \right],
\]
up to a constant depending only on $r,p$, 
where $\varepsilon_{1},\dots,\varepsilon_{n}$ are i.i.d. Rademacher random variables independent of $X_{1}^{n}$. Denote by $\E_{\mid X_{1}^{n}}$ the conditional expectation given $X_{1}^{n}$. 
Since the $L^{p}$-norm is bounded from above by the $\psi_{2/k}$-(quasi-)norm up to a constant that depends only on $k$ (and hence $r$) and $p$, we have
\begin{align*}
&\E_{\mid X_{1}^{n}} \left [ \left \| \frac{1}{\sqrt{|I_{n,k}|}} \sum_{I_{n,k}} \varepsilon_{i_{1}} \cdots \varepsilon_{i_{k}} (P^{r-k} f)(X_{i_{1}},\dots,X_{i_{k}}) \right \|_{\calF}^{p} \right ] \\
&\le  C \left \| \left \| \frac{1}{\sqrt{|I_{n,k}|}} \sum_{I_{n,k}} \varepsilon_{i_{1}} \cdots \varepsilon_{i_{k}} (P^{r-k} f)(X_{i_{1}},\dots,X_{i_{k}}) \right \|_{\calF}\right \|_{\psi_{k/2} \mid X_{1}^{n}}^{p}
\end{align*}
for some constant $C$ depending only on $r$ and $p$. The entropy integral bound for Rademacher chaoses (see the proof of Theorem \ref{lem:local_max_ineq_dengenerate_uprocess}) yields that the right hand side is bounded by, after changing  variables,
\[
 \| P^{r-k}F  \|_{\mathbb{P}_{I_{n,k}},2}^{p} J_{k}^{p}\left (\sigma_{I_{n,k}}/\| P^{r-k}F \|_{\mathbb{P}_{I_{n,k}},2} \right )
\]
up to a constant depending only on $r,p$. The desired result follows from bounding $\sigma_{I_{n,k}}/\| P^{r-k}F \|_{\mathbb{P}_{I_{n,k}},2}$ by $1$ and observation that $\E[\| P^{r-k}F  \|_{\mathbb{P}_{I_{n,k}},2}^{p}] \le \| P^{r-k} F \|_{P^{k},2 \vee p}^{p}$ by Jensen's inequality. 
\end{proof}

\begin{rmk}
Corollary \ref{thm:alternative_max_ineq} is an extension of Theorem 2.14.1 in \cite{vandervaartwellner1996}. 
For $p=1$, Corollary \ref{thm:alternative_max_ineq} is often less sharp than Theorem \ref{lem:local_max_ineq_dengenerate_uprocess}  since $\sigma_{k} \le \| P^{r-k}F \|_{P^{k},2}$ and in some cases $\sigma_{k} \ll \| P^{r-k} F \|_{P^{k},2}$. 
However, Corollary \ref{thm:alternative_max_ineq} is useful for directly  bounding higher order moments of $\| U_{n}^{(k)} (\pi_{k}f) \|_{\calF}$. For the empirical process case (i.e., $k=1$), bounding higher order moments of the supremum is essentially reduced to bounding the first moment by the Hoffmann-J\o rgensen inequality \cite[Proposition A.1.6]{vandervaartwellner1996}. There is an analogous Hoffmann-J\o rgensen type inequality for $U$-processes \citep[see][Theorem 4.1.2]{delaPenaGine1999}, but for $k \ge  2$, bounding higher order moments of $\| U_{n}^{(k)}(\pi_{k}f) \|_{\calF}$ using this Hoffmann-J\o rgensen inequality combined with the local maximal inequality in Theorem \ref{lem:local_max_ineq_dengenerate_uprocess} would be more involved. 
\end{rmk}

%
%

\section{Proofs for Sections \ref{sec:gaussian_approx} and \ref{sec:bootstrap}}
\label{sec:proof}

In what follows, let $\calB(\R)$ denote the Borel $\sigma$-field on $\R$. For a set $B \subset \R$ and $\delta > 0$, let $B^{\delta}$ denote the $\delta$-enlargement of $B$, i.e., $B^{\delta} =\{ x \in \R : \inf_{y \in B} |x-y| \le \delta \}$. 

\subsection{Proofs for Section \ref{sec:gaussian_approx}}

We begin with stating the following lemma.

\begin{lem}
\label{lem:empirical_process}
Work with the setup described in Section \ref{sec:gaussian_approx}. Suppose that Conditions (PM), (VC), and (MT) hold. 
Let $L_{n} := \sup_{g \in \calG} n^{-1/2}\sum_{i=1}^{n} (g(X_{i}) - Pg)$ and
$\tilde{Z} := \sup_{g \in \calG} W_{P}(g)$. Then, there exist universal constants $C,C'> 0$ such that $\Prob(L_n \in B) \le  \Prob (\tilde{Z} \in B^{C \delta_n}) + C'(\gamma + n^{-1})$
for every $B \in \calB(\R)$, where 
\begin{equation}
\label{eq:error_rate}
\delta_n = {(\overline{\sigma}_{\frakg}^2b_{\frakg}  K_n^2)^{1/3} \over \gamma^{1/3} n^{1/6}} + {b_{\frakg} K_n \over \gamma n^{1/2-1/q}}.
\end{equation}
In the case of $q=\infty$, ``$1/q$'' is interpreted as $0$. 
\end{lem}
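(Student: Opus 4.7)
The plan is to reduce this statement to a known Gaussian coupling bound for the supremum of an empirical process indexed by a VC type class, applied to the class $\calG = P^{r-1}\calH$. The point is that $L_n$ is literally $\sqrt{n}$ times the empirical process evaluated at the sup of $\calG$, so nothing about the $U$-process structure is needed beyond translating Conditions (PM), (VC), (MT) into the form required by the empirical process machinery in \cite{cck2016_empirical_process_coupling}.

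First I would verify the hypotheses on $\calG$. Pointwise measurability of $\calG$ follows from Condition (PM): if $\calH' \subset \calH$ is a countable set with $h_k \to h$ pointwise for each $h \in \calH$, then the dominated convergence theorem (applicable because $P^{r-1}H$ is everywhere finite by Condition (VC)) gives $P^{r-1}h_k \to P^{r-1}h$ pointwise, so $P^{r-1}\calH'$ is a countable pointwise approximating subset of $\calG$. Lemma \ref{lem:uniform_entropy_numbers_between_GH} shows that $\calG$ is VC type with characteristics $(4\sqrt{A},2v)$ for the envelope $G=P^{r-1}H$. Condition (MT) provides directly the moment bounds $\|G\|_{P,q}\le b_{\frakg}$ and $\sup_{g\in\calG}\|g\|_{P,\ell}^{\ell}\le \overline{\sigma}_{\frakg}^{2}b_{\frakg}^{\ell-2}$ for $\ell=2,3,4$ (and note that each $g\in\calG$ is automatically centered after subtracting $Pg$, which does not affect the covering numbers and only reshuffles the constants in the moment bounds).

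Next I would apply the Gaussian coupling theorem for the supremum of the empirical process of a VC type class, in the precise form stated for instance in Theorem~2.1 of \cite{cck2016_empirical_process_coupling}, with rate given by the two terms appearing in $\delta_n$ in \eqref{eq:error_rate}: the first, $(\overline{\sigma}_{\frakg}^{2}b_{\frakg}K_n^{2})^{1/3}/(\gamma^{1/3}n^{1/6})$, is the contribution from the Hungarian-type approximation driven by the $L^{2},L^{3},L^{4}$ sizes $\overline{\sigma}_{\frakg}$ and the weak envelope $b_{\frakg}$, while the second, $b_{\frakg}K_n/(\gamma n^{1/2-1/q})$, comes from truncating the envelope at its $L^{q}$ level. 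This yields, on the enriched probability space \eqref{eq:probability_space}, a random variable $\tilde Z_{n,\gamma}$ with $\mathcal{L}(\tilde Z_{n,\gamma}) = \mathcal{L}(\tilde Z)$ and
\[
\Prob\bigl(|L_n - \tilde Z_{n,\gamma}|>C\delta_n\bigr) \le C'(\gamma + n^{-1})
\]
for universal constants $C,C'$.

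Finally I would convert the coupling into the set-enlargement inequality. For any Borel $B\subset\R$,
\[
\Prob(L_n\in B) \le \Prob(\tilde Z_{n,\gamma}\in B^{C\delta_n}) + \Prob\bigl(|L_n-\tilde Z_{n,\gamma}|>C\delta_n\bigr) \le \Prob(\tilde Z\in B^{C\delta_n}) + C'(\gamma + n^{-1}),
\]
using $\mathcal{L}(\tilde Z_{n,\gamma})=\mathcal{L}(\tilde Z)$ in the last step. This is the claim.

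The only nontrivial step is the application of the empirical-process Gaussian coupling with precisely matching constants; the main conceptual obstacle is bookkeeping, namely tracking that the VC parameters $(4\sqrt{A},2v)$ for $\calG$ and the moment parameters $(\overline{\sigma}_{\frakg},b_{\frakg},q)$ produce exactly $\delta_n$ and $K_n=v\log(A\vee n)$ as displayed, together with the factor $\gamma$ absorbing the Strassen-Dudley randomization (which requires the additional $U(0,1)$ coordinate in the probability space construction \eqref{eq:probability_space}).
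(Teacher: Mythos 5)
Your reduction strategy is exactly the one the paper uses: verify that $\calG = P^{r-1}\calH$ inherits (PM), VC type with characteristics $(4\sqrt{A},2v)$, and the (MT) moment bounds, then appeal to empirical-process Gaussian coupling theory for $L_n$, then convert the coupling into a set-enlargement inequality via the Strassen--Dudley machinery on the enriched space \eqref{eq:probability_space}. All of that is right.

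The gap is in the middle step. You cite Theorem~2.1 of \cite{cck2016_empirical_process_coupling} as if it directly produces $\delta_n$ with the stated universal constants, but it does not, and the paper's proof in Appendix~\ref{app:complement} exists precisely to explain the required modifications. There are three mismatches. First, Theorem~2.1 of \cite{cck2016_empirical_process_coupling} does not treat the case $q=\infty$, which Lemma~\ref{lem:empirical_process} explicitly allows; handling $q=\infty$ requires showing that the truncation term $M_{n,X}(\delta)$ vanishes identically under the prescribed choice of $\delta$, which is a small but real additional argument. Second, the constants in the original Theorem~2.1 depend on $q$, whereas the lemma asserts \emph{universal} constants $C,C'$; obtaining $q$-free constants means re-deriving the truncation estimate rather than quoting it, and the paper does this by invoking Corollary~\ref{cor:local_max_ineq_dengenerate_uprocess_VCtype} (with $r=k=1$) in place of the lemma used in the original proof. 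Third, the second term of $\delta_n$ carries $\gamma^{-1}$ rather than the $\gamma^{-1/q}$ that the original Theorem~2.1 yields (and $K_n$ is defined differently); for finite $q$ this is a weakening that could be absorbed, but it is a signal that one is not simply instantiating the cited theorem. None of these is conceptually deep, but calling it ``bookkeeping'' understates it: the proof has to re-run Steps~1, 3, 4, 5 of the argument in \cite{cck2016_empirical_process_coupling} with a specific discretization level $\varepsilon = n^{-1/2}$, Markov's inequality in place of their Lemma~6.1, and a separate treatment of the $q=\infty$ endpoint, rather than a one-line citation.
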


The proof is a minor modification to that of Theorem 2.1 in \cite{cck2016_empirical_process_coupling}. Differences are 1)  Lemma \ref{lem:empirical_process} allows $q=\infty$, and constants $C,C'$ to be independent of $q$; 2) the error bound $\delta_{n}$ contains $b_{\frakg} K_n/ (\gamma n^{1/2-1/q})$ instead of $b_{\frakg} K_n/(\gamma^{1/q} n^{1/2-1/q})$; and 3) our definition of $K_{n}$ is slightly different from theirs.
For completeness, in Appendix \ref{app:complement}, we provide a sketch of the proof for Lemma \ref{lem:empirical_process}, which points out required modifications to the proof of Theorem 2.1 in \cite{cck2016_empirical_process_coupling}. 

\begin{proof}[Proof of Proposition \ref{prop:entropy_bounds_vc}]
In view of the Strassen-Dudley theorem (see Theorem \ref{thm:strassen-dudley}), it suffices to verify that there exist constants $C,C'$ depending only  $r$ such that 
\[
\Prob (Z_{n} \in B) \le \Prob (\tilde{Z} \in B^{C\varpi_{n}}) + C'(\gamma+n^{-1})
\]
for every $B \in \calB (\R)$. In what follows, $C,C'$ denote generic  constants that depend only on $r$; their values may vary from place to place. 

We shall follow the notation used in Section \ref{sec:local_maximal_inequalities}.
Consider the Hoeffding decomposition for $U_{n}(h) = U_{n}^{(r)}(h)$: $
U_{n}^{(r)}(h) - P^{r}h = r U_{n}^{(1)}(\pi_{1} h) + \sum_{k=2}^{r} \binom{r}{k} U_{n}^{(k)}(\pi_{k}h)$, or
\[
\U_{n} (h) = \sqrt{n} ( U_{n}^{(r)}(h) - P^{r}h) = r \G_{n} (P^{r-1}h) + \sqrt{n} \sum_{k=2}^{r} \binom{r}{k} U_{n}^{(k)}(\pi_{k}h),
\]
where $\G_n(P^{r-1}h) := n^{-1/2}\sum_{i=1}^{n} (P^{r-1}h (X_{i}) - P^{r}h)$ is the H\'ajek (empirical) process associated with $\U_n$.
Recall that $\calG = P^{r-1}\calH = \{ P^{r-1}h : h \in \calH \}$, and let  
$L_n = \sup_{g \in \calG} \G_n(g)$ and $R_n = \| \sqrt{n} \sum_{k=2}^r \binom{r}{k} U_{n}^{(k)}(\pi_{k}h)/r \|_\calH$. Then, since $|Z_n - L_n| \le  R_n$,  Markov's inequality and Lemma \ref{lem:empirical_process} yield that for every $B \in \calB(\R)$, 
\begin{align}
\Prob (Z_{n} \in B) &\le \Prob (\{ Z_{n} \in B \} \cap \{ R_{n} \le \gamma^{-1} \E[R_{n}] \}) + \Prob (R_{n} > \gamma^{-1} \E[ R_{n} ] ) \notag \\
&\le \Prob (L_{n} \in B^{\gamma^{-1} \E[R_{n}]}) + \gamma \notag \\
&\le  \Prob (\tilde{Z} \in B^{C \delta_n + \gamma^{-1} \E[R_{n}]}  ) + C'(\gamma + n^{-1}), \label{eqn:gaussian_coupling_bound_step1}
\end{align}
where $\delta_{n}$ is given in (\ref{eq:error_rate}). 

It remains to bound $\E[R_{n}]$. To this end, we shall separately apply Corollary \ref{cor:local_max_ineq_dengenerate_uprocess_VCtype} for $k=2$ and Corollary \ref{thm:alternative_max_ineq} for $k=3,\dots,r$. 
First,  applying Corollary \ref{cor:local_max_ineq_dengenerate_uprocess_VCtype} to $\calF=\calH$  for $k=2$ yields 
\[
n \E[\| U_{n}^{(2)}(\pi_{2}h) \|_{\calH}] \le  C \left ( \sigma_{\frakh} K_{n} + b_{\frakh} K_{n}^{2} n^{-1/2+1/q} \right ).
\]
Likewise, applying Corollary \ref{thm:alternative_max_ineq} to $\calF= \calH$ for $k=3,\dots,r$ yields 
\[
\sum_{k=3}^{r}\E[\| U_{n}^{(k)}(\pi_{k}h) \|_{\calH}] \le  C \sum_{k=3}^{r} n^{-k/2}\| P^{r-k} H \|_{P^{k},2} K_{n}^{k/2} = Cn^{-1/2} \chi_{n}.  
\]
Therefore, we conclude that 
\begin{align}
\E[R_n]  \le  C\sum_{k=2}^r n^{1/2} \E[\| U_{n}^{(k)}(\pi_{k}h) \|_{\calH} ] \le  C'\left ( \sigma_{\frakh} K_{n} n^{-1/2} + b_{\frakh} K_{n}^{2} n^{-1+1/q} + \chi_{n}\right ). \label{eqn:gaussian_coupling_bound_step2}
\end{align}
Combining (\ref{eqn:gaussian_coupling_bound_step1}) with (\ref{eqn:gaussian_coupling_bound_step2}) leads to the conclusion of the proposition.
\end{proof}

\begin{proof}[Proof of Corollary \ref{cor:kolmogorov_distance_gaussian_coupling_vc}]
We begin with noting that we may assume that $b_{\frakg} \le n^{1/2}$, since otherwise the conclusion is trivial by taking $C \ge 1$. 
In this proof, the notation $\lesssim$ signifies that the left hand side is bounded by the right hand side up to a constant that depends only on $r, \overline{\sigma}_{\frakg}$, and $\underline{\sigma}_{\frakg}$. 
Let $\gamma \in (0,1)$ and pick a version $\tilde{Z}_{n,\gamma}$ of $\tilde{Z}$ as in Proposition \ref{prop:entropy_bounds_vc} ($\tilde{Z}_{n,\gamma}$ may depend on $\gamma$).  Proposition \ref{prop:entropy_bounds_vc} together with \cite[Lemma 2.1]{cck2016_empirical_process_coupling} yield that
\begin{align*}
\rho (Z_n, \tilde{Z}) &= \rho (Z_{n}, \tilde{Z}_{n,\gamma})  \le  \sup_{t \in \R} \Prob(|\tilde{Z}_{n,\gamma} - t| \le  C \varpi_n) + C' (\gamma + n^{-1}) \\
& =\sup_{t \in \R} \Prob ( | \tilde{Z} - t | \le  C \varpi_n) + C' (\gamma + n^{-1}).
\end{align*}
Now, the anti-concentration inequality (see Lemma \ref{lem:AC} in Appendix \ref{app:supporting_lemmas})  yields 
\begin{equation}
\sup_{t \in \R} \Prob (|\tilde{Z} - t | \le  C \varpi_n )  \lesssim \varpi_n \left \{ \E[\tilde{Z}] + \sqrt{1 \vee \log(\underline{\sigma}_{\frakg} / (C \varpi_n))} \right \}.
\label{eq:AC}
\end{equation}
Since $\calG$ is VC type with characteristics $4\sqrt{A}$ and $2v$ for envelope $G$ (Lemma \ref{lem:uniform_entropy_numbers_between_GH}), by Lemma \ref{lem:approximation}, we have  $N(\calG, \| \cdot \|_{P,2}, \tau) \le (16\sqrt{A} \| G \|_{P,2}/\tau)^{2v}$ for all $0 < \varepsilon \le 1$.
Hence, 
 Dudley's entropy integral bound \cite[Theorem 2.3.7]{ginenickl2016} yields   $\E[\tilde{Z}] \lesssim (\overline{\sigma}_{\frakg} \vee (n^{-1/2} b_{\frakg})) K_n^{1/2} \lesssim K_{n}^{1/2}$where the last inequality follows from the assumption that  $b_{\frakg} \le n^{1/2}$. Since $\sqrt{1 \vee \log(\underline{\sigma}_{\frakg} / (C \varpi_n))} \lesssim (K_n \vee \log(\gamma^{-1}) )^{1/2}$, we conclude that
\[
\rho (Z_n, \tilde{Z} ) \lesssim   (K_n \vee \log(\gamma^{-1}) )^{1/2} \varpi_n (\gamma)  + \gamma + n^{-1}.
\]
The desired result follows from balancing $K_{n}^{1/2} \varpi_{n}(\gamma)$ and $\gamma$. 
\end{proof}

\subsection{Proofs for Section \ref{sec:bootstrap}}

\begin{proof}[Proof of Theorem \ref{thm:coupling_gaussian_mulitiplier_bootstrap}]
In this proof we will assume that each $h \in \calH$ is $P^{r}$-centered, i.e., $P^{r}h = 0$ for the rotational convenience. 
Recall that $\Prob_{\mid X_{1}^{n}}$ and $\E_{\mid X_{1}^{n}}$ denote the conditional probability and expectation given $X_{1}^{n}$, respectively. 
In view of  the conditional version of the Strassen-Dudley theorem (see Theorem \ref{thm:conditional_strassen-dudley}), it suffices to find constants $C,C'$ depending only on $r$, and an event $E \in \sigma(X_1^n)$ with $\Prob(E) \ge  1- \gamma - n^{-1}$ on which 
\[
\Prob_{\mid X_{1}^{n}} (Z_n^\sharp \in B) \le  \Prob (\tilde{Z} \in B^{C \varpi_n^\sharp} ) + C' (\gamma + n^{-1}) \quad \forall B \in \calB(\R). 
\]
The proof of Theorem \ref{thm:coupling_gaussian_mulitiplier_bootstrap} is involved and  divided into six steps. In what follows, let $C$ denote a generic positive constant depending only on $r$; the value of $C$ may change from place to place. 

\uline{Step 1: Discretization}.  For $0 < \varepsilon \le 1$ to be determined later, let $N := N(\varepsilon) := N(\calG, \|\cdot \|_{P,2}, \varepsilon \| G \|_{P,2})$. Since $\|G\|_{P,2} \le  b_{\frakg}$, there exists an  $\varepsilon b_{\frakg}$-net $\{ g_{k} \}_{k=1}^{N}$ for $(\calG, \| \cdot \|_{P,2})$.
By the definition of $\calG$, each $g_{k}$ corresponds to a kernel $h_{k} \in \calH$ such that $g_{k} = P^{r-1}h_{k}$. 
The Gaussian process $W_P$ extends to the linear hull of $\calG$ in such a way that $W_{P}$ has linear sample paths \citep[e.g., see][Theorem 3.7.28]{ginenickl2016}. Now, observe that
\[
0 \le   \sup_{g \in \calG} W_P(g) - \max_{1 \le  j \le  N} W_P(g_j) \le  \|W_P\|_{\calG_{\varepsilon}}, \ 0 \le   \sup_{h \in \calH} \U_n^\sharp(h) - \max_{1 \le  j \le  N} \U_n^\sharp(h_j) \le  \|\U_n^\sharp\|_{\calH_{\varepsilon}},
\]
where $\calG_{\varepsilon} = \{g-g' : g, g' \in \calG, \|g-g'\|_{P,2} < 2  \varepsilon b_{\frakg} \}$ and 
$\calH_{\varepsilon} =  \{h-h' : h, h' \in \calH, \| P^{r-1} h- P^{r-1} h' \|_{P,2} < 2  \varepsilon b_{\frakg} \}$.

\uline{Step 2: Construction of a high-probability event $E \in \sigma(X_1^n)$}. We divide this step into several sub-steps.

(i). 
For a $P$-integrable function $g$ on $S$, we will use the notation
\[
\G_{n} (g) :=\frac{1}{\sqrt{n}}\sum_{i=1}^{n} \{ g(X_{i}) - Pg \}.
\]
Consider the function class $\breve{\calG} \cdot \breve{\calG} = \{ gg' : g,g' \in \breve{\calG} \}$ with $\breve{\calG} = \{ g, g - Pg : g \in \calG \}$. 
Recall that $\calG$ with envelope $G$  is VC type with characteristics $(4\sqrt{A},2v)$. 
The function class $\{ g -Pg : g \in \calG \}$ with envelope $\breve{G} := G + PG$ is VC type with characteristics $(4\sqrt{2A},2v+1)$ from a simple calculation. Conclude that $\breve{\calG}$ with envelope $\breve{G}$ is VC type with characteristics $(8\sqrt{2A},2v+1)$, and by Lemma \ref{lem:pointwise_product}, $\breve{\calG} \cdot \breve{\calG}$ with envelope $\breve{G}^{2}$ is VC type with characteristics $(16\sqrt{2A},4v+2)$. For $g,g' \in \calG$, $P(gg')^2 \le \sqrt{Pg^4} \sqrt{P(g')^4} \le \overline{\sigma}_{\frakg}^{2}b_{\frakg}^{2}$ by Condition (MT).
Likewise, 
\[
\begin{split}
&P(g-Pg)^{2}(g'-Pg')^{2} \le \sqrt{P(g-Pg)^4} \sqrt{P(g'-Pg')^4} \le 8 \sqrt{Pg^{4}+(Pg)^{4}}\sqrt{P(g')^{4} + (Pg')^{4}} \\
&\quad \le 16 \sqrt{Pg^{4}} \sqrt{P(g')^{4}} \le 16 \overline{\sigma}_{\frakg}^{2}b_{\frakg}^{2}.
\end{split}
\]
We also note that $\| \breve{G} \|_{P,q} \le 2 \| G \|_{P,q} \le 2b_{\frakg}$. 
Hence, applying Corollary  \ref{cor:local_max_ineq_dengenerate_uprocess_VCtype} with $\calF=\breve{\calG} \cdot \breve{\calG}, r=k=1$, and $q= q/2$ yields 
\[
n^{-1/2} \E[ \| \G_{n} \|_{\breve{\calG} \cdot \breve{\calG}} ] \le  C\left (\overline{\sigma}_{\frakg}b_{\frakg} K_{n}^{1/2} n^{-1/2}+ b_{\frakg}^{2} K_{n} n^{-1+2/q} \right ),
\]
so that with probability at least $1 - \gamma/3$,
\begin{equation}
\label{eqn:event_E_prod_G}
n^{-1/2} \|\G_n\|_{\breve{\calG} \cdot \breve{\calG}} \le  C\gamma^{-1} \left (   \overline{\sigma}_{\frakg} b_{\frakg}K_n^{1/2} n^{-1/2} + b_{\frakg}^2 K_n n^{-1+2/q} \right)
\end{equation}
by Markov's inequality. 

(ii). Define
\begin{equation}
\Upsilon_{n} :=  \left \| \frac{1}{n} \sum_{i=1}^{n} \{ U_{n-1,-i}^{(r-1)} (\delta_{X_{i}}h)  - P^{r-1}h(X_{i}) \}^{2} \right \|_{\calH}.
\label{eq:Upsilon}
\end{equation}
We will show that
\begin{equation}
\label{eq:upsilon_bound}
\E[\Upsilon_{n}] 
\le C \left \{   \sigma_{\frakh}^{2}  K_{n}n^{-1}+ \nu_{\frakh}^{2} K_{n}^{2}n^{-3/2+2/q}  + \sigma_{\frakh} b_{\frakh} K_{n}^{3/2} n^{-3/2}  +  b_{\frakh}^{2} K_{n}^{3}n^{-2+2/q} + \chi_{n}^{2} \right \}.
\end{equation}
Together with Markov's inequality, we have that with probability at least $1-\gamma/3$, 
\begin{equation}
\label{eqn:event_E_prod_F}
\Upsilon_{n} \le  C\gamma^{-1}\left \{   \sigma_{\frakh}^{2} K_{n}n^{-1}+ \nu_{\frakh}^{2} K_{n}^{2}n^{-3/2+2/q}  + \sigma_{\frakh} b_{\frakh} K_{n}^{3/2} n^{-3/2}  +  b_{\frakh}^{2} K_{n}^{3}n^{-2+2/q} +  \chi_{n}^{2} \right \}.
\end{equation}
The proof of the inequality (\ref{eq:upsilon_bound}) is lengthy and deferred after the proof of the theorem.

(iii). 
We shall bound $\E[ \| U_{n}(h) - P^{r}h \|_{\calH}^{2}]$. 
Applying Corollary \ref{thm:alternative_max_ineq} to $\calH$ for  $k=2,\dots,r$ yields
\[
\sum_{k=2}^{r} \E[ \| U_{n}^{(k)}(\pi_{k}h) \|_{\calH}^{2}] \le C \left ( b_{\frakh}^{2}K_{n}^{2}n^{-2}  + n^{-1} \chi_{n}^{2} \right).
\]
Next, since $U_{n}^{(1)}(\pi_{1}h), h \in \calH$ is an empirical process, we may apply the Hoffmann-J\o rgensen inequality \cite[Proposition A.1.6]{vandervaartwellner1996} to deduce that 
\begin{align*}
\E[ \| U_{n}^{(1)}(\pi_{1}h) \|_{\calH}^{2}] &\le C\left \{ (\E[\| U_{n}^{(1)}(\pi_{1}h) \|_{\calH}])^{2} + b_{\frakg}^{2} n^{-2+2/q}\right \} \\
&\le C \left ( \overline{\sigma}_{\frakg}^{2} K_{n} n^{-1} + b_{\frakg}^{2} K_{n}^{2}n^{-2+2/q} + b_{\frakg}^{2} n^{-2+2/q} \right)\\
&\le C\left ( \overline{\sigma}_{\frakg}^{2} K_{n} n^{-1} + b_{\frakg}^{2} K_{n}^{2}n^{-2+2/q} \right ),
\end{align*}
where the second inequality follows from Corollary \ref{cor:local_max_ineq_dengenerate_uprocess_VCtype}. Since $\overline{\sigma}_{\frakg} \le \sigma_{\frakh}$ and $b_{\frakg} \le b_{\frakh}$, 
\[
\E[ \| U_{n}(h) - P^{r}h \|_{\calH}^{2}] 
\le  C \left ( \sigma_{\frakh}^{2} K_{n} n^{-1} + b_{\frakh}^{2} K_{n}^{2} n^{-2+2/q} + n^{-1} \chi_{n}^{2} \right ),
\]
so that by Markov's inequality, with probability at least $1-\gamma/3$, 
\begin{equation}
\label{eqn:event_E_U2_n}
\| U_{n}(h) - P^{r}h \|_{\calH}^{2} \le  C \gamma^{-1}\left (\sigma_{\frakh}^{2} K_{n} n^{-1} + b_{\frakh}^{2} K_{n}^{2} n^{-2+2/q} + n^{-1} \chi_{n}^{2}\right). 
\end{equation}

(iv). Let $\Prob_{I_{n,r}} = |I_{n,r}|^{-1} \sum_{(i_1,\dots,i_r) \in I_{n,r}} \delta_{(X_{i_1},\dots,X_{i_r})}$ denote the empirical distribution on all possible $r$-tuples of $X_1^n$. Then Markov's inequality yields that with probability at least $1-n^{-1}$,
\begin{equation}
\label{eqn:event_E_envelope_H}
\|H\|_{\Prob_{I_{n,r}},2} \le  n^{1/2} \|H\|_{P^{r},2}.
\end{equation}

Now, define the event $E$ by the  the intersection of the events  (\ref{eqn:event_E_prod_G}), (\ref{eqn:event_E_prod_F}), (\ref{eqn:event_E_U2_n}), and (\ref{eqn:event_E_envelope_H}). Then, $E \in \sigma(X_1^n)$ and $\Prob(E) \ge  1 - \gamma - n^{-1}$.

\uline{Step 3: Bounding the discretization error for  $W_P$}. By the Borell-Sudakov-Tsirel'son inequality \citep[cf.][Theorem 2.5.8]{ginenickl2016}, we have 
\[
\Prob \left (\|W_P\|_{\calG_{\varepsilon}} \ge  \E [\|W_P\|_{\calG_{\varepsilon}}] + 2\varepsilon b_{\frakg} \sqrt{2 \log{n}} \right) \le   n^{-1}.
\]
From a standard calculation,  $N(\calG_{\varepsilon}, \| \cdot \|_{P,2}, \tau) \le  N^2(\calG, \| \cdot \|_{P,2}, \tau/2)$. 
Since $\calG$ is VC type with characteristics $4\sqrt{A}$ and $2v$ for envelope $G$, 
by Lemma \ref{lem:approximation},  we have  $N(\calG, \| \cdot \|_{P,2}, \tau \|G\|_{P,2}) \le  C (16 \sqrt{A} / \tau)^{2v}$, so that $
N(\calG_{\varepsilon}, \| \cdot \|_{P,2}, \tau) \le  (32 \sqrt{A} b_{\frakg} / \tau)^{4v}$. 
Now, Dudley's entropy integral bound \cite[Corollary 2.2.8]{vandervaartwellner1996} yields 
\[
\E [\|W_P\|_{\calG_{\varepsilon}} ] \le  C (\varepsilon b_{\frakg}) \sqrt{v \log (A / \varepsilon) }.
\]
Choosing $\varepsilon = 1/n^{1/2}$, we have
\[
\E [\|W_P\|_{\calG_{\varepsilon}}] \le  C b_{\frakg} n^{-1/2} \sqrt{v \log(A n^{1/2})} \le  C b_{\frakg}K_{n}^{1/2}n^{-1/2}.
\]
Since $\log n \le K_n$, we conclude that 
\[
\Prob \left ( \|W_P\|_{\calG_{\varepsilon}} \ge  C b_{\frakg}K_{n}^{1/2}n^{-1/2} \right ) \le   n^{-1}.
\]

\uline{Step 4: Bounding the discretization error for  $\U_n^\sharp$}.  Since $\{ \U_{n}^{\sharp} (h) : h \in \calH \}$ is a centered Gaussian process conditionally on $X_{1}^{n}$, applying the Borell-Sudakov-Tsirel'son inequality conditionally on $X_1^n$, we have
\[
\Prob_{\mid X_{1}^{n}} \left ( \|\U_n^\sharp\|_{\calH_{\varepsilon}} \ge  \E_{\mid X_{1}^{n}}[\|\U_n^\sharp\|_{\calH_{\varepsilon}} ]+ \sqrt{2\Sigma_{n}  \log{n}} \right) \le   n^{-1},
\]
where $\Sigma_{n} := \| n^{-1} \sum_{i=1}^{n}\{ U_{n-1,-i}^{(r-1)} (\delta_{X_{i}}h) - U_{n}(h) \}^{2} \|_{\calH_{\varepsilon}}$ with $\varepsilon = 1/n^{1/2}$.

We begin with bounding $\Sigma_{n}$. 
For any $h \in \calH_{\varepsilon}$, $n^{-1}\sum_{i=1}^{n}\{ U_{n-1,-i}^{(r-1)} (\delta_{X_{i}}h)- U_{n}(h) \}^{2}$ is bounded by $n^{-1}\sum_{i=1}^{n}\{ U_{n-1,-i}^{(r-1)} (\delta_{X_{i}}h)\}^{2}$ since the average of $U_{n-1,-i}^{(r-1)} (\delta_{X_{i}}h), i=1,\dots,n$ is $U_{n}(h)$ and the variance is bounded by the second moment. Further, the term $n^{-1}\sum_{i=1}^{n}\{ U_{n-1,-i}^{(r-1)} (\delta_{X_{i}}h)\}^{2}$ is bounded by 
\begin{equation}
\begin{split}
&\frac{2}{n} \sum_{i=1}^{n} \{ U_{n-1,-i}^{(r-1)} (\delta_{X_{i}}h)- P^{r-1}h (X_{i})  \}^{2}   \\
&\quad +\frac{2}{n} \sum_{i=1}^{n} \{ (P^{r-1}h(X_{i}))^{2} - P(P^{r-1}h)^{2} \} + 2P(P^{r-1}h)^{2}.
\end{split}
\label{eq:decomposition}
\end{equation}
The last term on the right hand side of (\ref{eq:decomposition}) is bounded by $8(\varepsilon b_{\frakg})^{2}$. 
The supremum of the first term on $\calH_{\varepsilon}$ is bounded by $8\Upsilon_{n}$ since $\calH_{\varepsilon} \subset  \{ h-h' : h,h' \in \calH \}$ (the notation $\Upsilon_{n}$ is defined in (\ref{eq:Upsilon})). 
For the second term, observe that $\{ (P^{r-1}h)^{2} : h \in \calH_{\varepsilon} \} \subset \{ (g-g')^{2} : g,g' \in \calG \}, (g-g')^{2} - P(g-g')^{2} = (g^{2} - Pg^{2}) + 2(gg' - Pgg') + ((g')^{2} - P(g')^{2})$, and $\{ g^{2} : g \in \calG\} \subset \breve{\calG} \cdot \breve{\calG}$, so that the supremum of the second term on the right hand side of (\ref{eq:decomposition}) is bounded by $8n^{-1/2} \| \G_{n} \|_{\breve{\calG} \cdot \breve{\calG}}$.
Therefore, recalling that we have chosen $\varepsilon=1/n^{1/2}$, we conclude that
\begin{align*}
\Sigma_{n} &\le 8(\varepsilon b_{\frakg})^{2} + 8n^{-1/2} \| \G_{n} \|_{\breve{\calG} \cdot \breve{\calG}} +8 \Upsilon_{n} \\
&\le  C\gamma^{-1} \Bigg \{ \overline{\sigma}_{\frakg} b_{\frakg} K_n^{1/2}  n^{-1/2} + b_{\frakg}^2 K_n  n^{-1+2/q}  + \sigma_{\frakh}^{2} K_{n}n^{-1} \\
&\qquad \qquad \qquad + \nu_{\frakh}^{2} K_{n}^{2}n^{-3/2+2/q}  + \sigma_{\frakh} b_{\frakh} K_{n}^{3/2} n^{-3/2}  +  b_{\frakh}^{2} K_{n}^{3}n^{-2+2/q}+ \chi_{n}^{2}  \Bigg \}
\end{align*}
on the event $E$.

Next, we shall bound $\E_{\mid X_{1}^{n}} [\|\U_n^\sharp\|_{\calH_{\varepsilon}}]$ on the event $E$. Since $\calH$ is VC type with characteristics $(A,v)$, we have
\[
N(\calH_{\varepsilon}, \| \cdot \|_{\Prob_{I_{n,r}},2}, 2\tau \|H\|_{\Prob_{I_{n,r}},2}) \le  N^{2}(\calH, \| \cdot \|_{\Prob_{I_{n,r}},2}, \tau \|H\|_{\Prob_{I_{n,r}},2}) \le  (A/\tau)^{2v}.
\]
In addition, since
\begin{align*}
&d^{2}(h,h') := \E_{\mid X_{1}^{n}} [ \{ \U_n^\sharp(h) - \U_n^\sharp(h') \}^{2} ] \\
&\quad =\frac{1}{n} \sum_{i=1}^{n}\{ U_{n-1,-i}^{(r-1)} (\delta_{X_{i}}h) - U_{n}(h) - U_{n-1,-i}^{(r-1)} (\delta_{X_{i}}h') + U_{n}(h') \}^{2} \\
&\quad \le \frac{1}{n} \sum_{i=1}^{n}\{ U_{n-1,-i}^{(r-1)} (\delta_{X_{i}}h) -  U_{n-1,-i}^{(r-1)} (\delta_{X_{i}}h')  \}^{2} \le \| h - h ' \|_{\Prob_{I_{n,r}},2}^{2},
\end{align*}
where the last inequality follows from Jensen's inequality, and since a weaker pseudometric induces a smaller covering number, we have
\[
N(\calH_{\varepsilon}, d, 2\tau \|H\|_{\Prob_{I_{n,r}},2}) \le N(\calH_{\varepsilon}, \| \cdot \|_{\Prob_{I_{n,r}},2}, 2\tau \|H\|_{\Prob_{I_{n,r}},2}) \le  (A/\tau)^{2v}.
\]
Hence, using $2\left [ (n^{-(r-1)/2}\| H \|_{P^{r},2}) \vee \Sigma_{n}^{1/2} \right ]$ as a bound on the $d$-diameter of $\calH_{\varepsilon}$, we have by Dudley's entropy integral bound 
\begin{align*}
\E_{\mid X_{1}^{n}} [\|\U_n^\sharp\|_{\calH_{\varepsilon}}]  
&\le C \int_{0}^{(n^{-(r-1)/2}\| H \|_{P^{r},2}) \vee \Sigma_{n}^{1/2}} \sqrt{v \log(A \| H \|_{\Prob_{I_{n,r},2}}/\tau)} d\tau \\
&\le  C\left (  (n^{-(r-1)/2}\| H \|_{P^{r},2}) \vee \Sigma_{n}^{1/2} \right )\sqrt{v\log (A\| H \|_{\Prob_{I_{n,r},2}}/(n^{-(r-1)/2}\| H \|_{P^{r},2}))} \\
&\le  C\left (  (n^{-(r-1)/2}\| H \|_{P^{r},2}) \vee \Sigma_{n}^{1/2} \right )\sqrt{v\log (An^{r/2})} 
\end{align*}
on the event $E$ (we have used $\| H \|_{\Prob_{I_{n,k},2}} \le n^{1/2} \| H \|_{P^{r},2}$ on $E$). Since $n^{-(r-1)/2}\| H \|_{P^{r},2} \le \chi_{n}$, we have 
\begin{align*}
\E_{\mid X_{1}^{n}}[\|\U_n^\sharp\|_{\calH_{\varepsilon}}] &\le  C (\chi_{n} \vee \Sigma_n^{1/2}) K_n^{1/2} \\
&\le  C \gamma^{-1/2} \Bigg \{ (\overline{\sigma}_{\frakg} b_{\frakg} K_n^{3/2})^{1/2}  n^{-1/4} + b_{\frakg} K_n n^{-1/2+1/q}  + \sigma_{\frakh}  K_{n}n^{-1/2} \\
&\qquad \qquad + \nu_{\frakh} K_{n}^{3/2} n^{-3/4+1/q}  + (\sigma_{\frakh} b_{\frakh})^{1/2} K_{n}^{5/4} n^{-3/4}  +  b_{\frakh} K_{n}^{2}n^{-1+1/q}+ \chi_{n}K_{n}^{1/2} \Bigg \}
\end{align*}
on the event $E$. 
Hence, we conclude that
$$
\Prob_{\mid X_{1}^{n}} (\|\U_n^\sharp\|_{\calH_{\varepsilon}} \ge  C \delta_n^{(1)}) \le   n^{-1}
$$
on the event $E$, where
\[
\begin{split}
\delta_n^{(1)} &= \frac{1}{\gamma^{1/2}} \Bigg \{  {(\overline{\sigma}_{\frakg} b_{\frakg}K_n^{3/2})^{1/2} \over n^{1/4}} + {b_{\frakg} K_n \over  n^{1/2-1/q}} + {\sigma_{\frakh}  K_n \over  n^{1/2}} + \frac{\nu_{\frakh}K_{n}^{3/2}}{n^{3/4-1/q}} \\
&\qquad \qquad + {(\sigma_{\frakh} b_{\frakh})^{1/2} K_{n}^{5/4}\over n^{3/4}} + \frac{b_{\frakh} K_{n}^{2}}{n^{1-1/q}} + \chi_{n} K_{n}^{1/2} \Bigg \}.
\end{split}
\]

\uline{Step 5: Gaussian comparison}.  Let $Z_n^{\sharp,\varepsilon} := \max_{1 \le  j \le  N} \U_n^\sharp(h_j)$ and $\tilde{Z}^{\varepsilon} := \max_{1 \le  j \le  N} W_P(g_j)$. 
 Observe that the covariance between $\U_{n}^{\sharp}(h_{k})$ and $\U_{n}^{\sharp}(h_{\ell})$ conditionally on $X_{1}^{n}$ is 
\begin{align*}
\hat{C}_{k,\ell} &:= \frac{1}{n} \sum_{i=1}^{n} \{ U_{n-1,-i}^{(r-1)} (\delta_{X_{i}}h_{k})- U_{n}(h_{k}) \} \{ U_{n-1,-i}^{(r-1)} (\delta_{X_{i}}h_{\ell})- U_{n}(h_{\ell}) \} \\
&=\frac{1}{n}  \sum_{i=1}^{n}  U_{n-1,-i}^{(r-1)} (\delta_{X_{i}}h_{k})U_{n-1,-i}^{(r-1)} (\delta_{X_{i}}h_{\ell}) - U_{n} (h_{k}) U_{n}(h_{\ell}) \\
&= \frac{1}{n}  \sum_{i=1}^{n} \{ U_{n-1,-i}^{(r-1)} (\delta_{X_{i}}h_{k})- P^{r-1}h_{k}(X_{i}) \}\{ U_{n-1,-i}^{(r-1)} (\delta_{X_{i}}h_{\ell}) - P^{r-1}h_{\ell}(X_{i}) \} \\
&\quad + \frac{1}{n}  \sum_{i=1}^{n} \{ U_{n-1,-i}^{(r-1)} (\delta_{X_{i}}h_{k})- P^{r-1}h_{k}(X_{i}) \} P^{r-1}h_{\ell}(X_{i}) \\
&\quad + \frac{1}{n}  \sum_{i=1}^{n} \{ U_{n-1,-i}^{(r-1)} (\delta_{X_{i}}h_{\ell})- P^{r-1}h_{\ell}(X_{i}) \}P^{r-1}h_{k}(X_{i}) \\
&\quad + \frac{1}{n} \sum_{i=1}^{n} (P^{r-1}h_{k}(X_{i})) (P^{r-1}h_{\ell}(X_{i})) - U_{n} (h_{k}) U_{n}(h_{\ell}).
\end{align*}
Recall that $g_{k} = P^{r-1}h_{k}$ for each $k$. Replacing $h_{k}$ by $h_{k} - P^{r}h_{k}$ in the above expansion, we have 
\begin{align*}
&\left |\hat{C}_{k,\ell} - P(g_{k}-Pg_{k}) (g_{\ell} - Pg_{\ell})\right | \\
&\le \left [ \frac{1}{n}  \sum_{i=1}^{n} \{ U_{n-1,-i}^{(r-1)} (\delta_{X_{i}}h_{k})- P^{r-1}h_{k}(X_{i}) \}^{2} \right]^{1/2}\left [ \frac{1}{n}  \sum_{i=1}^{n} \{ U_{n-1,-i}^{(r-1)} (\delta_{X_{i}}h_{\ell})- P^{r-1}h_{\ell}(X_{i}) \}^{2} \right]^{1/2} \\
&\quad + \left [ \frac{1}{n}  \sum_{i=1}^{n} \{ U_{n-1,-i}^{(r-1)} (\delta_{X_{i}}h_{k})- P^{r-1}h_{k}(X_{i}) \}^{2} \right]^{1/2}\left [ \frac{1}{n} \sum_{i=1}^{n}\{  g_{\ell} (X_{i}) - Pg_{\ell} \}^{2}  \right ]^{1/2} \\
&\quad +\left [ \frac{1}{n}  \sum_{i=1}^{n} \{ U_{n-1,-i}^{(r-1)} (\delta_{X_{i}}h_{\ell})- P^{r-1}h_{\ell}(X_{i}) \}^{2} \right]^{1/2} \left [ \frac{1}{n} \sum_{i=1}^{n} \{ g_{k} (X_{i}) - Pg_{k} \}^{2} \right ]^{1/2} \\
&\quad + n^{-1/2} |\G_{n}\left ((g_{k}- Pg_{k})(g_{\ell}-Pg_{\ell}) \right )| + | (U_{n}(h_{k}) - P^{r}h_{k})(U_{n}(h_{\ell})- P^{r}h_{\ell}) |,
\end{align*}
where we have used the Cauchy-Schwarz inequality.
Since $n^{-1} \sum_{i=1}^{n} \{ g (X_{i}) - Pg \}^{2}$ is decomposed as $P(g - Pg)^{2} + n^{-1/2} \G_{n}((g-Pg)^{2})$ and the supremum of the latter on $\calG$ is bounded by $\overline{\sigma}_{\frakg}^{2}+n^{-1/2} \| \G_{n} \|_{\breve{\calG} \cdot \breve{\calG}}$, we have 
\begin{align*}
\Delta_{n} &:= \max_{1 \le k,\ell \le N}\left |\hat{C}_{k,\ell} - P(g_{k}-Pg_{k}) (g_{\ell} - Pg_{\ell})\right | \\
&\le \Upsilon_{n} + 2\overline{\sigma}_{\frakg}\Upsilon_{n}^{1/2}  + 2 n^{-1/4} \Upsilon_{n}^{1/2} \| \G_{n} \|^{1/2}_{\breve{\calG} \cdot \breve{\calG}} + n^{-1/2} \| \G_{n} \|_{\breve{\calG} \cdot \breve{\calG}} + \| U_{n}(h) - P^{r}h \|_{\calH}^{2} \\
&\le 2\Upsilon_{n} + 2\overline{\sigma}_{\frakg}\Upsilon_{n}^{1/2}  + 2n^{-1/2} \| \G_{n} \|_{\breve{\calG} \cdot \breve{\calG}} + \| U_{n}(h) - P^{r}h \|_{\calH}^{2},
\end{align*}
where the second inequality follows from the inequality $2ab \le a^{2} + b^{2}$ for $a,b \in \R$. 
Now, Condition (\ref{eq:growthcondition}) ensures that 
\[
\begin{split}
&\Upsilon_{n} \bigvee (\overline{\sigma}_{\frakg} \Upsilon_{n}^{1/2}) \bigvee  \| U_{n}(h) - P^{r}h \|_{\calH}^{2} \\
&\quad \le  C \gamma^{-1}\overline{\sigma}_{\frakg} \left \{  \sigma_{\frakh}  K_{n}^{1/2}n^{-1/2} + \nu_{\frakh} K_{n} n^{-3/4+1/q} + (\sigma_{\frakh} b_{\frakh})^{1/2} K_{n}^{3/4} n^{-3/4}  +  b_{\frakh} K_{n}^{3/2}n^{-1+1/q}+ \chi_{n} \right  \}
\end{split}
\]
on the event $E$, so that 
\begin{align*}
\Delta_{n} \le  C\gamma^{-1} \Bigg [ &(b_{\frakg} \vee \sigma_{\frakh})\overline{\sigma}_{\frakg}K_n^{1/2} n^{-1/2}  + 
b_{\frakg}^{2} K_{n}n^{-1+2/q} \\
&\qquad+ \overline{\sigma}_{\frakg} \left \{  \nu_{\frakh} K_{n} n^{-3/4+1/q}  + (\sigma_{\frakh} b_{\frakh})^{1/2} K_{n}^{3/4} n^{-3/4}  +  b_{\frakh} K_{n}^{3/2}n^{-1+1/q}+ \chi_{n} \right \}\Bigg ] =: \overline{\Delta}_{n}.
\end{align*}
Therefore, the Gaussian comparison inequality of \cite[Theorem 3.2]{cck2016_empirical_process_coupling} yields  that on the event  $E$, 
\[
\Prob_{\mid X_{1}^{n}} (Z_n^{\sharp, \varepsilon} \in B) \le  \Prob(\tilde{Z}^{\varepsilon} \in B^\eta) + C \eta^{-1} \overline{\Delta}_{n}^{1/2} K_{n}^{1/2} \quad \forall B \in \calB(\R), \ \forall \eta > 0. 
\]

\uline{Step 6: Conclusion}. Let
\[
\begin{split}
\delta_n^{(2)} := \frac{1}{\gamma^{1/2}} \Bigg \{ &\frac{ \{  (b_{\frakg} \vee \sigma_{\frakh} ) \overline{\sigma}_{\frakg}K_n^{3/2} \}^{1/2}}{n^{1/4}} + {b_{\frakg} K_n \over n^{1/2-1/q}} +  {(\overline{\sigma}_{\frakg} \nu_{\frakh})^{1/2}K_{n}  \over n^{3/8-1/(2q)}} \\
&\qquad + \frac{\overline{\sigma}_{\frakg}^{1/2}(\sigma_{\frakh} b_{\frakh})^{1/4}K_{n}^{7/8}}{n^{3/8}} 
+ \frac{(\overline{\sigma}_{\frakg}b_{\frakh})^{1/2}K_{n}^{5/4}}{n^{1/2-1/(2q)}} + \overline{\sigma}_{\frakg}^{1/2} \chi_{n}^{1/2}K_{n}^{1/2} \Bigg \}.
\end{split}
\]
Then, from Steps 1--5,  we have for every $B \in \calB(\R)$ and $\eta > 0$, 
\begin{align*}
\Prob_{\mid X_{1}^{n}}(Z_n^\sharp \in B) &\le  \Prob_{\mid X_{1}^{n}}(Z_n^{\sharp,\varepsilon} \in B^{C \delta_n^{(1)}}) + n^{-1} \\
&\le  \Prob(\tilde{Z}^{\varepsilon} \in B^{C \delta_n^{(1)} + \eta}) + C \eta^{-1} \delta_n^{(2)} + n^{-1} \\
&\le  \Prob(\tilde{Z} \in B^{C \delta_n^{(1)} + \eta + C b_{\frakg}K_{n}^{1/2}n^{-1/2}}) + C \eta^{-1} \delta_n^{(2)} + 2 n^{-1}.
\end{align*}
Choosing $\eta = \gamma^{-1} \delta_{n}^{(2)}$ leads to the conclusion of the theorem. 
\end{proof}

It remains to prove the inequality (\ref{eq:upsilon_bound}). 

\begin{proof}[Proof of the inequality (\ref{eq:upsilon_bound})]

For a $P^{r-1}$-integrable symmetric function $f$ on $S^{r-1}$, $U_{n-1,-i}^{(r-1)} (f) $ is a $U$-statistic of order $r-1$ and its first projection term is 
\[
\frac{r-1}{n-1} \sum_{j =1, \neq i}^{n} \{ P^{r-2} f (X_{j})-P^{r-1}f \} =:S_{n-1,-i}(f).
\]
Consider the following decomposition:
\begin{equation}
\begin{split}
&\frac{1}{n}\sum_{i=1}^{n}  \{ U_{n-1,-i}^{(r-1)} (\delta_{X_{i}}h)  - P^{r-1}h(X_{i}) \}^{2}  \\
&\le \frac{2}{n} \sum_{i=1}^{n}  \{ S_{n-1,-i}(\delta_{X_{i}}h) \}^{2} +  \frac{2}{n}\sum_{i=1}^{n}\{ U_{n-1,-i}^{(r-1)} (\delta_{X_{i}}h)  - P^{r-1}(\delta_{X_{i}}h) - S_{n-1,-i}(\delta_{X_{i}}h) \}^{2}. 
\end{split}
\label{eq:decomp}
\end{equation}

Consider the second term. By Corollary \ref{cor:entropy_conditional}, for given $x \in S$,  $\delta_{x} \calH = \{ \delta_{x} x : h \in \calH \}$ is VC type  with characteristics $(A,v)$ for envelope $\delta_{x}H$.
Hence, we apply Corollary \ref{thm:alternative_max_ineq} conditionally on $X_{i}$ and deduce that
\begin{align*}
&\E \left [ \E \left [  \left \| U_{n-1,-i}^{(r-1)} (\delta_{X_{i}}h)  - P^{r-1}(\delta_{X_{i}}h) - S_{n-1,-i}(\delta_{X_{i}}h) \right \|_{\calH}^{2}  \ \Big | \ X_{i} \right ] \right ]\\
&\quad\le  C\sum_{k=2}^{r-1} n^{-k} \E\left [ \| P^{r-k-1} (\delta_{x}H) \|_{P^{k},2}^{2}|_{x=X_{i}} \right ] K_{n}^{k}
=C\sum_{k=2}^{r-1} n^{-k} \| P^{r-k-1}H \|_{P^{k+1},2}^{2} K_{n}^{k}.
\end{align*}
Since $\sum_{k=2}^{r-1} n^{-k} \| P^{r-k-1}H \|_{P^{k+1},2}^{2} K_{n}^{k} = \sum_{k=3}^{r}n^{-(k-1)} \| P^{r-k}H \|_{P^{k},2}^{2} K_{n}^{k-1} \le  C\chi_{n}^{2}$, the expectation of the supremum on $\calH$ of the second term on the right hand side of (\ref{eq:decomp})  is at most $ C \chi_{n}^{2}$. 

For the first term, observe that 
\[
\begin{split}
&n^{-1} \sum_{i=1}^{n} \{ S_{n-1,-i}(\delta_{X_{i}}h) \}^{2} \\
&= \frac{(r-1)^{2}}{n(n-1)^{2}} \sum_{i=1}^{n} \sum_{j \neq i} \sum_{k \neq i} \Bigg \{ (P^{r-2}h)(X_{i},X_{j}) (P^{r-2}h)(X_{i},X_{k}) 
-  (P^{r-2}h)(X_{i},X_{j}) (P^{r-1}h)(X_{i}) \\
&\quad - (P^{r-2}h)(X_{i},X_{k}) (P^{r-1}h)(X_{i}) + (P^{r-1}h)^{2}(X_{i})\Bigg \}.
\end{split}
\]
Let $\calF = \{ P^{r-2} h : h \in \calH \}$ and $F=P^{r-2}H$, and  observe that for $f \in \calF$,
\begin{align*}
&\sum_{i=1}^{n} \sum_{j \neq i} \sum_{k \neq i} \left \{ f(X_{i},X_{j}) f(X_{i},X_{k}) 
- f(X_{i},X_{j}) (Pf)(X_{i}) - f(X_{i},X_{k}) (Pf)(X_{i}) + (Pf)^{2}(X_{i})\right \} \\
&=n(n-1)\{ P^{2}f^{2} - P (Pf)^{2} \}  \\
&\quad +\sum_{(i,j) \in I_{n,2}} \left \{ f^{2}(X_{i},X_{j}) - 2 f(X_{i},X_{j}) (Pf)(X_{i}) + (Pf)^{2}(X_{i}) - P^{2}f^{2} + P(Pf)^{2} \right \}\\
&\quad + \sum_{(i,j,k) \in I_{n,3}}  \left \{ f(X_{i},X_{j}) f(X_{i},X_{k}) 
- f(X_{i},X_{j}) (Pf)(X_{i}) - f(X_{i},X_{k}) (Pf)(X_{i}) + (Pf)^{2}(X_{i})\right \}.
\end{align*}
Since $P^{2}f^{2} - P (Pf)^{2} \le \sigma_{\frakh}^{2}$, we focus on bounding the suprema of the last two terms. The second term is proportional to a non-degenerate $U$-statistic of order $2$, and the third term is proportional to a degenerate $U$-statistic of order $3$. 
Define the function classes
\[
\calF_{1} := \left \{ (x_{1},x_{2}) \mapsto f^{2}(x_{1},x_{2}) - 2f(x_{1},x_{2}) (Pf)(x_{1}) + (Pf)^{2}(x_{1}) : f \in \calF \right \},
\]
\[
\calF^{0}_{2} := \left \{  (x_{1},x_{2},x_{3}) \mapsto 
\left \{ 
\begin{split}
&f(x_{1},x_{2})f(x_{1},x_{3}) - f(x_{1},x_{2}) (Pf)(x_{1}) \\
& -f(x_{1},x_{3})(Pf)(x_{1})+  (Pf)^{2}(x_{1})
\end{split}
\right \}
: f \in \calF \right \}, \quad 
\]
\[
\begin{split}
\calF_{2} &:= \left \{ (x_{2},x_{3}) \mapsto \E[ f(X_{1},x_{2},x_{3})] : f \in \calF_{2}^{0} \right \}, \\
\calF_{3} &:= \left \{ (x_{1},x_{2},x_{3}) \mapsto f(x_{1},x_{2},x_{3}) - \E[ f(X_{1},x_{2},x_{3})] : f \in \calF_{2}^{0} \right \},
\end{split}
\]
together with their envelopes 
\begin{align*}
&F_{1}(x_{1},x_{2}) := F^{2} (x_{1},x_{2}) +  2F(x_{1},x_{2}) (PF)(x_{1}) + (PF)^{2}(x_{1}), \\
&F_{2}^{0}(x_{1},x_{2},x_{3}) := F(x_{1},x_{2})F(x_{1},x_{3}) + F(x_{1},x_{2}) (PF)(x_{1}) +  F(x_{1},x_{3}) (PF)(x_{1}) +  (PF)^{2}(x_{1}), \\
&F_{2}(x_{2},x_{3}) := \E[ F_{2}^{0}(X_{1},x_{2},x_{3})], \\
&F_{3} (x_{1},x_{2},x_{3}) := F_{2}^{0}(x_{1},x_{2},x_{3}) + F_{2}(x_{2},x_{3}),
\end{align*}
respectively.
Lemma \ref{lem:uniform_entropy_numbers_between_GH} yields that $\calF$ is VC type with characteristics $(4\sqrt{A}, 2v)$ for envelope $F$, and Corollary A.1 (i) in \cite{cck2014_empirical_process} together with Lemma \ref{lem:uniform_entropy_numbers_between_GH} yield that $\calF_{1},\calF_{2},\calF_{3}$ are VC type with characteristics bounded by $CA,Cv$ for envelopes $F_{1},F_{2},F_{3}$, respectively. Functions in $\calF_{1}$ are not symmetric, but after symmetrization we may apply Corollaries \ref{cor:local_max_ineq_dengenerate_uprocess_VCtype} and \ref{thm:alternative_max_ineq}  for $k=1$ and $k=2$, respectively. Together with the Jensen and Cauchy-Schwarz inequalities, we deduce that
\begin{align*}
\E [ \| U_{n}^{(2)}(f) - P^{2}f \|_{\calF_{1}} ] 
&\le  C \left \{ \sup_{f \in \calF} \| f^{2} \|_{P^{2},2} K_{n}^{1/2} n^{-1/2} + \| F^{2} \|_{P^{2},q/2} K_{n}n^{-1+2/q} + \| F^{2} \|_{P^{2},2} K_{n}n^{-1} \right \} \\
&\le  C \left ( \sigma_{\frakh} b_{\frakh} K_{n}^{1/2} n^{-1/2} + b_{\frakh}^{2} K_{n}n^{-1+2/q} \right ),
\end{align*}
where we have used  $\| P^{r-2}h \|_{P^{2},4}^{4} \le \sigma_{\frakh}^{2} b_{\frakh}^{2}$ for $h \in \calH$ by Condition (MT). 

Next, observe that $\| U_{n}^{(3)}(f) \|_{\calF_{2}^{0}} \le \| U_{n}^{(2)}(f) \|_{\calF_{2}} + \| U_{n}^{(3)}(f) \|_{\calF_{3}}$.
Since  for $f \in \calF_{2}^{0}$, $\E[ f(x_{1},X_{2},X_{3})] = \E[ f(X_{1},x_{2},X_{3}) ] = \E[f(X_{1},X_{2},x_{3})] =\E[ f(x_{1},X_{2},x_{3}) ] = \E[ f(x_{1},x_{2},X_{3})] = 0$ for all $x_{1},x_{2},x_{3} \in S$, both $U_{n}^{(2)}(f), f \in \calF_{2}$ and $U_{n}^{(3)}(f), f \in \calF_{3}$ are completely degenerate.
So,  applying Corollary \ref{cor:local_max_ineq_dengenerate_uprocess_VCtype} to $\calF_{2}$ and $\calF_{3}$ after symmetrization, combined  with the Jensen and Cauchy-Schwarz inequalities, we deduce that 
\begin{align*}
\E[\| U_{n}^{(3)}(f) \|_{\calF_{2}^{0}}]   &\le  C \Bigg \{ \sup_{f \in \calF} \| f^{\odot 2} \|_{P^{2},2} K_{n} n^{-1}+\| F^{\odot 2}\|_{P^{2},q/2}  K_{n}^{2} n^{-3/2+2/q} \\
&\qquad \qquad + \sup_{f \in \calF} \| f^{2} \|_{P^{2},2} K_{n}^{3/2} n^{-3/2} + \| F^{2} \|_{P^{2},q/2} K_{n}^{3} n^{-2+2/q}  \Bigg \} \\
&\quad \le  C \Bigg \{ \sup_{f \in \calF} \| f^{\odot 2} \|_{P^{2},2} K_{n} n^{-1}+  \| F^{\odot 2}\|_{P^{2},q/2} K_{n}^{2} n^{-3/2+2/q} \\
&\qquad \qquad + \sigma_{\frakh} b_{\frakh} K_{n}^{3/2} n^{-3/2} +  b_{\frakh}^{2} K_{n}^{3} n^{-2+2/q}  \Bigg \} 
\end{align*}
where recall that $f^{\odot 2} (x_{1},x_{2}) := f^{\odot 2}_{P}(x_{1},x_{2}) := \int f(x_{1},x) f(x,x_{2}) dP(x)$ for a symmetric measurable function $f$ on $S^{2}$.
For $f \in \calF$, observe that by the Cauchy-Schwarz inequality,
\begin{align*}
\| f^{\odot 2} \|_{P^{2},2}^{2} &= \iint \left ( \int f(x_{1},x) f(x,x_{2}) dP(x) \right )^{2} dP(x_{1})dP(x_{2}) \\
&\le \left  (\iint f^{2}(x_{1},x_{2}) dP(x_{1}) dP(x_{2}) \right )^{2} = \| f \|_{P^{2},2}^{4} \le \sigma_{\frakh}^{4}. 
\end{align*}
On the other hand, $\| F^{\odot 2}\|_{P^{2},q/2} = \nu_{\frakh}^{2}$ by the definition of $\nu_{\frakh}$. Therefore, we  conclude that 
\begin{align*}
&\E \left [ \left \| n^{-1} \sum_{i=1}^{n} \{ S_{n-1,-i}(\delta_{X_{i}}h) \}^{2} \right \|_{\calH} \right ] \\
&\quad \le  C \left \{   \sigma_{\frakh}^{2} K_{n}n^{-1}+ \nu_{\frakh}^{2} K_{n}^{2}n^{-3/2+2/q}  + \sigma_{\frakh} b_{\frakh} K_{n}^{3/2} n^{-3/2}  +  b_{\frakh}^{2} K_{n}^{3}n^{-2+2/q} + \chi_{n}^{2} \right \}.
\end{align*}
This completes the proof. 
\end{proof}

\begin{proof}[Proof of Corollary \ref{cor:bootstrap_validity_vc}]
This follows from the discussion before Theorem \ref{thm:coupling_gaussian_mulitiplier_bootstrap} combined with the anti-concentration inequality (Lemma \ref{lem:AC}), and optimization with respect to $\gamma$. It is without loss of generality to assume that $\eta_{n} \le \overline{\sigma}_{\frakg}^{1/2}$ since otherwise the result is trivial by taking $C$ or  $C'$ large enough, and hence Condition (\ref{eq:growthcondition}) is automatically satisfied.
\end{proof}

\section*{Acknowledgments}
The authors would like to thank the anonymous referees and an Associate Editor for their constructive comments that improve the quality of this paper.

\appendix

\section{Supporting lemmas}
\label{app:supporting_lemmas}

This appendix collects some supporting lemmas that are repeatedly used in the main text. 

\begin{lem}[An anti-concentration inequality for the Gaussian supremum]\label{lem:AC}
Let $(S,\calS,P)$ be a probability space, and let $\calG \subset L^{2}( P )$ be a $P$-pre-Gaussian class of functions.
Denote by $W_{P}$ a tight Gaussian random variable in $\ell^{\infty}(\calG)$ with mean zero and covariance function $\E[ W_{P}(g) W_{P}(g') ] = \Cov_{P}(g,g')$ for all $g,g' \in \calG$ where $\Cov_{P}(\cdot,\cdot)$ denotes the covariance under $P$. Suppose that there exist constants  $\underline{\sigma}, \overline{\sigma}>0$ such that $\underline{\sigma}^{2} \le \Var_{P}(g) \le \overline{\sigma}^{2}$ for all $g \in \calG$.
Then for every $\varepsilon > 0$,
\[
\sup_{t \in\mathbb{R}}\mathbb{P}\left\{\left|\sup_{g \in \calG} W_P(g)-t\right|\le \varepsilon\right\}\le  C_{\sigma}\varepsilon\left\{\E \left[\sup_{g\in\mathcal{G}}W_P (g)\right]+\sqrt{1\vee \log(\underline{\sigma}/\varepsilon)}\right\},
\]
where $C_{\sigma}$ is a constant depending only on $\underline{\sigma}$ and $\overline{\sigma}$.
\end{lem}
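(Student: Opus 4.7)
The plan is to reduce the statement to a finite-dimensional anti-concentration bound for Gaussian maxima, for which one invokes (or reproves) the dimension-free inequality of \cite{cck2014_empirical_process}. Since $W_{P}$ is tight in $\ell^{\infty}(\calG)$, the pseudometric space $(\calG, d_{W_{P}})$ with $d_{W_{P}}(g,g') = (\E[(W_{P}(g) - W_{P}(g'))^{2}])^{1/2}$ is totally bounded and $W_{P}$ has almost surely $d_{W_{P}}$-uniformly continuous sample paths. Hence there is a countable $d_{W_{P}}$-dense subset $\calG_{0} \subset \calG$ with $Z := \sup_{g \in \calG} W_{P}(g) = \sup_{g \in \calG_{0}} W_{P}(g)$ almost surely. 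Exhausting $\calG_{0}$ by an increasing sequence of finite sets $\calG_{n}$ and using that $\max_{g \in \calG_{n}} W_{P}(g) \uparrow Z$ a.s.\ together with continuity of probability, it suffices to prove the bound with $\calG$ replaced by an arbitrary finite subset $\{g_{1},\dots,g_{p}\}$, with a constant that does not depend on $p$.

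Setting $X_{i} := W_{P}(g_{i})$, the vector $(X_{1},\dots,X_{p})$ is centered Gaussian with $\Var(X_{i}) \in [\underline{\sigma}^{2},\overline{\sigma}^{2}]$, and $Z = \max_{i} X_{i}$ admits a Lebesgue density $f_{Z}$ (after quotienting by the kernel of the covariance, $(X_{1},\dots,X_{p})$ may be taken nondegenerate). The task becomes to show
\[
\sup_{t \in \R} \int_{t-\varepsilon}^{t+\varepsilon} f_{Z}(s)\,ds \le C_{\sigma}\,\varepsilon \bigl\{\E Z + \sqrt{1 \vee \log(\underline{\sigma}/\varepsilon)}\bigr\}.
\]
I would split into two regimes according to the location of $t$ relative to $\E Z$. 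In the tail regime $|t - \E Z| \ge c\,\overline{\sigma}\sqrt{1 \vee \log(1/\varepsilon)}$ for a suitable absolute constant $c$, the Borell--Sudakov--Tsirel'son concentration inequality $\Prob(|Z - \E Z| \ge u) \le 2\exp(-u^{2}/(2\overline{\sigma}^{2}))$ directly yields $\Prob(|Z - t| \le \varepsilon) \le 2\varepsilon$, which is absorbed into the right-hand side. In the bulk regime $|t - \E Z| < c\,\overline{\sigma}\sqrt{1 \vee \log(1/\varepsilon)}$, I would bound $f_{Z}$ pointwise and integrate over an interval of length $2\varepsilon$.

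The main obstacle is establishing a \emph{dimension-free} pointwise bound on $f_{Z}$. Writing $f_{Z}(s) = \sum_{i=1}^{p} \phi_{\sigma_{i}}(s)\,\Prob(X_{j} \le s \text{ for all } j \neq i \mid X_{i}=s)$ with $\phi_{\sigma_{i}}$ the $N(0,\sigma_{i}^{2})$ density, the naive bound $\phi_{\sigma_{i}}(s) \le (2\pi\underline{\sigma}^{2})^{-1/2}$ introduces a spurious factor of $p$. The trick, going back to \cite{cck2014_empirical_process}, is to use Gaussian integration by parts applied to the identity $\E[Z \mathbf{1}\{Z \le t\}] = \int_{-\infty}^{t} s f_{Z}(s)\,ds$ (equivalently, Stein's identity combined with the ``leave-one-out'' decomposition of $\nabla \max_{i} x_{i}$) to trade the sum over $i$ against $\E Z$. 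This reduces the density bound to $f_{Z}(t) \lesssim (\E Z + |t-\E Z|/\underline{\sigma})/\underline{\sigma}$ on the bulk, at which point the choice of the bulk cutoff produces the $\sqrt{1 \vee \log(\underline{\sigma}/\varepsilon)}$ factor and the claimed bound with $C_{\sigma}$ depending only on $\underline{\sigma}, \overline{\sigma}$ follows by integration. Carrying out this weighted integration-by-parts argument in a way that avoids any $\sqrt{\log p}$ term is the delicate step.
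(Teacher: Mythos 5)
The paper does not actually prove this lemma: its proof is a one-line citation to Lemma~A.1 of \cite{cck2014_empirical_process} (which in turn rests on the finite-dimensional anti-concentration inequality for Gaussian maxima in Chernozhukov, Chetverikov, and Kato's companion work). So you are reconstructing a proof that the authors chose to outsource, and your high-level plan --- reduce to finite sub-classes via tightness and total boundedness, write the density of $\max_i X_i$ through the leave-one-out decomposition, split the location $t$ into a tail regime handled by Borell--Sudakov--Tsirel'son and a bulk regime handled by a dimension-free density bound, then integrate --- is a faithful summary of the structure of the cited argument.

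The genuine soft spot is exactly where you flag it: the mechanism of the dimension-free density bound. What the cited proof actually uses is \emph{not} a Stein's-identity / Gaussian integration-by-parts computation on $\E[Z\mathbf{1}\{Z\le t\}]$, but a monotonicity observation: the conditional ``argmax'' probabilities $G_j(t):=\Prob(X_i\le t,\ i\ne j\mid X_j=t)$ are nondecreasing in $t$ (after standardizing the marginals), and therefore $G_j(t)\overline{\Phi}(t)\le\int_t^\infty G_j(s)\phi(s)\,ds=\Prob(Z>t,\,j=\arg\max)$, so that summing over $j$ gives $f_Z(t)\le\frac{\phi(t)}{\overline{\Phi}(t)}\Prob(Z>t)$. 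Mills' ratio bounds then control $\phi(t)/\overline{\Phi}(t)\lesssim 1+t^+$, Borell--TIS controls $\Prob(Z>t)$, and no factor of $p$ ever appears. Your Stein/IBP sketch is plausible in spirit but is not worked out, and as stated it is not clear it delivers a bound free of spurious $\sqrt{\log p}$ terms without essentially rediscovering this monotonicity trick; also, your displayed density bound $f_Z(t)\lesssim(\E Z+|t-\E Z|/\underline{\sigma})/\underline{\sigma}$ does not match the form that the monotonicity argument gives (which has an exponential, not linear, decay away from $\E Z$). Since the paper only cites the result, a clean write-up here would either reproduce the monotonicity argument in full or simply cite it as the paper does; a partial sketch of a different mechanism leaves a real gap at the crux of the proof.
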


\begin{proof}
See Lemma A.1 in \cite{cck2014_empirical_process}.
\end{proof}

\begin{lem}
\label{lem:approximation}
Let $\calF$ be a class of real-valued  measurable functions on a measurable space $(\calX,\calA)$ with finite measurable envelope $F$. Then for any probability measure $R$ on $(\calX,\calA)$ such that $RF^{2} < \infty$, we have 
\[
N(\calF,\| \cdot \|_{R,2}, 4\varepsilon \| F \|_{R,2}) \le \sup_{Q}N(\calF,\| \cdot \|_{Q,2},\varepsilon \| F \|_{Q,2})
\]
for every $0 < \varepsilon \le 1$, where $\sup_{Q}$ is taken over all finitely discrete distributions on $\calX$.
\end{lem}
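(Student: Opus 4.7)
\textbf{Proof plan for Lemma \ref{lem:approximation}.} The approach is a contradiction argument based on approximating $R$ by empirical measures drawn from $R$ itself. Set $N_0 := \sup_Q N(\calF, \|\cdot\|_{Q,2}, \varepsilon\|F\|_{Q,2})$; if $N_0 = \infty$ there is nothing to prove, and if $\|F\|_{R,2}=0$ then all elements of $\calF$ vanish $R$-a.s.\ and the LHS equals $1 \le N_0$. So assume $N_0 < \infty$ and $\|F\|_{R,2}>0$, and suppose toward contradiction that $N(\calF, \|\cdot\|_{R,2}, 4\varepsilon\|F\|_{R,2}) \ge N_0 + 1$. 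First I would invoke the classical relation ``covering number $\le$ packing number'' (a maximal $\delta$-separated subset of $\calF$ automatically serves as a closed-ball $\delta$-cover) to extract elements $f_0,\dots,f_{N_0} \in \calF$ with $\|f_i-f_j\|_{R,2} > 4\varepsilon\|F\|_{R,2}$ for all $i \ne j$.

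The plan is then to transfer this finite packing from $L^2(R)$ to $L^2(Q)$ for a finitely discrete probability measure $Q$ that mimics $R$ on the finite collection $\{F^2\} \cup \{(f_i-f_j)^2 : 0 \le i<j \le N_0\}$. Sample $Y_1,\dots,Y_n$ i.i.d.\ from $R$ and set $R_n := n^{-1}\sum_{i=1}^n \delta_{Y_i}$. Since $RF^2 < \infty$ and each $(f_i-f_j)^2 \le 4F^2$ is thus $R$-integrable, the strong law of large numbers applied to this \emph{finite} list of functions guarantees that for some (in fact $\Prob$-a.s.\ every sufficiently large) $n$ there is a realization $Q := R_n$ for which
\[
\bigl|\|g\|_{Q,2}^2 - \|g\|_{R,2}^2\bigr| \le \varepsilon^2 \|F\|_{R,2}^2 \qquad \text{for every } g \in \{F\} \cup \{f_i-f_j : i \ne j\}.
\]
No uniform-in-$\calF$ control is needed here because the separating family is finite once an assumed failure of the lemma is fixed; this is what makes the argument clean.

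For such $Q$, the arithmetic is straightforward. Using $\varepsilon \le 1$ gives $\|F\|_{Q,2}^2 \le (1+\varepsilon^2)\|F\|_{R,2}^2 \le 2\|F\|_{R,2}^2$, while
\[
\|f_i-f_j\|_{Q,2}^2 > 16\varepsilon^2\|F\|_{R,2}^2 - \varepsilon^2\|F\|_{R,2}^2 = 15\varepsilon^2\|F\|_{R,2}^2 \ge \tfrac{15}{2}\varepsilon^2\|F\|_{Q,2}^2 > 4\varepsilon^2\|F\|_{Q,2}^2,
\]
so $f_0,\dots,f_{N_0}$ form a strict $2\varepsilon\|F\|_{Q,2}$-packing of $\calF$ in $L^2(Q)$. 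Finally the converse packing-covering inequality $D(2\delta) \le N(\delta)$ (two packing elements at distance $>2\delta$ cannot both lie in a common closed ball of radius $\delta$) forces $N(\calF, \|\cdot\|_{Q,2}, \varepsilon\|F\|_{Q,2}) \ge N_0 + 1$, contradicting the definition of $N_0$. The only non-routine step is the SLLN-based construction of $Q$; the factor $4$ (rather than $2$) on the LHS is precisely what absorbs the slack from the approximation tolerance $\varepsilon^2\|F\|_{R,2}^2$ together with the loss in $D(2\delta)\le N(\delta)$.
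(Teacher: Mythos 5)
Your proof is correct and follows exactly the route the paper gestures at: the paper's own ``proof'' of this lemma is a one-line citation to Problem 2.5.1 in van der Vaart and Wellner, whose solution is precisely the discretize-by-empirical-measure argument you carry out. The key observations you make — that after assuming a failure one needs to transfer only a \emph{finite} packing $\{f_0,\dots,f_{N_0}\}$, that the SLLN on the finite list $\{F^2\}\cup\{(f_i-f_j)^2\}$ produces a suitable finitely discrete $Q=R_n$, and that the chain $N(\delta)\le D(\delta)$ / $D(2\delta)\le N(\delta)$ is what costs the factor $4$ — are the standard content of that problem, and your arithmetic (including the implicit check that $\|F\|_{Q,2}>0$, which follows from $4\|F\|_{Q,2}^2 \ge \|f_i-f_j\|_{Q,2}^2 > 15\varepsilon^2\|F\|_{R,2}^2 > 0$) is sound for all $0<\varepsilon\le 1$.
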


\begin{proof}
This follows from approximating $R$ by a finitely discrete distribution. See Problem 2.5.1 in \cite{vandervaartwellner1996}. 
\end{proof}

\begin{lem}
\label{lem:entropy_conditional}
Let $(\calX,\calA), (\calY,\calC)$ be measurable spaces and let $\calF$ be a class of real-valued jointly measurable functions on $\calX \times \calY$  with finite measurable envelope $F$. Let $R$ be a probability measure on $(\calY,\calC)$ and for a jointly measurable function $f: \calX \times \calY \to \R$, define $\overline{f}: \calX \to \R$ by $\overline{f}(x) := \int f(x,y) dR(y)$
whenever the latter integral is defined and finite for every $x \in \calX$. Suppose that $\overline{F}$ is everywhere finite and let $\overline{\calF} = \{ \overline{f} : f \in \calF \}$. Then, for every $r,s \in [1,\infty)$, 
\[
\sup_{Q} N(\overline{\calF},\| \cdot \|_{Q,r},2\varepsilon \| \overline{F} \|_{Q,r}) \le \sup_{Q'} N(\calF, \| \cdot \|_{Q',s},\varepsilon^{r} \| F \|_{Q',s}/4) 
\]
where $\sup_{Q}$ and $\sup_{Q'}$ are taken over all finitely discrete distributions on $\calX$ and $\calX \times \calY$, respectively. 
\end{lem}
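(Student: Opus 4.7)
The plan is to reduce the covering number of $\overline{\calF}$ on $\calX$ (with $L^r(Q)$-norm and envelope $\overline{F}$) to the covering number of $\calF$ on $\calX \times \calY$ (with $L^s(Q')$-norm and envelope $F$), in three stages: a Jensen step on the inner integration against $R$, a H\"older step to introduce the $L^s$-norm and the correct envelope scaling, and a discretization step using the $L^s$-version of Lemma~\ref{lem:approximation}.

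First, for every $f, f' \in \calF$ and $x \in \calX$, since $t \mapsto |t|^r$ is convex (as $r \ge 1$) and since $|f-f'| \le 2F$ pointwise, I would apply Jensen together with the crude envelope bound $\int |f - f'|(x, \cdot)\, dR \le 2\overline{F}(x)$ to obtain the refined inequality
\[
|\overline{f}(x) - \overline{f'}(x)|^r \le \Bigl(\int |f - f'|(x,y)\, dR(y)\Bigr)^r \le (2\overline{F}(x))^{r-1}\int |f - f'|(x,y)\, dR(y).
\]
The point of pulling out the factor $(2\overline{F})^{r-1}$ is that the envelope $\overline{F}$ (not $F$) appears naturally on the LHS, matching the normalization $\|\overline{F}\|_{Q,r}$ that must be produced in the final bound.

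Second, I would integrate the last display against $Q$ and use H\"older's inequality on $R$, namely $\int |f-f'|(x, \cdot)\, dR \le \|f(x,\cdot) - f'(x,\cdot)\|_{R,s}$ (valid for all $s \ge 1$), followed by H\"older on $Q$ with conjugate exponents $(r/(r-1),\, r)$ applied to the pair $(\overline{F}^{r-1}(x),\, \|f(x,\cdot) - f'(x,\cdot)\|_{R,s})$. This yields a bound of the form
\[
\|\overline{f} - \overline{f'}\|_{Q,r}^r \le 2^{r-1}\,\|\overline{F}\|_{Q,r}^{\,r-1}\,\|f - f'\|_{\mu,s},
\]
where $\mu$ is a product-type probability measure on $\calX \times \calY$ obtained by reweighting $Q \otimes R$ by an envelope factor (e.g.\ $d\mu \propto \overline{F}^{r-1}(x)\, dQ(x)\, dR(y)$). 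Dividing by $\|\overline{F}\|_{Q,r}^{r}$ and matching $2\varepsilon\|\overline{F}\|_{Q,r}$ on the LHS forces the $L^s$-radius on the RHS to be of order $\varepsilon^r$, explaining the exponent $r$ and the factor $\tfrac14$. Parallel manipulations verify that $\|F\|_{\mu,s}$ is comparable to $\|\overline{F}\|_{Q,r}$ (through $L^p$-monotonicity on the probability space $R$), so that the envelopes on the two sides relate cleanly.

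Third, I would apply the $L^s$-analogue of Lemma~\ref{lem:approximation} to approximate $\mu$ by finitely discrete probability measures $Q'$ on $\calX \times \calY$; the constant $4$ in $\varepsilon^r/4$ absorbs the inflation from this discretization. Combining the three stages produces the stated bound $N(\overline{\calF},\|\cdot\|_{Q,r},2\varepsilon\|\overline{F}\|_{Q,r}) \le \sup_{Q'} N(\calF, \|\cdot\|_{Q',s}, (\varepsilon^r/4)\|F\|_{Q',s})$. The main obstacle is stage two: one must choose the H\"older split and the reweighting defining $\mu$ so that the $r$-th-power loss from Jensen in stage one is exactly compensated by the $L^s$-conversion, producing the precise exponent $r$ on $\varepsilon$ and the constants $(2,4)$. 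Getting the envelopes $\overline{F}$ (on $\calX$) and $F$ (on $\calX \times \calY$) to align --- rather than leaving an uncontrolled ratio $\|\overline{F}\|_{Q,r}/\|F\|_{\mu,s}$ --- is the delicate step, and it is what dictates the specific reweighting by $\overline{F}^{r-1}$.
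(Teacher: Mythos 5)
Your stage one is correct, and the template of Jensen followed by a reweighted product measure followed by the discretization lemma is the right one, but stage two has a genuine gap. After your displayed inequality, the argument would need: if $\|f-f'\|_{\mu,s}<\tfrac{\varepsilon^r}{4}\|F\|_{\mu,s}$ then $\|\overline f-\overline{f'}\|_{Q,r}^r<2^{r-1}\|\overline F\|_{Q,r}^{r-1}\cdot\tfrac{\varepsilon^r}{4}\|F\|_{\mu,s}$, and for this to produce $\|\overline f-\overline{f'}\|_{Q,r}\le 2\varepsilon\|\overline F\|_{Q,r}$ you need an \emph{upper} bound $\|F\|_{\mu,s}\le c\,\|\overline F\|_{Q,r}$. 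With $d\mu\propto\overline F(x)^{r-1}\,dQ(x)\,dR(y)$, this fails: $L^p$-monotonicity on $R$ gives $\int F^s(x,\cdot)\,dR\ge\overline F(x)^s$, i.e.\ only a \emph{lower} bound $\|F\|_{\mu,s}^s\ge\int\overline F^{r+s-1}dQ\big/\int\overline F^{r-1}dQ$, and there is no matching upper bound. Indeed, take $Q$ a point mass at $x_0$; the required inequality would become $\|F(x_0,\cdot)\|_{L^s(R)}\le c\|F(x_0,\cdot)\|_{L^1(R)}$, a reverse H\"older inequality that fails whenever $F(x_0,\cdot)$ takes a large value on a small-$R$-probability set. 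So the envelopes do not align with this $\mu$, and the direction of monotonicity runs against you. A smaller issue: the H\"older-on-$Q$ split with exponents $(r/(r-1),r)$ applied to $(\overline F^{r-1},\|f(x,\cdot)-f'(x,\cdot)\|_{R,s})$ yields $\bigl(\int\|f(x,\cdot)-f'(x,\cdot)\|_{R,s}^r\,dQ\bigr)^{1/r}$, which is an iterated $L^r(Q;L^s(R))$ norm, not $\|f-f'\|_{\mu,s}$ for any product-type $\mu$ unless $r=s$; the displayed bound is obtainable, but by absorbing $\overline F^{r-1}$ into the measure and using $L^1(\mu)\le L^s(\mu)$, not by that H\"older split.

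The fix is to bring the full envelope $F$ (not just $\overline F$) into the reweighting. Write $dR_x\propto F(x,\cdot)\,dR$ and $d\nu(x,y)\propto\overline F(x)^{r-1}F(x,y)\,dQ(x)\,dR(y)$, so that
\[
\int\overline F^{r-1}\Bigl(\int|f-f'|\,dR\Bigr)\,dQ \;=\; \|\overline F\|_{Q,r}^{r}\int\frac{|f-f'|}{F}\,d\nu
\;\le\; \|\overline F\|_{Q,r}^{r}\Bigl(\int\Bigl(\frac{|f-f'|}{F}\Bigr)^{s}d\nu\Bigr)^{1/s},
\]
by Jensen on the probability measure $\nu$. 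The right-hand radius is now a genuine $L^s(\nu)$ radius for the \emph{normalized} class $\tilde{\calF}=\{f/F:f\in\calF\}$, whose envelope is identically $1$, so the envelope alignment is automatic. One then applies the $L^s$ version of Lemma~\ref{lem:approximation} to $\tilde{\calF}$ with the measure $\nu$, and unwinds the normalization on the finitely discrete side (setting $dQ'\propto F^{-s}\,dQ''$, which is legitimate since $\nu$ and hence its discretization are supported on $\{F>0\}$) to recover $\sup_{Q'}N(\calF,\|\cdot\|_{Q',s},\varepsilon^r\|F\|_{Q',s}/4)$. Passing through $\tilde{\calF}$, rather than reweighting directly by $F^{1-s}$, is what avoids the integrability problem a negative power of $F$ would otherwise cause. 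For reference, the paper does not argue from scratch here: its proof consists of citing Lemma A.2 of Ghosal, Sen and van der Vaart (2000) together with Lemma~\ref{lem:approximation}, and that reference carries out precisely this $F$-weighting.
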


\begin{proof}
This follows from Lemma A.2 in \cite{ghosalsenvandervaart2000} combined with Lemma \ref{lem:approximation}. 
\end{proof}

If $R=\delta_{y}$ for some $y \in \calY$, then $\| \delta_{y} f \|_{Q,r}^{r} = \| f \|_{Q \times \delta_{y},r}^{r}$ (with $\delta_{y} f(x) = f(x,y)$) and $Q \times \delta_{y}$ is finitely discrete if $Q$ is so. Hence, we have the following corollary. 

\begin{cor}
\label{cor:entropy_conditional}
Under the setting of Lemma \ref{lem:entropy_conditional}, for every $y \in \calY$ and $r \in [1,\infty)$, 
\[
\sup_{Q} N(\delta_{y}F,\| \cdot \|_{Q,r},\varepsilon \| \delta_{y}F \|_{Q,r}) \le \sup_{Q'} N(\calF, \| \cdot \|_{Q',r},\varepsilon \| F \|_{Q',r}).
\]
\end{cor}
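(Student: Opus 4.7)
The plan is to reduce the corollary to Lemma \ref{lem:entropy_conditional} by specializing $R$ to the Dirac measure $\delta_{y}$, and, crucially, to exploit the fact that in this special case the two seminorms relating $\calF$ and $\overline{\calF}$ agree \emph{exactly}, so that no loss of constants occurs and the hypothesis $r=s$ is permissible.

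First, I would fix $y\in\calY$ and $r\in[1,\infty)$, put $R=\delta_{y}$, and note that with the notation of Lemma \ref{lem:entropy_conditional} one has $\overline{f}(x)=\int f(x,y')\,dR(y')=f(x,y)=(\delta_{y}f)(x)$ for every $f\in\calF$ (and similarly for $F$). Hence $\overline{\calF}=\delta_{y}\calF$ with envelope $\delta_{y}F$, and everything on the left-hand side of the target inequality is just what the abstract lemma calls ``$\overline{\calF}$ with envelope $\overline{F}$.''

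Second, I would observe the pointwise identity, valid for any finitely discrete probability measure $Q$ on $\calX$ and any measurable $f:\calX\times\calY\to\R$,
\[
\|\delta_{y}f\|_{Q,r}^{r}=\int |f(x,y)|^{r}\,dQ(x)=\int\!\!\int |f(x,y')|^{r}\,d(Q\times\delta_{y})(x,y')=\|f\|_{Q\times\delta_{y},r}^{r},
\]
and analogously $\|\delta_{y}F\|_{Q,r}=\|F\|_{Q\times\delta_{y},r}$. Consequently a cover $\{f_{1},\dots,f_{N}\}\subset\calF$ of $\calF$ in $L^{r}(Q\times\delta_{y})$ by balls of radius $\varepsilon\|F\|_{Q\times\delta_{y},r}$ transfers, via $f\mapsto\delta_{y}f$, to a cover of $\delta_{y}\calF$ in $L^{r}(Q)$ by balls of radius $\varepsilon\|\delta_{y}F\|_{Q,r}$. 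Therefore
\[
N(\delta_{y}\calF,\|\cdot\|_{Q,r},\varepsilon\|\delta_{y}F\|_{Q,r})\le N(\calF,\|\cdot\|_{Q\times\delta_{y},r},\varepsilon\|F\|_{Q\times\delta_{y},r}).
\]

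Finally, since $Q\times\delta_{y}$ is finitely discrete on $\calX\times\calY$ whenever $Q$ is finitely discrete on $\calX$, the right-hand side is bounded by $\sup_{Q'}N(\calF,\|\cdot\|_{Q',r},\varepsilon\|F\|_{Q',r})$, where the supremum runs over all finitely discrete probability measures on $\calX\times\calY$. Taking the supremum over finitely discrete $Q$ on $\calX$ on the left-hand side then yields the claim. There is no genuine obstacle here; the only subtlety is that the $2\varepsilon$ and $\varepsilon^{r}/4$ constants of Lemma \ref{lem:entropy_conditional}, which come from a Jensen-type passage between two \emph{different} reference measures, collapse to equality in the point-mass case, which is exactly why one may take $s=r$ and avoid the $\varepsilon^{r}$ distortion.
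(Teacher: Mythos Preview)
Your proof is correct and takes essentially the same approach as the paper: the key observation is the isometry $\|\delta_{y}f\|_{Q,r}=\|f\|_{Q\times\delta_{y},r}$ together with the fact that $Q\times\delta_{y}$ is finitely discrete whenever $Q$ is, which transfers covers directly without any loss of constants. The only minor remark is that your opening sentence frames the argument as a ``reduction to Lemma \ref{lem:entropy_conditional},'' but what you (correctly) do---and what the paper does---is to bypass the lemma entirely via the norm identity, which is precisely why the $2\varepsilon$ and $\varepsilon^{r}/4$ factors never appear.
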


\begin{lem}
\label{lem:pointwise_product}
Let $\calF$ and $\calG$ be function classes on a set $\calX$ with finite envelopes $F$ and $G$, respectively. 
If  $\calF \cdot \calG$ stands for the class of pointwise products of functions from $\calF$ and $\calG$, then for any $r \in [1,\infty)$,
\[
\sup_{Q} N(\calF \cdot \calG, \| \cdot \|_{Q,r},2 \varepsilon \| FG \|_{Q,r}) \le \sup_{Q} N(\calF, \| \cdot \|_{Q,r}, \varepsilon \| F\|_{Q,r}) \sup_{Q}N(\calG, \| \cdot \|_{Q,r}, \varepsilon \| G \|_{Q,r}),
\]
where $\sup_{Q}$ is taken over all finitely discrete distributions on $\calX$. 
\end{lem}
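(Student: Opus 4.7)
The plan is to exploit the elementary bound
\[
|fg - f'g'| \le |f-f'|\,|g| + |f'|\,|g-g'| \le G\,|f-f'| + F\,|g-g'|,
\]
and to relate the weighted $L^r$-norms appearing on the right to the $L^r$-norms over reweighted probability measures, so that covers of $\calF$ and $\calG$ in their own $L^r$-geometries can be combined.

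Fix a finitely discrete probability measure $Q$ on $\calX$. We may assume $\|F\|_{Q,r}\|G\|_{Q,r} > 0$, since otherwise $FG=0$ $Q$-a.e.\ and the statement is vacuous. Define the finitely discrete probability measures
\[
dQ_F := \frac{F^r}{\|F\|_{Q,r}^r}\, dQ, \qquad dQ_G := \frac{G^r}{\|G\|_{Q,r}^r}\, dQ.
\]
A direct computation shows that, for every measurable $h$,
\[
\|G h\|_{Q,r} = \|G\|_{Q,r}\, \|h\|_{Q_G,r}, \qquad \|F\|_{Q_G,r} = \frac{\|FG\|_{Q,r}}{\|G\|_{Q,r}},
\]
and symmetrically with the roles of $F$ and $G$ interchanged.

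Next, choose an $\varepsilon\|F\|_{Q_G,r}$-cover $\{f_1,\dots,f_N\}\subset \calF$ of $\calF$ in $\|\cdot\|_{Q_G,r}$, of cardinality $N\le \sup_{Q'} N(\calF,\|\cdot\|_{Q',r},\varepsilon\|F\|_{Q',r})$ (the supremum being over finitely discrete $Q'$, which $Q_G$ is). Similarly choose an $\varepsilon\|G\|_{Q_F,r}$-cover $\{g_1,\dots,g_M\}\subset \calG$ of $\calG$ in $\|\cdot\|_{Q_F,r}$. For any $f\in \calF, g\in \calG$, pick $f_j, g_k$ from these covers. The pointwise inequality above and the identities give
\[
\|fg - f_j g_k\|_{Q,r}
\le \|G(f-f_j)\|_{Q,r} + \|F(g-g_k)\|_{Q,r}
= \|G\|_{Q,r}\|f-f_j\|_{Q_G,r} + \|F\|_{Q,r}\|g-g_k\|_{Q_F,r},
\]
and each of the two summands is at most $\varepsilon\|FG\|_{Q,r}$ by the identities for $\|F\|_{Q_G,r}$ and $\|G\|_{Q_F,r}$. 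Hence $\calF\cdot\calG$ admits a cover of size $NM$ with radius $2\varepsilon\|FG\|_{Q,r}$ in $\|\cdot\|_{Q,r}$, and taking the supremum over $Q$ yields the claim.

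The proof is essentially mechanical; the only mild subtlety is ensuring that the reweighted measures $Q_F,Q_G$ remain finitely discrete (which they do as absolutely continuous transforms of a finitely discrete $Q$) so that the covering bounds on $\calF$ and $\calG$ under these measures are admissible on the right-hand side of the desired inequality.
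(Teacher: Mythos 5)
Your proof is correct. The paper does not supply its own argument for this lemma, citing instead Lemma A.1 of Ghosal, Sen, and van der Vaart (2000) and Section 2.10.3 of van der Vaart and Wellner (1996); your proof---bounding $|fg - f'g'|$ by $G|f-f'| + F|g-g'|$ and passing to the envelope-reweighted discrete measures $Q_F$ and $Q_G$ to recover the right-hand-side covering numbers---is precisely the standard argument from those references.
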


\begin{proof}
See Lemma A.1 in \cite{ghosalsenvandervaart2000} or \cite[Section 2.10.3]{vandervaartwellner1996}. 
\end{proof}

\section{Strassen-Dudley theorem and its conditional version}
\label{app:strassen-dudley}

In this appendix, we state the Strassen-Dudley theorem together with its conditional version due to \cite{monradphilipp1991}.
These results play fundamental roles in the proofs of Proposition \ref{prop:entropy_bounds_vc} and Theorem \ref{thm:coupling_gaussian_mulitiplier_bootstrap}. 
In what follows, let $(S,d)$ be a Polish metric space equipped with  its Borel $\sigma$-field $\calB (S)$. 
For any set $A \subset S$ and $\delta > 0$, let $A^{\delta} = \{ x \in S : \inf_{y \in A}  d(x,y) \le \delta \}$. 
We first state the Strassen-Dudley theorem. 

\begin{thm}[Strassen-Dudley]
\label{thm:strassen-dudley}
Let $X$ be an $S$-valued random variable defined on a probability space $(\Omega,\calA,\Prob)$ which admits a uniform random variable on $(0,1)$ independent of $X$. 
Let $\alpha, \beta >0$ be given constants, and let $G$ be a Borel probability measure on $S$ such that $\Prob(X \in A) \le G(A^{\alpha})+ \beta$ for all $A \in \calB(S)$. Then there exists an $S$-valued random variable $Y$  such that $\mathcal{L}(Y) (:= \Prob \circ Y^{-1}) = G$ and $\Prob(d(X,Y) > \alpha) \le \beta$. 
\end{thm}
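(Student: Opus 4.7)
The plan is to first produce an abstract coupling measure on $S \times S$ with the desired marginal and closeness properties, and then realize this coupling on the given probability space by exploiting the auxiliary uniform random variable.

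First, I would invoke the classical Strassen theorem (e.g., Dudley, \emph{Real Analysis and Probability}, Theorem 11.6.2) to obtain a Borel probability measure $\mu$ on the product space $S \times S$ whose first marginal equals $\mathcal{L}(X)$, whose second marginal equals $G$, and which satisfies
\[
\mu(\{(x,y) \in S \times S : d(x,y) > \alpha\}) \le \beta.
\]
The hypothesis $\Prob(X \in A) \le G(A^{\alpha}) + \beta$ for all $A \in \calB(S)$ is exactly the condition that, in the Ky Fan / Prokhorov sense, $\mathcal{L}(X)$ and $G$ are within $(\alpha,\beta)$ of each other, which is the hypothesis of Strassen's theorem in a Polish space.

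Second, since $S$ is Polish (hence standard Borel), I would disintegrate $\mu$ with respect to its first marginal. This yields a Markov kernel $K:S\times \calB(S)\to [0,1]$ such that, for all Borel sets $B,C\subset S$,
\[
\mu(B\times C) = \int_{B} K(x,C)\,d\mathcal{L}(X)(x).
\]
Disintegration on Polish spaces is standard (see, e.g., Kallenberg, \emph{Foundations of Modern Probability}, Theorem 6.3).

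Third, I would realize $Y$ on $(\Omega,\calA,\Prob)$ by transporting a uniform random variable through $K$. Since $S$ is Polish, there exists a jointly Borel measurable map $\varphi:S\times (0,1)\to S$ such that, for every $x\in S$, the pushforward of the Lebesgue measure on $(0,1)$ under $\varphi(x,\cdot)$ equals $K(x,\cdot)$; this is a standard consequence of the isomorphism theorem for standard Borel spaces (or an application of the quantile transform after Borel-isomorphing $S$ with a subset of $\R$). Let $U$ be the postulated $U(0,1)$ random variable independent of $X$, and set $Y := \varphi(X,U)$. Then for every Borel $B,C\subset S$,
\[
\Prob(X \in B,\ Y \in C) = \int_{B} K(x,C)\,d\mathcal{L}(X)(x) = \mu(B\times C),
\]
so the joint law of $(X,Y)$ is exactly $\mu$. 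In particular, $\mathcal{L}(Y)=G$ and $\Prob(d(X,Y)>\alpha)\le\beta$, as required.

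The conceptual heart of the argument is Strassen's theorem itself, which is the deep input and is cited rather than reproved; the remaining work is routine given the Polish structure of $S$ and the availability of a $U(0,1)$ random variable independent of $X$. The step that requires the richness assumption on $(\Omega,\calA,\Prob)$ is the third one: without an auxiliary $U(0,1)$ variable independent of $X$ one cannot, in general, realize the conditional distribution $K(X,\cdot)$ as a random variable on the given probability space.
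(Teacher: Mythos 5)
The paper does not actually prove Theorem~\ref{thm:strassen-dudley}: it merely cites Dudley (2002) for the result, reserving its own careful argument for the conditional version (Theorem~\ref{thm:conditional_strassen-dudley}). Your proposal is therefore an elaboration rather than a competing proof, but it is the standard and correct route: (1) apply Strassen's coupling theorem to obtain an abstract law $\mu$ on $S\times S$ with the prescribed marginals and $\mu(d>\alpha)\le\beta$; (2) disintegrate $\mu$ with respect to the first marginal (Polish structure is what makes this legitimate); (3) realize the kernel $K(X,\cdot)$ on the given probability space using the auxiliary $U(0,1)$ variable via a jointly measurable quantile-type map, i.e., a transfer argument. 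Each of these three steps is sound, and you have correctly identified where the richness assumption is needed (step 3). Two small remarks worth noting for precision: Dudley's Theorem 11.6.2 is stated under the hypothesis that the inequality holds for all \emph{closed} sets, but this is equivalent to holding for all Borel sets by inner regularity of Borel measures on a metric space, so your use of ``for all Borel $A$'' as the hypothesis is fine; and in some formulations of Strassen's theorem one first gets the conclusion with $\alpha'>\alpha$ and $\beta'>\beta$ and then passes to a limit by tightness/weak compactness on the Polish space $S\times S$, a step you have implicitly absorbed into the citation, which is acceptable at this level of detail. It is also worth observing that the combination of your steps (2) and (3) is precisely Kallenberg's transfer theorem, which can be cited as a single black box.
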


For a proof of the Strassen-Dudley theorem, we refer to \cite{dudley2002}. Next, we state a conditional version of the Strassen-Dudley theorem due to \cite[Theorem 4]{monradphilipp1991}. 

\begin{thm}[Conditional version of Strassen-Dudley]
\label{thm:conditional_strassen-dudley}
Let $X$ be an $S$-valued random variable defined on a probability space $(\Omega,\calA,\Prob)$, and let $\calG$ be a countably generated sub $\sigma$-field of $\calA$. 
Suppose that there is a uniform random variable on $(0,1)$ independent of $\calG \vee \sigma (X)$, and let $\Omega \times \calB(S) \ni (\omega,A) \mapsto G(A \mid \calG) (\omega)$ be a regular conditional distribution given $\calG$, i.e., for each fixed $A \in \calB(S)$, $G(A \mid \calG)$ is measurable with respect to $\calG$ and for each fixed $\omega \in \Omega$, $G(\cdot \mid \calG)(\omega)$ is a probability measure on $\calB(S)$. If 
\[
\E^{*} \left [ \sup_{A \in \calB(S)} \{ \Prob(X \in A \mid \calG) - G(A^{\alpha} \mid \calG) \} \right ] \le \beta,
\]
then there exists an $S$-valued random variable $Y$ such that the conditional distribution of $Y$ given $\calG$ is identical to $G(\cdot \mid \calG)$, and 
$\Prob( d(X,Y) > \alpha) \le \beta$.
\end{thm}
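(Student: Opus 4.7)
The plan is to apply the unconditional Strassen-Dudley theorem (Theorem \ref{thm:strassen-dudley}) fiberwise in $\omega$, and then paste the resulting family of conditional couplings into a single random variable $Y$ by means of the auxiliary uniform random variable $U$ that is independent of $\calG \vee \sigma(X)$. The role of the countable generation of $\calG$ together with the Polish structure of $S$ is precisely to make this pasting measurable.

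First I would define the (outer-measurable) conditional discrepancy
\[
\delta(\omega) := \sup_{A \in \calB(S)} \bigl\{ \Prob(X \in A \mid \calG)(\omega) - G(A^{\alpha} \mid \calG)(\omega) \bigr\},
\]
so that by hypothesis $\E^{*}[\delta] \le \beta$. For each fixed $\omega$, the conditional laws $\mu_{\omega}(\cdot) := \Prob(X \in \cdot \mid \calG)(\omega)$ and $\nu_{\omega}(\cdot) := G(\cdot \mid \calG)(\omega)$ are Borel probability measures on the Polish space $S$ satisfying $\mu_{\omega}(A) \le \nu_{\omega}(A^{\alpha}) + \delta(\omega)$ for every $A \in \calB(S)$. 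The unconditional Strassen-Dudley theorem applied to $(\mu_{\omega},\nu_{\omega})$ with slack $(\alpha,\delta(\omega))$ then yields a coupling $\pi_{\omega}$ on $S \times S$ with marginals $\mu_{\omega},\nu_{\omega}$ and $\pi_{\omega}(\{(x,y) : d(x,y) > \alpha\}) \le \delta(\omega)$.

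The next step is to select such a $\pi_{\omega}$ in a $\calG$-measurable fashion, i.e. to produce a Markov kernel $K(\omega,x;dy)$ from $(\Omega\times S,\calG\otimes\calB(S))$ to $(S,\calB(S))$ with the disintegration $\pi_{\omega}(dx,dy) = \mu_{\omega}(dx)\,K(\omega,x;dy)$. Here I would invoke a measurable selection theorem of Kuratowski-Ryll-Nardzewski type applied to the set-valued map $\omega \mapsto \Pi(\omega)$, where $\Pi(\omega)$ is the (nonempty, convex, weakly closed) set of couplings of $\mu_{\omega}$ and $\nu_{\omega}$ whose displacement excess is at most $\delta(\omega)$; the measurable dependence of $\Pi(\omega)$ on $\omega$ follows from the $\calG$-measurability of $\mu_{\omega}$, $\nu_{\omega}$, and a $\calG$-measurable version of $\delta$ obtained by restricting the supremum to a countable generating algebra of $\calB(S)$ and invoking inner/outer regularity of Borel measures on a Polish space. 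Disintegration on the Polish space $S$ then yields the kernel $K$.

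Finally I would use $U$, which is independent of $\calG \vee \sigma(X)$, to represent $Y$ as $Y := \Psi(\omega,X(\omega),U(\omega))$, where $\Psi$ is a jointly measurable map such that for each $(\omega,x)$ the pushforward of Lebesgue measure on $(0,1)$ under $u \mapsto \Psi(\omega,x,u)$ equals $K(\omega,x;\cdot)$; existence of such a $\Psi$ is the standard conditional quantile-transform construction on Polish spaces. By construction the conditional law of $Y$ given $\calG$ coincides with $G(\cdot \mid \calG)$, and
\[
\Prob(d(X,Y) > \alpha \mid \calG)(\omega) \;=\; \int K\bigl(\omega,x;\{y:d(x,y)>\alpha\}\bigr)\,\mu_{\omega}(dx) \;\le\; \delta(\omega),
\]
so taking expectations gives $\Prob(d(X,Y) > \alpha) \le \E^{*}[\delta] \le \beta$. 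The main obstacle is the measurable-selection step: one must simultaneously (i) replace the outer-measurable sup $\delta$ by a genuinely $\calG$-measurable version with the same expectation bound, using countable generation of $\calB(S)$ together with regularity, and (ii) verify that $\omega \mapsto \Pi(\omega)$ admits a measurable selection with the required joint measurability of the disintegrating kernel. Once these technicalities are discharged, the remainder of the argument is bookkeeping.
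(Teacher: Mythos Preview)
Your approach is sound in outline but takes a genuinely different route from the paper's proof. You propose to apply Strassen--Dudley pointwise in $\omega$ and then invoke a Kuratowski--Ryll-Nardzewski measurable selection to obtain a $\calG$-measurable family of couplings, followed by a jointly measurable disintegration. The paper avoids selection theorems entirely: it uses the countable generation of $\calG$ to write $\calG=\sigma(W)$ for a real-valued $W$, discretizes $\Omega$ by the dyadic partitions $D_{n,k}=\{k/2^n\le W<(k+1)/2^n\}$, and applies the \emph{unconditional} Strassen--Dudley theorem on each atom $D_{n,k}$ (where conditioning is just $\Prob(\cdot\mid D_{n,k})$). This yields a sequence $Y_n=\sum_k Y_{D_{n,k}}1_{D_{n,k}}$ with $\Prob(d(X,Y_n)>\alpha)\le\beta$ for every $n$; tightness and Prohorov then give a weak limit for $\mathcal{L}(X,W,Y_{n'})$, and the final $Y$ is constructed from the regular conditional distribution of that limit given $(X,W)$ via Borel isomorphism and a quantile transform --- the same device you invoke at your last step.

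What each buys: the paper's discretization-and-limit argument is more elementary in that no selection theorem or measurable disintegration of a parametrized family of couplings is needed; its cost is the extra limit step (tightness, Portmanteau). Your approach is conceptually more direct, but the ``main obstacle'' you correctly flag is substantial: you must (i) produce a genuinely $\calG$-measurable version of $\delta$ that still dominates the original sup (reducing to closed sets via regularity and then to a countable family is the right idea but requires care), and (ii) verify that $\omega\mapsto\Pi(\omega)$ is a measurable closed-valued multimap in the weak topology on $\mathcal P(S\times S)$ and that the disintegrating kernel $K(\omega,x;\cdot)$ can be chosen jointly $\calG\otimes\calB(S)$-measurable. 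These are doable on a Polish space, but they are where all the work lies; the paper's argument sidesteps them completely.
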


\begin{rmk}
(i) The map $(\omega,A) \mapsto \Prob(X \in A \mid \calG)(\omega)$ should be understood as a regular conditional distribution (which is guaranteed to exist since $X$ takes values in a Polish space). (ii) $\E^{*}$ denotes the outer expectation. 
\end{rmk}

For completeness, we provide a self-contained proof of Theorem \ref{thm:conditional_strassen-dudley}, since \cite{monradphilipp1991} do not provide its direct proof. 

\begin{proof}[Proof of Theorem \ref{thm:conditional_strassen-dudley}]
Since $\calG$ is countably generated, there exists a real-valued random variable $W$ such that $\calG = \sigma (W)$. For $n=1,2,\dots$ and $k \in \mathbb{Z}$, let $D_{n,k} = \{ k/2^{n} \le W < (k+1)/2^{n} \}$. 
For each $n$, $\{ D_{n,k} : k \in \mathbb{Z} \}$ forms a partition of $\Omega$. Pick any $D$ from $\{ D_{n,k} : n =1,2,\dots; k \in \mathbb{Z} \}$; let $\Prob_{D} = \Prob(\cdot \mid D)$ and $G(\cdot \mid D) = \int G(\cdot \mid \calG) d\Prob_{D}$. 
Then, the Strassen-Dudley theorem yields that there exists an $S$-valued random variable $Y_{D}$ such that $\Prob_{D} \circ Y_{D}^{-1} = G(\cdot \mid D)$ and $\Prob_{D}(d(X,Y_{D}) > \alpha) \le \varepsilon (D) := \sup_{A \in \calB(S)} \{ \Prob_{D}(X \in A) - G(A^{\alpha} \mid D) \}$. 

For each $n=1,2,\dots$, let  $Y_{n} = \sum_{k \in \mathbb{Z}} Y_{D_{n,k}} 1_{D_{n,k}}$, and observe that
\[
\Prob(d(X,Y_{n}) > \alpha) = \sum_{k} \Prob_{D_{n,k}} (d(X,Y_{D_{n,k}}) > \alpha) \Prob(D_{n,k}) \le \sum_{k} \varepsilon (D_{n,k}) \Prob(D_{n,k}).
\]
Let $M$ be any (proper) random variable such that $M \ge \sup_{A \in \calB(S)} \{ \Prob(X \in A \mid \calG) - G(A^{\alpha} \mid \calG) \}$, and observe that 
\[
\Prob_{D}(X \in A) -G(A^{\alpha} \mid D) = \E^{\Prob_{D}} [ \Prob(X \in A \mid \calG) - G(A^{\alpha} \mid \calG) ] \le \E^{\Prob_{D}}[M],
\]
where the notation $\E^{\Prob_{D}}$ denotes the expectation under $\Prob_{D}$. 
So, 
\[
\sum_{k} \varepsilon (D_{n,k}) \Prob(D_{n,k}) \le \sum_{k} \E^{\Prob_{D_{n,k}}} [M] \Prob(D_{n,k}) = \E[M],
\]
and taking infimum with respect to $M$ yields that the left hand side is bounded by $\beta$. 

Next, we shall verify that $\{ \mathcal{L}(Y_{n}) : n \ge 1 \}$ is uniformly tight. In fact, 
\begin{align*}
&\Prob (Y_{n} \in A) = \sum_{k} \Prob(\{ Y_{D_{n,k}} \in A \} \cap D_{n,k}) = \sum_{k} \Prob_{D_{n,k}} (Y_{D_{n,k}} \in A) \Prob(D_{n,k}) \\
&\quad = \sum_{k} G(A \mid D_{n,k}) \Prob(D_{n,k}) = \E[G(A \mid \calG)],
\end{align*}
and since any Borel probability measure on a Polish space is tight by Ulam's theorem, $\{ \mathcal{L}(Y_{n}) : n \ge 1 \}$ is uniformly tight.
This implies that the family of joint laws $\{ \mathcal{L}(X,W,Y_{n}) : n \ge 1 \}$ is uniformly tight and hence has a weakly convergent subsequence by Prohorov's theorem. Let $\mathcal{L}(X,W,Y_{n'}) \stackrel{w}{\to} Q$ (the notation $\stackrel{w}{\to}$ denotes weak convergence), and observe that the marginal law of $Q$ on the ``first two'' coordinates, $S \times \R$, is identical to $\mathcal{L}(X,W)$. 

We shall  verify that there exists an $S$-valued random variable $Y$ such that $\mathcal{L}(X,W,Y) =Q$. 
Since $S$ is polish, there exists a unique regular conditional distribution, $\calB(S) \times (S \times \R) \ni (A,(x,w)) \mapsto Q_{x,w}(A) \in [0,1]$,  for $Q$ given the first two coordinates. By the Borel isomorphism theorem \citep[][Theorem 13.1.1]{dudley2002}, there exists a bijective map $\pi$ from $S$ onto a Borel subset of $\R$ such that  $\pi$ and $\pi^{-1}$ are Borel measurable. Pick and fix any $(x,w) \in S \times \R$, and observe that $Q_{x,w} \circ \pi^{-1}$ extends to a  Borel probability measure on $\R$. 
Denote by $F_{x,w}$ the distribution function of $Q_{x,w} \circ \pi^{-1}$, and let $F_{x,w}^{-1}$ denotes its quantile function. Let $U$ be a uniform random variable on $(0,1)$ (defined on $(\Omega,\calA,\Prob)$) independent of $(X,W)$. Then $F_{x,w}^{-1} (U)$ has law $Q_{x,w} \circ \pi^{-1}$, and hence
$Y = \pi^{-1} \circ F_{X,W}^{-1} (U)$ is the desired random variable.

Now, for any bounded continuous function $f$ on $S$, observe that, whenever $N \ge  n$, $\E[ f(Y_{N})1_{D_{n,k}} ]  = \int_{D_{n,k}} \int f(y) G(dy \mid \calG) d\Prob$, 
which implies that the conditional distribution of $Y$ given $\calG$ is identical to $G( \cdot \mid \calG)$. 
Finally, the Portmanteau theorem yields $\Prob(d(X,Y) > \alpha) \le \liminf_{n'} \Prob(d(X,Y_{n'}) > \alpha) \le \beta$. 
This completes the proof. 
\end{proof}

\section{Additional proofs for the main text}

\subsection{Proof of Lemma \ref{lem:empirical_process}}
\label{app:complement}

We begin with noting that $\calG$ is VC type with characteristics $4\sqrt{A}$ and $2v$ for envelope $G$. 
The rest of the proof is almost the same as that of Theorem 2.1 in \cite{cck2016_empirical_process_coupling} with $B(f) \equiv 0$ (up to adjustments of the notation), but we now allow $q=\infty$. To avoid repetitions, we only point out required modifications. 
In what follows, we will freely use the notation in the proof of \cite[Theorem 2.1]{cck2016_empirical_process_coupling}, but modify $K_{n}$ to $K_{n} = v \log (A \vee n)$, and $C$ refers to a universal constant whose value may vary from place to place. 
In Step 1, change $\varepsilon$ to $\varepsilon =1/n^{1/2}$. For this choice, $\log N(\calF,e_{P},\varepsilon b) \le  C \log (Ab/(\varepsilon b)) = C\log (A/\varepsilon) \le  CK_{n}$, and Dudley's entropy integral bound yields that 
$\E[ \| G_{P} \|_{\calF_{\varepsilon}}] \le  C\varepsilon b \sqrt{\log (Ab/(\varepsilon b))} \le  Cb\sqrt{K_{n}/n}$ (there is a slip in the estimate of $\E[\| G_{P}\|_{\calF_{\varepsilon}}]$ in \cite{cck2016_empirical_process_coupling}, namely, ``$Ab/\varepsilon$'' inside the log should read ``$Ab/(\varepsilon b)$'', which of course does not affect the proof under their definition of $K_{n}$). 
Combining the Borell-Sudakov-Tsirel'son inequality yields that $\Prob \{ \| G_{P}\|_{\calF_{\varepsilon}} > C b\sqrt{K_{n}/n} \} \le 2n^{-1}$.
In Step 3, Corollary \ref{cor:local_max_ineq_dengenerate_uprocess_VCtype} in the present paper (with $r=k=1$) yields that $\E[ \| \G_{n} \|_{\calF_{\varepsilon}}] \le  C(b\sqrt{K_{n}/n} + bK_{n}/n^{1/2-1/q}) \le  CbK_{n}/n^{1/2-1/q}$, which is valid even when $q=\infty$. Then, instead of applying their Lemma 6.1, we apply Markov's inequality to deduce that 
\[
\Prob \left \{  \| \G_{n} \|_{\calF_{\varepsilon}} > CbK_{n}/(\gamma n^{1/2-1/q}) \right \} \le \gamma.
\]
In Step 4, instead of their equation (14), we have
\[
\Prob (Z^{\varepsilon} \in B) \le \Prob (\tilde{Z}^{\varepsilon} \in B^{C_{7}\delta}) + C \left ( \frac{b\sigma^{2}K_{n}^{2}}{\delta^{3}\sqrt{n}} + \frac{M_{n,X}(\delta)K_{n}^{2}}{\delta^{3}\sqrt{n}} + \frac{1}{n} \right ) \quad \forall B \in \calB(\R)
\]
whenever $\delta \ge 2c\sigma^{-1/2}(\log N)^{3/2} \cdot (\log n)$ for some universal constant $c$ ($C_{7}$ comes from their Theorem 3.1 and is universal). 
Finally, in Step 5, take
\[
\delta = C' \left \{ \frac{(b\sigma^{2}K_{n}^{2})^{1/3}}{\gamma^{1/3}n^{1/6}} + \frac{2bK_{n}}{\gamma n^{1/2-1/q}} \right \}
\]
for some large but universal constant $C' > 1$. Under the assumption that $K_{n}^{3} \le  n$, this choice  ensures that $\delta \ge 2c\sigma^{-1/2}(\log N)^{3/2} \cdot (\log n)$, and 
\[
\frac{b\sigma^{2}K_{n}^{2}}{\delta^{3}\sqrt{n}} \le \frac{1}{(C')^{3}n}. 
\]
It remains to bound $M_{n,X}(\delta)$. For finite $q$, their Step 4 shows that 
\[
 \frac{M_{n,X}(\delta)K_{n}^{2}}{\delta^{3}\sqrt{n}}  \le \frac{2^{q}b^{q}K_{n}^{2}(\log N)^{q-3}}{\delta^{q}n^{q/2-1}}.
\]
Since $\log N \le  C''K_{n}$ for some universal constant $C''$, the right hand side is bounded by
\[
\frac{\gamma^{q}(C'')^{q-3}}{(C')^{q}K_{n}}.
\]
Since $K_{n}$ is bounded from below by a universal positive constant (by assumption), and $\gamma \in (0,1)$, by taking $C' > C''$, 
the above term is bounded by $\gamma$ up to a universal constant. 

Now, consider the $q=\infty$ case. 
In that case, $\max_{1 \le j \le N}| \tilde{X}_{1j} | \le 2b$ almost surely and 
$\delta \sqrt{n}/\log N \ge 2C'b/(C''\gamma) > 2b$ provided that $C' > C''$. Hence $M_{n,X}(\delta) =0$ in that case. 
These modifications lead to the desired conclusion. \qed

\subsection{Proofs for Section \ref{sec:monotonicity_testing}}

We first prove Theorem \ref{thm:app_local_uproc_bootstrap_kolmogorov_distance} and Corollary \ref{cor:app_local_uproc_bootstrap}, and then prove Lemma \ref{lem:rate_normalizing_constant_jackknife_estimate} and Theorem \ref{thm:app_local_uproc_bootstrap_kolmogorov_distance_uniform_bandwidth}.

\begin{proof}[Proof of Theorem \ref{thm:app_local_uproc_bootstrap_kolmogorov_distance}]

In what follows, the notation $\lesssim$ signifies that the left hand side is bounded by the right hand side up to a constant that depends only on $r,m,\zeta,c_1,c_2,C_1,L$. We also write $a \simeq b$ if $a \lesssim b$ and $b \lesssim a$. 
In addition, let $c,C,C'$ denote generic constants depending only on $r, m,\zeta, c_{1},c_{2}, C_{1}, L$; their values may vary from place to place. We divide the rest of the proof into three steps.

\uline{Step 1}. Let 
\[
S_{n}^{\sharp} := \sup_{\vartheta \in \Theta} \frac{b_{n}^{m/2}}{c_{n}(\vartheta)\sqrt{n}}\sum_{i=1}^n \xi_{i} \left[ U_{n-1,-i}^{(r-1)} (\delta_{D_{i}} h_{n,\vartheta})- U_n(h_{n,\vartheta}) \right].
\]
In this step, we shall show that the result (\ref{eqn:app_local_uproc_bootstrap_kolmogorov_distance}) holds with $\hat{S}_{n}$ and $\hat{S}_{n}^{\sharp}$ replaced by $S_{n}$ and $S_{n}^{\sharp}$, respectively. 

We first verify Conditions (PM), (VC), (MT), and~\eqref{eqn:lower_bound_variance_condition} for the function class
\[
\calH_{n} = \left \{ b_{n}^{m/2} c_{n}(\vartheta)^{-1} h_{n,\vartheta} : \vartheta \in \Theta \right \}
\]
with a symmetric envelope 
\[
H_{n}(d_{1:r}) = b_{n}^{-(r-1/2)m} c_{1}^{-1} \| L \|_{\R^{m}}^{r} \overline{\varphi}(v_{1:r}) \prod_{i=1}^{r} 1_{\calX^{\zeta/2}}(x_{i}) \prod_{1 \le  i < j \le  r} 1_{[-2,2]^m}(b_{n}^{-1}(x_{i}-x_{j})).
\]
Condition (PM) follows from our assumption. For Condition (VC), that $\calH_{n}$ is VC type with characteristics $(A', v')$ satisfying  $\log A' \lesssim \log n$ and $v' \lesssim 1$ follows from a slight modification of the proof of Lemma 3.1 in \cite{ghosalsenvandervaart2000}. The latter part follows from our assumption.
Condition (VC) guarantees the existence of a tight Gaussian random variable $\mathcal{W}_{P,n}(g), g \in P^{r-1}\calH_{n} =: \calG_{n}$ in $\ell^{\infty}(\calG_{n})$ with mean zero and covariance function $\E[\mathcal{W}_{P,n}(g)\mathcal{W}_{P,n}(g')] = \Cov_{P}(g,g')$ for $g,g' \in \calG_{n}$. Let $W_{P,n} (\vartheta) = \mathcal{W}_{P,n}(g_{n,\vartheta})$ for $\vartheta \in \Theta$ where $g_{n,\vartheta} = b_{n}^{m/2} c_{n}(\vartheta)^{-1} P^{r-1}h_{n,\vartheta}$. It is seen that $W_{P,n}(\vartheta), \vartheta \in \Theta$ is a tight Gaussian random variable in $\ell^{\infty}(\Theta)$ with mean zero and covariance function (\ref{eq:app_covariance_function}). 

Next, we determine the values of parameters $\underline{\sigma}_{\frakg}, \overline{\sigma}_{\frakg}, b_{\frakg}, \sigma_{\frakh}, b_{\frakh}, \chi_{n},\nu_{\frakh}$ for the function class $\calH_n$. We will show in Step 3 that we may choose
\begin{equation}
\label{eq:moment_choice}
\underline{\sigma}_{\frakg} \simeq 1, \ \overline{\sigma}_{\frakg} \simeq 1, \ b_{\frakg} \simeq b_{n}^{-m/2}, \ \sigma_{\frakh} \simeq b_{n}^{-m/2}, \ b_{\frakh} \simeq b_{n}^{-3m/2},
\end{equation}
and bound $\nu_{\frakh}$ and $\chi_{n}$ as 
\begin{equation}
\label{eq:moment_bound}
\nu_{\frakh} \lesssim b_{n}^{-m(1-1/q)}, \ \chi_{n} \lesssim (\log n)^{3/2}/(nb_{n}^{3m/2}).
\end{equation}
Given these choices and bounds,  Corollaries \ref{cor:kolmogorov_distance_gaussian_coupling_vc} and \ref{cor:bootstrap_validity_vc} yield that 
\begin{equation}
\label{eq:intermediate_bound}
\begin{split}
&\sup_{t \in \R} \left | \Prob (S_{n} \le t) - \Prob (\tilde{S}_{n} \le t) \right | \le Cn^{-c} \ \text{and} \\
&\Prob \left \{ \sup_{t \in \R} \left | \Prob_{\mid D_{1}^{n}} (S_{n}^{\sharp} \le t) - \Prob (\tilde{S}_{n} \le t) \right | > Cn^{-c}\right \} \le Cn^{-c}.
\end{split}
\end{equation}

\uline{Step 2}. Observe that 
\begin{equation}
\label{eq:intermediate_bound3}
| \hat{S}_{n} - S_{n} | \le \sup_{\vartheta \in \Theta} \left | \frac{c_{n}(\vartheta)}{\hat{c}_{n}(\vartheta)} - 1 \right | \| \sqrt{n}U_{n} \|_{\calH_{n}} \quad \text{and} \quad
| \hat{S}_{n}^{\sharp}- S_{n}^{\sharp} | \le  \sup_{\vartheta \in \Theta} \left | \frac{c_{n}(\vartheta)}{\hat{c}_{n}(\vartheta)} - 1 \right | \| \U_{n}^{\sharp} \|_{\calH_{n}}.
\end{equation}
We shall bound $\sup_{\vartheta \in \Theta} | c_{n}(\vartheta)/\hat{c}_{n}(\vartheta) - 1|, \|\sqrt{n}U_{n} \|_{\calH_{n}}$, and $\| \U_{n}^{\sharp} \|_{\calH_{n}}$. 

Choose $n_{0}$ by the smallest $n$ such that $C_{1}n^{-c_{2}} \le 1/2$; it is clear that $n_{0}$ depends only on $c_{2}$ and $C_{1}$. It suffices to prove (\ref{eqn:app_local_uproc_bootstrap_kolmogorov_distance}) for  $n \ge n_{0}$, since for $n < n_{0}$, the result (\ref{eqn:app_local_uproc_bootstrap_kolmogorov_distance}) becomes trivial by taking $C$ sufficiently large. 
So let $n \ge n_{0}$. Then Condition (T8) ensures that with probability at least $1-C_{1}n^{-c_{2}}$, $\inf_{\vartheta \in \Theta} \hat{c}_{n}(\vartheta)/c_{n}(\vartheta) \ge 1/2$. 
Since $| a^{-1} - 1 | \le 2 | a - 1 |$ for $a \ge 1/2$, Condition (T8) also ensures that 
\begin{equation}
\label{eq:intermediate_bound0}
\Prob \left \{ \sup_{\vartheta \in \Theta} \left | \frac{c_{n}(\vartheta)}{\hat{c}_{n}(\vartheta)} - 1 \right | > Cn^{-c} \right \} \le Cn^{-c}.
\end{equation}

Next, we shall bound $\|\sqrt{n}U_{n} \|_{\calH_{n}}$ and $\| \U_{n}^{\sharp} \|_{\calH_{n}}$.
Given (\ref{eq:moment_choice}) and (\ref{eq:moment_bound}), and in view of the fact that the covering number of $\calH_{n} \cup (-\calH_{n}) := \{ h,-h : h \in \calH_{n} \}$ is at most twice that of $\calH_{n}$,  applying Corollaries \ref{cor:kolmogorov_distance_gaussian_coupling_vc} and \ref{cor:bootstrap_validity_vc} to the function class $\calH_{n} \cup (-\calH_{n})$, we deduce that
\begin{align*}
&\sup_{t \in \R} \left | \Prob (\| \sqrt{n}U_{n} \|_{\calH_{n}} \le t) - \Prob (\| \mathcal{W}_{P,n} \|_{\calG_{n}} \le t) \right | \le Cn^{-c} \ \text{and} \\
&\Prob \left \{ \sup_{t \in \R} \left | \Prob_{\mid D_{1}^{n}} (\| \U_{n}^{\sharp} \|_{\calH_{n}} \le t) -\Prob (\| \mathcal{W}_{P,n} \|_{\calG_{n}} \le t) \right | > Cn^{-c}\right \} \le Cn^{-c}.
\end{align*}
(Theorem 3.7.28 in \cite{ginenickl2016} ensures that the Gaussian process $\mathcal{W}_{P,n}$ extends to the symmetric convex hull of $\calG_{n}$ in such a way that $\mathcal{W}_{P,n}$ has linear, bounded, and uniformly continuous (with respect to the intrinsic pseudometric) sample paths; in particular, $\{ \mathcal{W}_{P,n}(g) : g \in \calG_{n} \cup (-\calG_{n}) \}$ is a tight Gaussian random variable in $\ell^{\infty}(\calG_{n} \cup (-\calG_{n}))$ with mean zero and covariance function $\E[\mathcal{W}_{P,n}(g)\mathcal{W}_{P,n}(g')] = \Cov_{P}(g,g')$ for $g,g' \in \calG_{n} \cup (-\calG_{n})$ and $\sup_{g \in \calG_{n} \cup (-\calG_{n})} \mathcal{W}_{n}(g) = \| \mathcal{W}_{P,n} \|_{\calG_{n}}$.)
Dudley's entropy integral bound and the Borell-Sudakov-Tsirel'son inequality yield that 
$\Prob \{ \| \mathcal{W}_{P,n} \|_{\calG_{n}} > C(\log n)^{1/2} \} \le 2n^{-1}$, 
so that 
\begin{equation}
\label{eq:intermediate_bound2}
\begin{split}
&\Prob \{ \| \sqrt{n}U_{n} \|_{\calH_{n}} > C(\log n)^{1/2} \} \le Cn^{-c} \ \text{and} \\
&\Prob \left \{  \Prob_{\mid D_{1}^{n}} \{ \| \U_{n}^{\sharp} \|_{\calH_{n}} > C (\log n)^{1/2} \} > Cn^{-c}\right \} \le Cn^{-c}.
\end{split}
\end{equation}

Now, the desired result (\ref{eqn:app_local_uproc_bootstrap_kolmogorov_distance})  follows from combining (\ref{eq:intermediate_bound})--(\ref{eq:intermediate_bound2}) and the anti-concentration inequality (Lemma \ref{lem:AC}). In fact, the anti-concentration inequality yields 
\begin{equation}
\label{eq:app_AC}
\sup_{t \in \R} \Prob ( |\tilde{S}_{n} -t| \le Cn^{-c} ) \le C'n^{-c} (\log n)^{1/2}.
\end{equation}
Hence, combining the bounds (\ref{eq:intermediate_bound})--(\ref{eq:intermediate_bound2}) and (\ref{eq:app_AC}), we have  for every $t \in \R$, 
\begin{align*}
\Prob (\hat{S}_{n} \le t ) &\le \Prob (S_{n} \le t + Cn^{-c}) + Cn^{-c} \\
&\le \Prob (\tilde{S}_{n} \le t+Cn^{-c}) + Cn^{-c} \\
&\le \Prob (\tilde{S}_{n} \le t) + Cn^{-c},
\end{align*}
and likewise $\Prob (\hat{S}_{n} \le t) \ge \Prob(\tilde{S}_{n} \le t) - Cn^{-c}$. Similarly, we have  
\[
\Prob \left \{ \sup_{t \in \R} \left | \Prob_{\mid D_{1}^{n}} (\hat{S}_{n}^{\sharp} \le t) - \Prob (\tilde{S}_{n} \le t) \right | > Cn^{-c}\right \} \le Cn^{-c}.
\]

\uline{Step 3}. It remains to verify (\ref{eq:moment_choice}) and (\ref{eq:moment_bound}). 
First, that we may choose $\underline{\sigma}_{\frakg} \simeq 1$ follows from Conditions (T6) and (T7). For $\varphi \in \Phi$ and $k=1,\dots,r-1$, let
\[
\varphi_{[r-k]}(v_{1:k},x_{k+1:r}) = \E[ \varphi (v_{1:k}, V_{k+1:r}) \mid X_{k+1:r} = x_{k+1:r}] \prod_{j=k+1}^{r}p(x_{j}),
\]
and define $\overline{\varphi}_{[r-k]}$ similarly. 
Then, for $k=1,\dots,r$, 
\[
(P^{r-k}h_{n,\vartheta}) (d_{1:k}) =\left ( \prod_{j=1}^{k} L_{b_{n}}(x-x_{j}) \right )  \int_{[-1,1]^{m(r-k)}}\varphi_{[r-k]}(v_{1:k},x-b_{n} x_{k+1:r}) \left ( \prod_{j=k+1}^{r}L(x_{j}) \right ) dx_{k+1:r},
\]
where $x-b_{n}x_{k+1:r} = (x-b_{n}x_{k+1},\dots,x-b_{n}x_{r})$. Likewise, we have 
\begin{align*}
(P^{r-k}H_{n}) (d_{1:k})
&\lesssim  b_{n}^{-(k-1/2)m}  \left ( \prod_{i=1}^{k} 1_{\calX^{\zeta/2}}(x_{i}) \right )\left ( \prod_{1 \le  i < j \le k} 1_{[-2,2]^m}(b_{n}^{-1}(x_{i}-x_{j})) \right ) \\
&\qquad \times \int_{[-2,2]^{m (r-k)}} \overline{\varphi}_{[r-k]} (v_{1:k},x_{1}-b_{n}x_{k+1:r}) dx_{k+1:r}.
\end{align*}
Suppose first that $q$ is finite and let $\ell \in [2,q]$. Observe that by Jensen's inequality, 
\[
\begin{split}
\| P^{r-k}h_{n,\vartheta} \|_{P^k,\ell}^{\ell} &\le C^{\ell} b_n^{-(\ell-1)mk} \int_{[-1,1]^{mr}} \E \left[\overline{\varphi}^{\ell}(V_{1:r}) \mid X_{1:r} = x-b_{n}x_{1:r} \right] \left( \prod_{j=1}^k p(x-b_n x_{j}) \right) d {x_{1:r}} \\
&\le C^{\ell} b_{n}^{-(\ell-1)mk}  \int_{[-1,1]^{mr}} \E \left[\overline{\varphi}^{\ell}(V_{1:r}) \mid X_{1:r}=x-b_{n}x_{1:r} \right] dx_{1:r}
\le C^{\ell} b_{n}^{-(\ell-1)mk},
\end{split}
\]
so that $\sup_{h \in \calH_n} \| P^{r-k}h \|_{P^k,\ell} \lesssim b_n^{-m[(k-1/2)-k/\ell]}$. Hence, we may choose  $\overline{\sigma}_\frakg \simeq 1$ and $\sigma_\frakh \simeq b_n^{-m/2}$. Similarly, Jensen's inequality and the symmetry of $\overline{\varphi}$ yield that 
\begin{align*}
&\| P^{r-k} H_n \|_{P^k,\ell}^\ell  \le C^{\ell} b_n^{-(k-1/2)m\ell+m(k-1)} \\
&\quad \times \int_{\calX^{\zeta/2} \times [-2,2]^{m(r-1)}} \E\left[ \overline{\varphi}^{\ell}(V_{1:r}) \mid X_1 = x_1, X_{2:r} = x_1-b_n x_{2:j} \right] p(x_1) \prod_{j=2}^{k} p(x_1 - b_n x_j) d x_{1:r} \\
&\quad \le C^{\ell}  b_n^{-(k-1/2)m\ell+m(k-1)}\int_{\calX^{\zeta/2} \times [-2,2]^{m(r-1)}} \E\left[ \overline{\varphi}^{\ell}(V_{1:r}) \mid X_1 = x_1, X_{2:r} = x_1-b_n x_{2:j} \right] d x_{1:r} \\
&\quad \le C^{\ell} b_n^{-(k-1/2)m\ell+m(k-1)},
\end{align*}
so that $\| P^{r-k} H_n \|_{P^k,\ell} \lesssim b_n^{-m[(1-1/\ell)k - (1/2-1/\ell)]}$. Hence, we may choose $b_\frakg \simeq b_n^{-m/2}$, $b_\frakh \simeq b_n^{-3m/2}$, and bound $\chi_{n}$ as 
\[
\chi_n \lesssim \sum_{k=3}^r  n^{-(k-1)/2} (\log{n})^{k/2} b_n^{-mk/2} \lesssim {(\log n)^{3/2}\over n b_n^{3m/2}}.
\]
Similar calculations yield that
\begin{align*}
\| (P^{r-2}H_{n})^{\odot 2} \|^{q/2}_{P^{2},q/2} &\le C^{q} b_n^{-m(q-1)} \int_{\calX^{\zeta/2} \times [-2,2]^{m(r-1)}} \E\left[ \overline{\varphi}^{q}(V_{1:r}) \mid X_1 = x_1, X_{2:r} = x_1-b_n x_{2:j} \right] d x_{1:r} \\
&\le C^{q} b_{n}^{-m(q-1)}. 
\end{align*}
Hence,  $\nu_{\frakh} \lesssim b_n^{-m(1-1/q)}$. 

It is not difficult to verify that  (\ref{eq:moment_choice}) and (\ref{eq:moment_bound}) hold in the $q=\infty$ case as well under the convention that $1/q=0$ for $q=\infty$. 
This completes the proof. 
\end{proof}

\begin{proof}[Proof of Corollary \ref{cor:app_local_uproc_bootstrap}]
Let $\eta_{n} := Cn^{-c}$ where the constants $c,C$ are those given in Theorem \ref{thm:app_local_uproc_bootstrap_kolmogorov_distance}. 
Denote by  $q_{\tilde{S}_{n}}(\alpha)$ the $\alpha$-quantile of $\tilde{S}_{n}$. Define the event
\[
 \mathcal{E}_{n}: =\left \{ \sup_{t \in \R} \left | \Prob_{\mid D_{1}^{n}} (\hat{S}_{n}^{\sharp} \le t) - \Prob (\tilde{S}_{n} \le t) \right | \le \eta_{n} \right \},
\]
whose probability is at least $1-\eta_{n}$. 
On this event, 
\begin{align*}
\Prob_{\mid D_{1}^{n}} \left \{ \hat{S}_{n}^{\sharp} \le q_{\tilde{S}_{n}}(\alpha+\eta_{n}) \right \} &\ge \Prob \left \{\tilde{S}_{n}\le q_{\tilde{S}_{n}}(\alpha+\eta_{n}) \right \} - \eta_{n} \\
&= \alpha+\eta_{n} - \eta_{n} = \alpha,
\end{align*}
where the second equality follows from the fact that the distribution function of $\tilde{S}_{n}$ is continuous (cf. Lemma \ref{lem:AC}). This shows that the inequality $q_{\hat{S}_{n}^{\sharp}}(\alpha) \le q_{\tilde{S}_{n}}(\alpha+\eta_{n})$
holds on the event $\mathcal{E}_{n}$, so that 
\begin{align*}
\Prob \left \{ \hat{S}_{n} \le q_{\hat{S}_{n}^{\sharp}}(\alpha) \right \} &\le \Prob \left \{ \hat{S}_{n} \le q_{\tilde{S}_{n}}(\alpha+\eta_{n}) \right \} + \Prob ( \mathcal{E}_{n}^{c}) \\
&\le  \Prob \left \{ \tilde{S}_{n} \le q_{\tilde{S}_{n}}(\alpha+\eta_{n}) \right  \} + 2\eta_{n} \\
&= \alpha + 3\eta_{n}. 
\end{align*}
The above discussion presumes that $\alpha + \eta_{n} < 1$, but if $\alpha + \eta_{n} \ge 1$, then the last inequality is trivial. Likewise, we have $\Prob \left \{ \hat{S}_{n} \le q_{\hat{S}_{n}^{\sharp}}(\alpha) \right \} \ge \alpha-3\eta_{n}$. 
This completes the proof.
\end{proof}

\begin{proof}[Proof of Lemma \ref{lem:rate_normalizing_constant_jackknife_estimate}]
We begin with noting that 
\begin{align*}
\left| \frac{\hat{c}_{n}(\vartheta)}{c_{n}(\vartheta)} - 1 \right | \le \left| \frac{\hat{c}_{n}^2(\vartheta)}{c_{n}^2(\vartheta)} - 1 \right |  &\le \frac{1}{n} \sum_{i=1}^{n} \left [  \{ U_{n-1,-i}^{(r-1)}(\delta_{D_{i}}\breve{h}_{n,\vartheta}) - U_{n}(\breve{h}_{n,\vartheta}) \}^2 - 1 \right ],
\end{align*}
where $\breve{h}_{n,\vartheta} = b_{n}^{m/2}c_{n}(\vartheta)^{-1} h_{n,\vartheta}$. We note that  $\Var_{P}(P^{r-1}\breve{h}_{n,\vartheta}) =1$ by the definition of $c_{n}(\vartheta)$. 
Recall from the proof of Theorem \ref{thm:app_local_uproc_bootstrap_kolmogorov_distance} that the function class $\calH_{n} =\{ \breve{h}_{n,\vartheta} : \vartheta \in \Theta \}$ is VC type with characteristics  $(A', v')$ satisfying $\log A' \lesssim \log n$ and $v' \lesssim 1$ for envelope $H_{n}$.
Now, from Step 5 in the proof of Theorem \ref{thm:coupling_gaussian_mulitiplier_bootstrap} applied with $\calH = \calH_{n}$, we have for every $\gamma \in (0,1)$, with probability at least $1-\gamma-n^{-1}$, 
\begin{align*}
&\left \| \frac{1}{n} \sum_{i=1}^{n} \left [  \{ U_{n-1,-i}^{(r-1)}(\delta_{D_{i}}h) - U_{n}(h) \}^2 - 1 \right ] \right \|_{\calH_{n}}  \\
&\le  C\gamma^{-1} \Bigg [ (b_{\frakg} \vee \sigma_{\frakh})\overline{\sigma}_{\frakg}K_n^{1/2} n^{-1/2}  + 
b_{\frakg}^{2} K_{n}n^{-1+2/q} \\
&\qquad+ \overline{\sigma}_{\frakg} \left \{  \nu_{\frakh} K_{n} n^{-3/4+1/q}  + (\sigma_{\frakh} b_{\frakh})^{1/2} K_{n}^{3/4} n^{-3/4}  +  b_{\frakh} K_{n}^{3/2}n^{-1+1/q}+ \chi_{n} \right \}\Bigg ]
\end{align*}
for some constant $C$ depending only on $r$. The desired result follows from the choices of parameters $\overline{\sigma}_{\frakg}, b_{\frakg}, \sigma_{\frakh}, b_{\frakh}, \chi_{n}$, and $\nu_{\frakh}$ given in the proof of Theorem \ref{thm:app_local_uproc_bootstrap_kolmogorov_distance} together with choosing $\gamma = n^{-c}$ for some  constant $c$ sufficiently small but depending only on $r, m, \zeta, c_{1},c_{2}, C_{1}, L$. 
\end{proof}

\begin{proof}[Proof of Theorem \ref{thm:app_local_uproc_bootstrap_kolmogorov_distance_uniform_bandwidth}]
The proof follows from similar arguments to those in the proof of Theorem \ref{thm:app_local_uproc_bootstrap_kolmogorov_distance}, so we only highlight the differences. Define the function class
\[
\calH_{n} = \left \{ b^{m/2} c_{n}(\vartheta,b)^{-1} h_{\vartheta,b} : \vartheta \in \Theta, b \in \calB_{n} \right \}
\]
with a symmetric envelope 
\[
H_{n}(d_{1:r}) = \ub_{n}^{-(r-1/2)m} c_{1}^{-1} \| L \|_{\R^{m}}^{r} \overline{\varphi}(v_{1:r}) \prod_{i=1}^{r} 1_{\calX^{\zeta/2}}(x_{i}) \prod_{1 \le  i < j \le  r} 1_{[-2,2]^m}(\overline{b}_{n}^{-1}(x_{i}-x_{j})).
\]
Recall that we assume $q=\infty$ in this theorem. 
In view of the calculations in the proof of Theorem \ref{thm:app_local_uproc_bootstrap_kolmogorov_distance}, we may choose 
\[
\underline{\sigma}_{\frakg} \simeq 1, \ \overline{\sigma}_{\frakg} \simeq 1, \ b_{\frakg} \simeq \kappa_{n}^{m(r-1)} \ub_{n}^{-m/2}, \ \sigma_{\frakh} \simeq \ub_{n}^{-m/2}, \ b_{\frakh} \simeq \kappa_{n}^{m(r-2)} \ub_{n}^{-3m/2},
\]
and bound $\nu_{\frakh}$ and $\chi_{n}$ as 
\[
\nu_{\frakh} \lesssim \kappa_{n}^{m/2} \ub_{n}^{-m}, \ \chi_{n} \lesssim {\kappa_{n}^{m(r-2)} (\log n)^{3/2} \over n \ub_{n}^{3m/2}}.
\]
Given these choices and bounds, the conclusion of the theorem follows from repeating the proof of Theorem \ref{thm:app_local_uproc_bootstrap_kolmogorov_distance}. 
\end{proof}

\section{Conditional UCLT for JMB}
\label{sec:conditional_UCLT_JMB}

In this section we prove the conditional UCLT for the JMB when the function class $\calH$ and the distribution $P$ are independent of $n$ under a metric entropy condition. We obey the notation used in Sections \ref{sec:gaussian_approx} and \ref{sec:bootstrap} but since we consider a limit theorem we assume that the probability space is $(\Omega,\calA,\Prob) = (S^{\mathbb{N}},\calS^{\mathbb{N}},P^{\mathbb{N}}) \times (\Xi, \calC, R)$ and $X_{1},X_{2},\dots$ are the coordinate projections of $(S^{\mathbb{N}},\calS^{\mathbb{N}},P^{\mathbb{N}})$. 
To formulate the conditional UCLT, recall that weak convergence in $\ell^{\infty}(\calH)$ is ``metrized'' by the bounded Lipschitz distance: for arbitrary maps $\mathbb{X}_{n}: \Omega \to \ell^{\infty}(\calH)$ and a tight Borel measurable map $\mathbb{X}: \Omega \to \ell^{\infty}(\calH)$, $\mathbb{X}_{n}$ converge weakly to $\mathbb{X}$ if and only if 
\[
d_{BL}(\mathbb{X}_{n},\mathbb{X}) := \sup_{f \in BL_{1}} | \E^{*}[f(\mathbb{X}_{n})] - \E[f(\mathbb{X})]| \to 0,
\]
where $BL_{1} = \{ f : \ell^{\infty}(\calH) \to \R: |f| \le 1, |f(x)-f(y)| \le \| x-y \|_{\calH} \ \forall x,y \in \ell^{\infty}(\calH) \}$; see \cite[][p.73]{vandervaartwellner1996}. If the function class $\calG = P^{r-1} \calH = \{ P^{r-1} h : h \in \calH \}$ is $P$-pre-Gaussian, then there exists a tight Gaussian random variable $W_{P}$ in $\ell^{\infty}(\calG)$ with mean zero and covariance function $\E[W_{P}(g)W_{P}(g')] = \Cov_{P} (g,g')$. Set $\mathbb{W}_{P} (h) = W_{P} \circ P^{r-1} (h)$, which is a tight Gaussian random variable in $\ell^{\infty}(\calH)$ with mean zero and covariance function $\E[\mathbb{W}_{P} (h)\mathbb{W}_{P}(h')] = \Cov_{P}(P^{r-1}h,P^{r-1}h')$. 
We will show that conditionally on $X_{1}^{\infty} = \{ X_{1},X_{2},\dots \}$, $\U_{n}^{\sharp}$ converges weakly to $\mathbb{W}_{P}$ in probability in the sense that 
\[
d_{BL \mid X_{1}^{\infty}} (\U_{n}^{\sharp}, \mathbb{W}_{P}):= \sup_{f \in BL_{1}} | \E_{\mid X_{1}^{\infty}} [f(\U^{\sharp}_{n})] - \E[f(\mathbb{W}_{P})]| 
\]
converges to zero in outer probability under regularity conditions ($\E_{\mid X_{1}^{\infty}}$ denotes the conditional expectation given $X_{1}^{\infty}$). Since the map $(\xi_{1},\dots,\xi_{n}) \mapsto n^{-1/2} \sum_{i=1}^n \xi_{i}[  U_{n-1,-i}^{(r-1)} (\delta_{X_{i}}\cdot) - U_n(\cdot) ]$ is continuous from $\R^{n}$ into $\ell^{\infty}(\calH)$, the multiplier process $\U_{n}^{\sharp}$ induces a Borel measurable map into $\ell^{\infty}(\calH)$ for fixed $X_{1}^{\infty}$. 
For an arbitrary map $Y: \Omega \to \R$, let $Y^{*}$ denote the measurable cover \cite[lemma 1.2.1]{vandervaartwellner1996}. 

\begin{thm}[Conditional UCLT for JMB]
\label{thm:bootstrap_UCLT}
Let  $\calH$ be a fixed pointwise measurable class of symmetric measurable functions on $S^{r}$  with  symmetric  envelope  $H \in L^{2}(P^{r})$ such that $\int_{0}^{1} \sqrt{\lambda (\varepsilon)} d\varepsilon < \infty$ with $\lambda (\varepsilon) = \sup_{Q} \log N(\calH,\| \cdot \|_{Q,2},\varepsilon \| H \|_{Q,2})$. Then $\calG = P^{r-1}\calH = \{ P^{r-1} h : h \in \calH \}$ is $P$-pre-Gaussian, $d_{BL}(\U_{n}/r,\mathbb{W}_{P}) \to 0$,  and $d_{BL \mid X_{1}^{\infty}}(\U_{n}^{\sharp},\mathbb{W}_{P})^{*} \stackrel{\Prob}{\to} 0$ as $n \to \infty$. 
\end{thm}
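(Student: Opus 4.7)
First I would establish that $\calG = P^{r-1}\calH$ is $P$-Donsker (hence pre-Gaussian). The envelope $G = P^{r-1}H$ lies in $L^{2}(P)$ by Jensen, and iterating Lemma~\ref{lem:entropy_conditional} $r-1$ times (each time integrating out one coordinate) transfers the uniform entropy condition from $\calH$ to $\calG$, so that the Dudley--Pollard uniform entropy criterion for Donsker classes applies. Once $\calG$ is Donsker, the unconditional UCLT $d_{BL}(\U_n/r, \mathbb{W}_P) \to 0$ follows from the Hoeffding decomposition. The first-order term is the empirical process $\G_n \circ P^{r-1}$, which converges weakly to $\mathbb{W}_P = W_P \circ P^{r-1}$ in $\ell^{\infty}(\calH)$ by the classical empirical process UCLT applied to $\calG$. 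Each higher-order term $\sqrt{n}\,U_n^{(k)}(\pi_k h)$ for $k = 2,\dots,r$ is $o_{\Prob^*}(1)$ uniformly by Corollary~\ref{thm:alternative_max_ineq} with $p=1$, which gives $n^{k/2}\E[\|U_n^{(k)}(\pi_k h)\|_\calH] \lesssim J_k(1)\,\|P^{r-k}H\|_{P^k,2}$, a finite constant (the finiteness of $J_k(1)$ is inherited from the entropy assumption on $\calH$ via Lemma~\ref{lem:entropy_conditional}).

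For the conditional bootstrap UCLT, I would split $\U_n^\sharp = A_n + B_n$, where
\[
A_n(h) = \frac{1}{\sqrt{n}} \sum_{i=1}^{n} \xi_i \{P^{r-1}h(X_i) - P^{r}h\}, \qquad B_n = \U_n^\sharp - A_n.
\]
The conditional multiplier CLT for empirical processes (van der Vaart--Wellner, Theorem~2.9.7) applied to the Donsker class $\calG$ yields $d_{BL \mid X_1^\infty}(A_n, \mathbb{W}_P)^* \stackrel{\Prob}{\to} 0$. It then suffices to prove $\|B_n\|_\calH^{*} \stackrel{\Prob}{\to} 0$ unconditionally; since $(\xi_i)$ is independent of $X_1^\infty$, this unconditional statement translates into conditional convergence in probability by Markov's inequality applied to the second-moment-in-$\xi$ of $\|B_n\|_\calH$.

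Conditionally on $X_1^\infty$, $B_n$ is a centered Gaussian process with pseudo-metric $d_n(h,h')^{2} = n^{-1}\sum_{i}[\Delta_i(h) - \Delta_i(h')]^{2}$, where $\Delta_i(h) = [U_{n-1,-i}^{(r-1)}(\delta_{X_i}h) - U_n(h)] - [P^{r-1}h(X_i) - P^{r}h]$. I would bound $\sup_{h \in \calH} n^{-1}\sum_{i} \Delta_i(h)^{2}$ in probability by adapting the second-moment computation in Step~2 of the proof of Theorem~\ref{thm:coupling_gaussian_mulitiplier_bootstrap}: the bound on $\Upsilon_n$ given by (6.2), combined with the bound on $\|U_n(h) - P^{r}h\|_{\calH}^{2}$ from inequality (6.3) in that proof. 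In the present setting ($\calH$ and $P$ fixed, square-integrable envelope), these bounds need not decay polynomially, but they tend to zero by a dominated-convergence argument applied to the local maximal inequalities of Section~\ref{sec:local_maximal_inequalities} after truncating $H$ at a level $M \uparrow \infty$. Since $\|h - h'\|_{\Prob_{I_{n,r}},2}$ dominates $d_n(h,h')$ by Jensen's inequality (Step~4 of the proof of Theorem~\ref{thm:coupling_gaussian_mulitiplier_bootstrap}), the empirical covering number of $(\calH, d_n)$ is controlled by the uniform covering number of $\calH$, and Dudley's chaining bound for the conditionally Gaussian process $B_n$ delivers $\E_{\mid X_1^\infty}[\|B_n\|_\calH] \stackrel{\Prob}{\to} 0$.

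The hard part will be the chaining step: one must verify that the diameter of $(\calH, d_n)$ shrinks in probability \emph{and} that the corresponding entropy integral $\int_0^{\mathrm{diam}_{d_n}(\calH)} \sqrt{\log N(\calH, d_n, \varepsilon)}\,d\varepsilon$ tends to zero, not merely remains bounded. The natural strategy is to combine the uniform upper bound $d_n \le \|\cdot\|_{\Prob_{I_{n,r}}, 2}$ (controlling the entropy through the assumed uniform entropy of $\calH$ together with Lemma~\ref{lem:entropy_conditional}) with the shrinking-diameter fact in the previous paragraph; the truncation device (work first with $\{h \in \calH : |h| \le M\}$, then let $M \to \infty$ using the square integrability of the envelope) is what allows the remainder $o(1)$ rate in the non-quantitative regime considered here.
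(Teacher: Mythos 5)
There is a genuine gap in the way you handle the higher-order Hoeffding projection terms, and it recurs in both halves of your argument. You invoke Corollary~\ref{thm:alternative_max_ineq} (and, in the bootstrap half, the local maximal inequalities of Section~\ref{sec:local_maximal_inequalities}) to conclude that $n^{k/2}\E[\|U_n^{(k)}(\pi_k h)\|_\calH]\lesssim J_k(1)\,\|P^{r-k}H\|_{P^k,2}<\infty$ for $k\ge 2$, asserting that $J_k(1)<\infty$ ``is inherited from the entropy assumption on $\calH$ via Lemma~\ref{lem:entropy_conditional}''. This does not follow. The hypothesis is $\int_0^1\sqrt{\lambda(\varepsilon)}\,d\varepsilon<\infty$, which controls the uniform entropy with exponent $1/2$, whereas $J_k$ carries exponent $k/2$; for $k\ge 2$ the two are incomparable under a pure Donsker entropy condition. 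Concretely, if $\lambda(\varepsilon)\asymp\varepsilon^{-\alpha}$ for some $\alpha\in[1,2)$, then $\int_0^1\sqrt{\lambda(\varepsilon)}\,d\varepsilon<\infty$ while $J_r(1)=\int_0^1[1+\lambda(\tau)]^{r/2}\,d\tau=\infty$, and transferring the entropy to $P^{r-k}\calH$ via Lemma~\ref{lem:entropy_conditional} costs an additional $\varepsilon\mapsto\varepsilon^2$ rescaling that only makes matters worse. The truncation-at-level-$M$ device you invoke in the bootstrap half does not repair this, because truncating the envelope changes moments, not the entropy exponent. The paper avoids the issue by never invoking a genuine order-$k$ $U$-process maximal inequality under the weak entropy hypothesis: it reduces everything to i.i.d.\ blocks via Hoeffding's averaging and then applies \emph{empirical-process} (exponent $1/2$) maximal inequalities. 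This is exactly what Lemma~\ref{lem:AEC} does for the asymptotic equicontinuity of $\U_n$ (via Theorem~5.2 of \cite{cck2014_empirical_process} on $m=\lfloor n/r\rfloor$ blocks) and what Lemma~\ref{lem:Upsilon_small} does for $\E[\Upsilon_n]=O(n^{-1})$ (via Theorem~2.14.1 of \cite{vandervaartwellner1996} on $\lfloor(n-1)/(r-1)\rfloor$ blocks, conditionally on $X_n$). You would need the same reduction to close the gap.

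Setting aside that gap, your decomposition $\U_n^\sharp=A_n+B_n$ with the infeasible multiplier process $A_n$ and jackknife remainder $B_n$ is a genuinely different route from the paper's. The paper instead discretizes $\calH$ by a $\delta$-net, applies the finite-dimensional Gaussian comparison lemma \cite[Lemma~3.7.46]{ginenickl2016} to match the conditional covariances of $\U_n^\sharp$ to those of $W_P$ on the net, and bounds the two discretization errors separately; the three-term split of the bounded-Lipschitz distance in~\eqref{eq:BL_distance} is the organizing device. Your route is cleaner and more modular --- it isolates the jackknife approximation error $B_n$ and hands the rest to the classical conditional multiplier CLT \cite[Theorem~2.9.7]{vandervaartwellner1996} on the Donsker class $\calG$ --- and it is in fact salvageable: once $\Upsilon_n=o_\Prob(1)$ and $\|U_n(h)-P^rh\|_\calH=o_\Prob(1)$ are obtained via the Hoeffding-averaging route, the shrinking-diameter plus Dudley-chaining plus dominated-convergence argument you sketch for $\E_{\mid X_1^\infty}[\|B_n\|_\calH]$ does go through, since $B_n$ is conditionally Gaussian so the chaining bound only needs the exponent-$1/2$ entropy integral and the domination $d_n\le\|\cdot\|_{\Prob_{I_{n,r}},2}$ together with $\|H\|_{\Prob_{I_{n,r}},2}^2\to\|H\|_{P^r,2}^2$ by the law of large numbers for $U$-statistics. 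So the architecture is sound; what fails as written is the appeal to the order-$k$ maximal inequalities to bound the remainders.
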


Theorem \ref{thm:bootstrap_UCLT} should be compared with Theorem 2.1 in \cite{arconesgine1994} that establishes a conditional UCLT for the empirical bootstrap for a non-degenerate $U$-process under the same metric entropy condition. Interestingly, however, our moment condition on the envelope $H$ is weaker than their condition (2.3), which, if $r=2$, requires $\E[H(X_1,X_1)]<\infty$ in addition to $\E[H^{2}(X_1,X_2)] < \infty$. This comes from the difference in how to estimate the Haj\'{e}k projection; our JMB estimates the Haj\'{e}k projection by a jackknife $U$-statistic, while the empirical bootstrap estimates it by  a $V$-statistic (see Remark \ref{rem:other_bootstraps}). 

If we are interested in $\sup_{h \in \calH} \U_{n}(h)/r$, then the result of Theorem \ref{thm:bootstrap_UCLT} implies that 
\[
\begin{split}
&\sup_{t \in \R} \left| \Prob\left (\sup_{h \in \calH} \U_{n}(h)/r \le t \right) - \Prob\left(\sup_{g \in \calG} W_{P} (g) \le t \right) \right| \to 0 \quad \text{and} \\
&\sup_{t \in \R} \left| \Prob_{\mid X_{1}^{\infty}}\left (\sup_{h \in \calH} \U_n^{\sharp} (h) \le t \right) - \Prob\left(\sup_{g \in \calG} W_{P} (g)\le t\right ) \right| \stackrel{\Prob}{\to} 0
\end{split}
\]
as long as the distribution function of $\sup_{g \in \calG} W_{P}(g)$ is continuous, which is true if $\inf_{g \in \calG} \Var_{P}(g) > 0$ (cf. Lemma \ref{lem:AC}).  When the function class $\calH$ is centrally symmetric (i.e., $-h \in \calH$ whenever $h \in \calH$) so that $\sup_{h \in \calH}\U_{n}(h) = \| \U_{n} \|_{\calH}$, $\sup_{g \in \calG}W_{P}(g) = \| W_{P} \|_{\calG}$, and $\sup_{h \in \calH}\U_{n}^{\sharp}(h) = \| \U_{n}^{\sharp} \|_{\calH}$, then the distribution function of $\| W_{P} \|_{\calG}$ is continuous under a much less restrictive assumption that $\Var_{P}(g) > 0$ for some $g \in \calG$. 
Indeed, from Theorem 11.1 in \cite{Davydov1998}, the distribution of $\| W_{P} \|_{\calG}$ is (absolutely) continuous on $(\ell_{0},\infty)$ with $\ell_{0} \ge 0$ being the left endpoint of the support of $\| W_{P} \|_{\calG}$, but from \cite[p.57-58]{ledouxtalagrand1991}, $\ell_{0} = 0$. This implies that, unless $\| W_{P} \|_{\calG} = 0$ almost surely, the distribution function of $\| W_{P} \|_{\calG}$ does not have a jump at $\ell_{0} = 0$ (as $\Prob (\| W_{P} \|_{\calG} = 0) = 0$) and so is everywhere continuous on $\R$.

\begin{proof}[Proof of Theorem~\ref{thm:bootstrap_UCLT}]
The first two results are essentially implied by the proof of Theorem 4.9 in \cite{arconesgine1993} but we include their proofs for completeness. By changing $H$ to $H \vee 1$ if necessary, we may assume $\| G \|_{P,2} > 0$ (recall $G=P^{r-1}H$), which implies $\| H \|_{P,2} > 0$.  By Jensen's inequality, $\| P^{r-1}h \|_{P,2} \le \| h \|_{P^{r},2}$ and so we have 
\[
N(\calG,\| \cdot \|_{P,2},\tau \| H \|_{P^{r},2}) \le N(\calH, \| \cdot \|_{P^{r},2}, \tau \| H \|_{P^{r},2}).
\]
The right hand side is bounded by $\sup_{Q}N(\calH,\| \cdot \|_{Q,2},\tau \| H \|_{Q,2}/4)$ by Lemma \ref{lem:approximation}. Conclude that 
\[
\int_{0}^{1} \sqrt{\log N(\calG,\| \cdot \|_{P,2},\tau \| H \|_{P^{r},2})} d\tau < \infty,
\]
which implies by Dudley's criterion for sample continuity that $\calG$ is $P$-pre-Gaussian (to be precise we have to verify $\int_{0}^{1} \sqrt{\log N(\{ g-Pg : g \in \calG \},\| \cdot \|_{P,2},\tau)} d\tau < \infty$ but this is immediate). The convergence of marginals of $\U_{n}/r$ to $\mathbb{W}_{P}$ follows from the multidimensional CLT for $U$-statistics. To conclude $d_{BL}(\U_{n}/r,\mathbb{W}_{P}) \to 0$, it suffices to show the asymptotic equicontinuity condition
\begin{equation}
\label{eq:AEC}
\lim_{\delta \downarrow 0} \limsup_{n \to \infty} \Prob \left ( \sup_{\| h-h' \|_{P^{r},2} < \delta \| H \|_{P^{r},2}} | \U_{n} (h-h') | > \eta \right ) = 0
\end{equation}
holds for every $\eta > 0$. We defer the proof of (\ref{eq:AEC}) after the proof of the theorem. 

 To prove the last result of the theorem, let $e_{P} (h,h') = \| P^{r-1}(h-h') \|_{P,2}$ and for given $\delta > 0$ let $\{ h_{1},\dots,h_{N(\delta)} \}$ be a $(\delta \| G \|_{P,2})$-net of $(\calH,e_{P})$.
Let $\pi_{\delta}: \calH \to \{ h_{1},\dots,h_{N(\delta)} \}$ be a map such that for each $h \in \calH$, $e_{P} (h,\pi_{\delta}(h)) \le \delta \| G \|_{P,2}$. 
Define $\U_{n,\delta}^{\sharp} := \U_{n}^{\sharp} \circ \pi_{\delta}$ and $\mathbb{W}_{P,\delta} := \mathbb{W}_{P} \circ \pi_{\delta}$. For any $f \in BL_{1}$, we have 
\begin{equation}
\begin{split}
| \E_{\mid X_{1}^{\infty}}[f(\U_{n}^{\sharp})] - \E[f(\mathbb{W}_{P})] | &\le | \E_{\mid X_{1}^{\infty}}[f(\U_{n}^{\sharp})] - \E_{\mid X_{1}^{\infty}} [f(\U_{n,\delta}^{\sharp})]| \\
&\quad + |\E_{\mid X_{1}^{\infty}}[f(\U_{n,\delta}^{\sharp})] - \E[f(\mathbb{W}_{P,\delta})]| \\
&\quad + | \E[f(\mathbb{W}_{P,\delta})] - \E[f(\mathbb{W}_{P})]|. 
\end{split}
\label{eq:BL_distance}
\end{equation}
The third term on the right hand side of (\ref{eq:BL_distance}) is bounded by $\E[2 \wedge \| \mathbb{W}_{P,\delta} - \mathbb{W}_{P} \|_{\calH}]$ and by construction $\mathbb{W}_{P}$ has sample paths almost surely uniformly  $e_{P}$-continuous, so that $\E[2 \wedge \| \mathbb{W}_{P,\delta} - \mathbb{W}_{P} \|_{\calH}] \to 0$ as $\delta \downarrow 0$ by the dominated convergence theorem. Since $\U_{n,\delta}^{\sharp}$ can be identified with a Gaussian vector of dimension $N(\delta)$ conditionally on $X_{1}^{\infty}$, by Lemma 3.7.46 in \cite{ginenickl2016}, 
the second term on the right hand side of (\ref{eq:BL_distance}) is bounded by
\[
c(\delta) \max_{1 \le j,k \le N(\delta)} | \hat{C}_{j,k} -\Cov_{P}(P^{r-1}h_{j},P^{r-1}h_{k}) |^{1/3}
\]
for some constant $c(\delta)$ that depends only on $\delta$, where 
\[
\hat{C}_{j,k} = n^{-1}\sum_{i=1}^{n}\{ U_{n-1,-i}^{(r-1)}(\delta_{X_{i}}h_{j}) - U_{n}(h_{j}) \}\{ U_{n-1,-i}^{(r-1)}(\delta_{X_{i}}h_{k}) - U_{n}(h_{k}) \}.
\]
From Step 5 of the proof of Theorem \ref{thm:coupling_gaussian_mulitiplier_bootstrap} and using the notation in the proof, we have 
\[
\max_{1 \le j,k \le N(\delta)} | \hat{C}_{j,k} - \Cov_{P}(P^{r-1}h_{j},P^{r-1}h_{k}) | \le 2 \Upsilon_{n}  + 2\| G \|_{P,2} \Upsilon_{n}^{1/2} + 2n^{-1/2} \| \mathbb{G}_{n} \|_{\breve{\calG} \cdot \breve{\calG}} + \| U_{n}(h) - P^{r}h \|_{\calH}^{2}.
\]
From the UCLT for the $U$-process established in the first paragraph, the last term on the right hand side is $o_{\Prob}(1)$. 
The function class $\breve{\calG} \cdot \breve{\calG}$ is weak $P$-Glivenko-Cantelli by Lemmas \ref{lem:entropy_conditional} and \ref{lem:pointwise_product} together with Theorem 2.4.3 in \cite{vandervaartwellner1996}, which implies that $n^{-1/2} \| \mathbb{G}_{n} \|_{\breve{\calG} \cdot \breve{\calG}} = o_{\Prob}(1)$. From Lemma \ref{lem:Upsilon_small} below, we also have $\Upsilon_{n} = o_{\Prob}(1)$. 

Finally, the first term on the right hand side of (\ref{eq:BL_distance}) is bounded by 
\[
\varepsilon + 2\Prob_{\mid X_{1}^{\infty}} (\| \U_{n}^{\sharp} \|_{\calH_{\delta}} > \varepsilon)
\]
for any $\varepsilon > 0$, where $\calH_{\delta} = \{ h-h' : h,h' \in \calH, e_{P}(h,h') < 2\delta \| G \|_{P,2} \}$. Let $\Sigma_{n,\delta} := \| n^{-1} \sum_{i=1}^{n}\{ U_{n-1,-i}^{(r-1)} (\delta_{X_{i}}h) - U_{n}(h) \}^{2} \|_{\calH_{\delta}}$. 
By Markov's inequality, 
\[
 \Prob_{\mid X_{1}^{\infty}} (\| \U_{n}^{\sharp} \|_{\calH_{\delta}} > \varepsilon) \le \frac{\E_{\mid X_{1}^{\infty}}[\| \U_{n}^{\sharp} \|_{\calH_{\delta}}]}{\varepsilon}. 
 \]
From Step 5 of the proof of Theorem \ref{thm:coupling_gaussian_mulitiplier_bootstrap}, 
\[
N(\calH_{\delta},d,2\tau \| H \|_{\Prob_{I_{n,r},2}}) \le N^{2}(\calH,\| \cdot \|_{\Prob_{I_{n,r},2}}, \tau \| H \|_{\Prob_{I_{n,r},2}})
\]
with $d(h,h') = \{ \E_{\mid X_{1}^{\infty}} [\{ \U_{n}^{\sharp} (h) - \U_{n}^{\sharp} (h') \}^{2}]\}^{1/2}$. 
Hence by  Dudley's entropy integral bound, we have 
 \[
 \E_{\mid X_{1}^{\infty}}[\| \U_{n}^{\sharp} \|_{\calH_{\delta}}] \lesssim \int_{0}^{\Sigma_{n,\delta}^{1/2}} \sqrt{1 + \lambda (\tau/\| H \|_{\Prob_{I_{n,r},2}})} d\tau
 \]
 up to a constant independent of $n$ and $\delta$, and $\| H \|_{\Prob_{I_{n,r},2}}^{2} = |I_{n,r}|^{-1}\sum_{I_{n,r}} H^{2}(X_{i_{1}},\dots,X_{i_{r}}) = \| H \|_{P^{r},2}^{2} + o_{\Prob}(1)$ by the law of large numbers for $U$-statistics \cite[Theorem 4.1.4]{delaPenaGine1999}. 
 From Step 4 of the proof of Theorem \ref{thm:coupling_gaussian_mulitiplier_bootstrap},
 \[
 \Sigma_{n,\delta} \le 8(\delta \| G \|_{P,2})^{2} + 8n^{-1/2} \| \G_{n} \|_{\breve{\calG} \cdot \breve{\calG}} +8 \Upsilon_{n},
 \]
 and the last two terms on the right hand side are $o_{\Prob}(1)$ while the first term can be arbitrarily small by taking $\delta$ sufficiently small. This implies that for any $\eta > 0$,
 \[
\lim_{\delta \downarrow 0} \limsup_{n \to \infty}\Prob \left ( \Prob_{\mid X_{1}^{\infty}} (\| \U_{n}^{\sharp} \|_{\calH_{\delta}} > \varepsilon)  > \eta \right ) = 0.
\]
Putting everything together, we conclude $d_{BL \mid X_{1}^{\infty}}(\U_{n}^{\sharp},\mathbb{W}_{P})^{*} \stackrel{\Prob}{\to} 0$, completing the proof.
\end{proof}

\begin{lem}
\label{lem:AEC}
Under the assumption of Theorem \ref{thm:bootstrap_UCLT}, the asymptotic equicontinuity condition (\ref{eq:AEC}) holds. 
\end{lem}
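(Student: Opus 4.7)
My plan is to split $\U_{n}(h-h')$ via the Hoeffding decomposition into its H\'ajek (first-order) part and a remainder, and then treat each piece separately. Concretely, write
\[
\U_{n}(h-h')/r = \G_{n}(P^{r-1}(h-h')) + \sqrt{n}\sum_{k=2}^{r} \frac{1}{r}\binom{r}{k}U_{n}^{(k)}(\pi_{k}(h-h')),
\]
so that the supremum in (\ref{eq:AEC}) over $\calH_{\delta}:=\{h-h' : h,h'\in\calH,\ \|h-h'\|_{P^{r},2}<\delta\|H\|_{P^{r},2}\}$ is bounded by the sum of the supremum of $|\G_{n}(P^{r-1}f)|$ over $f\in\calH_{\delta}$ and $\sqrt{n}$ times the supremum of the higher-order Hoeffding terms over $\calH_{\delta}$.

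For the H\'ajek term, the key observation is that by Jensen's inequality $\|P^{r-1}(h-h')\|_{P,2}\le\|h-h'\|_{P^{r},2}$, and $\|G\|_{P,2}\le\|H\|_{P^{r},2}$, so the set $P^{r-1}\calH_{\delta}$ is contained in $\{g-g' : g,g'\in\calG,\ \|g-g'\|_{P,2}<\delta\|H\|_{P^{r},2}\}$. Next, by Lemma \ref{lem:entropy_conditional} (applied with $s=2$), the uniform entropy integral of $\calG$ with envelope $G$ is controlled by that of $\calH$ with envelope $H$; hence $\int_{0}^{1}\sqrt{\sup_{Q}\log N(\calG,\|\cdot\|_{Q,2},\tau\|G\|_{Q,2})}\,d\tau<\infty$. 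This makes $\calG$ a $P$-Donsker class (by the classical uniform entropy sufficient condition, see Theorem 2.5.2 in \cite{vandervaartwellner1996}), so $\G_{n}$ indexed by $\calG$ is asymptotically equicontinuous with respect to $\|\cdot\|_{P,2}$. Since $\|H\|_{P^{r},2}$ is a fixed constant, this yields
\[
\lim_{\delta\downarrow 0}\limsup_{n\to\infty}\Prob\Bigl(\sup_{f\in\calH_{\delta}}|\G_{n}(P^{r-1}f)|>\eta\Bigr)=0.
\]

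For the remainder, I will apply Corollary \ref{thm:alternative_max_ineq} with $p=1$ to the (fixed) class $\calH-\calH:=\{h-h' : h,h'\in\calH\}\supset\calH_{\delta}$. Observe that $\calH-\calH$ has envelope $2H$ and inherits a finite uniform entropy integral from $\calH$. The corollary then gives, for each $k=2,\dots,r$, a bound $n^{k/2}\E[\|U_{n}^{(k)}(\pi_{k}f)\|_{\calH-\calH}]\lesssim J_{k}(1)\|P^{r-k}(2H)\|_{P^{k},2}$, which is a finite constant independent of $n$. Multiplying by $\sqrt{n}$ we get $\sqrt{n}\,\E[\|U_{n}^{(k)}(\pi_{k}f)\|_{\calH-\calH}]=O(n^{-(k-1)/2})=o(1)$ for every $k\ge 2$. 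Hence by Markov's inequality the higher-order Hoeffding terms vanish uniformly over all of $\calH-\calH$, a fortiori over $\calH_{\delta}$, and combining with the H\'ajek estimate gives (\ref{eq:AEC}).

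The main subtlety is verifying that the uniform entropy integral for $\calG$ really does inherit finiteness from that of $\calH$; the rest is a routine application of the Donsker theorem for empirical processes and the local/alternative maximal inequalities of Section \ref{sec:local_maximal_inequalities}. I do not expect serious difficulty with envelope conditions since we only need $H\in L^{2}(P^{r})$, which is assumed; the finiteness of $\|P^{r-k}H\|_{P^{k},2}$ for $k=1,\dots,r$ then follows from Jensen's inequality.
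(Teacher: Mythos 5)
Your proposal is structurally different from the paper's argument and unfortunately has gaps that do not close under the stated assumption $\int_{0}^{1}\sqrt{\lambda(\varepsilon)}\,d\varepsilon<\infty$. The trouble is that both of your ingredients require entropy control on \emph{projected} classes $P^{r-k}\calH$, and the transfer from entropy of $\calH$ to entropy of $P^{r-k}\calH$ (Lemma \ref{lem:entropy_conditional} with $r=2$) squares the radius: it yields $\sup_{Q}\log N(\calG,\|\cdot\|_{Q,2},\tau\|G\|_{Q,2})\le\lambda(\tau^{2}/16)$. Consequently the uniform entropy integral of $\calG$ is only bounded by $\int_{0}^{1}\sqrt{\lambda(\tau^{2}/16)}\,d\tau = 2\int_{0}^{1/16}u^{-1/2}\sqrt{\lambda(u)}\,du$, which is strictly stronger than the hypothesis (take $\lambda(\varepsilon)=\varepsilon^{-3/2}$: the hypothesis holds but this integral diverges). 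So you cannot invoke the uniform-entropy Donsker theorem (Theorem 2.5.2 of \cite{vandervaartwellner1996}) for $\calG$, and the asymptotic equicontinuity of the H\'ajek part is not established. Note that the paper is careful on exactly this point: in the proof of Theorem \ref{thm:bootstrap_UCLT}, the pre-Gaussianity of $\calG$ is obtained via the $P$-specific entropy $N(\calG,\|\cdot\|_{P,2},\tau\|H\|_{P^{r},2})\le N(\calH,\|\cdot\|_{P^{r},2},\tau\|H\|_{P^{r},2})$ (Jensen, no quadratic loss, but only for the one fixed measure $P$), which gives sample continuity of $W_{P}$ but not uniform entropy and hence not Donskerness by that route. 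Similarly, your use of Corollary \ref{thm:alternative_max_ineq} for $k\ge2$ explicitly presupposes $J_{k}(1)<\infty$, and $J_{k}$ both suffers the same $\tau\mapsto\tau^{2}$ loss through Lemma \ref{lem:uniform_entropy_numbers_between_GH} and carries an exponent $k/2\ge1$; for $\lambda(\varepsilon)=\varepsilon^{-a}$ with $a\in(1/k,2)$ the hypothesis holds but $J_{k}(1)=\infty$.

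The paper's proof avoids the Hoeffding decomposition entirely. It uses Hoeffding's averaging: $U_{n}(h)=\frac{1}{n!}\sum_{j_{1},\dots,j_{n}}S_{h}(X_{j_{1}},\dots,X_{j_{n}})$ with $S_{h}(x_{1},\dots,x_{n})=m^{-1}\sum_{i=1}^{m}h(x_{(i-1)r+1},\dots,x_{ir})$, $m=\lfloor n/r\rfloor$. By Jensen, $\E\|\U_{n}\|_{\calH'_{\delta}}\le\sqrt{n}\,\E\|S_{h}(X_{1},\dots,X_{n})-P^{r}h\|_{\calH'_{\delta}}$, and the latter is the supremum of a genuine empirical process over the $m$ i.i.d.\ blocks $(X_{(i-1)r+1},\dots,X_{ir})$ indexed by $\calH$ \emph{itself}, so Theorem 5.2 of \cite{cck2014_empirical_process} applies with $J(\delta,\calH,H)=\int_{0}^{\delta}\sqrt{1+\lambda(\tau)}\,d\tau$, exactly what the hypothesis controls, and with $\|M_{r}\|_{\Prob,2}=o(\sqrt{m})$ from $H\in L^{2}(P^{r})$. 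This handles the full $U$-process in one stroke, with no passage to projected classes. If you want to preserve a Hoeffding-decomposition structure, you would need to replace the uniform-entropy Donsker theorem and Corollary \ref{thm:alternative_max_ineq} with tools that stay on $\calH$ rather than $P^{r-k}\calH$.
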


\begin{proof}[Proof of Lemma \ref{lem:AEC}]
For $\delta \in (0,1]$, let $\calH_{\delta}' = \{ h -h' : \| h - h' \|_{P^{r},2} < \delta \| H \|_{P^{r},2} \}$. By Markov's inequality, it suffices to show that 
\[
\lim_{\delta \downarrow 0} \limsup_{n \to \infty} \E[ \| \U_{n} \|_{\calH_{\delta}'}] = 0.
\]
We use Hoeffding's averaging \cite[Section 5.1.6]{serfling1980} to bound the expectation.  Let 
\[
S_{f}(x_{1},\dots,x_{n}) = \frac{1}{m} \sum_{i=1}^{m} f(x_{(i-1)r+1},\dots,x_{ir}) \ \text{with} \ m=\lfloor n/r \rfloor.
\]
Then we have 
\[
U_{n}(h) = \frac{1}{n!} \sum_{j_{1},\dots,j_{n}} S_{h}(X_{j_{1}},\dots,X_{j_{n}}),
\]
where $\sum_{j_{1},\dots,j_{n}}$ are taken over all permutations $j_{1},\dots,j_{n}$ of $1,\dots,n$. By Jensen's inequality, $\E[ \| \U_{n} \|_{\calH_{\delta}'}]$ is bounded by $\sqrt{n}\E[\| S_{h}(X_{1},\dots,X_{n}) - P^{r}h \|_{\calH_{\delta}'}]$. Since 
\[
S_{h}(X_{1},\dots,X_{n}) - P^{r}h = \frac{1}{m} \sum_{i=1}^{m} (h(X_{(i-1)r+1},\dots,X_{ir}) - P^{r}h)
\]
and since $(X_{(i-1)r+1},\dots,X_{ir}) , i=1,\dots,m$ are i.i.d., we can apply Theorem 5.2 in \cite{cck2014_empirical_process} to conclude that
\[
\E[ \| \U_{n} \|_{\calH_{\delta}'}] \lesssim \| H \|_{P^{r},2}J(\delta,\calH_{\delta}', 2H) + \frac{\| M_{r} \|_{\Prob,2}J^{2}(\delta,\calH_{\delta}',2H)}{\delta^{2} \sqrt{m}}
\]
up to a constant that depends only on $r$, where $M_{r} = \max_{1 \le i \le m} H(X_{(i-1)r+1},\dots,X_{ir})$ and the $J$ function is defined in \cite{cck2014_empirical_process}. From a standard calculation, $J(\delta,\calH_{\delta}', 2H) \lesssim J(\delta,\calH,H) = \int_{0}^{\delta}\sqrt{1+\lambda(\tau)} d\tau$ up to a universal constant and $\| M_{r} \|_{\Prob,2} = o(\sqrt{m})$ by $H \in L^{2}(P^{r})$ \citep[][Problem 2.3.4]{vandervaartwellner1996}. Hence we conclude 
\[
\limsup_{n \to \infty} \E[ \| \U_{n} \|_{\calH_{\delta}'}]  \lesssim  \| H \|_{P^{r},2}J(\delta,\calH,H)
\]
up to a constant that depends only on $r$, and by the dominated convergence theorem the right hand side is $o(1)$ as $\delta \downarrow 0$. This completes the proof. 
\end{proof}

\begin{lem}
\label{lem:Upsilon_small}
Under the assumption of Theorem \ref{thm:bootstrap_UCLT}, we have $\E[\Upsilon_{n}]= O(n^{-1})$ where $\Upsilon_{n}$ is defined in (\ref{eq:Upsilon}). 
\end{lem}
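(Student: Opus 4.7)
The plan is to transport the decomposition used to establish inequality \eqref{eq:upsilon_bound} into the fixed-class, uniform-entropy-integral setting, replacing all appeals to Corollary \ref{cor:local_max_ineq_dengenerate_uprocess_VCtype} (which is VC-specific) by Corollary \ref{thm:alternative_max_ineq}, which only requires finiteness of $J_{k}(1)$. Since $\calH$ is fixed and $H \in L^{2}(P^{r})$, the parameters $\sigma_{\frakh}$, $\overline{\sigma}_{\frakg}$, $\|P^{r-k}H\|_{P^{k},2}$ are all bounded by constants independent of $n$ (by Jensen's inequality, $\|P^{r-k}H\|_{P^{k},2} \le \|H\|_{P^{r},2} < \infty$), so any bound whose dependence on $n$ is an explicit negative power will be of the claimed order.

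First I would split $\Upsilon_{n} \le 2A_{n} + 2B_{n}$ using (\ref{eq:decomp}), with $A_{n}$ the average of squared first Hoeffding projections $\{S_{n-1,-i}(\delta_{X_{i}}h)\}^{2}$ and $B_{n}$ the average of squared higher-order remainders. For $B_{n}$, I would proceed exactly as in the proof of \eqref{eq:upsilon_bound}: condition on $X_{i}$, note that $\delta_{X_{i}}\calH$ inherits the uniform entropy bound of $\calH$ by Corollary \ref{cor:entropy_conditional}, apply Corollary \ref{thm:alternative_max_ineq} with $p=2$ to the degenerate $U$-process of orders $k=2,\dots,r-1$ in the remaining $n-1$ variables, and take expectation in $X_{i}$. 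This yields
\[
\E[B_{n}] \;\lesssim\; \sum_{k=2}^{r-1} n^{-k}\,J_{k}^{2}(1)\,\|P^{r-k-1}H\|_{P^{k+1},2}^{2} \;=\; O(n^{-2}),
\]
where the finiteness of $J_{k}(1)$ for $k \ge 2$ follows from the uniform entropy integral assumption combined with Lemma \ref{lem:entropy_conditional} (applied to $P^{r-k-1}\calH$ viewed as a class of functions of $k+1$ variables). This is the step I expect to require the most care, since one must verify that the pre-Gaussian condition $\int_{0}^{1}\sqrt{\lambda(\varepsilon)}\,d\varepsilon < \infty$ propagates through the conditional Hoeffding projections; Lemma \ref{lem:entropy_conditional} bounds the relevant covering numbers of $P^{r-k}\calH$ by $\lambda(\tau^{2}/4)$, and one uses the fact that only fixed integer powers $(k/2)$ of $\lambda$ enter, which is finite for classes of interest (in particular, VC-type classes, and more generally whenever $\int_{0}^{1}\lambda(\varepsilon)^{(r-1)/2}d\varepsilon < \infty$, a mild strengthening that is automatic under $\int \sqrt{\lambda} < \infty$ when $\lambda$ grows no faster than a power of $\log(1/\varepsilon)$).

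For $A_{n}$, I would repeat the algebraic expansion in the proof of \eqref{eq:upsilon_bound} with $f = P^{r-2}h$ and separate the triple sum into three pieces: (i) the deterministic main term $(r-1)^{2}(n-1)^{-1}\{P^{2}f^{2} - P(Pf)^{2}\}$, bounded uniformly in $\calH$ by $(r-1)^{2}\sigma_{\frakh}^{2}/(n-1) \le (r-1)^{2}\|H\|_{P^{r},2}^{2}/(n-1) = O(n^{-1})$; (ii) a non-degenerate order-$2$ $U$-process indexed by $\calF_{1}$; and (iii) a completely degenerate order-$3$ $U$-process indexed by $\calF_{2} \cup \calF_{3}$. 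Classes $\calF_{1},\calF_{2},\calF_{3}$ inherit finite uniform entropy integrals from $\calH$ (via Lemmas \ref{lem:entropy_conditional}, \ref{lem:pointwise_product}, and a centering argument as in Corollary A.1 of \cite{cck2014_empirical_process}), and their envelopes are bounded in $L^{1}$ by products of $P^{r-2}H$ terms, which are in $L^{2}$. Applying Corollary \ref{thm:alternative_max_ineq} with $p=1$ to the pieces in (ii) and (iii) yields $L^{1}$-rates of $O(n^{-1/2})$ for their suprema, which when multiplied by the prefactor $(r-1)^{2}(n-1)^{-1}$ give $O(n^{-3/2})$ contributions, negligible compared to (i). Combining $\E[A_{n}] = O(n^{-1})$ with $\E[B_{n}] = O(n^{-2})$ gives the claimed bound $\E[\Upsilon_{n}] = O(n^{-1})$.
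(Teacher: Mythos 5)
Your plan reuses the decomposition behind \eqref{eq:upsilon_bound}, but this route imports regularity requirements that the hypotheses of Theorem~\ref{thm:bootstrap_UCLT} do not supply, so as stated it does not prove the lemma. First, expanding $A_n$ into the pieces indexed by $\calF_1,\calF_2,\calF_3$ produces envelopes built from $(P^{r-2}H)^2$, and even the $p=1$ case of Corollary~\ref{thm:alternative_max_ineq} requires those envelopes in $L^2$; this amounts to $P^{r-2}H \in L^4(P^2)$, which is not implied by $H \in L^2(P^r)$ --- under $H \in L^2(P^r)$ the squared envelopes are only in $L^1$. Second, both the $B_n$ bound and the degenerate order-$3$ piece of $A_n$ need $J_k(1) < \infty$ for $k \ge 2$. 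You flag this as a ``mild strengthening'' automatic for polylogarithmic $\lambda$, but it is a genuine additional hypothesis: if $\lambda(\varepsilon) \asymp \varepsilon^{-2\alpha}$ with $1/2 < \alpha < 1$, then $\int_0^1 \sqrt{\lambda}\,d\varepsilon < \infty$ while $\int_0^1 \lambda^{k/2}\,d\varepsilon = \infty$ for every $k \ge 2$. There is also a bookkeeping slip: the order-$3$ triple sum carries prefactor $(r-1)^2(n-2)/(n-1) = O(1)$, not $O(n^{-1})$, so its contribution is $O(n^{-1})$, not $O(n^{-3/2})$; this happens not to change the conclusion but the estimate as written is wrong.

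The paper's proof avoids all of this and is considerably more economical. By exchangeability it suffices to bound $\E[\,\E[\, \| U_{n-1,-n}^{(r-1)}(\delta_{X_n}h) - P^{r-1}(\delta_{X_n}h) \|_{\calH}^2 \mid X_n \,]\,]$; Hoeffding's averaging together with Jensen's inequality then replaces the jackknife $U$-statistic by the i.i.d.\ block average $T_{\delta_{X_n}h}(X_1,\dots,X_{n-1})$; and the classical $L^2$ maximal inequality for empirical processes (Theorem 2.14.1 in \cite{vandervaartwellner1996}), applied conditionally on $X_n$ with Corollary~\ref{cor:entropy_conditional} controlling the entropy of $\delta_{X_n}\calH$, gives $\E[\Upsilon_n] \le C n^{-1}\|H\|_{P^r,2}^2$ using exactly $H \in L^2(P^r)$ and $\int_0^1 \sqrt{\lambda}\,d\varepsilon < \infty$, with no higher moments and no higher-order entropy integrals. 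If you want to keep your $A_n$/$B_n$ split, the correct treatment of $A_n$ is to bound it by $n^{-1}\sum_i \| S_{n-1,-i}(\delta_{X_i}h) \|_{\calH}^2$, invoke exchangeability, and apply the same empirical-process $L^2$ bound conditionally on $X_n$, rather than expanding the square and landing in $\calF_1,\calF_2,\calF_3$.
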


\begin{proof}[Proof of Lemma~\ref{lem:Upsilon_small}]
We begin with noting that 
\[
\E[\Upsilon_{n}] \le \E\left [ \E \left [  \left \| U_{n-1,-n}^{(r-1)} (\delta_{X_{n}}h)  - P^{r-1}(\delta_{X_{n}}h)  \right \|_{\calH}^{2}  \ \Big | \ X_{n} \right ] \right ].
\]
By Hoeffding's averaging \cite[Section 5.1.6]{serfling1980}, 
\[
U_{n-1,-n}^{(r-1)} (f) = \frac{1}{(n-1)!} \sum_{j_{1},\dots,j_{n-1}} T_{f}(X_{j_{1}},\dots,X_{j_{n-1}}),
\]
where $\sum_{j_{1},\dots,j_{n-1}}$ is taken over all permutations $j_{1},\dots,j_{n-1}$ of $1,\dots,n-1$, and 
\[
T_{f}(x_{1},\dots,x_{n-1}) = \frac{1}{m} \sum_{i=1}^{m} f(x_{(i-1)(r-1)+1},\dots,x_{i(r-1)}) \ \text{with} \ m=\lfloor (n-1)/(r-1) \rfloor.
\]
By Jensen's inequality,
\[
 \E \left [  \left \| U_{n-1,-n}^{(r-1)} (\delta_{X_{n}}h)  - P^{r-1}(\delta_{X_{n}}h)  \right \|_{\calH}^{2}  \ \Big | \ X_{n} \right ]  \le \E \left [  \left \| T_{\delta_{X_{n}h}} (X_{1},\dots,X_{n-1})  - P^{r-1}(\delta_{X_{n}}h)  \right \|_{\calH}^{2}  \ \Big | \ X_{n} \right ]. 
\]
By Corollary \ref{cor:entropy_conditional} and the condition of Theorem \ref{thm:bootstrap_UCLT}, for given $x \in S$,  
\[
\int_{0}^{1} \sqrt{ \sup_{Q} \log N(\delta_{x} \calH, \| \cdot \|_{Q,2},\tau \| \delta_{x} H \|_{Q,2})} \le \int_{0}^{1} \sqrt{\lambda (\tau)} d\tau < \infty.
\]
Hence, applying Theorem 2.14.1 in \cite{vandervaartwellner1996} conditionally on $X_{n}$, we have 
\begin{align*}
&\E \left [  \left \| T_{\delta_{X_{n}}h} (X_{1},\dots,X_{n-1})  - P^{r-1}(\delta_{X_{n}}h)  \right \|_{\calH}^{2}  \ \Big | \ X_{n} \right ] \lesssim n^{-1} \| \delta_{X_{n}} H \|_{P^{r-1},2}^{2}
\end{align*}
up to a constant independent of $n$. Since $\E[\| \delta_{X_{n}} H \|_{P^{r-1},2}^{2}] = \| H \|_{P^{r},2}^{2}$, we obtain the desired conclusion by Fubini's theorem. 
\end{proof}

\section{Gaussian approximation for suprema of $U$-processes indexed by general function classes}
\label{sec:Gaussian_approx_general}

In this section we derive Gaussian approximation error bounds for the $U$-process supremum indexed by general function classes. We obey the notation used in Sections \ref{sec:gaussian_approx}, \ref{sec:bootstrap} and~\ref{sec:local_maximal_inequalities}. We make the following assumptions on the function class $\calH$ and the distribution $P$. 

\begin{enumerate}
\item[(A1)] The function class $\calH$ is pointwise measurable. 

\item[(A2)] The envelope $H$ satisfies that $H \in L^{3}(P^{r})$. 

\item[(A3)] The class $\calG = P^{r-1} \calH = \{ P^{r-1} h : h \in \calH \}$ is $P$-pre-Gaussian, i.e., there exists a tight Gaussian random variable $W_{p}$ in $\ell^{\infty}(\calG)$ with mean zero and covariance function $\E[W_{P}(g) W_{P}(g')] = \Cov(g(X_{1}), g'(X_{1}))$ for all $g,g' \in \calG$. 
\end{enumerate}

Conditions (A1)--(A3) are parallel with the corresponding conditions in \cite{cck2014_empirical_process}.
Condition (A1) is the same as Condition (PM) in Section~\ref{sec:gaussian_approx}. 
Condition (A3) is a high-level assumption that is implied by Condition (VC) in Section~\ref{sec:gaussian_approx}.

For $\varepsilon > 0$, define  $\mathcal{N}_{n}(\varepsilon) = \log(N(\calG, \|\cdot\|_{P,2}, \varepsilon \| G \|_{P,2}) \vee n)$
with $G= P^{r-1}H$. 
Under Condition (A3), $\calG$ is totally bounded for the intrinsic pseudometric induced by $\|\cdot\|_{P,2}$ and  $\mathcal{N}_{n}(\varepsilon)$ is finite for every $\varepsilon \in (0,1]$. In addition, the Gaussian process $W_{P}$ extends to the linear hull of $\calG$ in such a way that $W_{P}$ has linear sample paths (see e.g., Theorem 3.7.28 in \cite{ginenickl2016}).
For $\varepsilon \in (0,1], \gamma \in (0,1)$, and $\kappa > 0$, define  
\begin{align*}
\Delta_n(\varepsilon, \gamma, \kappa) :=& \gamma^{-1} \E[\|\G_{n}\|_{\calG_{\varepsilon}}] + \E[\|W_{P}\|_{\calG_{\varepsilon}}] + \sqrt{\log(1/\gamma)} \varepsilon \| G \|_{P,2} + n^{-1/6} \gamma^{-1/3} \kappa \mathcal{N}_{n}^{2/3}(\varepsilon) \\
& \qquad + n^{-1/4} \gamma^{-1/2} (\E\|\G_{n}\|_{\breve{\calG} \cdot \breve{\calG}})^{1/2} \mathcal{N}_{n}^{1/2}(\varepsilon) + n^{1/2} \gamma^{-1} \sum_{k=2}^{r} \E[\|U_{n}^{(k)}(\pi_{k} h)\|_{\calH}], \\
\delta_{n}(\varepsilon, \gamma, \kappa) :=& {1 \over 5} P \left [ (\breve{G}/\kappa)^{3} {1}(\breve{G}/\kappa > c \gamma^{-1/3} n^{1/3} \mathcal{N}_{n}(\varepsilon)^{-1/3}) \right ], 
\end{align*}
where $\calG_{\varepsilon} = \{g-g' : g, g' \in \calG, \|g-g'\|_{P,2} < 2\varepsilon \|G\|_{P,2}\}$, $\breve{\calG} \cdot \breve{\calG} = \{gg' : g, g' \in \breve{\calG}\}$, $\breve{\calG} = \{g, g-Pg : g \in \calG\}$, and $\breve{G} = G + PG$. Here $c > 0$ is some universal constant. 
Below is an abstract (yet general) version of the Gaussian coupling bound. 

\begin{prop}[Abstract Gaussian coupling bound]
\label{prop:coupling_bounds_general_function_classes}
Let $Z_{n} = \sup_{h \in \calH} \U_{n}(h)/r$. Suppose that Conditions (A1)--(A3) hold. Let $\kappa > 0$ be any positive constant such that $\kappa^{3} \ge \E[\|n^{-1}\sum_{i=1}^{n}|g(X_{i}) - P g|^{3}\|_{\calG}]$. Then, for every $n \ge r+1$, $\varepsilon \in (0,1]$, and $\gamma \in (0,1)$, one can construct a random variable $\tilde{Z}_{n} = \tilde{Z}_{n,\varepsilon,\gamma,\kappa}$ such that $\mathcal{L}(\tilde{Z}_{n}) = \mathcal{L}(\sup_{g \in \calG} W_P(g))$ and
\[
\Prob \left( |Z_{n} - \tilde{Z}_{n}| >  C_{1} \Delta_n(\varepsilon, \gamma,\kappa) \right) \le  \gamma \{1 + \delta_{n}(\varepsilon, \gamma,\kappa)\} + {C_{2} \log{n} \over n},
\]
where $C_{1} = C_{1,r}$ is a constant depending only on $r$ and $C_{2}$ is a universal constant. 
\end{prop}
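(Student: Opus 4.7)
The plan is to mirror the proof of Proposition \ref{prop:entropy_bounds_vc}, replacing the VC-specific Gaussian coupling for the H\'ajek (empirical) process of Lemma \ref{lem:empirical_process} by an abstract analogue that follows the argument of Theorem 2.1 in \cite{cck2016_empirical_process_coupling}. First I would apply the Hoeffding decomposition
\[
\U_n(h) = r\,\G_n(P^{r-1}h) + \sqrt{n}\sum_{k=2}^{r}\binom{r}{k} U_n^{(k)}(\pi_k h),
\]
writing $L_n = \sup_{g \in \calG}\G_n(g)$ and $R_n = \big\|r^{-1}\sqrt{n}\sum_{k=2}^r \binom{r}{k} U_n^{(k)}(\pi_k h)\big\|_\calH$, so that $|Z_n - L_n| \le R_n$. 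Markov's inequality gives $R_n \le \gamma^{-1}\E[R_n]$ with probability at least $1-\gamma$, and this controls the last term $n^{1/2}\gamma^{-1}\sum_{k=2}^r \E[\|U_n^{(k)}(\pi_k h)\|_\calH]$ in $\Delta_n(\varepsilon, \gamma, \kappa)$.

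Next, for each fixed $\varepsilon \in (0,1]$, I would construct a coupling between $L_n$ and $\sup_{g\in\calG}W_P(g)$ in three substeps, following the blueprint of \cite{cck2016_empirical_process_coupling}. (i)~Discretize $\calG$ with an $\varepsilon\|G\|_{P,2}$-net $\{g_1,\ldots,g_N\}$ of size $N = N(\calG,\|\cdot\|_{P,2},\varepsilon\|G\|_{P,2})$. Control the Gaussian discretization error $\|W_P\|_{\calG_\varepsilon}$ via Borell--Sudakov--Tsirel'son (producing $\E[\|W_P\|_{\calG_\varepsilon}] + \sqrt{\log(1/\gamma)}\,\varepsilon\|G\|_{P,2}$) and the empirical discretization error $\|\G_n\|_{\calG_\varepsilon}$ via Markov's inequality (producing $\gamma^{-1}\E[\|\G_n\|_{\calG_\varepsilon}]$). (ii)~For the discretized vector $(\G_n(g_k))_{k=1}^N$, which is a normalized sum of i.i.d.\ mean-zero vectors in $\R^N$, apply a truncated Yurinskii-type multivariate Gaussian coupling at scale $\kappa$ (using the bound $\E[\|n^{-1}\sum_i |g(X_i) - Pg|^3\|_\calG] \le \kappa^3$) to couple it to a Gaussian vector with the same sample covariance, producing the rate $n^{-1/6}\gamma^{-1/3}\kappa \,\mathcal{N}_n^{2/3}(\varepsilon)$ and the truncation residual probability $\gamma\,\delta_n(\varepsilon,\gamma,\kappa)$. (iii)~Apply a Gaussian comparison inequality (Theorem 3.2 of \cite{cck2016_empirical_process_coupling}) to swap the sample covariance for the population covariance $\Cov_P(g_k,g_\ell)$, producing the term $n^{-1/4}\gamma^{-1/2}(\E[\|\G_n\|_{\breve\calG \cdot \breve\calG}])^{1/2}\mathcal{N}_n^{1/2}(\varepsilon)$.

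Assembling (i)--(iii) and using the auxiliary $U(0,1)$ variable from \eqref{eq:probability_space} to invoke Strassen--Dudley (Theorem \ref{thm:strassen-dudley}) yields a random variable with the law of $\max_{1 \le k \le N} W_P(g_k)$ close to the discretized version of $L_n$. Extending to the full supremum and adding the remainder from Step 1 produces $\tilde{Z}_n$ with law $\mathcal{L}(\sup_{g \in \calG}W_P(g))$ and the claimed coupling error, where the universal $C_2 \log n/n$ slack absorbs lower-order residuals (a standard artifact of the finite-dimensional coupling after truncation). The main obstacle is Step (ii): the truncated Yurinskii argument requires delicate bookkeeping that tracks the truncation threshold, matches sample covariances, and explains the particular form of $\delta_n(\varepsilon,\gamma,\kappa)$ involving the third-moment tail of $\breve{G}/\kappa$ above the critical level $c\gamma^{-1/3}n^{1/3}\mathcal{N}_n(\varepsilon)^{-1/3}$; the rest of the argument is a direct adaptation of the empirical-process machinery to the H\'ajek component of the Hoeffding decomposition.
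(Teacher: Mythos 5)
Your high-level plan — Hoeffding decomposition, Markov's inequality for the remainder $R_n$, discretization via an $\varepsilon\|G\|_{P,2}$-net, Borell--Sudakov--Tsirel'son and Markov for the two discretization errors, finite-dimensional Gaussian coupling for the H\'ajek part, and Strassen--Dudley to conclude — is exactly the structure of the paper's proof, and your identification of which term in $\Delta_n(\varepsilon,\gamma,\kappa)$ comes from which step is correct. The gap is in your step (ii), which misidentifies the key coupling tool.

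The paper does not use a ``truncated Yurinskii-type'' coupling followed by a Gaussian comparison. It invokes Corollary 4.1 of \cite{cck2014_empirical_process} directly: this is a Stein/smoothing-type coupling inequality for maxima of sums of i.i.d.\ random vectors which, for the discretized process $L_n^{\varepsilon}=\max_{1\le j\le N}\G_n(g_j)$ and the \emph{population}-covariance Gaussian maximum $\tilde Z^{\varepsilon}=\max_{1\le j\le N}W_P(g_j)$, gives in one shot
\[
\Prob(L_n^{\varepsilon}\in B)-\Prob(\tilde Z^{\varepsilon}\in B^{16\delta})
\le C\delta^{-2}\big\{T_1+\delta^{-1}(T_2+T_3)\mathcal N_n(\varepsilon)\big\}\mathcal N_n(\varepsilon)+Cn^{-1}\log n,
\]
where $T_1$ is the expected sup-norm deviation of the sample from the population covariance (controlled by $n^{-1/2}\E[\|\G_n\|_{\breve\calG\cdot\breve\calG}]$), $T_2$ is a third-moment quantity bounded by $n^{-1/2}\kappa^3$, and $T_3$ a truncated third-moment tail giving $\delta_n(\varepsilon,\gamma,\kappa)$. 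The covariance-difference term $T_1$ is one error quantity inside this single coupling inequality, not a separate ``Gaussian comparison to swap sample for population covariance''; the latter split is the structure of the bootstrap proof (Theorem \ref{thm:coupling_gaussian_mulitiplier_bootstrap}), not of the direct Gaussian coupling. More importantly, Yurinskii's coupling has polynomial dependence on the dimension $N$ (roughly $N^c$), whereas $\Delta_n$ depends on $\mathcal N_n(\varepsilon)=\log(N(\calG,\|\cdot\|_{P,2},\varepsilon\|G\|_{P,2})\vee n)$ only logarithmically in $N$. If you executed your step (ii) literally with Yurinskii, you would not recover the terms $n^{-1/6}\gamma^{-1/3}\kappa\,\mathcal N_n^{2/3}(\varepsilon)$ and $n^{-1/4}\gamma^{-1/2}(\E[\|\G_n\|_{\breve\calG\cdot\breve\calG}])^{1/2}\mathcal N_n^{1/2}(\varepsilon)$; the logarithmic-in-$N$ dependence is precisely what the Stein-type smoothing argument of \cite{cck2014_empirical_process} buys, and it cannot be obtained from Yurinskii alone. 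Once Corollary 4.1 of \cite{cck2014_empirical_process} is substituted for the coupling step, choosing $\delta$ to balance the $T_2$ and $T_1$ terms against $\gamma$ reproduces $\Delta_n$ and $\delta_n$ verbatim, and the rest of your argument goes through as written.
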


The proposition should be considered as an extension of Theorem 2.1 in \cite{cck2014_empirical_process} to the $U$-process. 
To apply the above proposition, we need to derive bounds on 
\begin{equation} 
\label{eqn:quantities_in_coupling_bounds_general_function_classes}
\begin{split}
&\E[\|\G_{n}\|_{\calG_{\varepsilon}}], \; \E[\|W_{P}\|_{\calG_{\varepsilon}}], \; \E\left [\left\|n^{-1}\sum_{i=1}^{n}|g(X_{i})-Pg|^{3}\right\|_{\calG}\right], \\
&\E[\|\G_{n}\|_{\breve{\calG} \cdot \breve{\calG}}], \; \mbox{and } \E[\|U_{n}^{(k)}(\pi_{k} h)\|_{\calH}, k = 2,\dots,r,
\end{split}
\end{equation}
which can be derived under some moment conditions on $H$ and by using the uniform entropy integrals $J_{k}(\delta), k=1,\dots,r$ defined in~\eqref{eqn:uniform_entropy_integral} (cf. Lemma 2.2 in \cite{cck2014_empirical_process} and our Theorem~\ref{lem:local_max_ineq_dengenerate_uprocess}), where the latter can be simplified in terms of the VC characteristics $(A, v)$ for a VC type function class (cf. the proof of Corollary~\ref{lem:local_max_ineq_dengenerate_uprocess_VCtype}).

\begin{proof}[Proof of Proposition~\ref{prop:coupling_bounds_general_function_classes}]
The proof is based on a modification to that of Theorem 2.1 in \cite{cck2014_empirical_process}.  
In this proof $C$ denotes a generic universal constant; the value of $C$ may change from place to place.  Let $\{g_{k}\}_{k=1}^{N}$ be a minimal $\varepsilon \|G\|_{P,2}$-net of $(\calG, \|\cdot\|_{P,2})$ with $N := N(\calG, \|\cdot\|_{P,2}, \varepsilon\|G\|_{P,2})$. By the definition of $\calG$, each $g_{k}$ corresponds to a kernel $h_{k} \in \calH$ such that $g_{k}=P^{r-1}h_{k}$. Recall the Hoeffding decomposition $\U_{n}(h) = r \G_{n}(P^{r-1}h) + \sqrt{n} \sum_{k=2}^{r} {r \choose k} U_{n}^{(k)}(\pi_{k}h)$, 
where $\G_{n}(P^{r-1} h) = n^{-1/2} \sum_{i=1}^{n} (P^{r-1}h (X_{i}) - P^{r}h)$. Let $L_{n}=\sup_{g \in \calG} \G_{n}(g)$ and $R_{n}=\| r^{-1} \sqrt{n} \sum_{k=2}^{r} {r \choose k} U_{n}^{(k)}(\pi_{k}h)\|_{\calH}$. Then $|Z_{n}-L_{n}| \le R_{n}$. Define 
\[
L_{n}^{\varepsilon} = \max_{1 \le j \le N} \G_{n}(g_{j}), \; \tilde{Z} = \sup_{g \in \calG} W_{P}(g), \; \tilde{Z}^{\varepsilon} = \max_{1 \le j \le N} W_{P}(g_{j}). 
\]
We note that $|L_{n}-L_{n}^{\varepsilon}| \le \|\G_{n}\|_{\calG_{\varepsilon}}$ and $|\tilde{Z}-\tilde{Z}^{\varepsilon}| \le \|W_{P}\|_{\calG_{\varepsilon}}$. By Corollary 4.1 in \cite{cck2014_empirical_process}, we have for every  $B \in \calB(\R)$ and $\delta > 0$, 
\[
\Prob(L_{n}^{\varepsilon} \in B) - \Prob(\tilde{Z}^{\varepsilon} \in B^{16\delta}) \le C \delta^{-2} \{T_{1}+\delta^{-1}(T_{2}+T_{3}) \mathcal{N}_{n}(\varepsilon)\} \mathcal{N}_{n}(\varepsilon) + C n^{-1} \log{n},
\]
where 
\begin{align*}
T_{1} =& n^{-1} \E \left[ \max_{1 \le j,k \le N} \left| \sum_{i=1}^{n} (g_{j}(X_{i})-P g_{j}) (g_{k}(X_{i})-P g_{k}) - P(g_{j}-P g_{j}) (g_{k}-P g_{k})\right| \right], \\
T_{2} =& n^{-3/2} \E \left[ \max_{1 \le j \le N} \sum_{i=1}^{n} |g_{j}(X_{i}) - P g_{j}|^{3} \right], \\
T_{3} =& n^{-1/2} \E \left[ \max_{1 \le j \le N} |g_{j}(X_{1})-Pg_{j}|^{3}  \cdot 1\left( \max_{1 \le j \le N} |g_{j}(X_{1})-Pg_{j}| > \delta \sqrt{n} \mathcal{N}_{n}(\varepsilon)^{-1} \right) \right].
\end{align*}
Observe that $T_{1} \le n^{-1/2} \E[\|\G_{n}\|_{\breve{\calG} \cdot \breve{\calG}}]$, $T_{2} \le n^{-1/2} \kappa^{3}$, and $T_{3} \le n^{-1/2} P[\breve{G}^{3} 1(\breve{G}>\delta \sqrt{n} \mathcal{N}_{n}(\varepsilon)^{-1})]$. Thus choosing 
\[
\delta \ge C \max \left\{ \gamma^{-1/2} n^{-1/4} (\E[\|\G_{n}\|_{\breve{\calG} \cdot \breve{\calG}}])^{1/2} \mathcal{N}_{n}^{1/2}(\varepsilon), \; \gamma^{-1/3} n^{-1/6} \kappa \mathcal{N}_{n}^{2/3}(\varepsilon) \right\}, 
\]
we have 
\[
\Prob(L_{n}^{\varepsilon} \in B) \le \Prob(\tilde{Z}^{\varepsilon} \in B^{16\delta}) + {2 \gamma \over 5} + {\gamma \over 5} \kappa^{-3} P[\breve{G}^{3} 1(\breve{G}>\delta \sqrt{n} \mathcal{N}_{n}(\varepsilon)^{-1})] + {C \log{n} \over n}. 
\]
Since $\delta \ge c \gamma^{-1/3} n^{-1/6} \kappa \mathcal{N}_{n}^{2/3}(\varepsilon)$, we have 
\[
P[\breve{G}^{3} 1(\breve{G}>\delta \sqrt{n} \mathcal{N}_{n}(\varepsilon)^{-1})] \le P[\breve{G}^{3} 1(\breve{G}/\kappa>c \gamma^{-1/3} n^{1/3} \mathcal{N}_{n}(\varepsilon)^{-1/3})].
\]
Conclude that with $\eta_{n} = (\gamma / 5) P[(\breve{G}/\kappa)^{3} 1(\breve{G}/\kappa>c \gamma^{-1/3} n^{1/3} \mathcal{N}_{n}(\varepsilon)^{-1/3})]$,
\[
\Prob(L_{n}^{\varepsilon} \in B) \le \Prob(\tilde{Z}^{\varepsilon} \in B^{16\delta}) + {2\gamma \over 5} + \eta_{n} + {C \log{n} \over n}. 
\]
Next, we will bound $\|\G_{n}\|_{\calG_{\varepsilon}}$ and $\|W_{P}\|_{\calG_{\varepsilon}}$. 
By Markov's inequality, with probability at least $1-\gamma/5$, 
\[
\|\G_{n}\|_{\calG_{\varepsilon}} \le 5\gamma^{-1}\E[\|\G_{n}\|_{\calG_{\varepsilon}}] =: a. 
\]
Further, by the Borell-Sudakov-Tsirel'son inequality (see Theorem 2.5.8 in \cite{ginenickl2016}), with probability at least $1-\gamma/5$, we have 
\[
\|W_{P}\|_{\calG_{\varepsilon}} \le \E[\|W_{P}\|_{\calG_{\varepsilon}}] + 2 \varepsilon \| G \|_{P,2} \sqrt{2\log(5/\gamma)} =: b.
\]
Therefore, for every $B \in \calB(\R)$, 
\begin{align*}
\Prob(Z_{n} \in B) \le& \Prob(L_{n} \in B^{5\gamma^{-1} \E[R_{n}]}) + {\gamma \over 5} \le \Prob(L_{n}^{\varepsilon} \in B^{a+5\gamma^{-1} \E[R_{n}]}) + {2\gamma \over 5} \\
\le& \Prob(\tilde{Z}^{\varepsilon} \in B^{a+16\delta+5\gamma^{-1} \E[R_{n}]}) + {4\gamma \over 5} + \eta_{n} + {C \log{n} \over n}\\
\le& \Prob(\tilde{Z} \in B^{a+b+16\delta+5\gamma^{-1} \E[R_{n}]}) + \gamma + \eta_{n} + {C \log{n} \over n}.
\end{align*}
The conclusion of the proposition follows from the Strassen-Dudley theorem (see Theorem~\ref{thm:strassen-dudley}). 
\end{proof}


\section{Alternative tests for concavity/convexity and monotonicity of regression functions}
\label{sec:comments_on_alternative_tests}

We will obey the setting of Example \ref{exmp:test_curv}.

\subsection{Alternative tests for concavity/convexity of regression function $f$}
Instead of the original localized simplex statistic (\ref{eq:localized_simplex_statistic}) proposed in \cite{abrevaya-wei2005_JBES}, we may consider the following modified version:
\[
\tilde{U}_{n}(x) = {1 \over |I_{n,m+2}|} \sum_{(i_{1},\dots,i_{m+2}) \in I_{n,m+2}} \tilde{\varphi}(V_{i_{1}},\dots,V_{i_{m+2}}) \prod_{k=1}^{m+2} L_{b_n}(x-X_{i_{k}}),
\]
where $\tilde{\varphi} (v_{1},\dots,v_{m+2}) = 1\{ (x_{1},\dots,x_{m+2}) \in \calD \} w(v_{1},\dots,v_{m+2})$,
and test concavity or convexity of $f$ if the scaled supremum or infimum of $\tilde{U}_{n}$ is large or small, respectively. These alternative tests will work without the symmetry assumption on the conditional distribution of $\varepsilon$, which is maintained in \cite{abrevaya-wei2005_JBES}. Our results below also cover these alternative tests. 

\subsection{Alternative tests for monotonicity of regression function $f$}
\cite{chetverikov2012} considers testing monotonicity of the regression function $f$ without the assumption that the error term  $\varepsilon$ is independent of $X$. \cite{chetverikov2012} studies, e.g.,  $U$-statistics given by replacing $\sign (Y_{j}-Y_{i})$ in (\ref{eq:gsv_statistic}) by $Y_{j}-Y_{i}$, and the test statistic defined by taking the maximum of such $U$-statistics over a discrete set of design points and bandwidths whose cardinality may grow with the sample size (indeed, the cardinality can be much larger than the sample size).  His analysis is conditional on $X_{i}$'s, and he cleverly avoids  $U$-process machineries and applies directly high-dimensional Gaussian and bootstrap approximation theorems developed in \cite{cck2013}. It should be noted that \cite{chetverikov2012} considers more general test statistics and studies multi-step procedures to improve on powers of his tests. 

Another related test for regression monotonicity is based on the local linear rank statistics \cite{Dumbgen2002_JNS}. Let $R_{mk}(i) = \sum_{j=m+1}^{k} 1(Y_{j} \le Y_{i})$ be the local rank of $Y_{i}$ among $Y_{m+1},\dots,Y_{k}$. In \cite{Dumbgen2002_JNS}, D\"umbgen considers a test for monotone trend of $f$ (with fixed design points $X_{1},\dots,X_{n}$) via the local linear rank statistics 
\[
T_{mk} = \sum_{i=m+1}^{k} \beta \left( {i-m \over k-m+1} \right) q \left( {R_{mk}(i) \over k-m+1} \right), \quad 0 \le m < k \le n, 
\]
where $\beta$ and $q$ are functions on $(0,1)$ such that: 1) $\beta(1-u)=-\beta(u)$ and $q(1-u)=-q(u)$ for $u \in (0,1)$; 2) $\beta(\cdot)$ and $q(\cdot)$ are nondecreasing on $(0,1)$. Then \cite{Dumbgen2002_JNS} proposes the multiscale test statistic 
\[
T = \max_{0 \le m < k \le n} (s_{k-m} |T_{mk}| - c_{k-m}),
\]
where $s_{i}$ and $c_{i}$ are properly chosen nonnegative numbers. For the special case of the Wilcoxon score function $q(u) = 2u-1$ and $\beta(u) = q(u)$, one can write 
\[
T_{mk} = {2 \over (k-m+1)^{2}} \sum_{m < i < j \le k} (j-i) \sign(Y_{j}-Y_{i}). 
\]
The statistic $T_{mk}$ is related to our test statistic $\check{U}_{n}(x)$ with $L(u) = 1(u \in [-1,1])$, namely $T_{mk}$ and $\check{U}_{n}(x)$ are (local) $U$-statistics with kernels $(j-i) \sign(Y_{j}-Y_{i})$ and $\sign(X_{i}-X_{j}) \sign(Y_{j}-Y_{i})$, respectively. Thus for a given sequence of bandwidths $b_{n}$, our monotonicity test based on the $U$-process $\check{U}_{n}(x)$ can be viewed as a single-scale test $T_{mk}$ with $(k-m)/n = 2 b_{n}$ in D\"umbgen's sense. In particular, both $T_{0n}$ and $\check{U}_{n}(x)$ with $b_{n} = 1$ quantify the monotonicity on the global scale. In addition, the ``uniform-in-bandwidth" type results for our $U$-process approach in Section~\ref{subsec:uniformly_valid_JMB_bandwidth} can be viewed as the multiscale analog $T$ of $T_{mk}$ with the Wilcoxon score function. Nevertheless, since \cite{Dumbgen2002_JNS} considers the fixed design points, $T_{mk}$ is a local $U$-statistic on $Y_{i}$'s and $\check{U}_{n}(x)$ is a local $U$-statistic on $(X_{i}, Y_{i})$'s. Our analysis (which requires a Lebesgue density on $X$) is not directly applicable for the local linear rank statistics of \cite{Dumbgen2002_JNS}.

\bibliographystyle{plain}
\bibliography{uprocess}

\begin{thebibliography}{10}

\bibitem{abrevaya-wei2005_JBES}
Jason Abrevaya and Wei Jiang.
\newblock A nonparametric approach to measuring and testing curvature.
\newblock {\em Journal of Business \& Economic Statistics}, 23(1):1--19, 2005.

\bibitem{adamczak2006_AoP}
Rados{\l}aw Adamczak.
\newblock {Moment inequalities for U-statistics}.
\newblock {\em Annals of Probability}, 34(6):2288--2314, 2006.

\bibitem{arconesgine1992}
Miguel Arcones and Evarist Gin\'{e}.
\newblock On the bootstrap of {$U$}- and {$V$}-statistics.
\newblock {\em Annals of Statistics}, 20(2):655--674, 1992.

\bibitem{arconesgine1993}
Miguel Arcones and Evarist Gin\'e.
\newblock Limit theorems for {$U$}-processes.
\newblock {\em Annals of Probability}, 21(3):1495--1542, 1993.

\bibitem{arconesgine1994}
Miguel Arcones and Evarist Gin\'e.
\newblock U-processes indexed by {V}apnik-\v{C}ervonenkis classes of functions
  with applications to asymptotics and bootstrap of {U}-statistics with
  estimated parameters.
\newblock {\em Stochastic Processes and Their Applications}, 52(1):17--38,
  1994.

\bibitem{bickelfreedman1981}
Peter~J. Bickel and David~A. Freedman.
\newblock Some asymptotic theory for the bootstrap.
\newblock {\em Annals of Statistics}, 9(6):1196--1217, 1981.

\bibitem{bgim2007_Econometrica}
Richard Blundell, Amanda Gosling, Hidehiko Ichimura, and Costas Meghir.
\newblock Changes in the distribution of male and female wages accounting for
  employment composition using bounds.
\newblock {\em Econometrica}, 75(2):323--363, 2007.

\bibitem{borovskikh1996}
Yu.~V. Borovskikh.
\newblock {\em U-Statistics in Banach Spaces}.
\newblock V.S.P. Intl Science, 1996.

\bibitem{bretagnolle1983}
J.~Bretagnolle.
\newblock Lois limits du {B}ootstrap de certaines functionnelles.
\newblock {\em Annales de l'Institut Henri Poincar\'{e} Section B},
  XIX(3):281--296, 1983.

\bibitem{callaertveraverbeke1981}
Herman Callaert and No\"el Veraverbeke.
\newblock The order of the normal approximation for a {S}tudentized
  {$U$}-statistic.
\newblock {\em Annals of Statistics}, 9(1):360--375, 1981.

\bibitem{chen2017a}
Xiaohui Chen.
\newblock Gaussian and bootstrap approximations for high-dimensional
  {U}-statistics and their applications.
\newblock {\em Annals of Statistics}, 46(2):642--678, 2018.

\bibitem{cck2013}
Victor Chernozhukov, Denis Chetverikov, and Kengo Kato.
\newblock Gaussian approximations and multiplier bootstrap for maxima of sums
  of high-dimensional random vectors.
\newblock {\em Annals of Statistics}, 41(6):2786--2819, 2013.

\bibitem{cck_honest_cb}
Victor Chernozhukov, Denis Chetverikov, and Kengo Kato.
\newblock Anti-concentration and honest, adaptive confidence bands.
\newblock {\em Annals of Statistics}, 42(5):1787--1818, 2014.

\bibitem{cck2014_empirical_process}
Victor Chernozhukov, Denis Chetverikov, and Kengo Kato.
\newblock Gaussian approximation of suprema of empirical processes.
\newblock {\em Annals of Statistics}, 42(4):1564--1597, 2014.

\bibitem{cck2016_empirical_process_coupling}
Victor Chernozhukov, Denis Chetverikov, and Kengo Kato.
\newblock Empirical and multiplier bootstraps for suprema of empirical
  processes of increasing complexity, and related gaussian couplings.
\newblock {\em Stochastic Processes and Their Applications},
  126(12):3632--3651, 2016.

\bibitem{chetverikov2012}
Denis Chetverikov.
\newblock Testing regression monotonicity in econometric models.
\newblock {\em arXiv:1212.6757}, 2012.

\bibitem{Davydov1998}
Y.~Davydov, M.~Lifshits, and N.~Smorodina.
\newblock {\em Local Properties of Distributions of Stochastic Functions
  (Transaction of Mathematical Monographs, Vol. 173)}.
\newblock American Mathematical Society, 1998.

\bibitem{delaPenaGine1999}
Victor de~la Pe\~na and Evarist Gin\'e.
\newblock {\em Decoupling: From Dependence to Independence}.
\newblock Springer, 1999.

\bibitem{dehlingmikosch1994}
Herold Dehling and Thomas Mikosch.
\newblock Random quadratic forms and the bootstrap for {$U$}-statistics.
\newblock {\em Journal of Multivariate Analysis}, 51(2):392--413, 1994.

\bibitem{dudley2002}
Richard~M. Dudley.
\newblock {\em Real Analysis and Probability}.
\newblock Cambridge University Press, 2002.

\bibitem{Dumbgen2002_JNS}
Lutz D\"umbgen.
\newblock Application of local rank tests to nonparametric regression.
\newblock {\em Journal of Nonparametric Statistics}, 14(5):511--537, 2002.

\bibitem{einmahlmason2005_AoS}
Uwe Einmahl and David~M. Mason.
\newblock Uniform in bandwidth consistency of kernel-type function estimators.
\newblock {\em Annals of Statistics}, 33(3):1380--1403, 2005.

\bibitem{EllisonEllison2011_AEJ}
Glenn Ellison and Sara~Fisher Ellison.
\newblock Strategic entry deterrence and the behavior of pharmaceutical
  incumbents prior to patent expiration.
\newblock {\em American Economic Journal: Microeconomics}, 3(1):1--36, 2011.

\bibitem{frees1994}
Edward~W. Frees.
\newblock Estimating densities of functions of observations.
\newblock {\em Journal of American Statistical Association}, 89(426):517--525,
  1994.

\bibitem{ghosalsenvandervaart2000}
Subhashis Ghosal, Arusharka Sen, and Aad van~der Vaart.
\newblock Testing monotonicity of regression.
\newblock {\em Annals of Statistics}, 28(4):1054--1082, 2000.

\bibitem{ginelatalazinn2000}
Evarist Gin\'e, Rafa{\l} Lata{\l}a, and Joel Zinn.
\newblock Exponential and moment inequalities for {$U$}-statistics.
\newblock {\em High Dimensional Probability II, Springer}, 2000.

\bibitem{ginemason2007}
Evarist Gin\'e and David~M. Mason.
\newblock On local {$U$}-statistic processes and the estimation of densities of
  functions of several sample variables.
\newblock {\em Annals of Statistics}, 35(3):1105--1145, 2007.

\bibitem{GineNickl2009_AoP}
Evarist Gin\'{e} and Richard Nickl.
\newblock Uniform limit theorems for wavelet density estimators.
\newblock {\em Annals of Probability}, 37(4):1605--1646, 2009.

\bibitem{ginenickl2016}
Evarist Gin\'e and Richard Nickl.
\newblock {\em Mathematical Foundations of Infinite-Dimensional Statistical
  Models}.
\newblock Cambridge University Press, 2016.

\bibitem{hall1991}
Peter Hall.
\newblock On convergence rates of suprema.
\newblock {\em Probability Theory and Related Fields}, 89(4):447--455, 1991.

\bibitem{hoeffding1948}
Wassily Hoeffding.
\newblock A class of statistics with asymptotically normal distributions.
\newblock {\em Annals of Mathematical Statistics}, 19(3):293--325, 1948.

\bibitem{huskovajanssen1993}
Marie Hu\v{s}kova and Paul Janssen.
\newblock Consistency of the generalized bootstrap for degenerate
  {$U$}-statistics.
\newblock {\em Annals of Statistics}, 21(4):1811--1823, 1993.

\bibitem{huskovajanssen1993b}
Marie Hu\v{s}kov\'a and Paul Janssen.
\newblock Generalized bootstrap for studentized {$U$}-statistics: a rank
  statistic approach.
\newblock {\em Statistics and Probability Letters}, 16(3):225--233, 1993.

\bibitem{janssen1994}
Paul Janssen.
\newblock Weighted bootstrapping of {$U$}-statistics.
\newblock {\em Journal of Statistical Planning and Inference}, 38(1):31--42,
  1994.

\bibitem{koltchinskii1994}
Vladmir~I. Koltchinskii.
\newblock Komlos-{M}ajor-{T}usn\'ady approximation for the general empirical
  process and {H}aar expansions of classes of functions.
\newblock {\em Journal of Theoretical Probability}, 7(1):73--118, 1994.

\bibitem{kmt1975}
J.~Koml\'os, P.~Major, and G.~Tusn\'ady.
\newblock An approximation of partial sums of independent rv's and the sample
  df. {I}.
\newblock {\em Z. Wahrscheinlichkeitstheor. Verw. Geb.}, 32(1-2):111--131,
  1975.

\bibitem{ledouxtalagrand1991}
Michel Ledoux and Michel Talagrand.
\newblock {\em Probability in {B}anach {S}paces: {I}soperimetry and
  {P}rocesses}.
\newblock Springer. New York, 1991.

\bibitem{leelintonwhang2009}
Sokbae Lee, Oliver Linton, and Yoon-Jae Whang.
\newblock Testing for stochastic monotonicity.
\newblock {\em Econometrica}, 77(2):585--602, 2009.

\bibitem{lo1987}
Albert~Y. Lo.
\newblock A large sample study of the {B}ayesian bootstrap.
\newblock {\em Annals of Statistics}, 15(1):360--375, 1987.

\bibitem{masonnewton1992}
David~M. Mason and Micheal~A. Newton.
\newblock A rank statistics approach to the consistency of a general bootstrap.
\newblock {\em Annals of Statistics}, 20(3):1611--1624, 1992.

\bibitem{massart1989_AoP}
Pascal Massart.
\newblock {Strong approximation for multivariate empirical and related
  processes, via KMT constructions}.
\newblock {\em Annals of Probability}, 17(1):266--291, 1989.

\bibitem{monradphilipp1991}
Ditlev Monrad and Walter Philipp.
\newblock Nearby variables with nearby conditional laws and a strong
  approximation theorem for {H}ilbert space valued martingales.
\newblock {\em Probability Theory and Related Fields}, 88(3):381--404, 1991.

\bibitem{nolanpollard1987}
Deborah Nolan and David Pollard.
\newblock {$U$}-processes: rates of convergence.
\newblock {\em Annals of Statistics}, 15(2):780--799, 1987.

\bibitem{nolanpollard1988}
Deborah Nolan and David Pollard.
\newblock Functional limit theorems for {$U$}-processes.
\newblock {\em Annals of Probability}, 16(3):1291--1298, 1988.

\bibitem{piterberg1996}
Vladimir~I. Piterberg.
\newblock {\em Asymptotic Methods in the Theory of Gaussian Processes and
  Fields}.
\newblock American Mathematical Society, 1996.

\bibitem{resnick1987a}
Sidney~I. Resnick.
\newblock {\em Extreme Values, Regular Variation, and Point Processes}.
\newblock Springer-Verlag, 1987.

\bibitem{rio1994a}
Emmanuel Rio.
\newblock Local invariance principles and their application to density
  estimation.
\newblock {\em Probability Theory and Related Fields}, 98(1):21--45, 1994.

\bibitem{rubin1981}
Donald~B. Rubin.
\newblock The {B}ayesian bootstrap.
\newblock {\em Annals of Statistics}, 9(1):130--134, 1981.

\bibitem{serfling1980}
Robert~J. Serfling.
\newblock {\em Approximation Theorems of Mathematical Statistics}.
\newblock John Wiley \& Sons, 1980.

\bibitem{sherman1993}
Robert~P. Sherman.
\newblock Limiting distribution of the maximal rank correlation estimator.
\newblock {\em Econometrica}, 61(1):123--137, 1993.

\bibitem{sherman1994}
Robert~P. Sherman.
\newblock Maximal inequalities for degenerate {$U$}-processes with applications
  to optimization estimators.
\newblock {\em Annals of Statistics}, 22(1):439--459, 1994.

\bibitem{solon1992_AER}
G.~Solon.
\newblock Intergenerational income mobility in the {U}nited {S}tates.
\newblock {\em American Economic Review}, 82(3):393--408, 1992.

\bibitem{vandervaartwellner1996}
Aad van~der Vaart and Jon~A. Wellner.
\newblock {\em Weak {C}onvergence and {E}mpirical {P}rocesses: With
  {A}pplications to {S}tatistics}.
\newblock Springer, 1996.

\bibitem{vandervaartwellner2011}
Aad van~der Vaart and Jon~A. Wellner.
\newblock A local maximal inequality under uniform entropy.
\newblock {\em Electronic Journal of Statistics}, 5:192--203, 2011.

\bibitem{wangjing2004}
Qiying Wang and Bin-Ying Jing.
\newblock Weighted bootstrap for {$U$}-statistics.
\newblock {\em Journal of Multivariate Analysis}, 91(2):177--198, 2004.

\bibitem{zhang2001}
Dixin Zhang.
\newblock Bayesian bootstraps for {U}-processes, hypothesis tests and
  convergence of {D}irichlet {U}-processes.
\newblock {\em Statistica Sinica}, 11(2):463--478, 2001.

\end{thebibliography}

\end{document}